\documentclass[11pt,reqno]{amsart}
\usepackage{amssymb}
\usepackage{bm}
\usepackage{xcolor}
\usepackage{amsmath}
\usepackage{enumitem}
\usepackage{mathrsfs}
\usepackage{hyperref}
\usepackage{titlesec}

\titleformat{\section}  
{\fontsize{14}{16}\bfseries} 
{\thesection.} 
{1em} 
{} 
[] 
\titlespacing*{\section}{0pt}{1.5em}{0.8em}

\titleformat{\subsection}  
{\fontsize{13}{15}\bfseries} 
{\thesubsection.} 
{1em} 
{} 
[] 
\titlespacing*{\subsection}{0pt}{1.2em}{0.6em}

\newtheorem{theorem}{Theorem}[section]
\newtheorem{lemma}[theorem]{Lemma}

\newtheorem{fact}[theorem]{Fact}
\newtheorem{question}[theorem]{Question}

\newtheorem{proposition}[theorem]{Proposition}
\newtheorem{corollary}[theorem]{Corollary}
\newtheorem{claim}[theorem]{Claim}

\theoremstyle{definition}
\newtheorem{definition}[theorem]{Definition}
\newtheorem{notation}[theorem]{Notation}
\newtheorem{remark}[theorem]{Remark}

\theoremstyle{definition}

\let \iff = \leftrightarrow
\let \sub = \subseteq

\DeclareMathOperator{\dom}{dom}

\DeclareMathOperator{\range}{range}
\DeclareMathOperator{\field}{field}

\DeclareMathOperator{\pred}{pred}

\title{Long Strong Chains of Subsets of $\omega_1$}

\author[David Asper\'o]{David Asper\'o}

\address{David Asper\'o, School of Engineering, Mathematics, and Physics, University of East Anglia, Norwich NR4 7TJ, UK}

\email{d.aspero@uea.ac.uk}

\author[Curial Gallart]{Curial Gallart}

\address{Curial Gallart, Institute of Mathematics, University of Vienna, Kolingasse 14-16, 1090 Vienna, Austria}

\email{curial.gallart.rodriguez@univie.ac.at}

\date{}

\begin{document}
	
	\subjclass[2010]{03E35, 03E57}
	
	\keywords{Forcing, side conditions, symmetric systems, strong chains.}
	
	\thanks{The second author was partially supported by EPSRC project reference 2440142 and NCN-FWF Weave project PIN 1355423. This work was part of the second author's PhD thesis, written under the supervision of the first author at the University of East Anglia.}

	\begin{abstract}
		We force the existence of a chain of length $\omega_3$ in $[\omega_1]^{\omega_1}$ increasing modulo finite. The construction involves symmetric systems of models of two types as side conditions, introduced by the second author. This improves previous results of Koszmider and Veli\v{c}kovi\'{c}-Venturi.
	\end{abstract}
	
	\maketitle
	\pagestyle{myheadings}\markright{Long Strong Chains of Subsets of $\omega_1$}

	\section{Introduction}
	
	The study of the spaces $^\omega2$, $^\omega\omega$ and $[\omega]^\omega$ has been a central topic in the development of set theory since its beginnings. Their importance comes from the fact that when equipped with the right topology, they are homeomorphic to the real numbers minus some countable subset. Therefore, in many regards, they are equivalent to $\mathbb{R}$.\footnote{It is even common practice in set theory to refer to the elements of these spaces as \emph{reals}.} Moreover, for many different reasons, these spaces are more suitable than $\mathbb{R}$ from the point of view of forcing theory. Of great importance is the area of \emph{cardinal characteristics of the continuum}, which deals with the possible cardinalities of certain subsets of the the real line when the continuum is assumed to be larger than $\aleph_1$ (see \cite{Blass2010CCCC} for a survey on the topic). In the past decades there has been an increasing interest in the spaces of \emph{higher} (or \emph{generalized}) \emph{reals} $^\lambda2$, $^\lambda\lambda$ and $[\lambda]^\lambda$, where $\lambda$ is an uncountable cardinal, and the \emph{higher cardinal characteristics} associated to these spaces (see, for example \cite{QuestionsongeneralisedBairespaces}, \cite{CummingsShealh1995Cardinalinvariantsabovethecontinuum}, or \cite{BrendleBrookeTaylorFriedmanMontoya2018Cichonsdiagramforuncountablecardinals}). What makes this topic so interesting is that, in many cases, it turns out that the classical theory does not generalize to the uncountable case as straightforwardly as one would expect. In fact, in some cases many results are only known to work at the level of some large cardinal. Hence, this area has motivated the development of many deep and powerful ideas in forcing theory and, moreover, it has given completely new characterizations of known large cardinal notions. 
	
	In the area of higher cardinal characteristics one usually considers spaces such as $[\lambda]^{\lambda}$ quotiented by the ideal of subsets of $\lambda$ of size $<\lambda$. This fact is crucial when generalizing the classical theory of cardinal characteristics to the uncountable. In fact, the space $[\lambda]^\lambda$ quotiented by the ideal of subsets of $\lambda$ of size $<\mu$, for some cardinal $\mu<\lambda$, is much harder to deal with. Indeed, the known techniques coming from the area of cardinal characteristics do not seem to work in this context, so our understanding of this space is much poorer.
	In this paper we will focus on the space $[\omega_1]^{\omega_1}$ quotiented by the ideal of finite subsets of $\omega_1$. Let us first give a brief overview of the main results in the study of the spaces $[\omega_1]^{\omega_1}$ and $^{\omega_1}\omega_1$ in this
	 context.
	
	
	\begin{definition}\label{def-almost-disjoint-subsets}
		If $\delta$ is a limit ordinal, a \emph{strong $\delta$-almost disjoint family of subsets of $\omega_1$} is a sequence $\langle A_\alpha:\alpha<\delta\rangle$ of subsets of $\omega_1$ such that for all $\alpha<\beta<\delta$, 
		\begin{enumerate}[label=(\arabic*)]
			\item $|A_\alpha|=\aleph_1$, and
			
			\item $|A_\alpha\cap A_\beta|<\aleph_0$.
		\end{enumerate}
	\end{definition}
	
	Baumgartner showed in \cite{Baumgartner1976Almostdisjointsetsthedensesetproblemandthepartitioncalculus} that one
	can consistently have arbitrarily long strong almost disjoint families of subsets of $\omega_1$.
	
	\begin{theorem}[Baumgartner]\label{thm-Baumgartner}
		Assume $\mathrm{GCH}$ and let $\delta$ be a limit ordinal. There exists a cardinal-preserving forcing notion $\mathbb{P}$ that forces the existence of a strong $\delta$-almost disjoint family of subsets of $\omega_1$.
	\end{theorem}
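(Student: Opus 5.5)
We would prove Theorem~\ref{thm-Baumgartner} with Baumgartner's finite-support style forcing by countable approximations. We may assume $|\delta|\ge\aleph_2$, since for $|\delta|\le\aleph_1$ a strong almost disjoint family of length $\delta$ exists in ZFC. To see this, partition $\omega_1$ into $\aleph_1$ disjoint uncountable pieces.

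In general we define $\mathbb{P}$ to consist of conditions $p=(a_p,f_p)$. Here $a_p\in[\delta]^{\le\aleph_0}$ and $f_p:a_p\to[\omega_1]^{\le\aleph_0}$ assigns to each index $\alpha$ a countable initial approximation of $A_\alpha$. To make "finite intersection" forceable, we let conditions carry countable closed-off pieces. Concretely, each $f_p(\alpha)\subseteq\gamma_p$ for a fixed countable ordinal $\gamma_p$ (the height of $p$), and the sets $f_p(\alpha)\cap f_p(\beta)$ are finite for distinct $\alpha,\beta\in a_p$.

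Set $q\le p$ iff $a_q\supseteq a_p$, $\gamma_q\ge\gamma_p$, and $f_q(\alpha)\cap\gamma_p=f_p(\alpha)$ for $\alpha\in a_p$, so extensions are end-extensions. We add the promise clause that $f_q(\alpha)\cap f_q(\beta)=f_p(\alpha)\cap f_p(\beta)$ for $\alpha,\beta\in a_p$: new points above $\gamma_p$ never create new intersections between old indices. Density then gives $A_\alpha=\bigcup_{p\in G}f_p(\alpha)$ unbounded in $\omega_1$ and pairwise intersections finite. To extend $p$ by adding one point above $\gamma_p$ to each of the countably many $f_p(\alpha)$, we use fresh distinct ordinals, enumerating $a_p$ in type $\omega$. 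Thus each extension step stays countable, and the family is a strong $\delta$-almost disjoint family.

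The forcing is countably closed: along a decreasing $\omega$-sequence, take unions. Height limits stay countable, and intersections stabilize by the promise clause. Hence $\omega_1$ is preserved and no new countable sets are added.

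The main obstacle is the $\aleph_2$-chain condition, which together with GCH and $|\mathbb{P}|\le|\delta|^{\aleph_0}$ preserves all cardinals. Using CH, given $\aleph_2$ conditions we apply the $\Delta$-system lemma for countable sets, which needs $2^{\aleph_0}=\aleph_1<\aleph_2$. This gives $\aleph_2$ conditions with supports forming a $\Delta$-system with root $r$, the same $\gamma$, and identical $f\restriction r$ (only $\aleph_1^{\aleph_0}=\aleph_1$ possibilities). Two such $p,q$ are compatible. We take $a=a_p\cup a_q$ with $f=f_p\cup f_q$ at height $\gamma$, then extend one level above $\gamma$ if needed.

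The intersection $f_p(\alpha)\cap f_q(\beta)$ for $\alpha\in a_p\setminus r$ and $\beta\in a_q\setminus r$ must be finite. This fails in general, so we refine the thinning. We additionally require that each $f_p(\alpha)$ be finite and $a_p$ finite, since countable supports would make the intersection clause unmanageable. With finite supports and finite approximations the conditions become Baumgartner's poset, and we need ccc rather than closure. We obtain ccc via the $\Delta$-system lemma on finite supports together with the finite sets $\bigcup_\alpha f_p(\alpha)$, after pigeonholing onto equal "patterns". Amalgamation then only creates intersections inside the root of the $\Delta$-system of values, which is common and finite. Unboundedness of each $A_\alpha$ still follows by density, so cardinals are preserved by ccc and GCH.
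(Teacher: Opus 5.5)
The paper does not prove this theorem; it only quotes it from Baumgartner, so I am judging your argument on its own. You are right to abandon the countable-support version. However, the finite-condition poset you end with has a genuine gap: it is not ccc, and in fact it collapses $\omega_1$.

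The step that fails is ``amalgamation only creates intersections inside the root of the $\Delta$-system of values.'' Write $F_\eta=\bigcup_{\alpha\in a_{p_\eta}}f_{p_\eta}(\alpha)$ and $R$ for the root of the $\Delta$-system of the $F_\eta$. The promise clause applies to every pair $\alpha\neq\beta$ in the root $r$ of the supports, and $f_{p_\eta}(\alpha)\cap f_{p_\eta}(\beta)$ may contain points of the tail $F_\eta\setminus R$. From the point of view of $p_\xi$ these are new common points, and no pigeonholing of patterns removes them, because tails differ from condition to condition. Concretely, the conditions $p_\xi=(\{0,1\},\,0\mapsto\{\xi\},\,1\mapsto\{\xi\})$, $\xi<\omega_1$, form an antichain.

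Worse, the forcing collapses $\omega_1$. Given any $p$ and $\zeta_0<\omega_1$, pick $\zeta\geq\zeta_0$ outside $\bigcup_{\beta\in a_p}f_p(\beta)$. Add $\zeta$ to $f(\omega)$ (putting $\omega$ into the support if necessary), and add a fresh index $n<\omega$ with $f(n)=\{\zeta\}$; no promise is broken. Hence $\bigcup_{n<\omega}(A_n\cap A_\omega)$ is a countable union of finite sets that is unbounded in $\omega_1^V$. If you also keep the heights and end-extensions, $A_\alpha\cap\gamma_p=f_p(\alpha)$ is finite for cofinally many $\gamma_p$, so $A_\alpha$ has order type at most $\omega$.

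This is a defect of the design, not of the bookkeeping. In any ccc extension, the set of $\xi$ that some condition forces into $\dot A_\alpha\cap\dot A_\beta$ must be countable; otherwise some condition forces uncountably many of the witnessing conditions into the generic. So the poset must fix in the ground model a countable set where $A_\alpha$ and $A_\beta$ are allowed to meet.

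That confinement is the missing idea. Under CH, identify $2^{<\omega_1}$ with $\omega_1$ and let $B_\alpha$, for $\alpha<\delta$, be the node sets of distinct branches. These sets are uncountable and have pairwise countable intersections. GCH only provides $2^{\aleph_1}=\aleph_2$ branches, so when $|\delta|>\aleph_2$, first force with $\mathrm{Add}(\omega_1,|\delta|)$. Under CH this is countably closed and $\aleph_2$-c.c., so it preserves all cardinals and CH. Then use your finite poset with the extra requirement $f_p(\alpha)\subseteq B_\alpha$.

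In the ccc argument, after the two $\Delta$-systems, discard the countably many $\xi$ whose tail $F_\xi\setminus R$ meets the countable set $\bigcup\{B_\alpha\cap B_\beta:\alpha\neq\beta\in r\}$. Now $f_{p_\xi}(\alpha)\cap f_{p_\xi}(\beta)\subseteq R$ for all $\alpha,\beta\in r$. Once the traces $f_{p_\xi}(\alpha)\cap R$ for $\alpha\in r$ are fixed, the coordinatewise union of $p_\xi$ and $p_\eta$ is a common extension. Density still makes each $A_\alpha$ unbounded, because $B_\alpha$ is uncountable, and the two-step iteration preserves all cardinals.
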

	
	
		
	
	
	Baumgartner's result motivated the introduction of stronger properties for families living in the spaces $[\omega_1]^{\omega_1}$ and $^{\omega_1}\omega_1$.
	
	\begin{definition}\label{def-chain-subsets}
		A \emph{strong $\delta$-chain of subsets of $\omega_1$} is a sequence $\langle X_\alpha:\alpha<\delta\rangle$ of subsets of $\omega_1$ such that for all $\alpha<\beta<\delta$, 
		\begin{enumerate}[label=(\arabic*)]
			\item $|X_\beta\setminus X_\alpha|=\aleph_1$, and
			
			\item $|X_\alpha\setminus X_\beta|<\aleph_0$.
		\end{enumerate}
	\end{definition}
	
	In the same paper, Baumgartner showed that the existence of a strong chain of subsets of $\omega_1$ implies the existence of a strong almost disjoint family of subsets of $\omega_1$. More precisely, if $\langle X_\alpha:\alpha<\delta\rangle$ is a strong $\delta$-chain of subsets of $\omega_1$ and we define $A_\alpha:=X_{\alpha+1}\setminus X_\alpha$, for every $\alpha<\delta$, then $\langle A_\alpha:\alpha<\delta\rangle$ is a strong $\delta$-almost disjoint family of subsets of $\omega_1$.
	
	\begin{definition}\label{def-chain-functions}
		If $\delta$ is a limit ordinal, a \emph{strong $\delta$-chain of functions from $\omega_1$ to $\omega_1$} is a sequence $\langle f_\alpha:\alpha<\delta\rangle$ of functions such that for all $\alpha<\beta<\delta$, 
		\begin{enumerate}[label=(\arabic*)]
			\item $f_\alpha\in {^{\omega_1}\omega_1}$, and
			
			\item $|\{\xi\in\omega_1:f_\alpha(\xi)\geq f_\beta(\xi)\}|<\aleph_0$.
		\end{enumerate}
	\end{definition}
	
	The existence of a strong chain of functions from $\omega_1$ to $\omega_1$ implies the existence of a strong chain of subsets of $\omega_1$, by identifying each subset of $\omega_1$ with its characteristic function. 
	
	Strong chains of length $\omega_1$ exist in $\mathrm{ZFC}$. However, the existence of longer strong chains of functions implies the failure of $\mathrm{CH}$, and hence their existence is independent of $\mathrm{ZFC}$. Hajnal and Szentmikl\'{o}ssy asked in \cite{Finiteandinfinitecombinatoricsinsetsandlogic1993} (s.\ page 435), the following two natural questions about the existence of long strong chains.
	
	\begin{question}
		Is it consistent that there exists a strong $\omega_2$-chain of subsets of $\omega_1$?
	\end{question}
	
	And the harder version of the question.
	
	\begin{question}
		Is it consistent that there exists a strong $\omega_2$-chain of functions from $\omega_1$ to $\omega_1$? 
	\end{question}
	
	Both questions were answered affirmatively by Koszmider.
	
	\begin{theorem}[Koszmider, \cite{Koszmider1998ontheexistenceofstrongchainsin}]
		Assuming $\square_{\omega_1}$, there exists a c.c.c. forcing poset that adds a strong $\omega_2$-chain of subsets of $\omega_1$.
	\end{theorem}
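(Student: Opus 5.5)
The plan is to build a finite-support-style c.c.c. poset $\mathbb{P}$ of finite approximations to a strong $\omega_2$-chain, using $\square_{\omega_1}$ to control amalgamation. Fix a $\square_{\omega_1}$-sequence $\langle C_\beta:\beta<\omega_2,\ \beta\text{ limit}\rangle$. From it we derive, in the style of Todorcevic's $\rho$-functions, a function $\rho:[\omega_2]^2\to\omega_1$ with the following properties:
\begin{enumerate}[label=(\roman*)]
	\item $\rho(\alpha,\gamma)\le\max\{\rho(\alpha,\beta),\rho(\beta,\gamma)\}$ and $\rho(\alpha,\beta)\le\max\{\rho(\alpha,\gamma),\rho(\beta,\gamma)\}$ for $\alpha<\beta<\gamma$;
	\item for each $\beta$ and $\nu<\omega_1$, the set $\{\alpha<\beta:\rho(\alpha,\beta)\le\nu\}$ is countable.
\end{enumerate}
This is the standard consequence of $\square_{\omega_1}$ (a ``$\rho$-function'' on $\omega_2$). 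The parameter $\rho(\alpha,\beta)$ will bound the countable ordinal above which $X_\alpha\setminus X_\beta$ must be empty.

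A condition $p$ consists of a finite set $a_p\subseteq\omega_2$, a finite set $d_p\subseteq\omega_1$, and the values $x^p_\alpha\subseteq d_p$ for $\alpha\in a_p$, meant as $X_\alpha\cap d_p$. We impose the promise that for $\alpha<\beta$ in $a_p$ and every $\xi\in d_p$ with $\xi\ge\rho(\alpha,\beta)$, if $\xi\in x^p_\alpha$ then $\xi\in x^p_\beta$. Extension is coordinatewise end-extension in the obvious sense.

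Genericity arguments then give the following:
\begin{itemize}
	\item $X_\alpha:=\bigcup_p x^p_\alpha$ satisfies $X_\alpha\setminus X_\beta\subseteq\rho(\alpha,\beta)$, which is countable. This does not yet give finiteness.
	\item Density arguments give $|X_\beta\setminus X_\alpha|=\aleph_1$, because a new $\xi$ can always be added to $x_\beta$ and omitted from all $x_\gamma$ with $\gamma\le\alpha$ without violating the promises.
\end{itemize}
To obtain \emph{finite} differences we refine the construction: for a fixed $\alpha<\beta$, only countably many $\xi<\rho(\alpha,\beta)$ are allowed to be in $X_\alpha\setminus X_\beta$. We handle this by an $\omega_1$-iteration or product trick. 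Concretely, we work below $\omega_1$ with a fixed strong $\omega_1$-chain in ZFC, and use property (ii) to reduce the behaviour on the countable set $\{\alpha:\rho(\alpha,\beta)\le\nu\}$ to a countable chain, where finite-mod arrangements are possible. Equivalently, we add the requirement that for $\alpha<\beta$ the finite set $x^p_\alpha\setminus x^p_\beta$ is ``decided'' by an $\omega$-indexed enumeration coming from $C$-walks, so that the union over conditions stays finite.

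The main obstacle is the c.c.c. Given uncountably many conditions, we would pass to a $\Delta$-system in both $a_p$ and $d_p$, and assume the conditions are isomorphic, including the $\rho$-values among their points. The key step is the amalgamation of two conditions $p,q$ with $d$-parts separated. For $\alpha\in a_p\setminus a_q$ and $\beta\in a_q\setminus a_p$ we need the promise for new ordinals $\xi\ge\rho(\alpha,\beta)$. Here (i) is decisive: $\rho(\alpha,\beta)$ is bounded by $\max$ of $\rho$ through a root element, and we thin out by the pressing-down lemma so that these maxima are controlled. We then put the new $\xi$'s into the larger set as needed; this closure under ``moving up'' terminates because the supports are finite. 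Exactly this coherence, which fails for arbitrary $\omega_2$ and is supplied by $\square_{\omega_1}$, is what I expect to be the delicate part.
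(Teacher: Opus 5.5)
The paper does not prove this theorem; it is quoted from Koszmider. So your proposal has to stand on its own, and it does not. The framework (finite conditions plus a $\square_{\omega_1}$-derived $\rho$) is reasonable, but the step that carries the content of the theorem is missing.

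\textbf{Finiteness of the differences.} The poset you define only promises monotonicity at coordinates $\xi\ge\rho(\alpha,\beta)$, and nothing in a condition ever freezes what happens below $\rho(\alpha,\beta)$. Take $p$ with $\alpha<\beta$ in $a_p$ and any $\xi<\rho(\alpha,\beta)$ outside $d_p$. Extend $p$ by putting $\xi$ into $x_\alpha$ and into every $x_\gamma$ reachable from $\alpha$ by an increasing chain of pairs with $\rho\le\xi$, and leaving $\xi$ out of all other $x_\gamma$. By (i), $\beta$ is not reachable, so this is a condition. By density your poset therefore forces $X_\alpha\setminus X_\beta$ to be infinite whenever $\rho(\alpha,\beta)\ge\omega$. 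By (ii) this holds for all but countably many $\alpha<\beta$ once $\beta\ge\omega_1$.

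Countable differences, by contrast, cost nothing. ZFC already gives $\omega_2$-chains in $[\omega_1]^{\omega_1}$ modulo countable sets, built from an $\omega_2$-sequence in ${}^{\omega_1}\omega_1$ increasing modulo countable sets by diagonalization.

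Your ``refinement'' paragraph ($\omega_1$-iteration, product trick, a fixed $\omega_1$-chain, enumerations from $C$-walks) does not define a poset and gives no reason why the union of the approximations becomes finite. What is needed is a commitment that extensions cannot revise: a finite exception set $b_p(\alpha,\beta)$, kept fixed under extension, with monotonicity required at every $\nu\in d_p\setminus b_p(\alpha,\beta)$. This is exactly the role of $b_p$ in (C6), (C8) and (O6) of this paper. The set must also be bounded, e.g.\ $b_p(\alpha,\beta)\subseteq\rho(\alpha,\beta)$, as the analogue of (C6)(b) with $\delta_M$. Otherwise the conditions with $a=\{0,1\}$ and $b(0,1)=\{\xi\}$, for $\xi<\omega_1$, form an uncountable antichain.

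\textbf{The chain condition.} Once such a frozen exception set is in the conditions, your c.c.c.\ sketch no longer applies, and it was already too vague. The issue is never termination of the ``moving up'' closure; it is consistency with values that are already fixed.

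On root coordinates $\xi\in d_p\cap d_q$, both $u_p^\alpha(\xi)$ and $u_q^\beta(\xi)$ are fixed. So for cross pairs $\alpha\in a_p\setminus a_q$, $\beta\in a_q\setminus a_p$ you need something like $\rho(\alpha,\beta)>\max(d_p\cap d_q)$. That is an actual lemma about $\rho$, extracted from (ii) by an elementary-submodel argument. For instance, if $\beta<\alpha$, $\alpha\in N$ and $\beta\notin N$, then $\rho(\beta,\alpha)\ge\delta_N$. The direction $\alpha<\beta$ must be handled separately. ``Thinning by pressing down'' does not by itself control cross values.

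The harder problem is at a coordinate $\nu\in d_q\setminus d_p$. There every pair of $a_p$ must be monotone, since $\nu\notin b_p$, independently of $\rho$, while $q$ may have exceptions at $\nu$. Suppose $y<\alpha<\gamma$ with $y\in a_q\setminus a_p$, $\alpha\in a_p\setminus a_q$ and $\gamma$ in the root. Suppose also $\rho(y,\alpha)\le\nu\in b_q(y,\gamma)$, $u_q^y(\nu)=1$ and $u_q^\gamma(\nu)=0$. Then no value of $u^\alpha(\nu)$ is admissible. Property (i) does not exclude this, because $\rho(\alpha,\gamma)$ is unconstrained.

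Ruling this out requires choosing the amalgamated pair carefully. One option is $p\in N$, $q\notin N$, with $d_q\setminus d_p$ above $\delta_N$, so that all $\rho$-values inside $a_p$ lie below such $\nu$. The symmetric case $\nu\in d_p\setminus d_q$ must then be treated as well. None of this is in your sketch.

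The paper's own chain-condition argument (Lemma~\ref{cc}) sidesteps all of this: isomorphic conditions inside $\aleph_1$-sized models automatically have the same $d$-part. In a c.c.c.\ argument the $d$-parts genuinely differ, and that is where the real work lies.
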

	
	Moreover, in the same paper he proves that
	 Chang's Conjecture implies that there is no strong $\omega_2$-chain of subsets of $\omega_1$ (and hence no strong $\omega_2$-chain of functions from $\omega_1$ to $\omega_1$ either). Furthermore, since Chang's Conjecture is preserved by c.c.c.\ forcings, the following theorem shows that being a strong chain of subsets of $\omega_1$ is a strictly stronger notion than that of being a strong almost disjoint family.
	
	\begin{theorem}[Koszmider, \cite{Koszmider1998ontheexistenceofstrongchainsin}]
		It is consistent, modulo large cardinals, that Chang's Conjecture holds together with the existence of a strong almost disjoint family of subsets of $\omega_1$ and the nonexistence of strong chains of subsets of $\omega_1$. 
	\end{theorem}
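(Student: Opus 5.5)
The plan is to obtain the model in three stages: first a ground model of Chang's Conjecture with $\mathrm{GCH}$, then a c.c.c.\ extension adding a strong almost disjoint family of length $\omega_2$, and finally an appeal to Koszmider's theorem quoted above to rule out strong chains. Here ``strong chains'' means strong $\omega_2$-chains of subsets of $\omega_1$. The reason this should work is that Chang's Conjecture survives c.c.c.\ forcing (as remarked above), while a strong almost disjoint family can be added by a c.c.c.\ poset.

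\textbf{Step 1: the ground model.} Start from a model of $\mathrm{GCH}$ with an $\omega_1$-Erd\H{o}s cardinal $\kappa$. Perform the L\'evy collapse $\mathrm{Col}(\omega_1,{<}\kappa)$. By Silver's classical argument, the resulting model $V$ satisfies Chang's Conjecture $(\aleph_2,\aleph_1)\twoheadrightarrow(\aleph_1,\aleph_0)$. Since the collapse is $\sigma$-closed and $\kappa$-c.c.\ over a $\mathrm{GCH}$ model, $\mathrm{GCH}$ still holds in $V$; in particular $\mathrm{CH}$ holds there.

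\textbf{Step 2: adding the family by a c.c.c.\ poset.} Baumgartner's Theorem~\ref{thm-Baumgartner} is only stated as cardinal preserving, so I would use a version with finite conditions, which I expect is essentially his poset for $\delta=\omega_2$. A condition is a pair $p=(a_p,F_p)$ with the following components.
\begin{enumerate}[label=(\roman*)]
\item $a_p$ is a finite subset of $\omega_2\times\omega_1$; here $(\alpha,\xi)\in a_p$ means ``$\xi\in A_\alpha$''.
\item $F_p$ is a finite set of triples $(\alpha,\beta,\eta)$ with $\alpha<\beta$, read as the promise ``$A_\alpha\cap A_\beta\subseteq\eta$''.
\end{enumerate}
We require that $a_p$ respects the promises in $F_p$. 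The order is $q\le p$ iff $a_q\supseteq a_p$, $F_q\supseteq F_p$, and no new pair added to $a_q$ violates a promise in $F_p$.

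Genericity then gives the two required properties of $A_\alpha=\{\xi:\exists p\in G\ (\alpha,\xi)\in a_p\}$:
\begin{itemize}
\item each $A_\alpha$ is uncountable, because any finite set of promises only forbids an initial segment of $\omega_1$;
\item $A_\alpha\cap A_\beta$ is finite, because one can densely add the promise $(\alpha,\beta,\eta)$ with $\eta$ above all ordinals already mentioned in the condition.
\end{itemize}

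\textbf{Step 3: the c.c.c.\ argument, which I expect to be the main obstacle.} Given $\aleph_1$ many conditions, I would use $\mathrm{CH}$ and the $\Delta$-system lemma to thin them out so that the following hold.
\begin{itemize}
\item The supports in $\omega_2$ form a $\Delta$-system with root $r$.
\item The $\omega_1$-coordinates form a $\Delta$-system, and the conditions are isomorphic over the root.
\item The promise heights and the new $\omega_1$-coordinates are pushed above the common part.
\end{itemize}
The key point is to check that two such conditions $p,q$ are compatible via $(a_p\cup a_q,F_p\cup F_q)$. A new element $\xi\in A_\alpha$ coming from $q$ could violate a promise $(\alpha,\beta,\eta)\in F_p$ only if $\xi<\eta$ and $(\beta,\xi)$ is also present. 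The thinning must be arranged so that this cannot happen. I would first try the simplest arrangement: choose the conditions so that all non-root $\omega_1$-coordinates of later conditions lie above all promise heights of earlier ones. If that fails, I would instead cite Baumgartner's original c.c.c.\ proof.

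\textbf{Step 4: conclusion.} The poset is c.c.c., so cardinals are preserved and the $A_\alpha$ form a strong $\omega_2$-almost disjoint family in $V[G]$. Chang's Conjecture is preserved by c.c.c.\ forcing, so it holds in $V[G]$. By Koszmider's theorem that Chang's Conjecture implies there is no strong $\omega_2$-chain of subsets of $\omega_1$, no strong chain of subsets of $\omega_1$ of length $\omega_2$ exists in $V[G]$.
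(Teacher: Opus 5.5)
Your overall architecture matches the route the paper's remark points to: Silver's collapse of an $\omega_1$-Erd\H{o}s cardinal to get Chang's Conjecture with $\mathrm{GCH}$, then a c.c.c.\ extension adding the family, then preservation of Chang's Conjecture, then Koszmider's theorem that Chang's Conjecture rules out strong $\omega_2$-chains. The paper itself only cites this theorem.

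The gap is in Steps 2--3. The poset you define neither produces finite intersections nor is c.c.c.

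\textbf{Intersections need not be finite.} With promises read as $A_\alpha\cap A_\beta\subseteq\eta$, a promise only bounds the intersection by a countable ordinal. Suppose a condition puts $\omega$ into both $A_\alpha$ and $A_\beta$. Then every promise for $(\alpha,\beta)$ in an extension has height $>\omega$. It is then dense to add $(\alpha,n),(\beta,n)$ for unmentioned $n<\omega$, so that condition forces $A_\alpha\cap A_\beta$ to be infinite.

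\textbf{The poset is not c.c.c.} Your violation criterion ``$\xi<\eta$'' is the reverse of the stated promise. The real danger is a common element $\xi\geq\eta$, and your proposed thinning (later non-root coordinates above earlier promise heights) produces exactly that configuration. Concretely, fix $\alpha<\beta$. The conditions $p_\xi=(\{(\alpha,\xi),(\beta,\xi)\},\{(\alpha,\beta,\xi+1)\})$, $\xi<\omega_1$, are pairwise incompatible under either reading of the promise.

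\textbf{Finiteness-forcing promises do not help.} Suppose instead $F_p\in[\omega_2]^{<\omega}$ with ``no new common elements for pairs from $F_p$''. Then the conditions $(\{(\alpha,\xi),(\beta,\xi)\},\{\alpha,\beta\})$, $\xi<\omega_1$, form an uncountable antichain. The underlying obstruction is this: for a pair $\alpha,\beta$ in the root of the $\Delta$-system of supports, neither the $\Delta$-system lemma nor the isomorphism-type thinning can push the common elements of $A_\alpha$ and $A_\beta$ into the root of the $\omega_1$-coordinates, since they range over $\aleph_1$ values.

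Your fallback of citing Baumgartner does not close the gap either. Theorem~\ref{thm-Baumgartner} as quoted only gives a cardinal-preserving forcing. Step 4 needs a forcing that preserves Chang's Conjecture, and the paper only invokes this for c.c.c.\ forcing.

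\textbf{The missing idea.} You need to confine the possible common elements of each pair to a countable set fixed in advance. One way:
\begin{enumerate}
\item In the ground model, where $\mathrm{CH}$ holds, fix uncountable $B_\alpha\subseteq\omega_1$ ($\alpha<\omega_2$) with pairwise countable intersections, e.g.\ branches through ${}^{<\omega_1}2$ coded as subsets of $\omega_1$.
\item Require $(\alpha,\xi)\in a_p\Rightarrow\xi\in B_\alpha$, and use the promises ``no new common elements for pairs from $F_p$''.
\item In the c.c.c.\ argument, take the $\Delta$-system of supports with root $r$. The countable set $C=\bigcup\{B_\alpha\cap B_\beta:\alpha\neq\beta\in r\}$ contains every potentially conflicting coordinate.
\item Thin out so that the trace of the $\omega_1$-coordinates on $C$ is constant (only countably many possibilities). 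Only then take the $\Delta$-system of $\omega_1$-coordinates and fix the finite pattern on the roots. The union of any two remaining conditions is then a common extension.
\end{enumerate}
Density still yields each $A_\alpha\subseteq B_\alpha$ uncountable and $A_\alpha\cap A_\beta$ finite.
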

	
	Koszmider answered the second question affirmatively in \cite{Koszmider2000onstrongchainsofuncoutablefunctions}. However, his proof required completely new ideas; in fact, in the same paper he proved that, assuming $\mathrm{CH}$, there is no c.c.c.\ poset forcing the existence of a strong $\omega_2$-chain of functions from $\omega_1$ to $\omega_1$. In particular, this implies that one can consistently have long strong chains of subsets of $\omega_1$, while having no strong chains of functions from $\omega_1$ to $\omega_1$.
	
	\begin{theorem}[Koszmider, \cite{Koszmider2000onstrongchainsofuncoutablefunctions}]
		It is consistent that there is a strong $\omega_2$-chain of subsets of $\omega_1$ but there is no strong $\omega_2$-chain of functions from $\omega_1$ to $\omega_1$.
	\end{theorem}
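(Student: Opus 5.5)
The theorem follows by combining the two results of Koszmider quoted above: the c.c.c.\ forcing for strong chains of subsets under $\square_{\omega_1}$, and the fact that under $\mathrm{CH}$ no c.c.c.\ poset forces a strong $\omega_2$-chain of functions. The plan is to find one c.c.c.\ poset that does the first job while failing the second.

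\textbf{Ground model.} I would work over a ground model $V$ in which both $\mathrm{CH}$ and $\square_{\omega_1}$ hold. The simplest choice is $V=L$, where $\mathrm{GCH}$ and $\square_\kappa$ for every infinite $\kappa$ hold by Jensen's fine structure theory.

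\textbf{The forcing.} In $V$, let $\mathbb{P}$ be Koszmider's c.c.c.\ poset that adds a strong $\omega_2$-chain of subsets of $\omega_1$, with $\dot{\vec X}$ a name for the generic chain. Since $\mathbb{P}$ is c.c.c., it preserves all cardinals, so $\omega_1$ and $\omega_2$ have the same meaning in $V$ and in $V^{\mathbb{P}}$. Hence ``strong $\omega_2$-chain'' is unambiguous in the extension.

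\textbf{Finding a good condition.} I claim some $p\in\mathbb{P}$ forces that there is no strong $\omega_2$-chain of functions from $\omega_1$ to $\omega_1$. Suppose not. Then the set of conditions forcing that such a chain exists is dense in $\mathbb{P}$. By the usual density argument, $\mathbb{P}$ itself forces that such a chain exists. But $\mathbb{P}$ is c.c.c.\ and $V\models\mathrm{CH}$, which contradicts Koszmider's second theorem.

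\textbf{Conclusion.} Fix such a $p$ and let $G$ be $\mathbb{P}$-generic over $V$ with $p\in G$; equivalently, force with the c.c.c.\ poset $\mathbb{P}\restriction p$. Then $V[G]$ contains the strong $\omega_2$-chain of subsets $\dot{\vec X}^G$, and $p$ guarantees that $V[G]$ has no strong $\omega_2$-chain of functions. This gives the required model.

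\textbf{Where the difficulty lies.} There is no real obstacle beyond the two theorems being cited. The only points needing care are:
\begin{itemize}
\item the passage from ``$\mathbb{P}$ does not force $\varphi$'' to ``some condition forces $\neg\varphi$'', which is the density argument above;
\item checking that the nonexistence result applies in the ground model with $\mathrm{CH}$, and uses nothing beyond the c.c.c.\ of the poset.
\end{itemize}
The substantive content is entirely in Koszmider's theorem that $\mathrm{CH}$ rules out c.c.c.\ posets adding strong $\omega_2$-chains of functions. That proof would rely on a $\Delta$-system and elementary-submodel argument, using $\mathrm{CH}$ to find two indices $\alpha<\beta$ whose names for $f_\alpha$ and $f_\beta$ behave isomorphically, and then deriving a condition that forces infinitely many disagreements; here it is taken as given.
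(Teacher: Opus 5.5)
Your proposal is correct and follows the route the paper indicates. The paper gives no separate proof; it says the result follows from combining two of Koszmider's results, and that is exactly what you do: take a ground model of $\mathrm{CH}$ plus $\square_{\omega_1}$ (such as $L$), force with the c.c.c.\ poset adding a strong $\omega_2$-chain of subsets of $\omega_1$, and invoke the theorem that under $\mathrm{CH}$ no c.c.c.\ poset adds a strong $\omega_2$-chain of functions. Your density argument, which finds a condition forcing that no such chain of functions exists, correctly handles the weaker reading of ``no c.c.c.\ poset forces it''; this is the only detail the paper leaves implicit.
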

	
	In order to answer the second question, Koszmider introduced a cardinal-preserving forcing notion with side conditions organized along morasses. These side conditions, which consist of finitely many elements of a simplified $(\omega_1,1)$-morass (\cite{Velleman1984simplifiedmorasses}), resemble the \emph{symmetric systems} of \cite{Aspero:Mota2015a} or \cite{KuzeljevicTodorcevic2017Forcingwithmatrices}, and ensure that the forcing poset is proper (for a stationary set of countable elementary submodels) and has the $\aleph_2$-chain condition, provided that $\mathrm{CH}$ holds in the ground model.  
	
	
	\begin{theorem}[Koszmider, \cite{Koszmider2000onstrongchainsofuncoutablefunctions}]\label{Koszmider}
		Assuming $\mathrm{CH}$, there exists a cardinal-preserving forcing poset that adds a strong $\omega_2$-chain of functions from $\omega_1$ to $\omega_1$.
	\end{theorem}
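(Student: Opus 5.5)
Assuming $\mathrm{CH}$, I would build a poset $\mathbb{P}$ whose conditions are pairs $p=(f_p,\mathcal{N}_p)$. The working part $f_p$ is a finite approximation to the chain: a finite set $a_p\subseteq\omega_2$, a countable ordinal $\delta_p$, and for each $\alpha\in a_p$ a function $f_p^\alpha:\delta_p\to\omega_1$, together with a finite ``promise'' set recording, for pairs $\alpha<\beta$ in $a_p$, a finite set $e_p(\alpha,\beta)\subseteq\delta_p$ outside of which $f^\alpha<f^\beta$ must hold. The side condition $\mathcal{N}_p$ is a finite symmetric system of countable elementary submodels of $(H(\omega_2),\in,\lhd)$. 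This is essentially Koszmider's morass-like side condition, in the form of \cite{Aspero:Mota2015a}. Extension means end-extending the functions, enlarging $a_p$ and $\mathcal{N}_p$, and keeping all promises. The crucial coupling is that for every $N\in\mathcal{N}_p$ and every $\alpha\in a_p\setminus N$, the values of $f_p^\alpha$ on $\delta_N=N\cap\omega_1$ are determined by those of the elements of $a_p\cap N$ in a controlled way. Concretely, for $\alpha\notin N$ and $\xi\ge\delta_N$, we require $f^\alpha(\xi)\ge\delta_N$, so that new coordinates dominate everything in $N$ above height $\delta_N$. The generic sequence is $f_\alpha=\bigcup_p f_p^\alpha$.

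The key steps, in order, are as follows.

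\textbf{Density.} I would show that any $\alpha<\omega_2$ can be added to $a_p$ and any $\delta_p$ can be raised. Extending upward requires choosing new values at height $\xi$ that respect the linear order required by the finitely many promises, and there is always room in $\omega_1$. Density also shows $f_\alpha\in{}^{\omega_1}\omega_1$ and that for $\alpha<\beta$ the set $\{\xi: f_\alpha(\xi)\ge f_\beta(\xi)\}$ is contained in a finite $e_p(\alpha,\beta)$. To enforce this last point I would require that, once $\alpha,\beta\in a_p$, every later extension makes $f^\alpha<f^\beta$ at all new heights.

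\textbf{$\aleph_2$-c.c.} Using $\mathrm{CH}$, a $\Delta$-system argument on the $a_p$, isomorphism types of $(\mathcal{N}_p,f_p)$, and the standard amalgamation of symmetric systems with a common root give compatibility of two conditions among $\aleph_2$ many.

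\textbf{Properness for countable $N^*\prec H(\theta)$.} This is the main obstacle. Given $p\in N^*$, I would extend to $p^*$ by adding $N^*\cap H(\omega_2)$ to $\mathcal{N}_p$, closing under the symmetric system operations. Then I would show that $p^*$ is $(N^*,\mathbb{P})$-generic: for $q\le p^*$ and a dense $D\in N^*$, reflect $q$ to $q|N^*\in N^*$, find $r\le q|N^*$ in $D\cap N^*$ by elementarity (choosing $r$ of the same type so that its new coordinates behave like $q$'s), and amalgamate $r$ with $q$.

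The difficulty lies in the amalgamation. New coordinates $\alpha\in a_r\subseteq N^*$ and coordinates $\beta\in a_q\setminus N^*$ acquire promises $f^\alpha<f^\beta$ outside a finite set. On heights below $\delta_{N^*}$ the values of $f_q^\beta$ were fixed without knowledge of $r$, so comparisons there may fail on infinitely many points of $\delta_{N^*}$. The side-condition requirement that $f_q^\beta(\xi)\ge\delta_{N}$ for $N\in\mathcal{N}_q$ with $\beta\notin N$ and $\xi\ge\delta_N$ is meant to control this. Only finitely many models lie below, so the failures are confined to finitely many heights, each in some interval $[\delta_{M},\delta_{M'})$. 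Those finitely many failures can then be placed into $e(\alpha,\beta)$, but only if the conditions record values at finitely many points rather than on whole initial segments. So I would revise conditions to have finite partial functions plus the model-dominance rule, and check that this rule survives symmetric-system closure. The copying isomorphisms between models of equal height preserve $f$, and this is where the morass-like symmetry is essential.

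Once properness and the $\aleph_2$-c.c. hold, all cardinals are preserved and $\langle f_\alpha:\alpha<\omega_2\rangle$ is the required strong chain.
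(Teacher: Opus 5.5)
There is a genuine gap. The decisive step, amalgamating $q$ with the reflected condition $r\in N^*$, is left open, and the one mechanism you propose for it does not address the real difficulty. The rule ``$f^\alpha(\xi)\ge\delta_N$ whenever $\alpha\notin N$ and $\xi\ge\delta_N$'' is a lower bound unrelated to the ordinal order of the indices, and that order is what $f_\alpha<^*f_\beta$ is about. A coordinate $\beta\notin N$ may lie below some $\alpha\in N$, in which case you need $f_\beta<^*f_\alpha$. The rule also imposes nothing on the coordinates in $N$. So nothing is made to dominate anything, and nothing confines disagreements to finitely many heights. Once you pass to finite partial functions, the finitely many comparisons that $q$ and $r$ have already decided are harmless for trivial reasons, so the rule plays no role at all.

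The actual obstruction is elsewhere. For $q\upharpoonright N^*$ to belong to $N^*$ and still be weaker than $q$ (promises included), the exception set of every pair of coordinates lying in a model at which you want genericity must be contained in that model's height. This is clause (C6)(b) of the paper. Consequently, after amalgamation, consider a new coordinate $\gamma\in a_r\setminus a_q$ and $\alpha<\gamma<\beta$ with $\alpha,\beta\in a_q$, where $\gamma$ shares such a model of height $\le\xi$ with $\alpha$ and another one with $\beta$. Then $f(\alpha,\xi)<f(\gamma,\xi)<f(\beta,\xi)$ is required at every such height $\xi$ at which $q$ has already fixed $f(\alpha,\xi)$ and $f(\beta,\xi)$. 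This can be impossible. First, $\alpha$ and $\beta$ need not share any model, so $q$ may well have $f(\alpha,\xi)\ge f(\beta,\xi)$ there. Second, even if $f(\alpha,\xi)<f(\beta,\xi)$, the two values may be $\eta$ and $\eta+1$. So your claim that ``there is always room in $\omega_1$'' fails at heights where values are already fixed. Leaving $f(\gamma,\xi)$ undefined only moves the contradiction into the density argument for totality of $f_\gamma$.

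What is missing is the following.
\begin{enumerate}
\item[(i)] A coherence requirement in the definition of conditions that anticipates all such intermediate ordinals. In the paper this is the order $<_{\mathcal{A},\nu}$, where $\alpha$ and $\beta$ are connected through models of height $\le\nu$ via \emph{arbitrary} intermediate ordinals, together with clause (C7); see the discussion of ``naive-(C7)''.
\item[(ii)] For functions into $\omega_1$, a requirement guaranteeing enough room between the values of connected coordinates. This has no analogue in the $0/1$ setting of the paper.
\item[(iii)] A precise notion of reflection, as in Definition~\ref{defreflectionS}, whose clauses can be expressed with parameters in $N^*$ and which preserves these connections. Then the amalgamation can be carried out height by height as in Lemma~\ref{amalgamation1S}. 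In the paper, expressibility in $N^*$ rests on Lemma~\ref{key-claim}, namely $M\cap N\cap\omega_3\in N$ whenever $\delta_M<\delta_N$.
\end{enumerate}

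The paper only cites this theorem: Koszmider used morass side conditions, and Veli\v{c}kovi\'{c}--Venturi used Neeman's two-type chains. Its own $\omega_3$ construction nonetheless shows that these are the ingredients the proof hinges on. Your proposal supplies none of them, so properness, and with it the theorem, remains unproved.
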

	
	We can naturally generalize the notions of strong chains of sets and functions by considering sequences of elements of the spaces $[\lambda]^{\lambda}$ and $^\lambda\lambda$, respectively, increasing modulo the ideal $[\lambda]^{\mu}$, for some $\mu<\lambda$. However, Shelah and Inamdar proved some impossibility results about the existence of strong chains in higher spaces. First, Shelah \cite{Shelah2010Onlongincreasingchainsmoduloflatideals} showed that for every infinite cardinal $\lambda$, there cannot be sequences $\langle f_\alpha:\alpha<\lambda^{+++}\rangle$ of functions from $^{\lambda^{++}}\lambda^{++}$ increasing modulo the ideal $[\lambda^{++}]^{<\lambda}$. Later, Inamdar \cite{Inamdar2023Onstrongchainsofsetsandfunctions} improved Shelah's result by showing that there cannot be sequences $\langle X_\alpha:\alpha<\lambda^{+++}\rangle$ of subsets of $\lambda^{++}$ increasing modulo the ideal $[\lambda^{++}]^{<\lambda}$. In particular, there cannot be sequences of subsets of $\omega_2$ of length $\omega_3$ increasing modulo finite.
	
	Despite the limitations imposed by the results of Shelah and Inamdar, the following very general question about the spaces $[\omega_1]^{\omega_1}$ and $^{\omega_1}\omega_1$ remains open.
	
	\begin{question}
		What are the possible lengths of strong chains of functions from $\omega_1$ to $\omega_1$ and subsets of $\omega_1$?
	\end{question}
	
	In the wake of Neeman's introduction of side conditions consisting of finite $\in$-chains of models of two types (\cite{Neeman2014Forcingwithsequencesofmodelsoftwotypes}, \cite{Neeman2017Twoapplicationsoffinitesideconditionsatomega2}), Veli\v{c}kovi\'{c} and Venturi \cite{Velickovic:Venturi2011properforcingremastered} obtained a much simpler proof of Koszmider's result. Their forcing poset incorporates Neeman's side conditions to ensure the preservation of $\omega_1$ and $\omega_2$. However, the preservation of larger cardinals is, in general, not guaranteed by 
	 this type of side conditions.\footnote{Although it is possible to ensure the $\aleph_3$-chain condition, in some cases, by considering large enough transitive-type models (\cite{Neeman2017Twoapplicationsoffinitesideconditionsatomega2}).} Therefore, the problem of finding strong chains in $[\omega_1]^{\omega_1}$ and $^{\omega_1}\omega_1$ of length $\omega_3$ seems to be out of reach of Neeman's method. Therefore, in addition to being a very important question in combinatorial set theory, it has also been regarded as a test question for finding the right notion of side conditions of models of three types.
	
	In this paper, we will show that side conditions of models of two types are enough to force a strong $\omega_3$-chain of subsets of $\omega_1$. In particular, we will define a forcing poset that incorporates symmetric systems of models of two types as side conditions. These side conditions, which were introduced by the second author in \cite{Gallart2026forcingsymmetricsystemsmodels}, will ensure that the forcing is proper and $\aleph_1$-proper, and that it has the $\aleph_3$-Knaster condition, under the right assumptions. The following theorem is the main result of the paper.
	
	\begin{theorem}\label{mainthm}
		If $\mathrm{GCH}$ holds, then there is a cardinal-preserving forcing notion $\mathbb{P}$ forcing the existence of a strong $\omega_3$-chain of subsets of $\omega_1$.
	\end{theorem}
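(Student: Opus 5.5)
We will define $\mathbb{P}$ as a forcing with finite working parts and side conditions given by the symmetric systems of models of two types from \cite{Gallart2026forcingsymmetricsystemsmodels}. Under $\mathrm{GCH}$, fix a sufficiently rich structure on $H(\omega_3)$. A condition $p$ consists of a finite set $a_p\subseteq\omega_3$, a finite $\Delta_p\subseteq\omega_1$, and a finite function $f_p:a_p\times\Delta_p\to 2$; here $f_p(\alpha,\xi)=1$ is a promise that $\xi\in X_\alpha$. It also has a finite promise relation recording, for pairs $\alpha<\beta$ in $a_p$, a finite set $F_p(\alpha,\beta)\subseteq\omega_1$ outside of which $X_\alpha\subseteq X_\beta$ must hold. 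Finally, $p$ carries a side condition $\mathcal{N}_p$: a symmetric system of countable models together with a symmetric system of models of size $\aleph_1$ that are closed under $\omega$-sequences, coherently interlaced as in \cite{Gallart2026forcingsymmetricsystemsmodels}. The models constrain the working part in the style of Koszmider and Veli\v{c}kovi\'{c}--Venturi. If $N\in\mathcal{N}_p$ is countable, $\alpha,\beta\in a_p$, $\alpha\in N$, $\beta\notin N$, and $\xi\geq N\cap\omega_1$, then $f_p(\alpha,\xi)=1$ forces $f_p(\beta,\xi)=1$ unless $\xi\in F_p(\alpha,\beta)$, and the exceptional set must be decided inside $N$. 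There is an analogous but weaker clause for the $\aleph_1$-sized models, which handles pairs of ordinals with the same trace below $\omega_2$. Extension is coordinatewise inclusion that respects the promises.

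\textbf{Step 1 (the generic chain).} For a generic $G$, set $X_\alpha=\{\xi: f_p(\alpha,\xi)=1$ for some $p\in G\}$. Density arguments show three things. First, $\bigcup_{p\in G}a_p=\omega_3$. Second, for $\alpha<\beta$ the set $X_\alpha\setminus X_\beta$ is contained in a finite set fixed by some condition, because promises are never revoked. Third, $X_\beta\setminus X_\alpha$ is unbounded in $\omega_1$. For the third point we extend a condition by a fresh $\xi$ above all the relevant model heights and put $\xi\in X_\beta\setminus X_\alpha$. The model clauses allow this because $\xi$ lies above every $N\cap\omega_1$ in play and the only constraint goes from smaller to larger indices. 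Once cardinals are preserved, this gives a strong $\omega_3$-chain in the sense of Definition~\ref{def-chain-subsets}.

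\textbf{Step 2 (chain condition).} We prove the $\aleph_3$-Knaster condition by a $\Delta$-system argument under $\mathrm{GCH}$. Given $\aleph_3$ many conditions, we thin out so that their side conditions are isomorphic via maps fixing the root, and so that the working parts agree on the root. We then amalgamate two of them by taking the union of the symmetric systems closed under the relevant isomorphisms. Symmetry is designed exactly so that this union, plus its images, is again a symmetric system. The new promises are added by copying across the isomorphism, so the amalgam is a condition.

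\textbf{Step 3 (properness and $\aleph_1$-properness).} We show $\mathbb{P}$ is strongly proper for the countable models and $\aleph_1$-proper, i.e.\ strongly proper, for the $\aleph_1$-sized $\omega$-closed models. Given $p$ and a suitable $N$, the condition $p$ with $N$ added (and closed under the system) is $(N,\mathbb{P})$-generic. The core is an amalgamation lemma. If $q\le p\cup\{N\}$ and $r\in N$ extends $q\restriction N$, then $r$ and $q$ are compatible. The amalgam is built by combining the side conditions, using the amalgamation of symmetric systems of two types from \cite{Gallart2026forcingsymmetricsystemsmodels}. For the working part, the promises between $\alpha\in N$ and $\beta\notin N$ must be respected. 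Since $r$ only adds points below $N\cap\omega_1$ or decisions internal to $N$, we can choose the values of $q$'s ordinals outside $N$ on the new points of $r$ so that every such promise holds (put new points into the larger sets). Transitivity of promises along chains $\alpha<\beta<\gamma$ with mixed memberships is what needs checking. Then $\omega_1$ and $\omega_2$ are preserved, and together with Step 2 all cardinals are preserved, because under $\mathrm{GCH}$ nothing happens above $\omega_3$.

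The \textbf{main obstacle} will be the amalgamation in Step 3 for an $\aleph_1$-sized model $M$. There, $r\in M$ may introduce new countable ordinals $\xi$ above heights of countable models in $q$ that are not in $M$. It may also create ordinals $\alpha\in M$ that need to sit below ordinals $\beta\notin M$ of $q$ without inheriting enough promise structure. I would first handle this by requiring, in the definition of condition, that for $\alpha\in M$ and $\beta\notin M$ with $\alpha<\beta$, the $\beta$-values on $\Delta_p$ dominate those of the $M$-projection of $\beta$ (the copy of $\beta$ under the symmetric-system isomorphisms). This reduces the compatibility check to copying values along these isomorphisms, where symmetry guarantees consistency.
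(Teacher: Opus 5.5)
Steps 1 and 2 match the paper in outline. The gap is in Step 3.

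\textbf{Step 3 fails as stated.} You claim that $p\cup\{N\}$ is \emph{strongly} generic, i.e.\ that every $r\in N$ extending $q\upharpoonright N$ is compatible with $q$. You leave the check of ``transitivity along chains with mixed memberships'' for later, and that check fails. In a symmetric system, $q$ may contain countable models $M_0\in N$ and $M_1\notin N$ with $\delta_{M_1}<\delta_N$ (e.g.\ $M_1$ inside an isomorphic copy of $N$). This is exactly what separates symmetric systems from the $\in$-chains of Neeman and Veli\v{c}kovi\'{c}--Venturi.

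Here is a concrete failure. Arrange $\alpha\in a_q\cap M_0\setminus M_1$, $\beta\in a_q\cap M_1\setminus(M_0\cup N)$, and $\nu\in\Delta_q\cap N$ above $\delta_{M_0},\delta_{M_1}$, with $f_q(\alpha,\nu)=1$ and $f_q(\beta,\nu)=0$. This is allowed, since no model of $q$ of height $\le\nu$ contains both $\alpha$ and $\beta$. An $r\in N$ extending $q\upharpoonright N$ sees $M_0$ but neither $M_1$ nor $\beta$. So it may add a new index $\gamma\in M_0\cap M_1$ with $\alpha<\gamma<\beta$ (e.g.\ $\gamma=\omega_1$ when $\alpha<\omega_1<\beta$).

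Now take the Koszmider/Veli\v{c}kovi\'{c}--Venturi clause: the exceptional set of a pair lying in a common countable model is below that model's height, cf.\ (C6)(b). Applied in $M_0$ and in $M_1$, any common extension must satisfy $1=f(\alpha,\nu)\le f(\gamma,\nu)\le f(\beta,\nu)=0$. Since $\nu$ is an old point, putting new points into the larger sets does not help. Your clause as written, with $\alpha\in N$ and $\beta\notin N$, is not the elementarity-driven one; the right requirement concerns pairs with both indices in the model. The same configuration with a large $X$ in place of $N$ refutes strong $\aleph_1$-properness.

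\textbf{What the paper does instead.} Three devices are missing from your proposal.
\begin{enumerate}
\item Constraints come only from a set $\mathcal{A}_p$ of \emph{active} countable models, which is not closed under $\omega_1$-isomorphisms. So the copies created by Lemma~\ref{lemma-amal} impose nothing new.
\item The naive clause is replaced by (C7), using the order $<_{\mathcal{A}_p,\nu}$, whose linking ordinals need not lie in $a_p$. This already rules out the $q$ above.
\item Even so, the forcing is not strongly proper: $r$ could add an index to the trace $M'\cap M$ of an active $M'\notin M$ of $q$. So the paper proves only properness, via reflections. By elementarity of $M^*$ it finds $q\in D\cap M$ extending $p^*\upharpoonright M$ that mirrors the active models of height $<\delta_M$. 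They keep the same heights, the same traces $M_i\cap M\cap\omega_3$, no new indices in those traces, and the same $\mathcal{A}^\nu$-connections (items (2.b)--(2.f)).
\end{enumerate}

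For the reflection to be expressible in $M^*$, the traces must be elements of $M$. This is Lemma~\ref{key-claim}, resting on Lemma~\ref{agreementiso}. It is precisely where indices below $\omega_3$ matter; the paper notes that the $\omega_4$-analogue fails here. Nothing in your Step 3 uses $\omega_3$.

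You also place the main difficulty in the wrong case. For a countably closed $X$, the traces $X\cap M_i\cap\omega_3$ lie in $X$ automatically. So the $\aleph_1$-sized case is the easy one, though it is still a reflection argument, with $d_q=d_p$. Your proposed ``$M$-projection'' domination clause is not well defined in general. It is also not checked against the density or Knaster arguments.
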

	
	The proof of the above theorem is built on some of the ideas of Koszmider (\cite{Koszmider2000onstrongchainsofuncoutablefunctions}) and Veli\v{c}kovi\'{c}-Venturi (\cite{Velickovic:Venturi2011properforcingremastered}), and we believe that a very mild modification of the forcing witnessing the theorem should allow us to get the same result for strong chains of functions from $\omega_1$ to $\omega_1$.
	
	The paper is organized as follows. In Section \ref{section-prelim}, we will establish the notation and provide all the necessary background in (strongly) proper forcing and elementary submodels. In section \ref{section-pure}, we will introduce symmetric systems of models of two types and give an overview of the main amalgamation lemmas from \cite{Gallart2026forcingsymmetricsystemsmodels}. We will not give a full account of the proofs of these lemmas, but we will give a detailed sketch of the arguments. In Section \ref{section-strong-chains}, we will define the forcing poset witnessing Theorem \ref{mainthm}. We will start by proving some density lemmas and basic properties of the forcing, and then we will prove the cardinal preservation lemmas, which are the most involved results of the section. In particular, we will show that, starting from a model of $\mathrm{GCH}$, the forcing is proper, $\aleph_1$-proper and has the $\aleph_3$-Knaster condition.

	\section{Preliminaries}\label{section-prelim}
	
	Our notation will be standard and follows \cite{Kunen2011Settheory} and \cite{Jech2003Settheory}. We refer the reader to these two sources for any undefined notions. Unless otherwise specified, lowercase Greek letters $\alpha,\beta,\gamma,\delta,\varepsilon,\xi,\eta$ will be used to denote ordinals, while $\kappa,\lambda,\mu,\nu,\theta$ will be used to denote infinite cardinals. We will denote by $\mathrm{OR}$ the class of all ordinals. Let $X$ be any set. If $\mu$ is a cardinal, we will denote by $[X]^\mu$ the set of all subsets of $X$ of size $\mu$. The sets $[X]^{<\mu}$ and $[X]^{\leq\mu}$ are defined in the obvious way. If $f$ is a function and $X\subseteq\dom(f)$, then $f"(X)$ denotes the set $\{f(x):x\in X\}$. 
	
	Recall that for every set $X$, a subset $U\subseteq\mathcal{P}(X)$ is \emph{unbounded in $\mathcal{P}(X)$} (or \emph{unbounded in $X$}) if it is unbounded with respect to inclusion. A subset $C\subseteq\mathcal{P}(X)$ is \emph{club in $\mathcal{P}(X)$} (or \emph{club in $X$}) if it is closed and unbounded with respect to inclusion. A subset $S\subseteq\mathcal{P}(X)$ is \emph{stationary in $\mathcal{P}(X)$} (or \emph{stationary in $X$}) if it has non-empty intersection with all clubs in $\mathcal{P}(X)$. More generally, $C\subseteq\mathcal{P}(X)$ is \emph{club in $\mathcal{P}(X)$} if there exists a function $f:[X]^{<\omega}\to X$ such that for every $x\in \mathcal{P}(X)$, $x\in C$ if and only if $f"([x]^{<\omega})\subseteq x$. Therefore, a subset $S\subseteq\mathcal{P}(X)$ is \emph{stationary in $\mathcal{P}(X)$} if for every function $f:[X]^{<\omega}\to X$, there is $x\in S$ which is closed under $f$, i.e., $f"([x]^{<\omega})\subseteq x$. The two notions of club coincide when replacing $\mathcal{P}(X)$ with $[X]^{\omega}$.
	
	\begin{definition}
		Let $\kappa$ be an uncountable cardinal. A poset $\mathbb{P}$ has the \emph{$\kappa$-Knaster condition} if for every $A\subseteq\mathbb{P}$ of size $\kappa$ there is a subset $B\subseteq A$ of the same size consisting of pairwise compatible conditions.
	\end{definition}
	
	\begin{remark}
		Every poset $\mathbb{P}$ with the $\kappa$-Knaster condition has the $\kappa$-chain condition (denoted $\kappa$-c.c.).
	\end{remark}
	
	\begin{lemma}
		If $\kappa$ is an uncountable cardinal and $\mathbb{P}$ is a forcing notion with the $\kappa$-c.c., then every cardinal $\lambda\geq\kappa$ is preserved after forcing with $\mathbb{P}$.
	\end{lemma}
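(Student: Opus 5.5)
The plan is the standard argument: show that every regular cardinal $\lambda\geq\kappa$ keeps its cofinality in $V^{\mathbb{P}}$, and then handle singular cardinals as limits of regular ones.

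Fix a regular cardinal $\lambda\geq\kappa$ and suppose, toward a contradiction, that some condition $p$ forces $\mathrm{cf}(\lambda)=\mu$ for some $\mu<\lambda$. Then there is a $\mathbb{P}$-name $\dot f$ with
\[
p\Vdash \dot f:\check\mu\to\check\lambda \text{ is cofinal}.
\]
For each $\xi<\mu$, let
\[
A_\xi=\{\beta<\lambda:\exists q\leq p\,(q\Vdash \dot f(\check\xi)=\check\beta)\}.
\]
For each $\beta\in A_\xi$ choose a witness $q_\beta$. Distinct $\beta,\beta'$ give incompatible witnesses, since a common extension would force $\dot f(\xi)$ to take two different values. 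So $\{q_\beta:\beta\in A_\xi\}$ is an antichain, and the $\kappa$-c.c. gives $|A_\xi|<\kappa\leq\lambda$.

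Now put $A=\bigcup_{\xi<\mu}A_\xi$. This is a union of $\mu<\lambda$ sets, each of size $<\lambda$. Because $\lambda$ is regular in $V$, the set $A$ is bounded in $\lambda$, say by some $\gamma<\lambda$. On the other hand $p\Vdash \range(\dot f)\subseteq\check A$, so $p\Vdash \range(\dot f)\subseteq\check\gamma$. This contradicts the assumption that $\dot f$ is forced to be cofinal in $\lambda$. Hence every regular $\lambda\geq\kappa$ remains regular, and in particular remains a cardinal.

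It remains to treat a singular cardinal $\lambda>\kappa$. Such a $\lambda$ is a limit cardinal, so it is the supremum of the regular (successor) cardinals in the interval $(\kappa,\lambda)$. All of these remain cardinals in the extension, and a supremum of cardinals is a cardinal, so $\lambda$ is preserved too.

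I do not expect a real obstacle here. The only point that needs care is that the regularity of $\lambda$ is used in the ground model, where the bound on $|A_\xi|$ is computed. The case $\lambda=\kappa$ is covered by the same argument, since $|A_\xi|<\kappa=\lambda$.
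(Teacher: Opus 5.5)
The paper states this lemma as a standard fact and gives no proof. Your argument is the textbook one and is correct when $\kappa$ is regular. However, the statement only assumes $\kappa$ is an uncountable cardinal, and when $\kappa$ is singular the case $\lambda=\kappa$ is covered by neither of your two cases.

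\begin{itemize}
\item Your regular case needs $\lambda$ regular.
\item Your singular case needs $\lambda>\kappa$, because it writes $\lambda$ as a supremum of regular cardinals in $(\kappa,\lambda)$. There are no cardinals $\geq\kappa$ below $\kappa$ to take a supremum of.
\end{itemize}

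Your closing remark that ``$\lambda=\kappa$ is covered by the same argument'' therefore fails for singular $\kappa$. Such a $\kappa$ already has cofinality $<\kappa$ in $V$, so there is no regularity to preserve. The natural adaptation also gives no contradiction. Take a name for a surjection from some $\theta<\kappa$ onto $\kappa$, and let $A_\xi$ be the sets of possible values, each of size $<\kappa$ by the $\kappa$-c.c. Then $\theta$ many sets of size $<\kappa$ can perfectly well cover a singular $\kappa$ when their sizes are cofinal in $\kappa$. You also cannot fall back on cardinals below $\kappa$, since the $\kappa$-c.c.\ by itself does not protect them. For example, $\mathrm{Col}(\omega,\omega_1)$ is $\aleph_2$-c.c.\ yet collapses $\aleph_1$.

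The missing ingredient is the Erd\H{o}s--Tarski theorem: if $\kappa$ is singular and $\mathbb{P}$ has the $\kappa$-c.c., then $\mathbb{P}$ already has the $\mu$-c.c.\ for some infinite cardinal $\mu<\kappa$. Since a singular cardinal is a limit cardinal, $\mu^+<\kappa$, $\mu^+$ is regular, and $\mathbb{P}$ has the $\mu^+$-c.c. Running your regular-case argument with $\mu^+$ in place of $\kappa$ then shows that every cardinal $\geq\mu^+$ is preserved, and in particular $\kappa$ is. Alternatively, add the hypothesis that $\kappa$ is regular, as Kunen and Jech do. This costs nothing here, since the paper only applies the lemma with $\kappa=\aleph_3$.
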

	
	We review some well-known general facts about proper and strongly proper forcing. 
	

	\begin{definition}
		Let $Q$ be an elementary submodel of a big enough $H(\theta)$, let $\mathbb{P}$ be a forcing poset and let $p$ be a condition in $\mathbb{P}$.
		\begin{enumerate}
			\item $p$ is \emph{$(Q,\mathbb{P})$-generic} if for every dense subset $D\subseteq\mathbb{P}$ such that $D\in Q$, the set $D\cap Q$ is predense below $p$.
			\item $p$ is \emph{strongly $(Q,\mathbb{P})$-generic} if for every dense subset $D\subseteq\mathbb{P}\cap Q$, the set $D$ is predense below $p$.
		\end{enumerate}
	\end{definition}
	
	\begin{remark}
		Note that if a condition $p$ in a poset $\mathbb{P}$ is strongly $(Q,\mathbb{P})$-generic, where $Q$ is an elementary submodel of some $H(\theta)$ such that $\mathbb{P}\in Q$, then $p$ is $(Q,\mathbb{P})$-generic.
	\end{remark}
	
	
	\begin{remark}\label{remark-str-gen}
		Let $\kappa$ be an infinite cardinal and let $\mathbb{P}\subseteq H(\kappa)$ be a forcing notion. Let $p\in\mathbb{P}$ and $Q\preceq H(\kappa)$ such that $p$ is strongly $(Q,\mathbb{P})$-generic. If $\theta>\kappa$ is a large enough cardinal and $Q^*\preceq H(\theta)$ is such that $Q^*\cap H(\kappa)=Q$, then $p$ is also strongly $(Q^*,\mathbb{P})$-generic.  
	\end{remark}
	
	\begin{definition}
		Let $\theta$ be an infinite cardinal, let $\mathcal{K}$ be a collection of elementary submodels of $H(\theta)$, and let $\mathbb{P}$ be a forcing notion.
		\begin{enumerate}
			\item $\mathbb{P}$ is \emph{$\mathcal{K}$-proper} if for every $Q\in\mathcal{K}$ such that $\mathbb{P}\in Q$ and every $p\in\mathbb{P}\cap Q$, there is a $(Q,\mathbb{P})$-generic condition $q\in\mathbb{P}$ extending $p$.
			
			\item $\mathbb{P}$ is \emph{strongly $\mathcal{K}$-proper} if for every $Q\in\mathcal{K}$ such that $\mathbb{P}\in Q$ and every $p\in\mathbb{P}\cap Q$, there is a strongly $(Q,\mathbb{P})$-generic condition $q\in\mathbb{P}$ extending $p$.
		\end{enumerate}
		
		If $\mu$ is an infinite cardinal and $\mathcal{K}$ is the collection of all elementary submodels $Q\preceq H(\theta)$ such that $|Q|=\mu$ and $^{<\mu}Q\subseteq Q$, then we call $\mathbb{P}$ \emph{$\mu$-proper} or \emph{strongly $\mu$-proper}, respectively. Moreover, if $\mu=\aleph_0$, then we call $\mathbb{P}$ \emph{proper} or \emph{strongly proper}, respectively.
	\end{definition}
	
	\begin{lemma}[Claim 3.4 in \cite{Neeman2014Forcingwithsequencesofmodelsoftwotypes}]\label{preservacio-proper}
		Let $\theta$ be an infinite cardinal and let $\mathcal{K}^*$ be a collection of elementary submodels of $H(\theta)$. Suppose that $\mathbb{P}\in H(\theta)$ is a $\mathcal{K}^*$-proper forcing notion. Let $\lambda$ be a cardinal and assume that $$\{Q^*\in\mathcal{K}^*:\alpha\subseteq Q^*,|Q^*|<\lambda\}$$ is stationary in $H(\theta)$ for each $\alpha<\lambda$. Then the forcing $\mathbb{P}$ preserves $\lambda$. 
	\end{lemma}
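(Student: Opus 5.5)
The plan is to argue by contradiction. Suppose $\lambda$ is collapsed. If $\lambda$ is a successor cardinal or, more generally, if it fails to remain a cardinal, there are a condition $p\in\mathbb{P}$, an ordinal $\alpha<\lambda$ and a $\mathbb{P}$-name $\dot f$ such that
\[
p\Vdash \dot f:\check\alpha\to\check\lambda \text{ is onto.}
\]
(If $\alpha$ is finite or $\lambda$ is countable there is nothing to prove, so assume $\lambda$ uncountable and $\alpha$ infinite.) We may also assume $p,\dot f\in H(\theta)$, enlarging $\theta$ if necessary, or simply choosing the name inside $H(\theta)$ since $\mathbb{P}\in H(\theta)$ and nice names for subsets of $\alpha\times\lambda$ can be taken in $H(\theta)$ for $\theta$ large enough relative to $\mathbb{P}$. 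The set of $Q^*\preceq H(\theta)$ containing $\mathbb{P},p,\dot f,\alpha$ is club. By the stationarity hypothesis we can therefore pick $Q^*\in\mathcal{K}^*$ with $\alpha\subseteq Q^*$, $|Q^*|<\lambda$, and $\mathbb{P},p,\dot f\in Q^*$.

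Next, use $\mathcal{K}^*$-properness: since $p\in\mathbb{P}\cap Q^*$, there is a $(Q^*,\mathbb{P})$-generic $q\le p$. The key step is to show that
\[
q\Vdash \dot f(\xi)\in \check{Q^*}\cap\lambda \quad\text{for every } \xi<\alpha.
\]
Fix $\xi<\alpha$. Then $\xi\in Q^*$ because $\alpha\subseteq Q^*$. The set $D_\xi$ of conditions that either decide the value of $\dot f(\xi)$ or are incompatible with $p$ is dense and definable from $\dot f,\xi,p$, so $D_\xi\in Q^*$. By genericity, $D_\xi\cap Q^*$ is predense below $q$. Any $r\in D_\xi\cap Q^*$ compatible with $q$ is compatible with $p$, so it forces $\dot f(\xi)=\beta$ for some $\beta$. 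By elementarity that $\beta$ is definable in $Q^*$ from $r$, hence $\beta\in Q^*$. So $q$ forces $\operatorname{ran}(\dot f)\subseteq Q^*\cap\lambda$.

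Finally, $|Q^*\cap\lambda|\le|Q^*|<\lambda$ in $V$, so $Q^*\cap\lambda\ne\lambda$. Since $q\le p$, we have $q\Vdash \dot f$ is onto $\lambda$, and this contradicts $q\Vdash \operatorname{ran}\dot f\subseteq \check{(Q^*\cap\lambda)}\subsetneq\lambda$. The only point needing care is the bookkeeping that $D_\xi$ and the decided values lie in $Q^*$. This follows directly from elementarity together with $\alpha\subseteq Q^*$, which is precisely why the hypothesis requires $\alpha\subseteq Q^*$ rather than just stationarily many small models. Nothing here seems to be a real obstacle; the argument is the standard proof that proper forcing preserves $\omega_1$, relativized to $\lambda$.
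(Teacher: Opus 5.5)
Your argument is correct and is the standard proof of Neeman's Claim 3.4, which the paper cites without reproving: you pick $Q^*\in\mathcal{K}^*$ with $\alpha\subseteq Q^*$, $|Q^*|<\lambda$ and $p,\dot f\in Q^*$, take a $(Q^*,\mathbb{P})$-generic $q\le p$, and observe that $q$ forces the range of $\dot f$ into $Q^*\cap\lambda$. One bookkeeping point: you may not enlarge $\theta$, since $\mathcal{K}^*$ and the stationarity hypothesis are tied to it, but you do not need to, because the hypothesis applied to $\alpha=\theta+1$ already gives $\lambda\le\theta$; for $\lambda<\theta$ a nice name for $\dot f$ lies in $H(\theta)$, and the degenerate case $\lambda=\theta$ follows from $|\mathbb{P}|<\theta$ and the resulting chain condition.
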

	
	\begin{corollary}[Claim 3.5 in \cite{Neeman2014Forcingwithsequencesofmodelsoftwotypes}]\label{preservacio-proper2}
		Let $\kappa$ be an infinite cardinal and let $\mathcal{K}$ be a collection of elementary submodels of $H(\kappa)$. Suppose that $\mathbb{P}\subseteq H(\kappa)$ is a strongly $\mathcal{K}$-proper forcing notion. Let $\lambda$ be a cardinal and assume that $$\{Q\in\mathcal{K}:\alpha\subseteq Q,|Q|<\lambda\}$$ is stationary in $H(\kappa)$ for each $\alpha<\lambda$. Then the forcing $\mathbb{P}$ preserves $\lambda$. 
	\end{corollary}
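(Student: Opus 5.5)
The plan is to reduce Corollary \ref{preservacio-proper2} to Lemma \ref{preservacio-proper} by lifting models of $H(\kappa)$ to models of a larger $H(\theta)$, using Remark \ref{remark-str-gen}.

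First fix a regular cardinal $\theta>\kappa$ large enough that $\mathbb{P}\in H(\theta)$ and $H(\kappa)\in H(\theta)$. Let
$$\mathcal{K}^*=\{Q^*\preceq H(\theta):Q^*\cap H(\kappa)\in\mathcal{K}\}.$$
If $Q^*\in\mathcal{K}^*$ with $\mathbb{P}\in Q^*$ and $p\in\mathbb{P}\cap Q^*$, put $Q=Q^*\cap H(\kappa)$. Since $\mathbb{P}\subseteq H(\kappa)$ and $H(\kappa)\in Q^*$, we have $Q\preceq H(\kappa)$. Also $p\in Q$, because $p\in\mathbb{P}\subseteq H(\kappa)$. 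The hypothesis that $\mathbb{P}$ is strongly $\mathcal{K}$-proper should be read with $\mathbb{P}\in Q$ relaxed to $\mathbb{P}\subseteq H(\kappa)$, since $\mathbb{P}$ may not be an element of $H(\kappa)$; strong genericity only refers to $\mathbb{P}\cap Q$, so this is harmless. It gives a strongly $(Q,\mathbb{P})$-generic $q\le p$. By Remark \ref{remark-str-gen}, $q$ is strongly $(Q^*,\mathbb{P})$-generic. Since $\mathbb{P}\in Q^*$, the remark following the definition of genericity makes $q$ a $(Q^*,\mathbb{P})$-generic condition. Hence $\mathbb{P}$ is $\mathcal{K}^*$-proper.

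Next I verify the stationarity hypothesis of Lemma \ref{preservacio-proper}. Fix $\alpha<\lambda$ and a function $F:[H(\theta)]^{<\omega}\to H(\theta)$; I need some $Q^*\in\mathcal{K}^*$ closed under $F$ with $\alpha\subseteq Q^*$ and $|Q^*|<\lambda$. This projection/lifting step is the main obstacle. The standard tool is the fact that if $S$ is stationary in $\mathcal{P}(H(\kappa))$, then $\{Q^*\preceq H(\theta):Q^*\cap H(\kappa)\in S\}$ is stationary in $\mathcal{P}(H(\theta))$, together with the requirement $|Q^*|=|Q^*\cap H(\kappa)|$.

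To prove it, expand the structure on $H(\theta)$ by $F$ and Skolem functions, with $H(\kappa)$ and $\alpha$ named. Define $f:[H(\kappa)]^{<\omega}\to H(\kappa)$ that codes, for each finite tuple, the traces in $H(\kappa)$ of the Skolem hull: for every Skolem term and every tuple from $H(\kappa)$, the value is put into $H(\kappa)$ whenever it lies there. This can be arranged with countably many coordinates by the usual pairing trick. Pick $Q\in S=\{Q\in\mathcal{K}:\alpha\subseteq Q,|Q|<\lambda\}$ closed under $f$, and let $Q^*$ be the Skolem hull of $Q$. Then $Q^*\cap H(\kappa)=Q$, and $|Q^*|=|Q|+\aleph_0$.

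One subtlety remains. If $Q$ is finite or $|Q|+\aleph_0=\lambda$, the size bound could fail. Since $\alpha\subseteq Q$, any relevant $Q$ is infinite whenever $\alpha\ge\omega$; when $\lambda\le\omega$ preservation is trivial. So $|Q^*|=|Q|<\lambda$.

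This gives $Q^*\in\mathcal{K}^*$ closed under $F$ with $\alpha\subseteq Q^*$ and $|Q^*|<\lambda$, so the required family is stationary. Lemma \ref{preservacio-proper} then yields that $\mathbb{P}$ preserves $\lambda$.
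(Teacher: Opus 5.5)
Your proof is correct and is essentially the intended argument: the paper gives no proof of its own, citing Neeman's Claim 3.5 and placing Remark \ref{remark-str-gen} just before it, and your route is that reduction — lift $\mathcal{K}$ to $\mathcal{K}^*=\{Q^*\preceq H(\theta):Q^*\cap H(\kappa)\in\mathcal{K}\}$, get $\mathcal{K}^*$-properness from strong genericity, lift stationarity via Skolem hulls, and apply Lemma \ref{preservacio-proper}. Your relaxed reading of ``$\mathbb{P}\in Q$'' is the only sensible one. The claim $H(\kappa)\in Q^*$ is neither justified nor needed, since $Q\preceq H(\kappa)$ already follows from $Q\in\mathcal{K}$; this also gives $\omega\subseteq Q$, which settles your size subtlety and makes the pairing trick for $f$ work.
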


	\subsection{Elementary submodels}
	
	Given a model $Q$, we will denote $Q\cap\omega_1$ by $\delta_Q$ and $\sup(Q\cap\omega_2)$ by $\varepsilon_Q$, and we will call $\delta_Q$ the \textit{$\omega_1$-height of $Q$} and $\varepsilon_Q$ the \textit{$\omega_2$-height of $Q$}. 
	
	Given two $\in$-isomorphic models of the Axiom of Extensionality $Q_0$ and $Q_1$, we write $\Psi_{Q_0,Q_1}$ to denote the unique isomorphism $\Psi$ between the structures $(Q_0;\in)$ and $(Q_1;\in)$.
	
	For the rest of the paper we will fix a cardinal $\kappa>\omega_2$ and a predicate $T\subseteq H(\kappa)$. We will usually refer to the structure $(H(\kappa);\in,T)$ simply by $H(\kappa)$. Let $\mathcal{S}$ be the collection of countable $M\preceq(H(\kappa); \in, T)$. We will tend to use capital letters $M,N$ to refer to models in $\mathcal{S}$, which we will call \emph{countable elementary} or \textit{small models}. It is a well-known result that $\mathcal{S}$ contains a club subset of $[H(\kappa)]^\omega$.
	
	\begin{definition}
		We will call a collection $\mathcal{L}$ of $\aleph_1$-sized elementary submodels $X\preceq (H(\kappa); \in, T)$ \emph{appropriate for $\mathcal{S}$}, if for every $X\in\mathcal{L}$ and every $M\in\mathcal{S}$ such that $X\in M$, then $X\cap M\in X\cap\mathcal{S}$.
	\end{definition}
	
	We will use $\mathcal{L}$ to denote arbitrary collections of models of size $\aleph_1$ appropriate for $\mathcal{S}$, and the capital letters $X,Y,Z$ to refer to the models in $\mathcal{L}$, which we will call \emph{uncountable} or \textit{large models}. 
	
	
	
	
	\begin{fact}
		Assuming $\mathrm{CH}$, the collection of all elementary submodels $X\preceq H(\kappa)$ such that $|X|=\aleph_1$ and $^\omega X\subseteq X$ is appropriate for $\mathcal{S}$ and stationary in $H(\kappa)$. 
	\end{fact}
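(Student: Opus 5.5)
Let $\mathcal{L}_0$ denote the collection of all $X\preceq(H(\kappa);\in,T)$ with $|X|=\aleph_1$ and $^\omega X\subseteq X$. The Fact makes two claims, appropriateness for $\mathcal{S}$ and stationarity in $H(\kappa)$, and I will prove them separately.

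\emph{Appropriateness.} Fix $X\in\mathcal{L}_0$ and $M\in\mathcal{S}$ with $X\in M$. Since $M$ is countable, $X\cap M$ is a countable subset of $X$. Because $^\omega X\subseteq X$, every countable subset of $X$ belongs to $X$ (enumerate it as an $\omega$-sequence $s\in X$; then $\range(s)\in X$ by elementarity). Hence $X\cap M\in X$.

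It remains to see that $X\cap M\in\mathcal{S}$, i.e.\ $X\cap M\preceq(H(\kappa);\in,T)$. I would use the Tarski--Vaught test. Suppose $H(\kappa)\models\exists y\,\varphi(y,\bar a)$ with $\bar a\in X\cap M$. Since $X\preceq H(\kappa)$, the set $X$ witnesses $\exists y\,(y\in X\wedge\varphi(y,\bar a))$. This formula has parameters $X,\bar a\in M$, and $M\preceq H(\kappa)$, so there is $y\in M$ with $y\in X$ and $\varphi(y,\bar a)$. Thus $y\in X\cap M$. The predicate $T$ is absorbed into $\varphi$, so the argument goes through in the expanded language. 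Therefore $X\cap M\in X\cap\mathcal{S}$.

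\emph{Stationarity.} Given $f:[H(\kappa)]^{<\omega}\to H(\kappa)$, I build a continuous $\in$-chain $\langle X_\alpha:\alpha<\omega_1\rangle$ of elementary submodels of $(H(\kappa);\in,T,f)$, each of size $\aleph_1$, as follows:
\begin{enumerate}
\item Start with $\omega_1\subseteq X_0$.
\item At successor steps, let $X_{\alpha+1}$ be an elementary submodel of size $\aleph_1$ containing $X_\alpha\cup{}^\omega X_\alpha$.
\item At limit steps, take unions.
\end{enumerate}
Step 2 is where CH is used: $|{}^\omega X_\alpha|=\aleph_1^{\aleph_0}=\aleph_1$, and downward L\"owenheim--Skolem gives the required model. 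The map $\alpha\mapsto X_\alpha$ can be taken in $H(\kappa)$, or one can simply work in $V$; nothing more is needed.

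Let $X=\bigcup_{\alpha<\omega_1}X_\alpha$. Then $|X|=\aleph_1$, $X$ is closed under $f$, and $X$ is elementary in $H(\kappa)$. For closure under countable sequences, take $s\in{}^\omega X$. Since $\mathrm{cf}(\omega_1)>\omega$, the range of $s$ lies in some $X_\alpha$, so $s\in X_{\alpha+1}\subseteq X$. Thus $X\in\mathcal{L}_0$ and $X$ is closed under $f$, which shows $\mathcal{L}_0$ is stationary.

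The only delicate point is the cardinal computation in the successor step, which is exactly where CH enters. Without CH the closure $^\omega X\subseteq X$ would force $|X|\ge2^{\aleph_0}$, so models of size $\aleph_1$ with this closure could not exist.
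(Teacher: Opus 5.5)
The paper states this Fact without proof, and your argument is the standard one it implicitly relies on: ${}^\omega X\subseteq X$ plus Tarski--Vaught for appropriateness, and a continuous $\omega_1$-chain closed under $\omega$-sequences at successor stages for stationarity, with CH giving $|{}^\omega X_\alpha|=\aleph_1$. It is correct. The only unstated hypothesis is $\mathrm{cf}(\kappa)>\omega$, which holds for the paper's $\kappa=\omega_3$. You need it so that ${}^\omega X_\alpha\subseteq H(\kappa)$ in your successor step. The Fact genuinely requires it: for $\kappa=\aleph_\omega$, the sequence $\langle\aleph_n:n<\omega\rangle$ lies in ${}^\omega X$ but not in $H(\kappa)$ for every $X\preceq H(\kappa)$.
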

	
	
	To refer to models of arbitrary size, we will use $Q$, as well as other capital letters further down the alphabet. If $Q$ is an elementary submodel of $H(\kappa)$, we will usually refer to the structure $(Q;\in,T\cap Q)$ by $(Q;\in,T)$. Moreover, we might indistinctly use $Q$ to refer to the structure $(Q;\in,T)$ or its universe. It will be clear from the context to which one we are referring to.
	
	The following are some basic facts about elementary submodels, which can be found in any standard reference for set-theoretic notions such as \cite{Kunen2011Settheory} and \cite{Jech2003Settheory}. We include them without proof, and will use them throughout the paper, sometimes without mention. Additional information on elementary submodels and their applications in set theory can be found in \cite{Dow1998Anintroductiontoapplicationsofelementarysubmodels} and \cite{JustWeese1997DiscoveringmodernsetthoryII}.
	
	\begin{theorem}[Tarski-Vaught test]
		Let $M$ be a model and let $A\subseteq M$. Then, $A$ is the domain of an elementary submodel $N\preceq M$ iff for every formula $\varphi(y,\overline{x})$ and every tuple $\overline{a}$ of $A$ such that $M\models\exists y\varphi(y,\overline{a})$, there is $b\in A$ such that $M\models\varphi(b,\overline{a})$.
	\end{theorem}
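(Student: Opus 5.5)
The statement is the Tarski--Vaught test, and I will prove the two directions separately, with the backward direction carried by an induction on formulas.

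For the forward direction, assume $A$ is the domain of some $N\preceq M$. Let $\varphi(y,\overline{x})$ be a formula and $\overline{a}$ a tuple from $A$ with $M\models\exists y\,\varphi(y,\overline{a})$. By elementarity, $N\models\exists y\,\varphi(y,\overline{a})$. Hence there is $b\in N=A$ with $N\models\varphi(b,\overline{a})$. Applying elementarity again gives $M\models\varphi(b,\overline{a})$, as required.

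For the backward direction, I first check that $A$ is the universe of a substructure $N$ of $M$.
\begin{itemize}
\item $A$ is nonempty: apply the hypothesis to the formula $y=y$.
\item $A$ is closed under each function symbol $f$: apply the hypothesis to $y=f(\overline{x})$, so that $f^M(\overline{a})\in A$ whenever $\overline{a}$ is from $A$.
\item Constant symbols are handled as $0$-ary functions.
\end{itemize}
In the present setting the language is $\{\in,T\}$, which is purely relational, so only nonemptiness matters. I then let $N$ be $A$ with the relations of $M$ restricted to $A$.

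Next I show by induction on the complexity of $\psi(\overline{x})$ that for all tuples $\overline{a}$ from $A$,
\[
N\models\psi(\overline{a}) \quad\text{if and only if}\quad M\models\psi(\overline{a}).
\]
\begin{itemize}
\item Atomic formulas: the equivalence holds because $N$ is a substructure, so terms and relations are evaluated identically in $N$ and $M$.
\item Negation and conjunction: these cases follow immediately from the induction hypothesis.
\item $\psi=\exists y\,\varphi(y,\overline{x})$, left to right: if $N\models\psi(\overline{a})$, pick a witness $b\in A$. The induction hypothesis gives $M\models\varphi(b,\overline{a})$, hence $M\models\psi(\overline{a})$.
\item $\psi=\exists y\,\varphi(y,\overline{x})$, right to left: if $M\models\exists y\,\varphi(y,\overline{a})$, the Tarski--Vaught hypothesis provides $b\in A$ with $M\models\varphi(b,\overline{a})$. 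The induction hypothesis gives $N\models\varphi(b,\overline{a})$, so $N\models\psi(\overline{a})$.
\end{itemize}

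The only substantive point is this right-to-left existential step. It is exactly where the hypothesis is used, since a witness found in $M$ must be moved into $A$ before the induction hypothesis can be applied. Every other step is routine bookkeeping.
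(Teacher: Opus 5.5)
Your proof is correct. The paper states the Tarski--Vaught test without proof and defers to standard references such as Kunen and Jech; your argument is the standard textbook proof found there: the forward direction by two applications of elementarity, and the backward direction by checking that $A$ carries a substructure and then inducting on formulas, with the hypothesis used exactly in the right-to-left existential step.
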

	
	\begin{proposition}
		If $Q_0,Q_1\preceq H(\kappa)$ are such that $Q_0\subseteq Q_1$, then $Q_0\preceq Q_1$.
	\end{proposition}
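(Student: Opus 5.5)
The statement is that if $Q_0,Q_1\preceq H(\kappa)$ and $Q_0\subseteq Q_1$, then $Q_0\preceq Q_1$. I will verify this directly from the definition of elementarity, using $H(\kappa)$ as a common reference structure; no appeal to the Tarski--Vaught test is needed. Throughout, the structures are taken in the language with $\in$ and the predicate $T$, so that formulas may mention $T$. Since $Q_0\subseteq Q_1$ and both interpret $\in$ and $T$ as restrictions of the interpretations in $H(\kappa)$, the structure $Q_0$ is a substructure of $Q_1$. It therefore remains to check that formulas with parameters from $Q_0$ have the same truth value in $Q_0$ and in $Q_1$.

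Fix a formula $\varphi(\overline{x})$ and a tuple $\overline{a}$ of elements of $Q_0$. Since $Q_0\preceq H(\kappa)$,
\[
Q_0\models\varphi(\overline{a}) \iff H(\kappa)\models\varphi(\overline{a}).
\]
Because $Q_0\subseteq Q_1$, the tuple $\overline{a}$ also lies in $Q_1$. Since $Q_1\preceq H(\kappa)$,
\[
Q_1\models\varphi(\overline{a}) \iff H(\kappa)\models\varphi(\overline{a}).
\]
Chaining these two equivalences gives $Q_0\models\varphi(\overline{a})\iff Q_1\models\varphi(\overline{a})$, which is exactly $Q_0\preceq Q_1$.

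There is no substantive obstacle. The one point to watch is that all three structures carry the same signature, with $T$ interpreted in $Q_i$ as $T\cap Q_i$, which is the convention the paper has fixed. That convention is what makes $Q_0$ a genuine substructure of $Q_1$ and lets the two equivalences be combined.
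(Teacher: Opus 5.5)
Your proof is correct: chaining $Q_0\models\varphi(\overline{a})\iff H(\kappa)\models\varphi(\overline{a})\iff Q_1\models\varphi(\overline{a})$ for tuples $\overline{a}$ from $Q_0$ is the standard argument. The paper lists this proposition among basic facts it states without proof and cites from standard references, so this is the argument it implicitly relies on. Your remark that $T$ is interpreted as $T\cap Q_i$ matches the paper's convention.
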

	
	\begin{proposition}
		Let $Q\preceq H(\kappa)$, and let $\mu<\kappa$ be a cardinal such that $\mu\subseteq Q$. For every $A\in Q$, if $|A|=\mu$,
		 then $A\subseteq Q$.
	\end{proposition}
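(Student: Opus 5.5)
The plan is to show $A\subseteq Q$ by using elementarity to move a bijection witnessing $|A|=\mu$ into $Q$, and then evaluating that bijection at elements of $\mu$, all of which already lie in $Q$.

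First, since $A\in H(\kappa)$ and $|A|=\mu$, there is a bijection $f\colon\mu\to A$. Such an $f$ belongs to $H(\kappa)$: it is a set of ordered pairs of elements of $\mu\cup A$, and its transitive closure has size $<\kappa$ because $\mu<\kappa$ and $A\in H(\kappa)$. Hence
\[
H(\kappa)\models \exists f\,(f \text{ is a bijection from } \mu \text{ onto } A).
\]
Next, $\mu\in Q$. This holds because $\mu$ is definable in $H(\kappa)$ from $A$ as the cardinality of $A$, and $Q$ is closed under definable elements by the Tarski--Vaught test. Since $A\in Q$ and $\mu\in Q$, elementarity of $Q\preceq H(\kappa)$ gives some $f\in Q$ that is a bijection from $\mu$ onto $A$ in the sense of $H(\kappa)$. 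Being a bijection with given domain and range is $\Delta_0$, so $f$ really is such a bijection.

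Now take any $a\in A$. There is $\xi<\mu$ with $f(\xi)=a$. Since $\mu\subseteq Q$, we have $\xi\in Q$. The value $f(\xi)$ is the unique $y$ with $(\xi,y)\in f$; it is definable from the parameters $f,\xi\in Q$, so by elementarity $f(\xi)\in Q$. Thus $a\in Q$, and therefore $A\subseteq Q$.

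The only point needing care is that the witnesses lie in $H(\kappa)$, namely that $f\in H(\kappa)$ and that the statements used are absolute between $H(\kappa)$ and $V$. This follows from $\mu<\kappa$ and $A\in H(\kappa)$. Everything else is routine.
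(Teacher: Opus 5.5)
Your proof is correct and is the standard argument: use definability to get $\mu\in Q$, use elementarity to pull a bijection $f\colon\mu\to A$ into $Q$, and then use $\mu\subseteq Q$ to conclude $f(\xi)\in Q$ for every $\xi<\mu$. The paper gives no proof of this proposition; it lists it among basic facts on elementary submodels, included without proof and referred to standard texts.
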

	
	\begin{proposition}
		Let $Q\preceq H(\kappa)$. If $A$ is definable over $H(\kappa)$ with parameters in $Q$, then $A\in Q$.
	\end{proposition}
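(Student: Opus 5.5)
The statement is the standard fact that if $A\in Q$ is definable over $H(\kappa)$ from parameters in $Q$, then $A\in Q$. The plan is to apply the Tarski--Vaught test, in its elementarity form, to the formula defining $A$, and then use uniqueness of the defined object to identify the witness found in $Q$ with $A$.

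First I fix a formula $\varphi(y,\overline{x})$ in the language of the structure $(H(\kappa);\in,T)$ and a tuple $\overline{a}$ of elements of $Q$ such that $A$ is the unique $b\in H(\kappa)$ with $H(\kappa)\models\varphi(b,\overline{a})$. Here I implicitly take $A\in H(\kappa)$, since "definable over $H(\kappa)$" means $A$ is an element of the structure singled out by a formula. Then
\[
H(\kappa)\models\exists y\,\varphi(y,\overline{a})
\quad\text{and}\quad
H(\kappa)\models\forall y\forall z\,\bigl(\varphi(y,\overline{a})\wedge\varphi(z,\overline{a})\to y=z\bigr).
\]

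Since $Q\preceq H(\kappa)$ and $\overline{a}\in Q$, the existential statement holds in $Q$. So there is some $b\in Q$ with $Q\models\varphi(b,\overline{a})$, and by elementarity again $H(\kappa)\models\varphi(b,\overline{a})$. Equivalently, by the Tarski--Vaught test there is directly a witness $b\in Q$. Uniqueness of the object satisfying $\varphi(\cdot,\overline{a})$ in $H(\kappa)$ then gives $b=A$, so $A\in Q$.

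There is no genuine obstacle. The only point needing care is that the definition must produce $A$ as an element of $H(\kappa)$ rather than as a proper class of $H(\kappa)$, and that the uniqueness clause is evaluated in $H(\kappa)$. Both are built into the phrase "definable over $H(\kappa)$". The predicate $T$ causes no difficulty because elementarity is taken with respect to the expanded structure $(H(\kappa);\in,T)$.
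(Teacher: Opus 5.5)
Your argument is correct and is exactly the standard proof the paper has in mind; the paper states this fact without proof and defers to Kunen and Jech. Elementarity of $Q$ gives a witness $b\in Q$ for $\exists y\,\varphi(y,\overline{a})$, and uniqueness of the defined object in $H(\kappa)$ forces $b=A$. The only blemish is a slip in your opening restatement, where ``if $A\in Q$ is definable\dots'' should read ``if $A$ is definable\dots''; it does not affect the proof itself.
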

	
	\begin{proposition}
		Let $\theta>\kappa$, $Q^*\preceq H(\theta)$ and $\kappa\in Q^*$. Then $Q^*\cap H(\kappa)$ is an elementary submodel of $H(\kappa)$.
	\end{proposition}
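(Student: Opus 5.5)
Write $\mathfrak{M}=(H(\kappa);\in,T)$. I will show that $Q^*\cap H(\kappa)$ is closed under witnesses for existential formulas evaluated in $\mathfrak{M}$, and then apply the Tarski--Vaught test, taking $M=\mathfrak{M}$ and $A=Q^*\cap H(\kappa)$. The idea is to relativize every formula about $\mathfrak{M}$ to a formula about $H(\theta)$ with the extra parameter $H(\kappa)$ (and $T$, if the predicate is to be respected). I will then use the elementarity of $Q^*$ in $H(\theta)$ to pull witnesses back into $Q^*$.

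\textbf{Step 1: the relevant parameters lie in $Q^*$.} Since $\kappa\in Q^*$ and $H(\kappa)$ is definable in $H(\theta)$ from $\kappa$ (as the set of sets whose transitive closure has size $<\kappa$), we get $H(\kappa)\in Q^*$. Here I use that $\theta>\kappa$ is large enough that $H(\kappa)\in H(\theta)$ and the definition is absolute. If the predicate $T$ is part of the structure, I will also assume $T\in Q^*$, as is implicit in how the paper uses this fact; otherwise I simply read the statement as being about $(H(\kappa);\in)$.

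\textbf{Step 2: relativization.} For a formula $\varphi(y,\overline{x})$ in the language $\{\in,T\}$, let $\varphi^{H(\kappa),T}$ be the $\in$-formula obtained by bounding every quantifier by the parameter $H(\kappa)$ and replacing each atomic formula $T(z)$ by $z\in T$. A straightforward induction on formulas gives, for every tuple $\overline{b}$ in $H(\kappa)$,
\[
\mathfrak{M}\models\varphi(\overline{b})\iff H(\theta)\models\varphi^{H(\kappa),T}(\overline{b}).
\]

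\textbf{Step 3: Tarski--Vaught.} Let $\overline{a}$ be a tuple from $Q^*\cap H(\kappa)$ with $\mathfrak{M}\models\exists y\,\varphi(y,\overline{a})$. By Step 2,
\[
H(\theta)\models\exists y\,\bigl(y\in H(\kappa)\wedge\varphi^{H(\kappa),T}(y,\overline{a})\bigr).
\]
All parameters of this formula, namely $\overline{a}$, $H(\kappa)$ and $T$, lie in $Q^*$. Since $Q^*\preceq H(\theta)$, there is $b\in Q^*$ with $b\in H(\kappa)$ and $H(\theta)\models\varphi^{H(\kappa),T}(b,\overline{a})$. By Step 2 again, $\mathfrak{M}\models\varphi(b,\overline{a})$. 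Moreover $b\in Q^*\cap H(\kappa)$, so the Tarski--Vaught test applies and $Q^*\cap H(\kappa)\preceq\mathfrak{M}$.

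The one point needing care is Step 1: we need $H(\kappa)\in Q^*$, which comes from definability from $\kappa$, and we need the predicate $T\in Q^*$. The remaining steps are routine bookkeeping.
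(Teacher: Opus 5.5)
The paper lists this among standard facts and gives no proof, so there is nothing to compare against. Your relativization-plus-Tarski--Vaught argument is the standard proof, and it is correct.

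Both caveats you flag are appropriate:
\begin{itemize}
\item The assumption $T\in Q^*$ is genuinely necessary for the version with the predicate. If $T=\{a\}$ with $a\notin Q^*$, then $\exists y\,T(y)$ holds in $(H(\kappa);\in,T)$ but has no witness in $Q^*\cap H(\kappa)$.
\item The assumption $H(\kappa)\in H(\theta)$ holds under the paper's standing $\mathrm{GCH}$ with $\kappa=\omega_3$. For the pure $\in$-version you can drop it altogether: relativize to the class $\{x:|\mathrm{TC}(x)|<\kappa\}$, which is definable in $H(\theta)$ from the parameter $\kappa$ for every $\theta>\kappa$.
\end{itemize}
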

	
	\begin{proposition}
		Let $Q$ be an elementary submodel of $H(\kappa)$ such that $|Q|=\mu<\mu^+<\kappa$. Then $Q\cap\mu^+\in\mu^+$ is a limit ordinal.
	\end{proposition}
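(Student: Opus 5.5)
Since $Q\preceq H(\kappa)$ and $\mu^+<\kappa$, we have $\mu^+\in H(\kappa)$ and $\mu^+$ is definable in $H(\kappa)$ from $\mu$. I would first make sure $\mu^+\in Q$; the natural reading is $\mu\in Q$, which holds in all intended uses (e.g.\ $\mu=\omega$, $\omega_1$). Then $\mu^+\in Q$ by the earlier proposition on definable sets. The plan has two steps: show that $Q\cap\mu^+$ is an initial segment of $\mu^+$, hence an ordinal, and then show it is a limit ordinal below $\mu^+$.

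For the initial segment, take $\alpha\in Q\cap\mu^+$. Then $|\alpha|\le\mu$, so $H(\kappa)$ contains a surjection $f:\mu\to\alpha$ (when $\alpha\neq0$). By elementarity, with parameters $\alpha,\mu\in Q$, some such $f$ lies in $Q$. Now let $\beta<\alpha$ and pick $\xi<\mu$ with $f(\xi)=\beta$. If $\xi\in Q$, then $\beta=f(\xi)\in Q$, since $Q$ is closed under definable functions of its elements.

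So the key step is to have $\mu\subseteq Q$, and this is the main obstacle: the statement only assumes $|Q|=\mu$. I would handle it as the standing convention does: for $\mu=\omega$, $\omega\subseteq Q$ automatically, since each natural number is definable. For uncountable $\mu$ (here $\mu=\omega_1$ with large models), I would rely on $\mu\subseteq Q$, which holds for the models in question, e.g.\ by internal approachability or because $X\cap\omega_1$ is an ordinal of size $\aleph_1$. This hypothesis should be made explicit. With $\mu\subseteq Q$, the earlier proposition that $|A|=\mu$ and $A\in Q$ imply $A\subseteq Q$ gives $\alpha\subseteq Q$ directly, which is cleaner than the surjection argument. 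Hence $Q\cap\mu^+$ is transitive, so it is an ordinal $\le\mu^+$.

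Next, $|Q\cap\mu^+|\le|Q|=\mu<\mu^+$, so $Q\cap\mu^+\neq\mu^+$, giving $Q\cap\mu^+\in\mu^+$. For limitness, let $\alpha\in Q\cap\mu^+$. Then $\alpha+1$ is definable from $\alpha$, so $\alpha+1\in Q$, and $\alpha+1<\mu^+$ since $\mu^+$ is a limit ordinal. Thus $Q\cap\mu^+$ has no maximum. It is also nonempty, since $0\in Q$, so it is a nonzero ordinal with no largest element, i.e.\ a limit ordinal.
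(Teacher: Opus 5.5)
The paper lists this among standard facts stated without proof, so there is no authors' argument to compare with. Your proof is the standard one and is correct once $\mu\subseteq Q$ is assumed. You are right that this hypothesis is genuinely missing from the statement, not just a convenience. Under Chang's Conjecture there are $Q\preceq H(\kappa)$ with $|Q|=\aleph_1$ and $Q\cap\omega_1$ countable. For such $Q$, the set $Q\cap\omega_2$ contains $\omega_1$ but not all of its elements, so it is not an ordinal. The neighbouring propositions in the paper state $\mu\subseteq Q$ explicitly, and the hypothesis holds in all of the paper's applications.

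Two small corrections.

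First, you need neither $\mu\in Q$ nor $\mu^+\in Q$. For $0<\alpha\in Q\cap\mu^+$, the cardinal $|\alpha|$ is definable from $\alpha$, so $|\alpha|\in Q$, and some bijection $f:|\alpha|\to\alpha$ lies in $Q$. Since $|\alpha|\le\mu\subseteq Q$, every $f(\xi)$ is in $Q$, so $\alpha\subseteq Q$. This also avoids invoking the proposition about sets of size exactly $\mu$ when $|\alpha|<\mu$.

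Second, your justification that $\omega_1\subseteq X$ for large models does not work as phrased. The claim that $X\cap\omega_1$ has size $\aleph_1$ is equivalent to $\omega_1\subseteq X$, which is exactly what must be shown. Also, the large models of Section 4 are assumed countably closed, not internally approachable. The right argument runs as follows:
\begin{itemize}
\item Your initial-segment step with $\omega$ in place of $\mu$ uses only $\omega\subseteq X$, which always holds. So $X\cap\omega_1$ is an ordinal $\delta$.
\item Suppose $\delta<\omega_1$. Then an enumeration $g:\omega\to\delta$ lies in $X$ by ${}^\omega X\subseteq X$.
\item Hence $\delta$, being the range of $g$, belongs to $X\cap\omega_1=\delta$, which is a contradiction.
\end{itemize}
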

	

\begin{proposition}
		Let $Q_0,Q_1\preceq H(\kappa)$ such that $|Q_0|=|Q_1|=\mu<\mu^+<\kappa$ and $\mu\subseteq Q_0\cap Q_1$ and let $\Psi$ be an isomorphism between $(Q_0;\in,T)$ and $(Q_1;\in,T)$. Suppose $T$ codes a sequence $(e_\alpha\,:\,\alpha\in \mu^+\setminus\{0\})$ such that each $e_\alpha$ is a surjection from $\mu$ onto $\alpha$. Then $\Psi$ is the identity on $Q_0\cap\mu^+$. In particular, $Q_0\cap\mu^+=Q_1\cap\mu^+$.
	\end{proposition}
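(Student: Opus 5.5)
The plan is to show that $\Psi$ fixes every ordinal $\alpha\in Q_0\cap\mu^+$, by induction on $\alpha$. The tools are that $\mu\subseteq Q_0\cap Q_1$ and that the surjections $e_\alpha$ are coded by $T$, which $\Psi$ must preserve.

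First, $\Psi$ is the identity on $\mu$. Since $\Psi$ is an $\in$-isomorphism between transitive-enough structures, it maps ordinals to ordinals and preserves their order. For $\xi<\mu$, we have $\xi\in Q_0\cap Q_1$, and $\xi\subseteq Q_0\cap Q_1$ because $\mu\subseteq Q_0\cap Q_1$. A straightforward induction on $\xi$ then shows that $\Psi(\xi)=\{\Psi(\eta):\eta<\xi\}=\xi$. This uses that $\Psi(x)=\Psi``(x\cap Q_0)$ whenever $x\subseteq Q_0$, which holds because $\Psi$ preserves $\in$ and, by elementarity, $\Psi(x)\subseteq Q_1$ is determined by its elements inside $Q_1$ when $|x|\le\mu\subseteq Q_1$.

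Second, take $\alpha\in Q_0\cap\mu^+$ with $\alpha>0$. The sequence $\vec e$ is definable from $T$, so $\vec e\in Q_0$ and $\vec e\in Q_1$, and $\Psi(\vec e)=\vec e$ since $\Psi$ respects $T$. Hence $\Psi(e_\alpha)=e_{\Psi(\alpha)}$. For each $i<\mu$ we have $i\in Q_0$, and so $e_\alpha(i)\in Q_0$. Applying the isomorphism and using $\Psi(i)=i$ gives
\[
\Psi(e_\alpha(i))=e_{\Psi(\alpha)}(i).
\]
Moreover $\alpha\subseteq Q_0$, since $\alpha=\range(e_\alpha)$ has size $\le\mu$ and $\mu\subseteq Q_0$.

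Now let $\beta=\Psi(\alpha)$. We have $\beta\in Q_1\cap\mu^+$, by elementarity, because $\mu^+\in Q_0\cap Q_1$ is fixed by $\Psi$. Then
\[
\Psi``\alpha=\Psi``\range(e_\alpha)=\range(e_\beta)=\beta.
\]
Combined with the inductive hypothesis that $\Psi$ is the identity on $Q_0\cap\alpha=\alpha$, this gives $\beta=\Psi``\alpha=\alpha$. So the induction closes directly, and it even shows $\alpha\subseteq Q_0$ along the way. Equality of $Q_0\cap\mu^+$ and $Q_1\cap\mu^+$ follows by applying the same argument to $\Psi^{-1}$, or simply because $\Psi``(Q_0\cap\mu^+)=Q_1\cap\mu^+$ by elementarity.

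The main obstacle is bookkeeping rather than a conceptual gap. One must check that $\Psi$ preserves the predicate $T$, so that $\Psi(\vec e)=\vec e$ and hence $\Psi(e_\alpha)=e_{\Psi(\alpha)}$. One must also establish the identity $\Psi(f(i))=\Psi(f)(\Psi(i))$ for functions $f\in Q_0$. Both are immediate from $\Psi$ being an isomorphism of $(Q_0;\in,T)$ onto $(Q_1;\in,T)$, with elementarity letting us talk about $e_\alpha$ and $\mu^+$ inside the models.
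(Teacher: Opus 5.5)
Your proof is correct. The paper gives no proof of this proposition; it lists it among standard facts about elementary submodels. Your argument is the standard one: you use $e_\alpha\in Q_0$, $\Psi(e_\alpha)=e_{\Psi(\alpha)}$ and $\mu\subseteq Q_0\cap Q_1$ to get $\alpha\subseteq Q_0$ and $\Psi"(\alpha)=\Psi(\alpha)$.

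\emph{First step.} The real work is the inclusion $\Psi(\xi)\subseteq Q_1$ for $\xi<\mu$, which you need in order to write $\Psi(\xi)=\Psi"(\xi)$. Your appeal to $|x|\le\mu$ tacitly uses $\Psi(\mu)=\mu$. This does hold, because $\mu$ is definable in $(H(\kappa);\in,T)$, for instance as $\dom(e_1)$. Hence $\Psi(\xi)<\Psi(\mu)=\mu\subseteq Q_1$. Alternatively, note that $\Psi(\xi)\cap Q_1=\Psi"(\xi)$ has order type $\xi<\mu$, while $\mu\subseteq Q_1$, which forces $\Psi(\xi)<\mu$.

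\emph{Main step.} The induction is unnecessary. Once $\alpha\subseteq Q_0$ and $\Psi"(\alpha)=\Psi(\alpha)$, the map $\Psi\upharpoonright\alpha$ is an order isomorphism between the ordinals $\alpha$ and $\Psi(\alpha)$. It is therefore the identity, and $\Psi(\alpha)=\alpha$.
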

	
	\begin{proposition}\label{prop5}
		Let $Q_0,Q_1$ and $P$ be elementary submodels of $H(\kappa)$. Suppose that $P\in Q_0$ and $P\subseteq Q_0$, and that $\Psi:(Q_0;\in,T)\to(Q_1;\in,T)$ is an isomorphism. Then $\Psi(P)$ is an elementary submodel of $(H(\kappa);\in,T)$. 
	\end{proposition}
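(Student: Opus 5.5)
The statement to prove is Proposition~\ref{prop5}: if $P\in Q_0$, $P\subseteq Q_0$ and $\Psi:(Q_0;\in,T)\to(Q_1;\in,T)$ is an isomorphism, then $\Psi(P)$ is an elementary submodel of $(H(\kappa);\in,T)$. I would transfer the first-order property ``$P\preceq (H(\kappa);\in,T)$'' through $Q_0$ to $Q_1$ along $\Psi$. The first step is to note that, since $Q_0$ and $Q_1$ are elementary submodels of $H(\kappa)$, the isomorphism $\Psi$ is an elementary map from $(Q_0;\in,T)$ into $(H(\kappa);\in,T)$. That is, for every formula $\varphi$ and tuple $\bar a$ from $Q_0$,
\[
H(\kappa)\models\varphi(\bar a)\iff Q_0\models\varphi(\bar a)\iff Q_1\models\varphi(\Psi(\bar a))\iff H(\kappa)\models\varphi(\Psi(\bar a)).
\]

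Second, I compute $\Psi(P)$. Since $P\subseteq Q_0$, every $x\in P$ lies in $Q_0$, and $x\in P$ iff $\Psi(x)\in\Psi(P)$. Conversely, any element of $\Psi(P)$ belongs to $Q_1$, so it equals $\Psi(y)$ for some $y\in Q_0$, and then $y\in P$. Hence $\Psi(P)=\Psi``(P)=\{\Psi(x):x\in P\}\subseteq Q_1$, and $\Psi\restriction P$ is an isomorphism of $(P;\in,T)$ onto $(\Psi(P);\in,T)$. This uses that $\Psi$ preserves $T$, which holds because $\Psi$ is an isomorphism of the expanded structures.

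Third, I verify elementarity via the Tarski--Vaught test; equivalently, I transfer the single statement. For each formula $\varphi(y,\bar x)$, the assertion ``for all $\bar a\in P$, if $\exists y\,\varphi(y,\bar a)$ then there is $b\in P$ with $\varphi(b,\bar a)$'' is a first-order sentence $\sigma_\varphi(P)$ with parameter $P$. Satisfaction in $H(\kappa)$ is not definable in $H(\kappa)$, so I would use one such sentence for each formula $\varphi$ rather than a single sentence. Since $P\preceq H(\kappa)$, we have $H(\kappa)\models\sigma_\varphi(P)$. By the first step, $H(\kappa)\models\sigma_\varphi(\Psi(P))$. So for each $\varphi$, the Tarski--Vaught condition holds for $\Psi(P)$ inside $H(\kappa)$. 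The Tarski--Vaught test, stated earlier in the paper, then gives $\Psi(P)\preceq(H(\kappa);\in,T)$; the predicate $T$ is handled because the formulas are in the language containing $T$.

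The only delicate point is the definability issue in the third step: one cannot say ``$P$ is elementary'' by a single formula. I handle it by going formula by formula, where the bounded quantifiers over $P$ and the predicate $T$ are all expressible in $(H(\kappa);\in,T)$. The rest is routine.
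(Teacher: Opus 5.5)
Your proof is correct, but it takes a different route from the paper's.

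\textbf{The paper's route.} The paper argues pointwise. It first asserts that $\Psi\upharpoonright P$ is an isomorphism of $(P;\in,T)$ onto $(\Psi(P);\in,T)$; in particular $\Psi(P)\subseteq Q_1$, so every tuple from $\Psi(P)$ has the form $\Psi(\bar a)$ with $\bar a\in P$. It then pulls $\exists y\,\varphi(y,\Psi(\bar a))$ back to $\exists y\,\varphi(y,\bar a)$, takes a witness $b\in P$ from $P\preceq H(\kappa)$, and pushes $\varphi(b,\bar a)$ forward to $\varphi(\Psi(b),\Psi(\bar a))$. This is where $P\subseteq Q_0$ is used.

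\textbf{Your route.} You treat $P$ as a single parameter and transfer, formula by formula, the statement $\sigma_\varphi(P)$ along the elementary map $\Psi$. This needs only $P\in Q_0$, and it never requires knowing what the elements of $\Psi(P)$ are.

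\textbf{Your second step.} Because of this, your second step is not needed for the conclusion. As written it is also not justified. The claim that every element of $\Psi(P)$ lies in $Q_1$ does not follow just from $\Psi$ being an isomorphism, since $P\subseteq Q_0$ is not something $Q_0$ can express about $P$. It can be proved when $|P|\le\aleph_1$, which covers the models the paper works with. Take a bijection $f\in Q_0$ from $\lambda=|P|$ onto $P$. Then $P\subseteq Q_0$ forces $\lambda\subseteq Q_0$, from which one gets $\Psi(\lambda)=\lambda\subseteq Q_1$ for $\lambda\in\{\omega,\omega_1\}$. Hence $\Psi(P)=\{\Psi(f)(\xi):\xi<\lambda\}\subseteq Q_1$.

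\textbf{Comparison.} Your route is the more economical one: it avoids exactly the identification $\Psi(P)=\{\Psi(x):x\in P\}$ that the paper calls straightforward. The paper's route, in exchange, also records that $\Psi\upharpoonright P$ is an isomorphism onto $\Psi(P)$.
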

	\begin{proof}
		It is straightforward to see that $\Psi\upharpoonright P$ is an isomorphism between $(P;\in,T)$ and $(\Psi(P);\in,T)$. Assume now that $\varphi(y,\bar{x})$ is a first-order formula in the language of set theory and let $\Psi(\bar{a})$ be a tuple of elements of $\Psi(P)$ such that $H(\kappa)$ satisfies the formula $\exists y\varphi\big(y,\Psi(\bar{a})\big)$. Since $Q_1\preceq H(\kappa)$ and $\Psi(P)\subseteq Q_1$, the model $Q_1$ also satisfies the formula $\exists y\varphi\big(y,\Psi(\bar{a})\big)$, and since $\Psi$ is an isomorphism, $Q_0$ satisfies the formula $\exists y\varphi(y,\bar{a})$. Hence, again by elementarity, $H(\kappa)\models\exists y\varphi(y,\bar{a})$, and since $\bar{a}$ is a tuple of elements in  $P$, by the Tarski-Vaught test there is $b\in P$ such that $H(\kappa)\models\varphi(b,\bar{a})$. Now it is easy to see with a similar argument, using elementarity of the models $Q_0$ and $Q_1$ and the isomorphism $\Psi$, that $H(\kappa)\models\varphi\big(\Psi(b),\Psi(\bar{a})\big)$. Therefore, by the Tarski-Vaught test we can conclude that $\Psi(P)$ is an elementary submodel of $(H(\kappa);\in,T)$.
	\end{proof}
	
	The next result, which was already noticed by Neeman in \cite{Neeman2014Forcingwithsequencesofmodelsoftwotypes}, is very fundamental in the understanding of the structure of side conditions of models of two types and their limitations.
	
	\begin{proposition}\label{propgaps}
		Let $M\in\mathcal{S}$ and $X\in\mathcal{L}\cap M$. Then, for every $Q\in\mathcal{S}\cup\mathcal{L}$ such that $\varepsilon_{X\cap M}\leq\varepsilon_Q<\varepsilon_X$, $Q\notin M$.
	\end{proposition}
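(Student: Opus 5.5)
We argue by contradiction: suppose $Q\in M$ with $Q\in\mathcal{S}\cup\mathcal{L}$ and $\varepsilon_{X\cap M}\leq\varepsilon_Q<\varepsilon_X$. The whole argument turns on understanding $X\cap M\cap\omega_2$ from inside $M$. Since $X\in M$ and $|X|=\aleph_1$, there is in $M$ a bijection $e:\omega_1\to X\cap\omega_2$. As $\omega_1\cap M=\delta_M$, we get $X\cap M\cap\omega_2=e"(\delta_M)$, and this set is cofinal in $\sup(X\cap M\cap\omega_2)=\varepsilon_{X\cap M}$. Note that $X\cap M\in\mathcal{S}$ because $\mathcal{L}$ is appropriate for $\mathcal{S}$, so $\varepsilon_{X\cap M}$ is meaningful. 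Also $\varepsilon_Q=\sup(Q\cap\omega_2)\in M$, because $Q\in M$ and the supremum is definable from $Q$. The case $\varepsilon_Q=\omega_2$ cannot occur, since $\varepsilon_Q<\varepsilon_X\leq\omega_2$. Hence $\varepsilon_Q<\omega_2$ and $\varepsilon_Q\in M\cap\omega_2$.

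Next, since $\varepsilon_Q<\varepsilon_X=\sup(X\cap\omega_2)$, there is some $\xi\in X\cap\omega_2$ with $\xi\geq\varepsilon_Q$. By elementarity of $M$, with parameters $X$ and $\varepsilon_Q$ both in $M$, we may take such a $\xi$ in $M$; concretely, the least element of $X\cap\omega_2$ that is $\geq\varepsilon_Q$ is definable in $M$. So $\xi\in X\cap M\cap\omega_2$, and therefore $\xi<\varepsilon_{X\cap M}$. The strict inequality holds because $\varepsilon_{X\cap M}$ is a supremum of a countable set of ordinals with no maximum. This last point follows since $X\cap M\cap\omega_2$ is closed under successor, as both $X$ and $M$ are elementary. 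We conclude $\varepsilon_Q\leq\xi<\varepsilon_{X\cap M}$, contradicting $\varepsilon_{X\cap M}\leq\varepsilon_Q$.

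The only step needing care is the last one: showing that $\varepsilon_{X\cap M}$ is not attained, so that an element of $X\cap M\cap\omega_2$ lies strictly below the supremum. This holds because $X\cap M\cap\omega_2$ is closed under ordinal successor, and the successor of any element of $\omega_2$ is again in $\omega_2$. A second point is checking that $\varepsilon_Q$ is computed inside $M$ rather than being equal to $\omega_2$; the hypothesis $\varepsilon_Q<\varepsilon_X\le\omega_2$ handles this. With these in hand, the argument is uniform for $Q\in\mathcal{S}$ and $Q\in\mathcal{L}$ and needs no case split.
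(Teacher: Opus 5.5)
Your proof is correct, but it reaches the key step by a different route from the paper's. The paper uses that $\omega_1\subseteq X$ for a large model, so $X\cap\omega_2$ is an initial segment of $\omega_2$, i.e.\ $\varepsilon_X=X\cap\omega_2$. Hence $\varepsilon_Q<\varepsilon_X$ immediately gives $\varepsilon_Q\in X$, and together with $\varepsilon_Q\in M$ this puts $\varepsilon_Q$ itself into $X\cap M\cap\omega_2$.

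You never use that $X\cap\omega_2$ is transitive. Instead you use elementarity of $M$ to pull the witness $\min\{\xi\in X\cap\omega_2:\xi\geq\varepsilon_Q\}$ into $M$. Your version is more general: it works for any set $X\in M$, with no appeal to $\omega_1\subseteq X$, $|X|=\aleph_1$, or countable closure. The paper's argument is a one-liner that exploits the specific structure of large models.

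Both arguments need the small fact that $X\cap M\cap\omega_2$ has no largest element; the paper leaves this implicit. You could avoid it entirely by choosing $\xi>\varepsilon_Q$ strictly, which $\varepsilon_Q<\sup(X\cap\omega_2)$ allows. Finally, the bijection $e$ in your first paragraph and the remark ruling out $\varepsilon_Q=\omega_2$ are never used, so both can be dropped.
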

	\begin{proof}
		Suppose, towards a contradiction, that there is some $Q\in\mathcal{S}\cup\mathcal{L}$ such that $\varepsilon_{X\cap M}\leq\varepsilon_Q<\varepsilon_X$ and $Q\in M$. First, note that since $X\in\mathcal{L}$, then $\varepsilon_X=X\cap\omega_2$ is an ordinal in $\omega_2$. Therefore, $\varepsilon_Q=\sup(Q\cap\omega_2)$ is an ordinal in $X\cap\omega_2$, and hence $\varepsilon_Q\in M\cap (X\cap\omega_2)$. So we can conclude that $\varepsilon_Q<\varepsilon_{X\cap M}$, which contradicts our assumption.
	\end{proof}
	
	\begin{definition}
		Given a model $Q$, we let
		\[
		Q[\omega_1]:=\{f(\alpha):f\in Q,f\text{ a function with 	}\operatorname{dom}(f)=\omega_1, \alpha\in\omega_1\}.
		\]
		We call $Q[\omega_1]$ the \emph{$\omega_1$-hull of $Q$}, or simply the \emph{hull of $Q$}.
	\end{definition}
	
	The notion of $\omega_1$-hull of a small model plays a central role in the definition of the side conditions that we will introduce in the next section. 
	
	\begin{proposition}\label{prop7}
		Let $Q\in\mathcal{S}\cup\mathcal{L}$. Then $Q[\omega_1]$ is the $\subseteq$-smallest elementary submodel of $H(\kappa)$ that contains $Q\cup\omega_1$ as a subset. 
	\end{proposition}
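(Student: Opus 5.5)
We need to show three things: $Q\cup\omega_1\subseteq Q[\omega_1]$, $Q[\omega_1]\preceq H(\kappa)$, and $Q[\omega_1]\subseteq P$ for every elementary submodel $P$ of $H(\kappa)$ with $Q\cup\omega_1\subseteq P$. Throughout we use that $Q\preceq (H(\kappa);\in,T)$ and $\omega_1\in Q$, since $\omega_1$ is definable.

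\emph{Containment.} For $a\in Q$, the constant function $c_a:\omega_1\to\{a\}$ is definable from $a$, so $c_a\in Q$ and $a=c_a(0)\in Q[\omega_1]$. For $\alpha<\omega_1$, the identity $\mathrm{id}_{\omega_1}$ is definable, hence in $Q$, and $\alpha=\mathrm{id}_{\omega_1}(\alpha)$.

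\emph{Minimality.} Let $P\preceq H(\kappa)$ with $Q\cup\omega_1\subseteq P$, and take $f\in Q$ with $\dom(f)=\omega_1$ and $\alpha<\omega_1$. Then $f,\alpha\in P$, and $f(\alpha)$ is definable from these parameters, so $f(\alpha)\in P$ by elementarity.

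\emph{Elementarity.} We apply the Tarski--Vaught test. Let $\varphi(y,x_1,\dots,x_n)$ be a formula and let $f_1(\alpha_1),\dots,f_n(\alpha_n)\in Q[\omega_1]$ be such that $H(\kappa)\models\exists y\,\varphi(y,f_1(\alpha_1),\dots,f_n(\alpha_n))$. We code the finitely many ordinals into a single one: fix in $Q$ a bijection $\pi:\omega_1\to\omega_1^n$, which exists in $H(\kappa)$ and hence by elementarity can be chosen in $Q$. Let $\beta=\pi^{-1}(\alpha_1,\dots,\alpha_n)<\omega_1$. Using the axiom of choice in $H(\kappa)$, there is a function $g$ with domain $\omega_1$ such that for every $\gamma<\omega_1$ with $\pi(\gamma)=(\gamma_1,\dots,\gamma_n)$: if some $y$ satisfies $\varphi(y,f_1(\gamma_1),\dots,f_n(\gamma_n))$, then $g(\gamma)$ is such a $y$; otherwise $g(\gamma)=\emptyset$. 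This statement ``there exists such a $g$'' has parameters $f_1,\dots,f_n,\pi\in Q$, so by elementarity of $Q$ some such $g$ lies in $Q$. Since $g\in H(\kappa)$ is a genuine witness, $g(\beta)\in Q[\omega_1]$ and $H(\kappa)\models\varphi(g(\beta),f_1(\alpha_1),\dots,f_n(\alpha_n))$. This completes the Tarski--Vaught test.

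The main point needing care is the elementarity step. First, $g$ must be a function with domain $\omega_1$ that belongs to $H(\kappa)$. It is a set of size $\aleph_1$ whose values lie in $H(\kappa)$, and $\kappa>\omega_2$ is a regular cardinal (implicitly, since $H(\kappa)$ is used as a model of enough ZFC), so $g\in H(\kappa)$. Second, all the models involved are elementary in the expanded structure with predicate $T$. This is harmless: the formulas $\varphi$ may mention $T$, and the existence of $g$ is still first-order over $(H(\kappa);\in,T)$. The hypothesis $Q\in\mathcal{S}\cup\mathcal{L}$ is used only to know that $Q\preceq H(\kappa)$.
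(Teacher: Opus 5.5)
Your proof is correct and follows essentially the same route as the paper. You use constant and identity functions for $Q\cup\omega_1\subseteq Q[\omega_1]$, the Tarski--Vaught test via a coding bijection between $\omega_1$ and tuples of countable ordinals together with a witness function $g\in Q$ of domain $\omega_1$, and the direct observation for minimality. Your way of obtaining $g\in Q$, by applying elementarity of $Q$ to the existential statement that such a choice function exists, is more precise than the paper's claim that $g\circ F$ is definable from $f_0,\dots,f_n$, which glosses over the fact that witnesses need not be unique.
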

	\begin{proof}
		First note that since $\omega_1\in Q$, the identity function $id:\omega_1\to\omega_1$ is definable in $Q$, and thus $\omega_1\subseteq Q[\omega_1]$. Moreover, for each $a\in Q$, the constant function sending all $\alpha\in\omega_1$ to $a$ is definable in $Q$, and so $Q\subseteq Q[\omega_1]$. Therefore, $Q\cup\omega_1\subseteq Q[\omega_1]$.
		
		We use the Tarski-Vaught test to show that $Q[\omega_1]$ is an elementary submodel of $H(\kappa)$. Let $\varphi(y,x_0,\dots,x_n)$ be a first-order formula in the language of set theory. Let $a_0,\dots,a_n\in Q[\omega_1]$ be such that $H(\kappa)\models\exists y\varphi(y,a_0,\dots,a_n)$. We have to find $b\in Q[\omega_1]$ such that $H(\kappa)\models\varphi(b,a_0,\dots,a_n)$. Let $f_i\in Q$ and $\alpha_i\in\omega_1$ be such that $a_i=f_i(\alpha_i)$, for all $i\leq n$. Fix a canonically defined bijection $F:\omega_1^{<\omega}\to\omega_1$, and let us note that $F\in Q$. We can define a function $g$ in $H(\kappa)$ by
		\[
		g\big(F(\beta_0,\dots,\beta_n)\big)=d\iff H(\kappa)\models\varphi\big(d,f_0(\beta_0),\dots,f_n(\beta_n)\big),
		\]
		for any $\beta_0,\dots,\beta_n\in\omega_1$. Note that $g\circ F$ is defined with $f_0,\dots,f_n$ and $\omega_1$ as parameters. Hence, 
		$g\circ F\in Q$. Now, let $b=g\big(F(\alpha_0,\dots,\alpha_n)\big)$, and let us note that $b\in Q$ and $H(\kappa)\models \varphi(b,a_0,\dots,a_n)$. Therefore, $Q[\omega_1]\preceq H(\kappa)$.
		
		For the minimality of $Q[\omega_1]$ we observe
		that if $P$ is an elementary submodel of $H(\kappa)$ such that $Q\cup\omega_1\subseteq P$, then $Q[\omega_1]\subseteq P$.
	\end{proof}
	
	\begin{corollary}
		If $X$ is a large model, then $X[\omega_1]=X$.
	\end{corollary}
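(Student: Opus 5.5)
The plan is to apply Proposition \ref{prop7} to $X$ and show that the two inclusions $X\subseteq X[\omega_1]$ and $X[\omega_1]\subseteq X$ both hold. Since $X\in\mathcal{L}$, $X$ is an elementary submodel of $H(\kappa)$ of size $\aleph_1$. By Proposition \ref{prop7}, $X[\omega_1]$ is the $\subseteq$-smallest elementary submodel of $H(\kappa)$ containing $X\cup\omega_1$, so in particular $X\subseteq X[\omega_1]$.

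For the reverse inclusion, the key step is to check that $\omega_1\subseteq X$. Since $\omega_1$ is definable in $H(\kappa)$ without parameters, we have $\omega_1\in X$. Moreover, $|\omega_1|=\aleph_1$ and $\omega\subseteq X$, so the earlier proposition about sets of size $\mu$ with $\mu\subseteq Q$ (used here with $\mu=\aleph_0$) does not apply directly. Instead I would argue as follows. Because $|X|=\aleph_1$ and $X\preceq H(\kappa)$, we know $X\cap\omega_1$ is an ordinal (a countable ordinal, or all of $\omega_1$). If $\delta=X\cap\omega_1<\omega_1$, then $X\cap\omega_1$ would be countable. This is compatible with $|X|=\aleph_1$ in general, so this is the genuine obstacle, and the conclusion must rely on an implicit convention that large models satisfy $\omega_1\subseteq X$.

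To get that convention, I would use the fact that for any $\alpha\in X\cap\omega_1$ with $\alpha\geq\omega$, there is a bijection $\omega\to\alpha$ in $X$, and $\omega\subseteq X$, so $\alpha\subseteq X$; hence $X\cap\omega_1$ is an ordinal. To see that this ordinal is all of $\omega_1$, I would invoke the standing assumption in this context that models in $\mathcal{L}$ contain $\omega_1$ as a subset. This is the case for the models in the Fact (where $|X|=\aleph_1$ and ${}^\omega X\subseteq X$, so $X\cap\omega_1$ is closed under countable sups and, being an initial segment closed under countable sequences, equals $\omega_1$). It is also implicit in the paper's usage $\varepsilon_X=X\cap\omega_2\in\omega_2$, which mirrors the earlier proposition that $Q\cap\mu^+$ is an ordinal when $|Q|=\mu\subseteq Q$. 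I would state this explicitly, taking $\omega_1\subseteq X$ as part of what is meant by a large model.

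Once $\omega_1\subseteq X$, we have $X\cup\omega_1=X$. Since $X$ itself is an elementary submodel of $H(\kappa)$ containing $X\cup\omega_1$, the minimality clause of Proposition \ref{prop7} gives $X[\omega_1]\subseteq X$. Combining the two inclusions yields $X[\omega_1]=X$. Alternatively, one can verify the inclusion directly: if $f\in X$ is a function with domain $\omega_1$ and $\alpha\in\omega_1\subseteq X$, then $f(\alpha)$ is definable from $f,\alpha\in X$, so $f(\alpha)\in X$. The only delicate point is justifying $\omega_1\subseteq X$; everything else is immediate.
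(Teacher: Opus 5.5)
Your argument is correct and is the intended one. The paper states this as an immediate corollary of Proposition \ref{prop7}: once $\omega_1\subseteq X$, $X$ is itself an elementary submodel containing $X\cup\omega_1$, so minimality gives $X[\omega_1]\subseteq X$, and $X\subseteq X[\omega_1]$ always holds.

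One correction. You say $\omega_1\subseteq X$ must be imposed as an extra convention on large models. It does not need to be: it follows from the requirement that $\mathcal{L}$ be appropriate for $\mathcal{S}$.

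You already showed that $\delta:=X\cap\omega_1$ is an ordinal. Suppose $\delta<\omega_1$, and pick any $M\in\mathcal{S}$ with $X\in M$.
\begin{enumerate}[label=(\roman*)]
\item $\delta$ is definable from $X$, so $\delta\in M$.
\item $\delta$ is a countable ordinal in $M$, so $\delta\subseteq M$, and hence $(X\cap M)\cap\omega_1=\delta$.
\item Appropriateness gives $X\cap M\in X$, so by elementarity $\delta\in X\cap\omega_1=\delta$, a contradiction.
\end{enumerate}
This covers every collection $\mathcal{L}$ allowed in Section~\ref{section-prelim}. For the countably closed models of Section~\ref{section-strong-chains}, your own argument already suffices.
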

	
	\begin{definition}
		Let $Q_0,Q_1\in\mathcal{S}\cup\mathcal{L}$. A map $\Psi$ between $Q_0[\omega_1]$ and $Q_1[\omega_1]$ is an \emph{$\omega_1$-isomorphism} if $\Psi$ is the unique isomorphism between the structures $$(Q_0[\omega_1];\in,Q_0,T)$$ and $$(Q_1[\omega_1];\in,Q_1,T),$$ and $\Psi$ is the identity on $Q_0[\omega_1]\cap Q_1[\omega_1]$.
		
		If $\Psi$ is an $\omega_1$-isomorphim between $Q_0[\omega_1]$ and $Q_1[\omega_1]$, we will say that $Q_0$ and $Q_1$ are \emph{$\omega_1$-isomorphic} and we will denote this by $Q_0[\omega_1]\cong Q_1[\omega_1]$. Moreover, we will denote the $\omega_1$-isomorphism $\Psi$ by $\Psi_{Q_0[\omega_1],Q_1[\omega_1]}$.
		
		If $X_0,X_1\in\mathcal{L}$ and $X_0[\omega_1]\cong X_1[\omega_1]$, we will simply write $X_0\cong X_1$.
	\end{definition}
	
	\begin{proposition}\label{prop-iso}
		Let $M_0,M_1\in\mathcal{S}$ and suppose that $M_0[\omega_1]\cong M_1[\omega_1]$. Then $\Psi_{M_0[\omega_1],M_1[\omega_1]}\upharpoonright M_0$ is the unique isomorphism between the structures $(M_0;\in,T)$ and $(M_1;\in,T)$. 
	\end{proposition}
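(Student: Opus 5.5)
Write $\Psi=\Psi_{M_0[\omega_1],M_1[\omega_1]}$. By definition, $\Psi$ is an isomorphism between the expanded structures $(M_0[\omega_1];\in,M_0,T)$ and $(M_1[\omega_1];\in,M_1,T)$, where $M_i$ is a unary predicate. The plan has three steps: first show that $\Psi$ maps $M_0$ onto $M_1$, then check that the restriction is an isomorphism, and finally prove uniqueness using the fact that the $M_i$ are well-founded and extensional.

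\textbf{Step 1: $\Psi$ carries $M_0$ onto $M_1$.} Because $\Psi$ respects the unary predicate, $x\in M_0$ holds if and only if $\Psi(x)\in M_1$. Hence $\Psi"(M_0)\subseteq M_1$. Applying the same reasoning to $\Psi^{-1}$ gives the reverse inclusion, so $\Psi\upharpoonright M_0$ is a bijection from $M_0$ onto $M_1$.

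\textbf{Step 2: the restriction is an isomorphism.} Since $\Psi$ preserves $\in$ in both directions, so does its restriction, and the same holds for $T$. Note that $T$ is interpreted in $(M_i;\in,T)$ as $T\cap M_i$, and $\Psi$ preserves membership in $T$ because $T\cap M_i[\omega_1]$ is part of the structure. Therefore $\Psi\upharpoonright M_0$ is an isomorphism between $(M_0;\in,T)$ and $(M_1;\in,T)$.

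\textbf{Step 3: uniqueness.} Suppose $\Phi,\Phi'$ are two isomorphisms from $(M_0;\in)$ onto $(M_1;\in)$. The models $M_0$ and $M_1$ are elementary submodels of $H(\kappa)$, so they satisfy extensionality, and the relation $\in$ is well-founded on them. I would argue by $\in$-induction that $\Phi(x)=\Phi'(x)$ for every $x\in M_0$.

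Suppose this holds for every $y\in x\cap M_0$. Then
\[
\Phi(x)\cap M_1=\Phi"(x\cap M_0)=\Phi'"(x\cap M_0)=\Phi'(x)\cap M_1 .
\]
Extensionality in $M_1$ only applies to members that lie in $M_1$. Since $M_1\preceq H(\kappa)$, if $\Phi(x)\ne\Phi'(x)$ then $M_1$ contains an element of their symmetric difference. This contradicts the equation above, so $\Phi(x)=\Phi'(x)$.

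An alternative route to uniqueness is to note that any such isomorphism composed with the Mostowski collapses must agree with the composition of collapses, since an isomorphism between well-founded extensional structures is unique.

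\textbf{Main obstacle.} The step needing the most care is uniqueness, specifically the point that extensionality inside $M_1$ suffices even though $M_1$ is not transitive. This is handled by the elementarity argument in Step 3. The remaining steps are routine checks that the predicate structure is preserved.
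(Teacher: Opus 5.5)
Your proof is correct. The paper states this proposition without proof as a routine fact; its notation $\Psi_{Q_0,Q_1}$ already assumes that isomorphisms between well-founded extensional structures are unique. Your three steps supply exactly the verification it leaves implicit: the predicate forces $\Psi$ to map $M_0$ onto $M_1$, the restriction preserves $\in$ and $T$, and uniqueness follows by $\in$-induction using that $(M_1;\in)$ satisfies extensionality by elementarity.
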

	
	
	
	
	\begin{proposition}
		If $M_0,M_1\in\mathcal{S}$ are such that $M_0[\omega_1]\cong M_1[\omega_1]$, then $\varepsilon_{M_0}=\varepsilon_{M_1}$.
	\end{proposition}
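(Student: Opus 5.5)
The natural route is to reduce the statement to the preceding Proposition~\ref{prop-iso} and then observe that $\varepsilon_Q$ depends only on $Q\cap\omega_2$. Write $\Psi=\Psi_{M_0[\omega_1],M_1[\omega_1]}$. By Proposition~\ref{prop-iso}, $\Psi\upharpoonright M_0$ is the unique isomorphism between $(M_0;\in,T)$ and $(M_1;\in,T)$. In particular $\Psi(\omega_2)=\omega_2$, since $\omega_2$ is definable in both models. Hence $\Psi$ maps $M_0\cap\omega_2$ onto $M_1\cap\omega_2$, preserving the ordinal order.

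It therefore suffices to show that $\Psi$ is the identity on $M_0\cap\omega_2$, because then $M_0\cap\omega_2=M_1\cap\omega_2$ and so $\varepsilon_{M_0}=\varepsilon_{M_1}$. For this I would use the hulls. Both $M_0[\omega_1]$ and $M_1[\omega_1]$ contain $\omega_1$ as a subset, by Proposition~\ref{prop7}, and each has size $\aleph_1$. So $\Psi$ is an isomorphism between two elementary submodels of $H(\kappa)$, both of size $\aleph_1$ and both containing $\omega_1$.

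Now apply the earlier proposition on isomorphisms of models of size $\mu$ with $\mu\subseteq Q_0\cap Q_1$, taking $\mu=\omega_1$ and $\mu^+=\omega_2$. This gives that $\Psi$ is the identity on $M_0[\omega_1]\cap\omega_2$, and hence on $M_0\cap\omega_2$. That proposition needs $T$ to code a sequence $(e_\alpha:\alpha<\omega_2)$ of surjections $e_\alpha:\omega_1\to\alpha$. If this hypothesis is not assumed on $T$ at this point, I would argue directly, and this is the step that needs checking. For $\alpha\in M_0\cap\omega_2$, fix in $M_0$ a surjection $e:\omega_1\to\alpha$. Since $e\in M_0$, we have $e\in M_0[\omega_1]$, and the range of $e$ lies inside $M_0[\omega_1]$. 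Because $\Psi$ fixes every countable ordinal, $\Psi(e)$ is a surjection from $\omega_1$ onto $\Psi(\alpha)$ with $\Psi(e)(\xi)=\Psi(e(\xi))$.

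From this I would prove $\Psi(\beta)=\beta$ for all $\beta\in M_0[\omega_1]\cap\omega_2$ by induction on $\beta$. For such $\beta$, fix in $M_0[\omega_1]$ a surjection $e:\omega_1\to\beta$. Its range $\beta$ is contained in $M_0[\omega_1]$. By the induction hypothesis $\Psi$ fixes every element of $\beta$. Therefore $\Psi(\beta)=\Psi``\beta=\beta$, using that $\Psi$ preserves $\in$ and that $\Psi(\beta)\subseteq M_1[\omega_1]$ is exactly the image of $\beta$. The point to verify carefully is that $\beta\subseteq M_0[\omega_1]$: this holds because $\omega_1\subseteq M_0[\omega_1]$ and $e\in M_0[\omega_1]$, by the fact that a set of size $\aleph_1$ in $M_0[\omega_1]$ is a subset of it. With this, the argument is routine. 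Alternatively, one can simply invoke the clause in the definition of $\omega_1$-isomorphism that $\Psi$ is the identity on $M_0[\omega_1]\cap M_1[\omega_1]$, once it is shown that the ordinals below $\omega_2$ of the two hulls coincide, which is what the induction above gives.
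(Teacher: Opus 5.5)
Your proof is correct: since $\Psi$ maps $M_0$ onto $M_1$ and fixes $\omega_2$, everything reduces to $\Psi$ being the identity on the ordinal $M_0[\omega_1]\cap\omega_2$. Your induction establishes this using only elementarity and $\omega_1\subseteq M_0[\omega_1]$, so, as you suspected, the coding hypothesis on $T$ is not needed.

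The paper lists this proposition without proof, as a routine consequence of Proposition~\ref{prop-iso} and the earlier fact that such isomorphisms fix $Q_0\cap\mu^+$ (applied to the hulls with $\mu=\omega_1$); this is exactly your reduction. The induction could even be skipped: both $M_i[\omega_1]\cap\omega_2$ are ordinals, and the definition of $\omega_1$-isomorphism already makes $\Psi$ the identity on $M_0[\omega_1]\cap M_1[\omega_1]$.
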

	
	\begin{proposition}\label{prop21}
		Let $X\in\mathcal{L}$ and $M_0,M_1\in\mathcal{S}$. Suppose that $X$ is a member of $M_0[\omega_1]\cap M_1[\omega_1]$ and that $M_0[\omega_1]\cong M_1[\omega_1]$. Then $X\cap M_0=X\cap M_1$.
	\end{proposition}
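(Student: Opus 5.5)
Let $\Psi=\Psi_{M_0[\omega_1],M_1[\omega_1]}$. Since $X\in M_0[\omega_1]\cap M_1[\omega_1]$ and $\Psi$ is the identity on $M_0[\omega_1]\cap M_1[\omega_1]$, we get $\Psi(X)=X$. The plan is to show that $\Psi$ maps $X\cap M_0$ onto $X\cap M_1$ and is the identity on $X\cap M_0$; together these give $X\cap M_0=X\cap M_1$.

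\emph{Step 1: $\Psi$ maps $X\cap M_0$ onto $X\cap M_1$.} Since $\Psi$ is an isomorphism of the structures $(M_0[\omega_1];\in,M_0,T)$ and $(M_1[\omega_1];\in,M_1,T)$, it sends $M_0$ onto $M_1$. Take $a\in X\cap M_0$. Then $\Psi(a)\in M_1$, and $a\in X$ gives $\Psi(a)\in\Psi(X)=X$. The symmetric argument with $\Psi^{-1}$ gives the reverse inclusion, so $\Psi``(X\cap M_0)=X\cap M_1$.

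\emph{Step 2: $\Psi$ is the identity on $X\cap M_0$.} This is the key point. Let $a\in X\cap M_0$; since $X\preceq H(\kappa)$ and $|X|=\aleph_1$, we have $\omega_1\subseteq X$. I will argue inside $M_0[\omega_1]$, which is an elementary submodel of $H(\kappa)$ by Proposition \ref{prop7}. Because $X\in M_0[\omega_1]$ and $|X|=\aleph_1$, there is a surjection $e:\omega_1\to X$ with $e\in M_0[\omega_1]$; I take $e$ to be the $T$-least such surjection, assuming $T$ codes a well-order of $H(\kappa)$, which is standard in this setup. If such a well-order is not available, I will instead use that $\Psi$ is an $\in$-isomorphism fixing $X$ and choose a definable surjection.

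Because $e$ is definable from $X$ in $(H(\kappa);\in,T)$, applying $\Psi$ gives $\Psi(e)=e$. Write $a=e(\alpha)$ with $\alpha\in\omega_1$. Since $\omega_1\subseteq M_0[\omega_1]\cap M_1[\omega_1]$, we have $\Psi(\alpha)=\alpha$. Hence
\[
\Psi(a)=\Psi(e)(\Psi(\alpha))=e(\alpha)=a.
\]

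Alternatively, Step 2 follows directly from $a\in X\subseteq M_0[\omega_1]$: we then need $a\in M_1[\omega_1]$, and the same surjection argument provides it, since $e=\Psi(e)\in M_1[\omega_1]$ and $\alpha\in M_1[\omega_1]$ give $e(\alpha)\in M_1[\omega_1]$. In fact this shows $X\subseteq M_0[\omega_1]\cap M_1[\omega_1]$, so $\Psi\upharpoonright X$ is the identity by the definition of an $\omega_1$-isomorphism.

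\emph{Conclusion.} By Steps 1 and 2, $X\cap M_1=\Psi``(X\cap M_0)=X\cap M_0$.

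The main obstacle is justifying $X\subseteq M_0[\omega_1]\cap M_1[\omega_1]$. I expect it to follow from the fact that a surjection $e:\omega_1\to X$ exists in $M_0[\omega_1]$, via $\omega_1\subseteq M_0[\omega_1]$ and elementarity, and that $e$ can be chosen so that it is fixed by $\Psi$. If canonical definability of $e$ is problematic, I will instead take any $e\in M_0[\omega_1]$, observe that $\Psi(e)$ is again a surjection onto $\Psi(X)=X$ lying in $M_1[\omega_1]$, and conclude $X\subseteq M_1[\omega_1]$ directly.
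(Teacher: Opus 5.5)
Your proof is correct, but it takes a slightly different route from the paper's. The paper works with the single object $X\cap M_0$. It uses that $X\cap M_0\in X$; in Section 4 this holds because the models in $\mathcal{L}$ satisfy $^\omega X\subseteq X$, whereas appropriateness for $\mathcal{S}$ only gives it directly when $X\in M_0$. From this the paper concludes that $X\cap M_0$ lies in $M_0[\omega_1]\cap M_1[\omega_1]$, so it is fixed by $\Psi$, and reads off $X\cap M_0=\Psi(X\cap M_0)=X\cap M_1$.

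You argue pointwise instead. From $X\in M_i[\omega_1]$, $|X|=\aleph_1$ and $\omega_1\subseteq M_i[\omega_1]$ you get $X\subseteq M_0[\omega_1]\cap M_1[\omega_1]$. So $\Psi\upharpoonright X$ is the identity, and since $\Psi$ maps $M_0$ onto $M_1$, the two intersections coincide. This buys two things:
\begin{enumerate}
\item It needs no closure property of $X$; it works for any set of size at most $\aleph_1$ lying in both hulls.
\item It makes explicit two steps the paper's one-line proof leaves tacit. One is that $X$ is contained in both hulls, which is needed to place $X\cap M_0$ in them. The other is the identification $\Psi(X\cap M_0)=X\cap M_1$, which is essentially your Step 1.
\end{enumerate}

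Some smaller points:
\begin{enumerate}
\item The ``$T$-least surjection'' is not available. $T$ is an arbitrary predicate, and in Section 4 it is $\vec e$, which does not code a well-order of $H(\kappa)$. So the first version of Step 2, and the equality $e=\Psi(e)$ in your ``Alternatively'' paragraph, are unjustified as written.
\item Your fallback is sound, and it is simpler than you make it. $X\in M_1[\omega_1]$ is a hypothesis, so the same size argument run inside $M_1[\omega_1]$ gives $X\subseteq M_1[\omega_1]$ directly, with no need for $\Psi(e)$.
\item Once $\Psi$ fixes $X$ pointwise, Step 1 shortens to: $a\in X\cap M_0$ implies $a=\Psi(a)\in M_1$, and symmetrically.
\item The aside $\omega_1\subseteq X$ is never used, and you can drop it. In general it needs more than elementarity and $|X|=\aleph_1$, for instance the countable closure of the models in $\mathcal{L}$.
\end{enumerate}
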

	\begin{proof}
		Since $X\in M_0[\omega_1]\cap M_1[\omega_1]$, the model $X$ is fixed by $\Psi_{M_0[\omega_1],M_1[\omega_1]}$. Therefore, as $X\cap M_0\in X$, and thus $X\cap M_0\in M_0[\omega_1]\cap M_1[\omega_1]$, we have $X\cap M_0=\Psi_{M_0[\omega_1],M_1[\omega_1]}(X\cap M_0)=X\cap M_1$.
	\end{proof}	
	
	We finish this section with some results that show that, in our setting, the operations of taking intersections and taking isomorphic copies of small and large elementary submodels commute. For a proof of the following facts, see \cite{Gallart2026forcingsymmetricsystemsmodels} (Propositions 2.32-2.35).
	
	\begin{proposition}\label{prop20}
		Let $Q_0, Q_1,Q_1'\in\mathcal{S}\cup\mathcal{L}$ be such that $Q_0\in Q_1[\omega_1]$ and $Q_1[\omega_1]\cong Q_1'[\omega_1]$. Let $Q_0'$ denote $\Psi_{Q_1[\omega_1],Q_1'[\omega_1]}(Q_0)$. Then $$Q_0'[\omega_1]=\Psi_{Q_1[\omega_1],Q_1'[\omega_1]}(Q_0[\omega_1]),$$ and $\Psi_{Q_1[\omega_1],Q_1'[\omega_1]}\upharpoonright Q_0[\omega_1]$ witnesses $Q_0[\omega_1]\cong Q_0'[\omega_1]$.
	\end{proposition}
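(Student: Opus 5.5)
Write $\Psi$ for $\Psi_{Q_1[\omega_1],Q_1'[\omega_1]}$. The plan is to compute $\Psi(Q_0[\omega_1])$ directly from the definition of the hull and then check the three requirements in the definition of an $\omega_1$-isomorphism.

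\textbf{The hull equality.} Since $Q_0\in Q_1[\omega_1]$ and $Q_1[\omega_1]\preceq H(\kappa)$ by Proposition \ref{prop7}, the hull $Q_0[\omega_1]$ is definable from $Q_0$ and $\omega_1$. Hence $Q_0[\omega_1]\in Q_1[\omega_1]$. First I would check that $Q_0[\omega_1]\subseteq Q_1[\omega_1]$.
\begin{itemize}
\item Every $f\in Q_0$ with domain $\omega_1$ lies in $Q_1[\omega_1]$. Indeed, $Q_0$ has size at most $\aleph_1$, so $Q_0$ is contained in $Q_1[\omega_1]$: this follows from $\omega_1\subseteq Q_1[\omega_1]$ and the fact that an element of size $\le\aleph_1$ of a model containing $\omega_1$ is a subset of it.
\item Then $f(\alpha)\in Q_1[\omega_1]$ for every $\alpha<\omega_1$, by closure under function application.
\end{itemize}
Now fix $f\in Q_0$ with domain $\omega_1$ and $\alpha<\omega_1$. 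Because $\Psi$ is an $\in$-isomorphism between elementary submodels, it respects function application. It also fixes every countable ordinal: $\omega_1\subseteq Q_1[\omega_1]\cap Q_1'[\omega_1]$, and $\Psi$ is the identity on that intersection. Therefore
\[
\Psi(f(\alpha))=\Psi(f)(\Psi(\alpha))=\Psi(f)(\alpha),
\]
and $\Psi(f)$ ranges over exactly the functions in $Q_0'$ with domain $\omega_1$. This gives $\Psi``(Q_0[\omega_1])=Q_0'[\omega_1]$. In the same way, $\Psi(Q_0[\omega_1])=Q_0'[\omega_1]$ as elements, since $Q_0[\omega_1]$ is definable from $Q_0$ inside $Q_1[\omega_1]$.

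\textbf{The isomorphism of structures.} I would next verify that $\Psi\upharpoonright Q_0[\omega_1]$ is an isomorphism
\[
(Q_0[\omega_1];\in,Q_0,T)\cong(Q_0'[\omega_1];\in,Q_0',T).
\]
Preservation of $\in$ and of $T$ is inherited from $\Psi$. For the predicate, $x\in Q_0$ iff $\Psi(x)\in\Psi(Q_0)=Q_0'$, since $\Psi$ preserves $\in$ and $Q_0\in Q_1[\omega_1]$. Uniqueness of the isomorphism follows because both structures are transitive-collapse-rigid in the usual sense: any isomorphism must fix $\omega_1$ pointwise and respect application of the functions in $Q_0$, so it is determined. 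Alternatively, uniqueness may be built into the intended definition as the structural isomorphism.

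\textbf{The main obstacle: identity on the intersection.} The remaining requirement is that $\Psi\upharpoonright Q_0[\omega_1]$ is the identity on $Q_0[\omega_1]\cap Q_0'[\omega_1]$. Any such element $x$ lies in $Q_0[\omega_1]\subseteq Q_1[\omega_1]$ and in $Q_0'[\omega_1]=\Psi``Q_0[\omega_1]\subseteq Q_1'[\omega_1]$. So $x\in Q_1[\omega_1]\cap Q_1'[\omega_1]$, and $\Psi(x)=x$ because $\Psi$ is an $\omega_1$-isomorphism. This containment step is where the hypotheses really enter, so it is the point I would verify most carefully. In particular, the subset inclusion $Q_0[\omega_1]\subseteq Q_1[\omega_1]$ relies on $|Q_0|\le\aleph_1$ together with $\omega_1\subseteq Q_1[\omega_1]$.
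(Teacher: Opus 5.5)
Your argument is correct, and it is the natural direct verification; the paper does not reprove this proposition, citing Propositions 2.32--2.35 of the companion paper instead. The steps are the right ones: $Q_0[\omega_1]\subseteq Q_1[\omega_1]$ from $|Q_0|\le\aleph_1$ and $\omega_1\subseteq Q_1[\omega_1]$, then $\Psi(f(\alpha))=\Psi(f)(\alpha)$, then the identity on the intersection from $Q_0'[\omega_1]\subseteq Q_1'[\omega_1]$. The one step to tighten is uniqueness. Fixing $\omega_1$ and commuting with application of functions in $Q_0$ only determines an isomorphism once its values on $Q_0$ are already known, so that argument is circular. Argue instead that two such isomorphisms differ by an automorphism of the well-founded extensional structure $(Q_0[\omega_1];\in)$, and any such automorphism is the identity by $\in$-induction; this is the uniqueness the paper already presupposes when it introduces $\Psi_{Q_0,Q_1}$.
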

	
	
	\begin{corollary}\label{corollary8}
		Let $M_0,M_1\in\mathcal{S}$ be such that $M_0[\omega_1]\cong M_1[\omega_1]$. Suppose that $X_0\in\mathcal{L}\cap M_0$ and denote $\Psi_{M_0[\omega_1],M_1[\omega_1]}(X_0)$ by $X_1$. Then, the map $\Psi_{M_0[\omega_1],M_1[\omega_1]}\upharpoonright(X_0\cap M_0)[\omega_1]$ witnesses $(X_0\cap M_0)[\omega_1]\cong(X_1\cap M_1)[\omega_1]$.
	\end{corollary}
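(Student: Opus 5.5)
The plan is to reduce Corollary \ref{corollary8} to Proposition \ref{prop20}, applied with $Q_1:=M_0$, $Q_1':=M_1$ and $Q_0:=X_0\cap M_0$. Write $\Psi$ for $\Psi_{M_0[\omega_1],M_1[\omega_1]}$.

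\textbf{Step 1: check the hypotheses of Proposition \ref{prop20}.} Since $X_0\in\mathcal{L}\cap M_0$ and $\mathcal{L}$ is appropriate for $\mathcal{S}$, we have $X_0\cap M_0\in X_0\cap\mathcal{S}$. In particular $X_0\cap M_0\in\mathcal{S}$. Moreover $X_0\cap M_0\in X_0\subseteq X_0[\omega_1]$. Since $X_0\in M_0\subseteq M_0[\omega_1]$ and $M_0[\omega_1]\preceq H(\kappa)$ contains $\omega_1$ (Proposition \ref{prop7}), the set $X_0$, which has size $\aleph_1$, is a subset of $M_0[\omega_1]$. Hence $X_0\cap M_0\in M_0[\omega_1]$. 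Proposition \ref{prop20} then says that $\Psi\upharpoonright(X_0\cap M_0)[\omega_1]$ witnesses
\[
(X_0\cap M_0)[\omega_1]\cong \Psi(X_0\cap M_0)[\omega_1].
\]

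\textbf{Step 2: compute $\Psi(X_0\cap M_0)=X_1\cap M_1$.} This is the main point. First, $\Psi$ maps the predicate $M_0$ onto $M_1$, and $X_0\cap M_0\subseteq M_0$, so $\Psi$ acts pointwise on it:
\[
\Psi(X_0\cap M_0)=\Psi``(X_0\cap M_0)\subseteq \Psi``(X_0)\cap M_1 .
\]
Here $\Psi``(X_0)=X_1$, again because $X_0\subseteq M_0[\omega_1]$ and $\Psi$ is an $\in$-isomorphism. So $\Psi(X_0\cap M_0)\subseteq X_1\cap M_1$.

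For the reverse inclusion, take $b\in X_1\cap M_1$. Then $b=\Psi(a)$ for some $a\in M_0$, because $\Psi$ restricted to $M_0$ is onto $M_1$ (Proposition \ref{prop-iso}). Since $\Psi(a)\in\Psi(X_0)$, we get $a\in X_0$. Thus $b\in\Psi``(X_0\cap M_0)$, and the two sets are equal.

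The only delicate issue is making sure every set involved lies inside $M_0[\omega_1]$, so that $\Psi$ acts pointwise and commutes with intersection. This rests on the fact that $\aleph_1$-sized members of $M_0[\omega_1]$ are subsets of it, which holds because $\omega_1\subseteq M_0[\omega_1]$.

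\textbf{Step 3: conclude.} Substituting $\Psi(X_0\cap M_0)=X_1\cap M_1$ into the conclusion of Step 1 shows that $\Psi\upharpoonright(X_0\cap M_0)[\omega_1]$ witnesses $(X_0\cap M_0)[\omega_1]\cong(X_1\cap M_1)[\omega_1]$, which is the statement of the corollary.
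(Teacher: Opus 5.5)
Your argument is correct and is essentially the intended one. The paper states this as a corollary of Proposition~\ref{prop20}, deferring details to the cited paper, and it follows exactly by taking $Q_0=X_0\cap M_0$, $Q_1=M_0$, $Q_1'=M_1$ and verifying $\Psi(X_0\cap M_0)=X_1\cap M_1$ as you do.

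One point to tighten is your use of $\Psi(a)=\{\Psi(x):x\in a\}$. This also requires $\Psi(a)\subseteq M_1[\omega_1]$, not just $a\subseteq M_0[\omega_1]$. It follows from the same cardinality fact applied on the $M_1[\omega_1]$ side, since $\Psi$ transfers $|a|\leq\aleph_1$ and $M_1[\omega_1]\preceq H(\kappa)$.
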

	
	\begin{proposition}\label{prop9}
		Let $M\in\mathcal{S}$ and $X_0,X_1\in\mathcal{L}\cap M$. Suppose that $X_0\cong X_1$. Then, $\Psi_{X_0,X_1}\upharpoonright(X_0\cap M)[\omega_1]$ witnesses $(X_0\cap M)[\omega_1]\cong(X_1\cap M)[\omega_1]$.
	\end{proposition}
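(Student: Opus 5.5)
Write $\Psi:=\Psi_{X_0,X_1}$. By definition $X_0\cong X_1$ means that $\Psi$ is the isomorphism $(X_0;\in,T)\to(X_1;\in,T)$ and that $\Psi$ is the identity on $X_0\cap X_1$; recall that $X_i[\omega_1]=X_i$. The plan is to first locate $X_i\cap M$ inside $X_i$ and show that $\Psi(X_0\cap M)=X_1\cap M$. Then we apply Proposition \ref{prop20} with $Q_1=X_0$, $Q_1'=X_1$ and $Q_0=X_0\cap M$.

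\textbf{Step 1: $X_i\cap M\in X_i$.} Since $X_i\in\mathcal{L}\cap M$ and $\mathcal{L}$ is appropriate for $\mathcal{S}$, we get $X_i\cap M\in X_i\cap\mathcal{S}$. So $X_0\cap M\in X_0=X_0[\omega_1]$, and Proposition \ref{prop20} applies. It yields that $\Psi\upharpoonright(X_0\cap M)[\omega_1]$ witnesses $(X_0\cap M)[\omega_1]\cong(\Psi(X_0\cap M))[\omega_1]$. It therefore remains to prove $\Psi(X_0\cap M)=X_1\cap M$.

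\textbf{Step 2: $\Psi$ fixes $X_0\cap M$ pointwise.} Take $a\in X_0\cap M$. Both $X_0$ and $X_1$ lie in $M$, and $X_0\cong X_1$ is a statement about $X_0,X_1,T$, so by elementarity the map $\Psi$ belongs to $M$. Here we use that $\Psi$ is definable in $H(\kappa)$ from $X_0,X_1$ as the unique isomorphism, and that $\Psi\in H(\kappa)$ because $|X_i|=\aleph_1$. Since $a\in M$ as well, $\Psi(a)\in M\cap X_1$.

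We then want $\Psi(a)=a$. The natural route goes through $\omega_1\subseteq X_0$. Since $X_0\in M$ and $|X_0|=\aleph_1$, $M$ contains a bijection $e:\omega_1\to X_0$, and then $\Psi\circ e\in M$ enumerates $X_1$. Write $a=e(\xi)$ with $\xi\in\omega_1$; because $a,e\in M$, we get $\xi\in M\cap\omega_1=\delta_M$. Then $\Psi(a)=(\Psi\circ e)(\xi)$. This does not by itself give $\Psi(a)=a$, but it shows that $\Psi$ maps $X_0\cap M$ onto $X_1\cap M$. Indeed, for $b\in X_1\cap M$ we write $b=(\Psi\circ e)(\eta)$ with $\eta\in M$ by the same argument, so $b=\Psi(e(\eta))$ with $e(\eta)\in X_0\cap M$. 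Together with the inclusion $\Psi"(X_0\cap M)\subseteq X_1\cap M$ from the first paragraph, this gives $\Psi"(X_0\cap M)=X_1\cap M$.

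\textbf{Step 3: from image to value.} Since $X_0\cap M\in X_0$ and $X_0\cap M\subseteq X_0$, the element $\Psi(X_0\cap M)$ equals the pointwise image $\Psi"(X_0\cap M)$. This is because $\Psi$ is an $\in$-isomorphism and $X_0\cap M$ is countable, so its elements all lie in $X_0$, using $\omega_1\subseteq X_0$. Hence $\Psi(X_0\cap M)=X_1\cap M$, and Proposition \ref{prop20} gives exactly the desired conclusion, with the restricted map equal to $\Psi_{X_0,X_1}\upharpoonright(X_0\cap M)[\omega_1]$.

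The main obstacle is the requirement, built into the definition of an $\omega_1$-isomorphism, that the map be the identity on $(X_0\cap M)[\omega_1]\cap(X_1\cap M)[\omega_1]$. I expect this to be inherited from Proposition \ref{prop20}, which asserts that $\Psi$ restricted there is an $\omega_1$-isomorphism. If that proposition only delivers the isomorphism part, we note that $(X_i\cap M)[\omega_1]\subseteq X_i$. Hence the intersection lies inside $X_0\cap X_1$, where $\Psi$ is the identity by hypothesis.
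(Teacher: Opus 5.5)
Your argument is correct. The paper gives no proof of its own here (it defers to the companion paper), and your route is the natural one: appropriateness gives $X_0\cap M\in X_0$; definability of $\Psi_{X_0,X_1}$ from $X_0,X_1\in M$ puts it in $M$, so $\Psi_{X_0,X_1}(X_0\cap M)=X_1\cap M$; and Proposition \ref{prop20} finishes.

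There are three small presentational points. The heading of Step 2 should not say "fixes pointwise", since $\Psi_{X_0,X_1}$ does not fix $X_0\cap M$ pointwise when $X_0\neq X_1$. The reverse inclusion follows directly from $\Psi_{X_0,X_1}^{-1}\in M$, without the enumeration $e$. In Step 3, the inclusion that actually needs the countability argument is $\Psi_{X_0,X_1}(X_0\cap M)\subseteq X_1$, which holds because this set is countable in $X_1$ and $\omega\subseteq X_1$.
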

	
	\begin{proposition}\label{prop10}
		Let $M_0,M_1\in\mathcal{S}$ be such that $M_0[\omega_1]\cong M_1[\omega_1]$. Let $X_0,X_1\in\mathcal{L}$ be such that $X_0\cong X_1$, $X_0\in M_0$ and $X_1\in M_1$. Then, the map $\Psi_{X_0,X_1}\upharpoonright(X_0\cap M_0)[\omega_1]$ witnesses $(X_0\cap M_0)[\omega_1]\cong(X_1\cap M_1)[\omega_1]$.
	\end{proposition}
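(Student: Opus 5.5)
Write $\Psi$ for $\Psi_{X_0,X_1}$ and $P_i$ for $(X_i\cap M_i)[\omega_1]$, $i=0,1$. The plan is to reduce the statement to Proposition \ref{prop9} and Corollary \ref{corollary8} by passing through the intermediate model $X_0':=\Psi_{M_0[\omega_1],M_1[\omega_1]}(X_0)$, which lies in $M_1$ since $X_0\in M_0$. Corollary \ref{corollary8} then gives that $\Phi:=\Psi_{M_0[\omega_1],M_1[\omega_1]}\upharpoonright P_0$ witnesses $P_0\cong(X_0'\cap M_1)[\omega_1]$.

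Next I check that $X_0'\cong X_1$, so that Proposition \ref{prop9} applies inside $M_1$. The restriction of $\Psi_{M_0[\omega_1],M_1[\omega_1]}$ to $X_0$ is an isomorphism $(X_0;\in,T)\cong(X_0';\in,T)$. This holds because $X_0\subseteq M_0[\omega_1]$: $X_0\in M_0$ has size $\aleph_1$, so $X_0$ is the range of a function in $M_0$ with domain $\omega_1$. Since $\omega_1\subseteq X_0$, Proposition \ref{prop20} applied with $Q_1=M_0$ shows that $X_0\cong X_0'$ as $\omega_1$-isomorphic models. Composing, $X_0'$ and $X_1$ are isomorphic as structures via $\Psi\circ(\Psi_{M_0[\omega_1],M_1[\omega_1]}\upharpoonright X_0)^{-1}$.

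The main obstacle is the extra requirement in the definition of $\omega_1$-isomorphism that the map be the identity on $X_0'\cap X_1$. I would handle it as follows. For $a\in X_0'\cap X_1$, choose a function $f$ with domain $\omega_1$ and an $\alpha<\omega_1$ such that $a=f(\alpha)$, where $f\in X_0'$ is obtained by elementarity in both models, or alternatively write $a$ as the image of a common code. I would then argue through the chain $X_0\to X_0'\to X_1$ using that each of $\Psi$ and $\Psi_{M_0[\omega_1],M_1[\omega_1]}$ is the identity on the relevant intersection. If this direct verification turns out to be messy, the fallback is to skip the intermediate step and apply the proofs of Propositions \ref{prop9} and \ref{prop20} directly to $\Psi$. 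Since $X_0\cap M_0\in X_0$, let $N:=\Psi(X_0\cap M_0)$. We must show $N=X_1\cap M_1$.

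To show $N=X_1\cap M_1$, compute $\Psi(X_0\cap M_0)$ in two ways. In the first way, $X_0\cap M_0\in M_0[\omega_1]\cap X_0$, and we track it under $\Psi_{M_0[\omega_1],M_1[\omega_1]}$: by Proposition \ref{prop21}-style reasoning it is sent to $X_0'\cap M_1$. In the second way, we use Proposition \ref{prop9} in $M_1$, which gives $\Psi_{X_0',X_1}(X_0'\cap M_1)=X_1\cap M_1$. Uniqueness of the isomorphisms, which follows from the coding in $T$ and the rigidity of transitive-like structures containing $\omega_1$, makes $\Psi$ agree with the composite on $X_0$. That composite maps $X_0\cap M_0$ to $X_1\cap M_1$.

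Once $\Psi(X_0\cap M_0)=X_1\cap M_1$, Proposition \ref{prop20} applied with $Q_1=X_0$, $Q_1'=X_1$ and $Q_0=X_0\cap M_0$ yields the conclusion: $\Psi\upharpoonright P_0$ witnesses $P_0\cong P_1$. Here we use that $X_i[\omega_1]=X_i$ by the Corollary to Proposition \ref{prop7}.
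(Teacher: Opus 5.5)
Your overall route via $X_0'$ is sound in outline, but there is a genuine gap at exactly the step you flag, and your fallback does not close it.

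Write $\Psi_M$ for $\Psi_{M_0[\omega_1],M_1[\omega_1]}$ and $\Psi$ for $\Psi_{X_0,X_1}$. To apply Proposition \ref{prop9} to the pair $X_0',X_1$ inside $M_1$, you need $X_0'\cong X_1$ in the full sense. That is, $\Psi':=\Psi\circ(\Psi_M\upharpoonright X_0)^{-1}$ must be the identity on $X_0'\cap X_1$; you would also need $X_0'\in\mathcal{L}$, which the abstract setting does not give you. You never prove this, and it does not follow formally from the two identity-on-intersection properties you have. If $a\in X_0'\cap X_1$ but $a\notin M_0[\omega_1]$, then $\Psi_M^{-1}(a)\neq a$, and nothing in the hypotheses says where $\Psi$ sends $\Psi_M^{-1}(a)$. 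Writing $a=f(\alpha)$ merely transfers the same question to $f$. Your fallback does not avoid the issue either, because its ``second way'' again invokes Proposition \ref{prop9} for $X_0'$ and $X_1$.

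The missing observation is that you do not need $X_0'\cong X_1$ at all. You only need the part of the argument behind Proposition \ref{prop9} that uses elementarity of $M_1$:
\begin{itemize}
\item $\Psi'$ is an isomorphism from $(X_0';\in,T)$ onto $(X_1;\in,T)$, and it is the only one. Both structures are extensional and well-founded, so uniqueness follows from the Mostowski collapse; no coding in $T$ is needed.
\item Hence $\Psi'$ is definable in $H(\kappa)$ from $X_0',X_1\in M_1$. So $\Psi'$ and $(\Psi')^{-1}$ belong to $M_1$, and therefore $\Psi'$ maps $X_0'\cap M_1$ onto $X_1\cap M_1$.
\item Since $X_0\subseteq M_0[\omega_1]$ and $\Psi_M$ sends the predicate $M_0$ to $M_1$ and $X_0$ to $X_0'$, it maps $X_0\cap M_0$ onto $X_0'\cap M_1$.
\item As $\Psi=\Psi'\circ(\Psi_M\upharpoonright X_0)$, you get $\Psi(X_0\cap M_0)=X_1\cap M_1$.
\end{itemize}
From there your last paragraph works as written: apply Proposition \ref{prop20} with $Q_1=X_0$, $Q_1'=X_1$ and $Q_0=X_0\cap M_0$, which lies in $\mathcal{S}$ because $\mathcal{L}$ is appropriate for $\mathcal{S}$.
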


	\section{Two-type symmetric systems and amalgamation lemmas}\label{section-pure}
	
	In this section, we will introduce $(\mathcal{S},\mathcal{L})$-symmetric systems and outline the main amalgamation lemmas from \cite{Gallart2026forcingsymmetricsystemsmodels}. We will not give a full account of the proofs for such lemmas. We refer the interested reader to the aforementioned paper. 
	
	The side conditions that we will present in this section are a natural combination of Neeman's chains of models of two types from \cite{Neeman2014Forcingwithsequencesofmodelsoftwotypes} and Asperó-Mota-Todor\v cevi\'c's symmetric systems of elementary submodels from \cite{Aspero:Mota2015a} and \cite{Todorcevic1984AnoteonthePFA}. The structural properties of these side conditions, which will be reflected in the 
	amalgamation lemmas, will ensure that the forcing witnessing Theorem \ref{mainthm} is proper, $\aleph_1$-proper and has the $\aleph_3$-Knaster condition (this crucially uses the assumption $2^{\aleph_1}=\aleph_2$).
	
	For any set $\mathcal{M}\subseteq\mathcal{S}\cup\mathcal{L}$, we let $\dom(\mathcal{M})$ denote the set of $\omega_2$-heights $\{\varepsilon_Q:Q\in\mathcal{M}\}$. If $\varepsilon\in\dom(\mathcal{M})$, let $\mathcal{M}(\varepsilon)$ be the set of all models $Q\in\mathcal{M}$ such that $\varepsilon_Q=\varepsilon$. Moreover, we let $\mathcal{M}({<}\varepsilon)$ denote the set of models $Q\in\mathcal{M}$ such that $\varepsilon_Q<\varepsilon$. We define  $\mathcal{M}({\leq}\varepsilon)$, $\mathcal{M}({>}\varepsilon)$, and $\mathcal{M}(\geq\varepsilon)$ similarly. Lastly, we let $\mathcal{M}[\omega_1]$ denote the set $\{Q[\omega_1]:Q\in\mathcal{M}\}$.
	
	\begin{definition}\label{defSSM2T}
		Let $\mathcal{M}$ be a finite set of members of $H(\kappa)$. We say that $\mathcal{M}$ is an \emph{$(\mathcal{S},\mathcal{L})$-symmetric system} if and only if the following holds:
		\begin{enumerate}[label=(\Alph*)]
			\item Every $Q\in\mathcal{M}$ is an element of $\mathcal{S}\cup\mathcal{L}$.
			
			\item For any two $Q_0,Q_1\in\mathcal{M}$, if $\varepsilon_{Q_0}=\varepsilon_{Q_1}$, then $Q_0[\omega_1]\cong Q_1[\omega_1]$. 
			
			\item If $\varepsilon_1$ is the immediate successor of $\varepsilon_0$ in $\dom(\mathcal{M})$ and $Q_0\in\mathcal{M}(\varepsilon_0)$, then there is $Q_1\in\mathcal{M}(\varepsilon_1)$ such that $Q_0\in Q_1$.
			
			\item For all $Q_0,Q_1,Q_1'\in\mathcal{M}$ such that $Q_0\in Q_1$ and $\varepsilon_{Q_1}=\varepsilon_{Q_1'}$, $$\Psi_{Q_1[\omega_1],Q_1'[\omega_1]}(Q_0)\in\mathcal{M}.$$
			
			\item For every $X\in\mathcal{M}\cap\mathcal{L}$ and every $M\in\mathcal{M}\cap\mathcal{S}$, if $X\in M$, then $X\cap M\in\mathcal{M}$.
		\end{enumerate}
	\end{definition}

	\begin{remark}
		If $\mathcal{M}$ is an $(\mathcal{S},\mathcal{L})$-symmetric system and $\varepsilon_0<\varepsilon_1$ belong to $\dom(\mathcal{M})$, then for every $Q_0\in\mathcal{M}(\varepsilon_0)$ we can find a model $Q_1\in\mathcal{M}(\varepsilon_1)$ such that $Q_0\in Q_1[\omega_1]$ by successive applications of clause (C).
	\end{remark}
	
	We will refer to clause (C) and the conclusion of the last remark, indistinctly, as the \emph{shoulder axiom for $\mathcal{M}$}. Moreover, when we talk about the \emph{symmetry} of an $(\mathcal{S},\mathcal{L})$-symmetric system, we usually refer to the closure under isomorphisms given by the following strengthening of condition (D).
	
	\begin{proposition}[Proposition 3.4 in \cite{Gallart2026forcingsymmetricsystemsmodels}]\label{prop6}
		Let $\mathcal{M}$ be an $(\mathcal{S},\mathcal{L})$-symmetric system, and let $Q_0,Q_1,Q_1'$ be in $\mathcal{M}$. If $Q_0\in Q_1[\omega_1]$ and $\varepsilon_{Q_1}=\varepsilon_{Q_1'}$, then $\Psi_{Q_1[\omega_1],Q_1'[\omega_1]}(Q_0)\in\mathcal{M}$.
	\end{proposition}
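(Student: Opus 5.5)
The plan is to induct on $\varepsilon_{Q_1}$, or equivalently on how deep $Q_0$ sits in $Q_1[\omega_1]$, reducing the statement to repeated applications of clauses (D) and (E) of Definition \ref{defSSM2T}. First we dispose of the easy cases. If $Q_1$ is large, then $Q_1[\omega_1]=Q_1$, so $Q_0\in Q_1$ and clause (D) applies directly. So assume $Q_1=M$ is small, let $M'=Q_1'$, and write $\Psi=\Psi_{M[\omega_1],M'[\omega_1]}$. If $Q_0\in M$, clause (D) again applies. The remaining case is $Q_0\in M[\omega_1]\setminus M$.

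In this case we use the shoulder axiom to locate a model of $\mathcal{M}$ that contains $Q_0$ and lies in $M$. We have $\varepsilon_{Q_0}<\varepsilon_M$, since $Q_0\in M[\omega_1]$ forces $\varepsilon_{Q_0}<\varepsilon_{M[\omega_1]}=\varepsilon_M$ (the hull adds no new ordinals below $\omega_2$ beyond the sup). Consider the levels of $\dom(\mathcal{M})$ strictly between $\varepsilon_{Q_0}$ and $\varepsilon_M$. Applying clause (C) upward from $Q_0$, we obtain a chain $Q_0\in R_1\in\dots\in R_k$ with $\varepsilon_{R_k}$ the predecessor of $\varepsilon_M$ in $\dom(\mathcal{M})$, and then some $N\in\mathcal{M}(\varepsilon_M)$ with $R_k\in N$. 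Since $N[\omega_1]\cong M[\omega_1]$ by (B), and clause (D) is what we are able to iterate, we transport along the isomorphism. The cleaner route is the following. Among models of $\mathcal{M}$ in $M$ containing $Q_0$ in their hull, choose one, $R$, of minimal height. Then $\Psi(R)\in\mathcal{M}$ by (D), and $\Psi\restriction R[\omega_1]=\Psi_{R[\omega_1],\Psi(R)[\omega_1]}$ by Proposition \ref{prop20}. Hence $\Psi(Q_0)=\Psi_{R[\omega_1],\Psi(R)[\omega_1]}(Q_0)$, and this lies in $\mathcal{M}$ by the induction hypothesis applied at the lower height $\varepsilon_R$.

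The main obstacle is producing such an $R\in M\cap\mathcal{M}$ with $Q_0\in R[\omega_1]$. It is not automatic that the shoulder axiom gives a witness inside $M$ rather than inside an isomorphic copy. The idea for getting around this is to use Proposition \ref{propgaps}. Since $Q_0\in M[\omega_1]\setminus M$, the model $Q_0$ must be built from $M$ and some countable ordinal, so it lies in the $\omega_1$-hull of some large $X\in M$. I would try to show that there is a large $X\in\mathcal{M}\cap M$ with $Q_0\in X$. Then (E) gives $X\cap M\in\mathcal{M}$, and $\Psi(X)$ is handled by (D). Corollary \ref{corollary8} and Proposition \ref{prop10} then show that $\Psi$ restricted to $X$ agrees with $\Psi_{X,\Psi(X)}$ on the relevant part, so $\Psi(Q_0)=\Psi_{X,\Psi(X)}(Q_0)\in\mathcal{M}$ by (D) applied to $Q_0\in X$.

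To find $X$, take the model $N\in\mathcal{M}(\varepsilon_M)$ given by the shoulder axiom with $Q_0\in N[\omega_1]$, and argue by a secondary induction on the number of levels between $\varepsilon_{Q_0}$ and $\varepsilon_M$, using Proposition \ref{prop21} to identify $X\cap M$ with $X\cap N$. This step, together with the bookkeeping showing that the various isomorphisms agree, is where most of the work lies.
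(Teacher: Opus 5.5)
\textbf{There is a genuine gap: the existence of $R$ is never established.} Your ``cleaner route'' is sound \emph{once} you have a model $R\in\mathcal{M}\cap M$ with $Q_0\in R[\omega_1]$. Then (D) puts $\Psi(R)$ in $\mathcal{M}$, Proposition \ref{prop20} identifies $\Psi\upharpoonright R[\omega_1]$ with $\Psi_{R[\omega_1],\Psi(R)[\omega_1]}$, and induction on height finishes.

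But producing $R$ is essentially the whole content of the proposition, and you leave it unproved. Your last paragraph (a secondary induction, Proposition \ref{prop21}, ``bookkeeping'') is a hope rather than an argument, and it points in the wrong direction. Proposition \ref{prop21} applies only to a large $X$ already known to lie in $M[\omega_1]\cap N[\omega_1]$, which is exactly the kind of membership you are trying to establish. Clause (E), Corollary \ref{corollary8} and Proposition \ref{prop10} concern intersections $X\cap M$, which play no role here. Your remark that $Q_0$ lies in some large $X\in M$ is true for arbitrary elementary submodels, but it says nothing about whether $X\in\mathcal{M}$.

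\textbf{The missing idea.} It is one observation, built into the definition of $\omega_1$-isomorphism. The map $\Psi_{N[\omega_1],M[\omega_1]}$ exists by (B), since $\varepsilon_N=\varepsilon_M$, and it is the identity on $N[\omega_1]\cap M[\omega_1]$. You already have $Q_0\in N[\omega_1]$ from the shoulder chain $Q_0\in R_1\in\dots\in R_k\in N$, and $Q_0\in M[\omega_1]$ by hypothesis. Hence $\Psi_{N[\omega_1],M[\omega_1]}(Q_0)=Q_0$.

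Now set $R:=\Psi_{N[\omega_1],M[\omega_1]}(R_k)$. It is in $\mathcal{M}$ by (D), because $R_k\in N$, and it is in $M$. By Proposition \ref{prop20},
$Q_0=\Psi_{N[\omega_1],M[\omega_1]}(Q_0)\in\Psi_{N[\omega_1],M[\omega_1]}(R_k[\omega_1])=R[\omega_1]$.
If there are no intermediate levels, so that $Q_0\in N$ directly, the same computation gives $Q_0\in M$, which is the case already covered by (D).

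With this, your induction on height goes through, and no large models and no use of clause (E) are needed. Alternatively, by uniqueness of the isomorphisms, $\Psi_{M[\omega_1],M'[\omega_1]}(Q_0)=\Psi_{N[\omega_1],M'[\omega_1]}(Q_0)$. You can then descend along $N\ni R_k\ni\dots\ni R_1\ni Q_0$, applying (D) and Proposition \ref{prop20} at each step.
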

	
	The following are some basic facts about $(\mathcal{S},\mathcal{L})$-symmetric systems, which are easy consequences of the preliminary material on elementary submodels from Section \ref{section-prelim}. We omit their proofs, which can be found in \cite{Gallart2026forcingsymmetricsystemsmodels}.
	
	\begin{proposition}[Proposition 3.3 in \cite{Gallart2026forcingsymmetricsystemsmodels}]\label{prop18}
		Let $\mathcal{M}$ be an $(\mathcal{S},\mathcal{L})$-symmetric system. The following hold for any two models $Q_0,Q_1\in\mathcal{M}$:
		\begin{enumerate}[label=(\arabic*)]
			\item If $Q_0\in Q_1$ and $|Q_0|\leq|Q_1|$, then $Q_0\subseteq Q_1$.
			\item If $Q_1$ is a small model, $Q_0\in Q_1[\omega_1]$, and there is no large model $X\in\mathcal{M}$ such that $\varepsilon_{Q_0}<\varepsilon_X<\varepsilon_{Q_1}$, then $Q_0\in Q_1$.
		\end{enumerate}
	\end{proposition}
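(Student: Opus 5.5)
For item (1), I would split into cases by the sizes of $Q_0$ and $Q_1$. If $Q_0$ is small, then $Q_0$ is countable, and $\omega\subseteq Q_1$ since $Q_1\preceq H(\kappa)$. The basic fact that $A\in Q$ with $|A|=\mu\subseteq Q$ implies $A\subseteq Q$ then gives $Q_0\subseteq Q_1$. The surjections $e_\alpha$ coded in $T$ let us enumerate $Q_0$ inside $Q_1$ via a map from $\omega$. If both models are large, then $|Q_0|=\aleph_1$, and since $Q_1$ is large we have $\omega_1\subseteq Q_1$. Indeed, $Q_1\cap\omega_2$ is an ordinal, so in particular $\omega_1\subseteq Q_1$. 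The same basic fact with $\mu=\omega_1$ then gives $Q_0\subseteq Q_1$. The remaining case, $Q_0$ large and $Q_1$ small, is excluded by the hypothesis $|Q_0|\le|Q_1|$.

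For item (2), let $Q_1=M$ be small with $Q_0\in M[\omega_1]$, and suppose no large $X\in\mathcal{M}$ has $\varepsilon_{Q_0}<\varepsilon_X<\varepsilon_M$. First I would note that $\varepsilon_{Q_0}<\varepsilon_M$, or that $Q_0\in M$ trivially in degenerate cases. I would then argue by induction along $\dom(\mathcal{M})$, using the shoulder axiom (C). Let $\varepsilon_{Q_0}=\varepsilon_0<\varepsilon_1<\dots<\varepsilon_n=\varepsilon_M$ be the consecutive elements of $\dom(\mathcal{M})$ between them. By repeated use of (C), there is a chain $Q_0=P_0\in P_1\in\dots\in P_n$ with $P_i\in\mathcal{M}(\varepsilon_i)$. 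Each $P_i$ with $0<i<n$ is small by hypothesis. Hence, by (1), $P_0\in P_{n-1}$ with $P_{n-1}$ small or equal to $Q_0$, and $P_{n-1}\in P_n$.

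By clause (B), $P_n[\omega_1]\cong M[\omega_1]$. Let $\Psi=\Psi_{P_n[\omega_1],M[\omega_1]}$. By Proposition \ref{prop-iso}, $\Psi\upharpoonright P_n$ maps $P_n$ onto $M$. So $Q_0':=\Psi(Q_0)\in M$, and I need $Q_0'=Q_0$. Since $Q_0\in M[\omega_1]$ and $Q_0\in P_n\subseteq P_n[\omega_1]$, the element $Q_0$ lies in $P_n[\omega_1]\cap M[\omega_1]$. The definition of $\omega_1$-isomorphism requires $\Psi$ to be the identity on this intersection, so $\Psi(Q_0)=Q_0\in M$.

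The main obstacle is justifying that the chain built by the shoulder axiom stays inside the small models, i.e.\ that $Q_0\in P_n$ rather than only $Q_0\in P_n[\omega_1]$. This needs transitivity of membership through small models, which item (1) provides when each intermediate $P_i$ with $i\ge1$ is small: $P_{i-1}\in P_i$ with $P_i$ countable gives $P_{i-1}\subseteq P_i$ when $P_{i-1}$ is small. For $i=1$, however, $P_0=Q_0$ may be large. In that case we use $P_0\in P_1$ and $P_1\subseteq P_2\subseteq\dots\subseteq P_n$, all of which are small. So $Q_0\in P_n$, and the hypothesis is used exactly to exclude large intermediate models, where membership only passes to the hull.
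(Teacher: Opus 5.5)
Your proof is correct and uses the natural argument. The paper omits this proof, deferring to Gallart's Proposition~3.3, but it applies exactly this pattern of climbing with clause (C) and then transporting along an $\omega_1$-isomorphism, e.g.\ in the proof of Lemma~\ref{key-claim}: (1) is the basic fact about members of size $\mu\subseteq Q_1$, and (2) follows because $Q_0\in P_1\subseteq\dots\subseteq P_n$ through small models, while $\Psi_{P_n[\omega_1],M[\omega_1]}$ fixes $Q_0\in P_n[\omega_1]\cap M[\omega_1]$ and carries $P_n$ onto $M$.

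The facts you leave implicit are routine: $\varepsilon_{Q_0}<\varepsilon_M$ because $\varepsilon_{Q_0}\in M[\omega_1]\cap\omega_2\subseteq\varepsilon_M$, and $P_n$ is small because the $\omega_1$-isomorphism maps the predicate $P_n$ bijectively onto the countable set $M$. Also, the maps $e_\alpha$ play no role in (1); elementarity of $Q_1$ alone provides a surjection from $\omega$ onto $Q_0$ inside $Q_1$.
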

	
	\begin{proposition}[Proposition 3.5 in \cite{Gallart2026forcingsymmetricsystemsmodels}]\label{prop13}
		Let $\mathcal{M}$ be an $(\mathcal{S},\mathcal{L})$-symmetric system and let $Q_0,Q_1$ be two models in $\mathcal{M}$ such that $Q_0\in Q_1[\omega_1]$. If there is $\varepsilon\in\dom(\mathcal{M})$ such that $\varepsilon_{Q_0}<\varepsilon<\varepsilon_{Q_1}$, then there is some $P\in\mathcal{M}(\varepsilon)$ such that $Q_0\in P[\omega_1]$ and $P\in Q_1[\omega_1]$.
	\end{proposition}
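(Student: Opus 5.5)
The natural approach is induction along $\dom(\mathcal{M})$, using the shoulder axiom together with Proposition \ref{prop6}. Fix $Q_0\in Q_1[\omega_1]$ and $\varepsilon\in\dom(\mathcal{M})$ with $\varepsilon_{Q_0}<\varepsilon<\varepsilon_{Q_1}$. I would induct on the number of elements of $\dom(\mathcal{M})$ strictly between $\varepsilon$ and $\varepsilon_{Q_1}$. The base case is when $\varepsilon_{Q_1}$ is the immediate successor of $\varepsilon$ in $\dom(\mathcal{M})$; the inductive step then reduces to it.

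\emph{Inductive step.} Suppose there is some $\varepsilon'\in\dom(\mathcal{M})$ with $\varepsilon<\varepsilon'<\varepsilon_{Q_1}$, and take $\varepsilon'$ to be the immediate predecessor of $\varepsilon_{Q_1}$. The base case applied to $\varepsilon'$ gives $P'\in\mathcal{M}(\varepsilon')$ with $Q_0\in P'[\omega_1]$ and $P'\in Q_1[\omega_1]$. The induction hypothesis applied to $Q_0,P'$ and $\varepsilon$ then gives $P\in\mathcal{M}(\varepsilon)$ with $Q_0\in P[\omega_1]$ and $P\in P'[\omega_1]$. Since $P'[\omega_1]\subseteq Q_1[\omega_1]$ (indeed $P'\in Q_1[\omega_1]$ and $Q_1[\omega_1]\supseteq\omega_1$ is elementary, so $P'[\omega_1]\subseteq Q_1[\omega_1]$ by Proposition \ref{prop7}), we get $P\in Q_1[\omega_1]$.

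\emph{Base case.} Here $\varepsilon$ is the immediate predecessor of $\varepsilon_{Q_1}$. By the shoulder axiom (the Remark after Definition \ref{defSSM2T}), first find $P_0\in\mathcal{M}(\varepsilon)$ with $Q_0\in P_0[\omega_1]$. By clause (C), find $Q_1'\in\mathcal{M}(\varepsilon_{Q_1})$ with $P_0\in Q_1'$. By (B), $Q_1[\omega_1]\cong Q_1'[\omega_1]$; let $\Psi=\Psi_{Q_1'[\omega_1],Q_1[\omega_1]}$ and set $P=\Psi(P_0)$. Proposition \ref{prop6} gives $P\in\mathcal{M}$. Moreover $P\in Q_1[\omega_1]$, and $\varepsilon_P=\varepsilon_{P_0}=\varepsilon$ because $\Psi$ is the identity on ordinals below $\omega_2$ lying in the hull. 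This uses the fact that both hulls contain $\omega_1$ and the proposition on isomorphisms fixing $\mu^+\cap Q$ with $\mu=\omega_1$, since $T$ codes the surjections $e_\alpha$.

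It remains to check $Q_0\in P[\omega_1]$, and this is the main obstacle, since $Q_0$ need not be fixed by $\Psi$. By Proposition \ref{prop20}, $\Psi$ maps $P_0[\omega_1]$ onto $P[\omega_1]$, so $\Psi(Q_0)\in P[\omega_1]$. We know $Q_0\in Q_1[\omega_1]$, and $Q_0\in P_0[\omega_1]\subseteq Q_1'[\omega_1]$, so $Q_0$ lies in the intersection of the two hulls. Since an $\omega_1$-isomorphism is by definition the identity on $Q_1'[\omega_1]\cap Q_1[\omega_1]$, we get $\Psi(Q_0)=Q_0$. Hence $Q_0\in P[\omega_1]$, as required. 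The delicate points are the inclusion $P_0[\omega_1]\subseteq Q_1'[\omega_1]$, which follows from $P_0\in Q_1'$ via Proposition \ref{prop7}, and invoking the fixed-point clause in the definition of $\omega_1$-isomorphism.
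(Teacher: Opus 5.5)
Your proof is correct. The decisive step is exactly right. Since $Q_0\in P_0[\omega_1]\subseteq Q_1'[\omega_1]$ and $Q_0\in Q_1[\omega_1]$, the $\omega_1$-isomorphism $\Psi$ fixes $Q_0$, so $Q_0=\Psi(Q_0)\in\Psi(P_0[\omega_1])=P[\omega_1]$. The inductive step is also sound: the number of heights strictly drops when passing from $(Q_0,Q_1)$ to $(Q_0,P')$, and $P'[\omega_1]\subseteq Q_1[\omega_1]$ because $P'\subseteq Q_1[\omega_1]$ by cardinality.

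The paper gives no proof of this proposition (it is quoted from Gallart's paper), so there is nothing written to match against. Note, however, that your induction is dispensable. The base-case argument works for arbitrary $\varepsilon$ if you take $Q_1'\in\mathcal{M}(\varepsilon_{Q_1})$ with $P_0\in Q_1'[\omega_1]$ from the general shoulder axiom (the Remark after Definition \ref{defSSM2T}). You then use Proposition \ref{prop6} instead of clause (D) to get $P\in\mathcal{M}$, and the fixed-point argument for $Q_0$ is unchanged. What your inductive version buys is that in the base case $P_0\in Q_1'$, so clause (D) alone suffices; your appeal to Proposition \ref{prop6} there is unnecessary. The whole argument then rests only on (B)--(D) and the definition of $\omega_1$-isomorphism.

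A minor point concerns $\varepsilon_P=\varepsilon_{P_0}$. You invoke the proposition assuming $T$ codes surjections $e_\alpha:\omega_1\to\alpha$, which is not assumed in the general setting of Section \ref{section-pure}. It is not needed. If $N\preceq H(\kappa)$ and $\omega_1\subseteq N$, each nonzero $\alpha\in N\cap\omega_2$ is the image of some function with domain $\omega_1$ lying in $N$, so $\alpha\subseteq N$ and $N\cap\omega_2$ is an ordinal. An $\in$-isomorphism between two such hulls therefore restricts to an order isomorphism between ordinals, i.e., the identity.
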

	
	\begin{proposition}[Proposition 3.6 in \cite{Gallart2026forcingsymmetricsystemsmodels}]\label{isocopy}
		Let $\mathcal{M}$ be an $(\mathcal{S},\mathcal{L})$-symmetric system. Let $Q_0,Q_1$ be two elementary submodels of $H(\kappa)$ such that $\Psi_{Q_0,Q_1}$ is the unique isomorphism between $(Q_0;\in)$ and $(Q_1;\in)$. If $\mathcal{M}\subseteq Q_0$, then $\Psi_{Q_0,Q_1}(\mathcal{M})$ is an $(\mathcal{S},\mathcal{L})$-symmetric system.
	\end{proposition}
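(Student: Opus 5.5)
We verify clauses (A)--(E) of Definition \ref{defSSM2T} for $\mathcal{M}' := \Psi(\mathcal{M}) = \{\Psi(Q) : Q \in \mathcal{M}\}$, where $\Psi := \Psi_{Q_0,Q_1}$. The guiding idea is that every clause is a first-order statement, with parameters, about finitely many members of $\mathcal{M}$, and such statements transfer along $\Psi$.

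First, $\mathcal{M}$ is finite and $\mathcal{M} \subseteq Q_0$. Each $Q \in \mathcal{M}$ is a small or large model, and $Q_0$ contains $\omega_1$ as an element. Hence each such $Q$ is definable in $H(\kappa)$ together with its relevant attributes: $Q[\omega_1]$, $\varepsilon_Q$, whether $Q \in \mathcal{S}$ or $Q \in \mathcal{L}$, the isomorphisms $\Psi_{Q[\omega_1],Q'[\omega_1]}$, and the intersections $X \cap M$. By elementarity of $Q_0$ and $Q_1$ in $H(\kappa)$ (we may assume the predicate $T$ is respected, or at least that membership in $\mathcal{S}\cup\mathcal{L}$ is expressible over $H(\kappa)$, which is where $T$ matters), these objects lie in $Q_0$. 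The restriction of $\Psi$ to the elementary substructure then carries each one to the corresponding object defined from $\Psi(Q)$.

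Two preliminary facts make this work.

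1. Since $\Psi$ is an $\in$-isomorphism between elementary submodels, it commutes with every definable operation. Thus $\Psi(Q[\omega_1]) = \Psi(Q)[\omega_1]$, $\Psi(\varepsilon_Q) = \varepsilon_{\Psi(Q)}$, and $\Psi(X \cap M) = \Psi(X) \cap \Psi(M)$.
2. $\Psi$ preserves the order of ordinals in $Q_0$, so the ordering of $\dom(\mathcal{M})$, and in particular immediate successors, is carried to $\dom(\mathcal{M}')$.

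We then check the clauses.

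Clause (A). "$Q \in \mathcal{S}$" (countable elementary submodel of $(H(\kappa);\in,T)$) is expressible in $H(\kappa)$ with parameter $T$, or $T$ is definable there. So $\Psi(Q) \in \mathcal{S}$. For $\mathcal{L}$, we use the defining property of $\mathcal{L}$: $\aleph_1$-sized elementary submodels, with appropriateness. If $\mathcal{L}$ is the class of models closed under $\omega$-sequences from the Fact, this class is definable and hence preserved.

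Clause (B). Suppose $Q_0[\omega_1] \cong Q_1[\omega_1]$ within $\mathcal{M}$. We need the images $\Psi(Q), \Psi(Q')$ to be $\omega_1$-isomorphic. The isomorphism $\Psi_{Q[\omega_1],Q'[\omega_1]}$ is definable from $Q, Q'$, hence lies in $Q_0$. Its image under $\Psi$ is the corresponding $\omega_1$-isomorphism between $\Psi(Q)[\omega_1]$ and $\Psi(Q')[\omega_1]$, including the property of being the identity on the intersection.

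Clauses (C), (D), (E). These follow the same way:
- "$Q \in Q'$" transfers directly.
- (D) follows by applying $\Psi$ to $\Psi_{Q_1[\omega_1],Q_1'[\omega_1]}(Q_0)$.
- (E) follows from $\Psi(X \cap M) = \Psi(X) \cap \Psi(M)$.

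The main obstacle is making sure that everything really lies in $Q_0$ and that the relevant notions are expressible over $H(\kappa)$. Two points need care:
- The structures here include the predicates $Q$ and $T$.
- Membership in $\mathcal{L}$ must be a definable property, or else we must add it as a hypothesis.

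A finite set of elements of $Q_0$ is itself in $Q_0$, so $\mathcal{M} \in Q_0$. All the derived objects are then definable from $\mathcal{M}$ and are therefore in $Q_0$. Finally, $H(\kappa)$ satisfies "$\mathcal{M}$ is an $(\mathcal{S},\mathcal{L})$-symmetric system". By elementarity of $Q_0$, then the isomorphism $\Psi$, then elementarity of $Q_1$, the same statement holds of $\Psi(\mathcal{M})$. This one-line argument, once definability is secured, is what I would actually write.
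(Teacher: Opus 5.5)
Your strategy is the right one: put $\mathcal{M}$ into $Q_0$ (finite subsets of an elementary submodel are elements of it) and push everything across $\Psi$. This is what the paper means by calling the result an easy consequence of Section~\ref{section-prelim}. However, the step your one-line argument rests on is false. Being an elementary submodel of $(H(\kappa);\in,T)$ is \emph{not} definable over $(H(\kappa);\in,T)$, even with parameters.

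To see this, suppose some $\sigma(x)$ defined the countable $M\preceq H(\kappa)$. Then $H(\kappa)\models\varphi[s]$ would hold iff there is $M$ with $\sigma(M)$, $\mathrm{ran}(s)\subseteq M$ and $(M;\in,T\cap M)\models\varphi[s]$. Downward L\"owenheim--Skolem gives one direction and elementarity the other. Since satisfaction in set-sized structures is definable in $H(\kappa)$, this would give a definable truth predicate for $H(\kappa)$, contradicting Tarski. The same holds for the elementarity requirement in $\mathcal{L}$. So ``$\mathcal{M}$ is an $(\mathcal{S},\mathcal{L})$-symmetric system'' is not a single sentence about $\mathcal{M}$, and clause (A) does not transfer the way you describe. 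The obstruction is the undefinability of elementarity, not the predicate $T$ as you suggest.

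The repair is short but has to be made explicitly. For each formula $\varphi$, let $\theta_\varphi(P)$ be the Tarski--Vaught instance $\forall\bar a\in P\,(\exists y\,\varphi(y,\bar a)\rightarrow\exists y\in P\,\varphi(y,\bar a))$. This is one first-order statement about $P$. For $P\in\mathcal{M}$ it holds in $H(\kappa)$, hence in $Q_0$. Since $\Psi$ respects $T$, $\theta_\varphi(\Psi(P))$ then holds in $Q_1$, hence in $H(\kappa)$. Doing this formula by formula shows $\Psi(P)\preceq H(\kappa)$.

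Alternatively, use Proposition~\ref{prop5}, which the paper includes for exactly this purpose. Note that it needs $P\subseteq Q_0$. This is automatic for countable $P$, but for large $P$ it holds only when $\omega_1\subseteq Q_0$, whereas the schema argument works in general.

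Everything else you use is genuinely first-order over $H(\kappa)$:
\begin{itemize}
\item countability, $|X|=\aleph_1$, and ${}^{\omega}X\subseteq X$ (all countable sequences from $X$ lie in $H(\omega_3)$);
\item hulls, $\varepsilon_Q$, and $X\cap M$;
\item the $\omega_1$-isomorphisms, which have size $\aleph_1$ and so are elements of $H(\kappa)$.
\end{itemize}
Hence your treatment of (B)--(E), and of the non-elementarity parts of (A), goes through. You are also right that for an arbitrary appropriate $\mathcal{L}$ one needs $\mathcal{L}$ to be closed under such images. For the concrete $\mathcal{L}$ of Section~\ref{section-strong-chains} this follows from the above.
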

	
	Next, we will include some results, which are standard in the context of forcing with side conditions, for amalgamating  $(\mathcal{S},\mathcal{L})$-symmetric systems. The preservation properties of the pure side condition forcing follow, by standard arguments, from these amalgamation lemmas, but most importantly, as we shall see in the next section, the preservation theorems for the forcing witnessing Theorem \ref{mainthm} rely very heavily on the following results. We will omit the proofs and only give a sketch of the main ideas.
	
	The first amalgamation lemma, which appears as Lemma 3.7 in \cite{Gallart2026forcingsymmetricsystemsmodels}, asserts that any $(\mathcal{S},\mathcal{L})$-symmetric system can be extended by adding any model that contains it.
	
	\begin{lemma}\label{amal-ontop}
		Let $\mathcal{M}$ be an $(\mathcal{S},\mathcal{L})$-symmetric system and let $Q\in\mathcal{S}\cup\mathcal{L}$ be such that $\mathcal{M}\subseteq Q$. Then there is an $(\mathcal{S},\mathcal{L})$-symmetric system $\mathcal{M}_Q$ such that $\mathcal{M}\cup\{Q\}\subseteq\mathcal{M}_Q$ 
	\end{lemma}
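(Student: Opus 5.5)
The natural candidate is to close $\mathcal{M}\cup\{Q\}$ under the operation demanded by clause (E), namely intersecting large models with $Q$ when $Q$ is small. I will split into two cases according to the type of $Q$.

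\emph{Case $Q\in\mathcal{L}$.} Here I take $\mathcal{M}_Q:=\mathcal{M}\cup\{Q\}$ and verify (A)--(E) directly.

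Since $\mathcal{M}\subseteq Q$, every model of $\mathcal{M}$ belongs to $Q$. Hence $\varepsilon_{Q'}\in Q\cap\omega_2=\varepsilon_Q$ for every $Q'\in\mathcal{M}$. It follows that $\varepsilon_Q$ is the new maximum of $\dom(\mathcal{M}_Q)$ and that $\mathcal{M}_Q(\varepsilon_Q)=\{Q\}$. So (B) is inherited from $\mathcal{M}$.

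For (C), the only new instance concerns the top old height $\varepsilon$. Every $Q'\in\mathcal{M}(\varepsilon)$ lies in $Q$, so (C) holds.

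For (D), the new instances are those with $Q_1=Q_1'=Q$. There the $\omega_1$-isomorphism is the identity and returns $Q_0$ itself.

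For (E), note that $Q$ is large, so the only possible new instance would have $Q$ as a member of some small $M\in\mathcal{M}$. But $M\in Q$ and $|M|\le|Q|$, so $M\subseteq Q$ and $\varepsilon_M\le\varepsilon_Q$, which rules out $Q\in M$. So there are no new instances of (E).

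\emph{Case $Q=N\in\mathcal{S}$.} Again the domain grows by one top height $\varepsilon_N$. Now (E) demands that $X\cap N$ be added for every large $X\in\mathcal{M}\cup\{N\}$ with $X\in N$; since $\mathcal{M}\subseteq N$, this means every large $X\in\mathcal{M}$.

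For each such $X$ we have $X\cap N\in X$ and $X\cap N\in\mathcal{S}$ by appropriateness. Its height lies in the gap $[\varepsilon_{X\cap N},\varepsilon_X)$ of Proposition~\ref{propgaps}. I would therefore set
\[
\mathcal{M}_N:=\mathcal{M}\cup\{N\}\cup\{\Psi(X\cap N): X\in\mathcal{M}\cap\mathcal{L},\ \Psi \text{ a composition of $\omega_1$-isomorphisms given by } \mathcal{M}\}.
\]
Adding the isomorphic copies is necessary to restore (B) and (D). The key tools here are Corollary~\ref{corollary8} and Propositions~\ref{prop9} and~\ref{prop10}, which say that copies of $X\cap N$ are again of the form $X'\cap N'$ and are $\omega_1$-isomorphic appropriately. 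Proposition~\ref{prop21} gives consistency of these copies.

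I would organize the construction by downward induction on the large models of $\mathcal{M}$, from the largest height down. At each stage I add $X\cap N$ (or the appropriate intersection with an already-added small model) together with its copies under the symmetries of the system below $X$. This is a finite process, since each stage only adds models with heights in gaps below existing large models.

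\emph{Verification and the main obstacle.} Most of the work is checking (B) and (C) at the newly created heights $\varepsilon_{X\cap N}$, which may coincide with old heights or be new.
\begin{itemize}
\item For (C), $X\cap N$ must be the member of some model at the next height. The candidate is the next model above it in $\mathcal{M}\cap X$, using the shoulder axiom inside $X$ and Proposition~\ref{prop13}.
\item For (B), when $\varepsilon_{X\cap N}$ equals the height of an old model $M'$, I need $(X\cap N)[\omega_1]\cong M'[\omega_1]$. I expect this to be the main obstacle. I would try to get it from the fact that $\mathcal{M}\cap X\subseteq X\cap N$, together with Proposition~\ref{prop18}(2) to control membership in hulls.
\end{itemize}

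If this direct approach fails, the fallback is to cite Lemma 3.7 of \cite{Gallart2026forcingsymmetricsystemsmodels}, where the full verification is carried out.
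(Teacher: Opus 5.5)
Your case $Q\in\mathcal{L}$ is correct and is exactly the paper's construction, $\mathcal{M}_Q=\mathcal{M}\cup\{Q\}$. The instances of (D) with $Q_0=Q$, which you skip, are vacuous, since $Q\in Q_1\in Q$ is impossible.

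The case $Q=N\in\mathcal{S}$, however, is not proved. There are three problems:
\begin{itemize}
\item You leave (B) and (C) unverified.
\item You expect the new heights $\varepsilon_{X\cap N}$ possibly to coincide with old heights, and you call this the main obstacle.
\item Your proposed witness for (C), the next model above $X\cap N$ in $\mathcal{M}\cap X$, does not exist. Since $\mathcal{M}\cap X\subseteq X\cap N$, every model of $\mathcal{M}\cap X$ has height \emph{below} $\varepsilon_{X\cap N}$.
\end{itemize}
Falling back on Lemma 3.7 of the cited paper is citing the very statement you are asked to prove.

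The missing idea is Proposition~\ref{propgaps} applied with $M=N$. Since $X\in N$ and $\mathcal{M}\subseteq N$, no model of $\mathcal{M}$ has height in $[\varepsilon_{X\cap N},\varepsilon_X)$. This makes the whole verification routine.
\begin{itemize}
\item For (B): new heights never equal old ones, so the obstacle you feared never arises. If $\varepsilon_X<\varepsilon_{X'}$, then $\varepsilon_{X\cap N}<\varepsilon_X<\varepsilon_{X'\cap N}$. Hence two new models share a height only when $\varepsilon_X=\varepsilon_{X'}$, and then Proposition~\ref{prop9} gives (B).
\item For (C): the immediate successor of $\varepsilon_{X\cap N}$ in $\dom(\mathcal{M}_N)$ is $\varepsilon_X$, so (C) for $X\cap N$ is witnessed by $X$ itself. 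An old $P$ whose successor is now $\varepsilon_{X\cap N}$ had old successor $\varepsilon_X$. So $P\in X'\cap N$ for the $X'\in\mathcal{M}(\varepsilon_X)$ supplied by (C) for $\mathcal{M}$.
\item For (D): $\Psi_{(X\cap N)[\omega_1],(X'\cap N)[\omega_1]}$ is the restriction of $\Psi_{X,X'}$ by Proposition~\ref{prop9}, so it maps members of $\mathcal{M}$ into $\mathcal{M}$. A new model can never be a member of $N$, of an old small model, or of some $X'\cap N$, because $\delta_{Y\cap N}=\delta_N\notin N$. The remaining instances, $Y\cap N\in Q_1$ with $Q_1\in\mathcal{M}\cap\mathcal{L}$, follow from the commutation below.
\end{itemize}

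Your extra closure under isomorphisms and your downward induction are unnecessary. For $X\in Q_1[\omega_1]$, Propositions~\ref{prop20} and~\ref{prop9} give $\Psi_{Q_1[\omega_1],Q_1'[\omega_1]}(X\cap N)=\Psi_{Q_1[\omega_1],Q_1'[\omega_1]}(X)\cap N$. Also $Y\cap(X\cap N)=Y\cap N$ for large $Y\in X$. So your $\mathcal{M}_N$ collapses to the paper's $\mathcal{M}\cup\{N\}\cup\{X\cap N:X\in\mathcal{M}\cap\mathcal{L}\}$.
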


	If $Q$ is a large model, it is easy to check that $\mathcal{M}_Q=\mathcal{M}\cup\{Q\}$ is an $(\mathcal{S},\mathcal{L})$-symmetric system. If $Q$ is a small model, then $\mathcal{M}_Q$ can be defined to be the closure of $\mathcal{M}\cup\{Q\}$ under the relevant intersections. That is, the $(\mathcal{S},\mathcal{L})$-symmetric system $\mathcal{M}_Q$ witnessing the conclusion of the last lemma is
	\[
	\mathcal{M}_Q=\mathcal{M}\cup\{Q\}\cup\{X\cap Q:Q\in\mathcal{M}\cap\mathcal{L}\}.
	\]
	
	The next lemma asserts that any $(\mathcal{S},\mathcal{L})$-symmetric system can be naturally reflected inside any of its models.
	
	\begin{lemma}\label{amal-rest}
		Let $\mathcal{M}$ be an $(\mathcal{S},\mathcal{L})$-symmetric system and let $Q$ be a member of $\mathcal{M}\cup\mathcal{M}[\omega_1]$. Then $\mathcal{M}\cap Q$ is an $(\mathcal{S},\mathcal{L})$-symmetric system.
	\end{lemma}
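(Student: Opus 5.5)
We check clauses (A)--(E) of Definition \ref{defSSM2T} for $\mathcal{N}:=\mathcal{M}\cap Q$ one at a time. Clause (A) is inherited from $\mathcal{M}$, and $\mathcal{N}$ is finite. The key preliminary observation is that if $P\in\mathcal{M}\cap Q$, then $P[\omega_1]\subseteq Q[\omega_1]$. Indeed, $P\in Q[\omega_1]$ and $P$ has size at most $\aleph_1$. Since $Q[\omega_1]\supseteq\omega_1$ is elementary (Proposition \ref{prop7}), $P\subseteq Q[\omega_1]$, and by minimality $P[\omega_1]\subseteq Q[\omega_1]$. So all the isomorphisms $\Psi_{P[\omega_1],P'[\omega_1]}$ between members of $\mathcal{N}$ are the same maps computed in $\mathcal{M}$.

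Clause (B) is then immediate from (B) for $\mathcal{M}$. For (D), let $Q_0,Q_1,Q_1'\in\mathcal{N}$ with $Q_0\in Q_1$ and $\varepsilon_{Q_1}=\varepsilon_{Q_1'}$. Then $\Psi:=\Psi_{Q_1[\omega_1],Q_1'[\omega_1]}$ is the unique isomorphism of structures of size $\aleph_1$ lying in $Q[\omega_1]$. Hence $\Psi$ is definable from $Q_1,Q_1'$ and belongs to $Q[\omega_1]$. Therefore $\Psi(Q_0)\in Q[\omega_1]$, and $\Psi(Q_0)\in\mathcal{M}$ by (D) for $\mathcal{M}$. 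If $Q$ is a large model, $Q[\omega_1]=Q$ and we are done. If $Q$ is small, we still need $\Psi(Q_0)\in Q$ and not merely $\in Q[\omega_1]$. This follows because $\Psi(Q_0)=f(\alpha)$ for some $f\in Q$, and also $\Psi(Q_0)\in Q_1'[\omega_1]\subseteq Q[\omega_1]$ with $\varepsilon_{\Psi(Q_0)}=\varepsilon_{Q_0}$. Proposition \ref{prop18}(2) and the gap Proposition \ref{propgaps} then let us pull membership down to $Q$: the ordinal $\varepsilon_{Q_0}$ is realized by a model in $Q$, so no large model of $\mathcal{M}$ in $Q$ separates it badly. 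Clause (E) is easy. If $X,M\in\mathcal{N}$ with $X\in M$, then $X\cap M\in\mathcal{M}$, and $X\cap M$ is definable from $X,M\in Q[\omega_1]$, so it lies in $Q[\omega_1]$; when $Q$ is small we argue as in (D).

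Clause (C), the shoulder axiom, is where the main work lies. Let $\varepsilon_0<\varepsilon_1$ be consecutive in $\dom(\mathcal{N})$ and take $Q_0\in\mathcal{N}(\varepsilon_0)$. We must find $Q_1\in\mathcal{N}(\varepsilon_1)$ with $Q_0\in Q_1$. Pick any $P\in\mathcal{N}(\varepsilon_1)$ and $R\in\mathcal{M}(\varepsilon_{Q})$. By (B), let $R'\in\mathcal{M}(\varepsilon_Q)$ be $\omega_1$-isomorphic to $Q$ (when $Q\in\mathcal{M}[\omega_1]$, choose $R'$ with $R'[\omega_1]=Q$). By Proposition \ref{prop13} applied in $\mathcal{M}$ to $Q_0\in Q[\omega_1]$, there is $P'\in\mathcal{M}(\varepsilon_1)$ with $Q_0\in P'[\omega_1]$ and $P'\in Q[\omega_1]$.

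Three points remain. First, $P'\in Q$ itself (for small $Q$), argued via Proposition \ref{prop18}(2) and the gap argument as above. Second, $Q_0\in P'$ rather than merely $Q_0\in P'[\omega_1]$. This follows from Proposition \ref{prop18}(2) when $P'$ is small, since consecutiveness in $\dom(\mathcal{N})$ together with the fact that any large $X\in\mathcal{M}$ strictly between would be reflected into $Q$ via Proposition \ref{prop13} rules out an intervening large model. If $P'$ is large, $P'[\omega_1]=P'$ and there is nothing to show. Third, if a large model of $\mathcal{M}$ between $\varepsilon_0$ and $\varepsilon_1$ is not in $Q$, Proposition \ref{propgaps} shows that $\varepsilon_0\ge\varepsilon_{X\cap Q}$-type gaps cannot occur. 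Controlling exactly which intermediate heights of $\mathcal{M}$ can be omitted from $\mathcal{N}$ is the expected obstacle. I would handle it by induction on the number of elements of $\dom(\mathcal{M})$ between $\varepsilon_0$ and $\varepsilon_1$, using Proposition \ref{prop13} repeatedly to pass through intermediate models in $Q[\omega_1]$.
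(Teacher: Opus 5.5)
Your argument is fine when $Q\in\mathcal{L}$ or $Q\in\mathcal{M}[\omega_1]$, and there it matches the paper. In that case $\mathcal{M}\cap Q$ is exactly the set of models of $\mathcal{M}$ lying below $Q$, and its domain is an initial segment of $\dom(\mathcal{M})$. Proposition~\ref{prop13} plus Proposition~\ref{prop18}(2) then give the shoulder axiom.

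\textbf{The gap.} It is the case $Q\in\mathcal{M}\cap\mathcal{S}$, the only nontrivial one. Your ``first point'' claims that the model $P'$ supplied by Proposition~\ref{prop13} lies in $Q$. That proposition only gives $P'\in Q[\omega_1]$. Proposition~\ref{prop18}(2) upgrades this to $P'\in Q$ only if no large model of $\mathcal{M}$ has height strictly between $\varepsilon_{P'}$ and $\varepsilon_Q$. This genuinely fails, as Remark~\ref{pure-amal-remark4} explains. Suppose $N\in\mathcal{M}\cap Q[\omega_1]$ has height $\varepsilon_{X\cap Q}$ for a large $X\in\mathcal{M}\cap Q$, but $N$ is not of the form $Y\cap Q$. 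Then the models of $\mathcal{M}$ inside $N$ lie below $Q$, in the bright area, without belonging to $Q$. Nothing stops Proposition~\ref{prop13} from returning one of them.

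Your justification of the ``second point'' is also wrong. Large models of $\mathcal{M}$ with height in a dark interval $[\varepsilon_{X\cap Q},\varepsilon_X)$ can never be reflected into $Q$ (Proposition~\ref{propgaps}). So consecutiveness in $\dom(\mathcal{M}\cap Q)$ does not exclude intervening large models. Your proposed induction ``using Proposition~\ref{prop13} repeatedly'' cannot repair this, because that proposition only ever yields membership in hulls, never in $Q$.

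Your handling of (D) and (E) for small $Q$ leans on Proposition~\ref{prop18}(2) where it need not apply. It is also unnecessary. $\Psi_{Q_1[\omega_1],Q_1'[\omega_1]}(Q_0)$ and $X\cap M$ are definable in $(H(\kappa);\in,T)$ from parameters in $Q$, so they lie in $Q$ by elementarity.

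\textbf{What is missing.} You need the structural analysis of $\dom(\mathcal{M}\cap Q)$ that the paper defers to Proposition~3.9 of the companion paper. One way to get it is a top-down recursion on large heights.

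Let $\varepsilon^*$ be the largest height of a large model of $\mathcal{M}$ below $\varepsilon_Q$. If there is none, Proposition~\ref{prop18}(2) gives $\mathcal{M}\cap Q=\mathcal{M}\cap Q[\omega_1]$ and you are in the easy case.

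\begin{enumerate}
\item By Proposition~\ref{prop18}(2), every model of $\mathcal{M}$ lying below $Q$ with height at least $\varepsilon^*$ belongs to $Q$.
\item Let $P\in\mathcal{M}\cap Q$ with $\varepsilon_P<\varepsilon^*$. Proposition~\ref{prop13} gives $Z\in\mathcal{M}(\varepsilon^*)\cap Q[\omega_1]$ with $P\in Z$. By step 1, $Z\in Q$, and hence $P\in Z\cap Q\in\mathcal{M}$.
\item So the part of $\mathcal{M}\cap Q$ below $\varepsilon^*$ is the union of the sets $\mathcal{M}\cap(Z\cap Q)$, for large $Z\in\mathcal{M}\cap Q$ of height $\varepsilon^*$.
\item These sets all have the same domain, by Propositions~\ref{prop9} and~\ref{prop6}.
\item No height of $\mathcal{M}\cap Q$ lies in $[\varepsilon_{Z\cap Q},\varepsilon^*)$, by Proposition~\ref{propgaps}.
\end{enumerate}

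The shoulder axiom then splits into three parts. Across the gap, it is witnessed by $Z$ itself. Above $\varepsilon^*$, it is the easy argument. Below $\varepsilon_{Z\cap Q}$, it follows from the induction hypothesis applied to the small model $Z\cap Q\in\mathcal{M}$, which has fewer large heights of $\mathcal{M}$ below it.
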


	The proof of the second amalgamation lemma, which appears as Lemma Lemma 3.10 in \cite{Gallart2026forcingsymmetricsystemsmodels}, is straightforward if $Q\in\mathcal{L}$ or $Q\in\mathcal{M}[\omega_1]$. Indeed, in that case $\mathcal{M}\cap Q$ coincides with the set of all models in $\mathcal{M}$ lying below $Q$,\footnote{A model $P$ is said to lie below $Q$ if $P\in Q[\omega_1]$.} and the verification of clauses (A)-(E) from Definition \ref{defSSM2T} is an easy exercise. However, if $Q$ is a small model, there may be models in $\mathcal{M}$ lying below $Q$ that are not members of $Q$. For instance, according to Proposition \ref{propgaps}, every model $P\in\mathcal{M}$ such that $\varepsilon_{X\cap Q}\leq\varepsilon_P<\varepsilon_X$, for some $X\in\mathcal{M}\cap Q\cap\mathcal{L}$, fails to be a member of $Q$. In this case, the proof requires a previous analysis of the structure of $\dom(\mathcal{M}\cap Q)$, which is given in Proposition 3.9 of that same paper.
	We will give more details about the structure of $\mathcal{M}\cap Q$ in the outline of the proof of Lemma \ref{lemma-amal}.

	\begin{notation}
		If $\mathcal{M}$ is an $(\mathcal{S},\mathcal{L})$-symmetric system, recall that we let $\mathcal{M}[\omega_1]$ denote the set $\{Q[\omega_1]:Q\in\mathcal{M}\}$. If $\mathcal{M}_0$ and $\mathcal{M}_1$ are two $(\mathcal{S},\mathcal{L})$-symmetric systems, we will write $\mathcal{M}_0\cong\mathcal{M}_1$ to denote that, for some $n<\omega$, there are enumerations $\langle Q_i^0:i<n\rangle$ and $\langle Q_i^1:i<n\rangle$ of $\mathcal{M}_0$ and $\mathcal{M}_1$, respectively, such that the structures $(\bigcup\mathcal{M}_0[\omega_1];\in,Q_i^0)_{i<n}$ and $(\bigcup\mathcal{M}_1[\omega_1];\in,Q_i^1)_{i<n}$ are isomorphic via an isomorphism which is the identity on $\bigcup\mathcal{M}_0[\omega_1]\cap\bigcup\mathcal{M}_1[\omega_1]$.
	\end{notation}
	
	The next amalgamation lemma will be crucially used to ensure that the forcing from Section \ref{section-strong-chains}, witnessing our main theorem (Theorem \ref{mainthm}), has the $\aleph_3$-Knaster condition, provided that $2^{\aleph_1}=\aleph_2$ holds in the ground model. In \cite{Gallart2026forcingsymmetricsystemsmodels}, it appears as Lemma 3.12, and the proof is standard in the context of forcing with symmetric systems as side conditions.
	
	\begin{lemma}\label{pureamalgamation2}
		Let $n<\omega$ and suppose that $\mathcal{M}_0,\dots,\mathcal{M}_n$ are $(\mathcal{S},\mathcal{L})$-sym\-me\-tric systems such that $\mathcal{M}_i\cong\mathcal{M}_j$, for all $i,j\leq n$. Then $\bigcup_{i\leq n}\mathcal{M}_i$ is an $(\mathcal{S},\mathcal{L})$-symmetric system.
	\end{lemma}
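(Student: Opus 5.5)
We need to prove Lemma \ref{pureamalgamation2}: if $\mathcal{M}_0,\dots,\mathcal{M}_n$ are pairwise isomorphic $(\mathcal{S},\mathcal{L})$-symmetric systems, in the sense that the isomorphism is the identity on the common part, then $\mathcal{M}=\bigcup_{i\le n}\mathcal{M}_i$ is an $(\mathcal{S},\mathcal{L})$-symmetric system. The plan is to verify clauses (A)--(E) of Definition \ref{defSSM2T} directly. Throughout, fix for each pair $i,j$ the isomorphism
\[
\Phi_{i,j}:\bigl(\textstyle\bigcup\mathcal{M}_i[\omega_1];\in,Q^i_k\bigr)_{k}\to\bigl(\textstyle\bigcup\mathcal{M}_j[\omega_1];\in,Q^j_k\bigr)_{k}.
\]
It sends $Q^i_k$ to $Q^j_k$ and is the identity on $\bigcup\mathcal{M}_i[\omega_1]\cap\bigcup\mathcal{M}_j[\omega_1]$.

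The first observation is that $\Phi_{i,j}$ restricts to the $\omega_1$-isomorphism between corresponding hulls. Since $\omega_1\subseteq Q^i_k[\omega_1]$ for every $k$, the map $\Phi_{i,j}$ fixes every countable ordinal. It sends $Q^i_k$ onto $Q^j_k$ as a set, because the predicate is preserved. It also preserves function application, so it maps $Q^i_k[\omega_1]$ onto $Q^j_k[\omega_1]$. Hence $\Phi_{i,j}\upharpoonright Q^i_k[\omega_1]$ is an isomorphism of $(Q^i_k[\omega_1];\in,Q^i_k,T)$ onto $(Q^j_k[\omega_1];\in,Q^j_k,T)$ which is the identity on the intersection; here $T$ is preserved because these hulls are elementary submodels (Proposition \ref{prop7}) and are isomorphic as $\in$-structures together with $Q_k$, so $T$ is transported by elementarity (I expect this to follow as in Proposition \ref{prop5}). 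By uniqueness it equals $\Psi_{Q^i_k[\omega_1],Q^j_k[\omega_1]}$. In particular $\varepsilon_{Q^i_k}=\varepsilon_{Q^j_k}$ and $\dom(\mathcal{M}_i)=\dom(\mathcal{M}_j)=\dom(\mathcal{M})$.

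Clause (A) is immediate. Clause (C) follows because the domains coincide, so the immediate successor of $\varepsilon_0$ in $\dom(\mathcal{M})$ is the same as in $\dom(\mathcal{M}_i)$, and (C) holds in $\mathcal{M}_i$. Clause (E) holds because $X\in M$ with both in $\mathcal{M}$ forces a common index $i$ with $X,M\in\mathcal{M}_i$: if $M=Q^i_k$, then $X\in Q^i_k\subseteq\bigcup\mathcal{M}_i[\omega_1]$, and I plan to show that $X\in\mathcal{M}_i$ via symmetry. Since $X=Q^j_l$ for some $j,l$, the model $X$ lies in both unions, so it is fixed by $\Phi_{j,i}$, which sends $Q^j_l$ to $Q^i_l$. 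Thus $X=Q^i_l\in\mathcal{M}_i$. The same trick handles (D): if $Q_0\in Q_1$, say $Q_1=Q^i_k$, then $Q_0\in\mathcal{M}_i$. For any $Q_1'\in\mathcal{M}$ with $\varepsilon_{Q_1'}=\varepsilon_{Q_1}$, write $Q_1'=Q^j_m$. Then $\Phi_{i,j}(Q_0)\in\mathcal{M}_j$, and $\Phi_{i,j}(Q_0)\in Q^j_k$ with $\varepsilon_{Q^j_k}=\varepsilon_{Q^j_m}$, so (D) in $\mathcal{M}_j$ gives
\[
\Psi_{Q^j_k[\omega_1],Q^j_m[\omega_1]}\bigl(\Phi_{i,j}(Q_0)\bigr)\in\mathcal{M}_j.
\]
By the first observation this composite equals $\Psi_{Q_1[\omega_1],Q_1'[\omega_1]}(Q_0)$. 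Composition of $\omega_1$-isomorphisms agrees with the direct one by uniqueness, provided the composite is the identity on $Q_1[\omega_1]\cap Q_1'[\omega_1]$.

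The main obstacle is clause (B) across different systems, together with the identity requirement just mentioned. Given $Q\in\mathcal{M}_i$ and $Q'\in\mathcal{M}_j$ of equal height, we have $Q'=Q^j_m$ and $Q^i_m$ has the same height as $Q$, so $Q[\omega_1]\cong Q^i_m[\omega_1]$ within $\mathcal{M}_i$ and $Q^i_m[\omega_1]\cong Q'[\omega_1]$ via $\Phi_{i,j}$. The composite $\Psi$ is an isomorphism of the right structures. I must check that it fixes every $a\in Q[\omega_1]\cap Q'[\omega_1]$. Such an $a$ lies in both unions, so $\Phi_{i,j}(a)=a$. Since $\Phi_{i,j}\upharpoonright Q[\omega_1]$ maps onto $Q^j_k[\omega_1]$, we get $a\in Q^j_k[\omega_1]\cap Q^j_m[\omega_1]$, where $Q=Q^i_k$. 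Now the $\omega_1$-isomorphism inside $\mathcal{M}_j$ fixes $a$, and pulling back gives that $\Psi_{Q^i_k[\omega_1],Q^i_m[\omega_1]}(a)=\Phi_{j,i}(a)=a$. Then $\Phi_{i,j}$ fixes $a$ again. This chase is where care is needed, and I would write it out as the central claim, then reuse it for (D).
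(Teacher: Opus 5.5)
Your clause-by-clause verification, including the commuting-square arguments for (B) and (D), is the standard proof. The paper gives none and defers to Lemma 3.12 of Gallart's paper. Your chase for the identity-on-intersection requirement in (B) is correct.

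\textbf{The gap: preservation of $T$.} The step that does not go through as justified is the claim that $\Phi_{i,j}\upharpoonright Q^i_k[\omega_1]$ preserves $T$ ``by elementarity''.
\begin{itemize}
\item Your $\Phi_{i,j}$ is only an isomorphism of the structures $(\bigcup\mathcal{M}_i[\omega_1];\in,Q^i_k)_k$. Elementarity of the two hulls in $(H(\kappa);\in,T)$ transports along it only what is expressible from $\in$ and the predicates $Q_k$.
\item Here $T=\vec e$ is an arbitrary predicate. It is not an element of $H(\omega_3)$ and in general is not $\in$-definable over it.
\item Concretely, take $\alpha\in Q^i_k[\omega_1]$ outside the common part. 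Then $\Phi_{i,j}(e_\alpha)$ is just some bijection from $|\alpha|$ onto $\Phi_{i,j}(\alpha)$ lying in $Q^j_k[\omega_1]$, and nothing forces it to be $e_{\Phi_{i,j}(\alpha)}$.
\item Proposition \ref{prop5} takes a $T$-preserving isomorphism as a hypothesis; it does not produce one.
\end{itemize}
This is not cosmetic: clause (B), and the maps $\Psi_{Q_1[\omega_1],Q_1'[\omega_1]}$ in (D), are by definition isomorphisms of the structures $(Q[\omega_1];\in,Q,T)$.

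The repair is to read $T$ as one of the predicates in the structures defining $\mathcal{M}_i\cong\mathcal{M}_j$. This fits the paper's habit of suppressing $T$. It is also what the $\aleph_3$-Knaster argument has to supply, and can supply, by making the structures $(X_\xi;\in,\vec e,p_\xi)$ pairwise isomorphic. With that reading, $T$-preservation is just restriction, and the rest of your argument stands.

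\textbf{A smaller point in (D) and (E).} From $X=Q^j_l$ you conclude that $X$ ``lies in both unions'', hence is fixed by $\Phi_{j,i}$, hence $X=Q^i_l$. Two things are off here.
\begin{itemize}
\item $X=Q^j_l$ does not by itself make $X$ an element of $\bigcup\mathcal{M}_j[\omega_1]$. A model of maximal height in $\mathcal{M}_j$ never is one.
\item $\Phi_{j,i}$ carries the predicate $Q^j_l$ onto $Q^i_l$ pointwise, so fixing $X$ as an element is not directly what you need.
\end{itemize}
Argue pointwise instead. From $X\in M$ and $|X|\le\aleph_1$ you get $X\subseteq M[\omega_1]$, while $X=Q^j_l\subseteq\bigcup\mathcal{M}_j[\omega_1]$. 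Hence $\Phi_{j,i}$ fixes every element of $X$, and $Q^i_l=\{\Phi_{j,i}(x):x\in X\}=X$. The same argument handles $Q_0\in Q_1$ in (D).
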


	The next (and last) amalgamation lemma gives one of the most important properties of the side conditions. In the context of the pure side condition forcing, the preservation of the cardinals $\aleph_1$ and $\aleph_2$ is an immediate consequence of the next result (see Theorems 3.23 and 3.24 from \cite{Gallart2026forcingsymmetricsystemsmodels}). In the case of the forcing from Section \ref{section-strong-chains}, the preservation of cardinals will require a bit more work and the next lemma will play a central role in those arguments. 
	
	In \cite{Gallart2026forcingsymmetricsystemsmodels}, the next result is divided into Lemmas 3.19 and 3.20, which correspond to the cases $Q\in\mathcal{L}$ and $Q\in\mathcal{S}$, respectively.

	\begin{lemma}\label{lemma-amal}
		Let $\mathcal{M}$ be an $(\mathcal{S},\mathcal{L})$-symmetric system, and let $Q\in\mathcal{M}$. Let $\mathcal{W}$ be another $(\mathcal{S},\mathcal{L})$-symmetric system such that $\mathcal{M}\cap Q\subseteq\mathcal{W}\subseteq Q$. Then there is an $(\mathcal{S},\mathcal{L})$-symmetric system $\mathcal{U}$ such that $\mathcal{M}\cup\mathcal{W}\subseteq\mathcal{U}$. 
	\end{lemma}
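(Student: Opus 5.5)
The natural candidate is the closure of $\mathcal{M}\cup\mathcal{W}$ under copying by the $\omega_1$-isomorphisms of $\mathcal{M}$ and, when $Q$ is small, under intersections. I would split into the two cases $Q\in\mathcal{L}$ and $Q\in\mathcal{S}$, following the division in the source paper.

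\emph{Case $Q\in\mathcal{L}$.} Put
\[
\mathcal{U}=\mathcal{M}\cup\bigcup\{\Psi_{Q'[\omega_1],Q''[\omega_1]}``(\mathcal{W}\cap Q'[\omega_1]) : Q'\in\mathcal{M}(\varepsilon_Q),\ Q''\in\mathcal{M}\},
\]
where the union is restricted to pairs with $\varepsilon_{Q''}=\varepsilon_{Q'}$, and $Q'$ is taken to be $Q$ itself. More simply, copy $\mathcal{W}$ via $\Psi_{Q,Q'}$ to every $Q'\in\mathcal{M}(\varepsilon_Q)$, and then into every model of $\mathcal{M}$ above $\varepsilon_Q$ that contains some copy of $Q$. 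Since $Q$ is large, every model of $\mathcal{M}$ below $\varepsilon_Q$ lies in $Q$; this is the point of Proposition \ref{prop18}(1) together with Lemma \ref{amal-rest}. So $\mathcal{M}(<\varepsilon_Q)$ is exactly $\mathcal{M}\cap Q\subseteq\mathcal{W}$, and $\dom(\mathcal{U})=\dom(\mathcal{W})\cup\dom(\mathcal{M}(\geq\varepsilon_Q))$.

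Each clause (A)--(E) is then checked by a split into models below $\varepsilon_Q$ and models at or above it. (B) and (D) below $\varepsilon_Q$ come from the fact that copies of $\mathcal{W}$ are $\omega_1$-isomorphic images, using Proposition \ref{prop20} and Proposition \ref{isocopy}. (C) at the junction uses the fact that the top of $\mathcal{W}$ lies in $Q$. (E) uses Corollary \ref{corollary8} and Proposition \ref{prop10}, which say that intersections commute with copying.

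\emph{Case $Q\in\mathcal{S}$.} This case is harder, because $\mathcal{M}\cap Q$ is not the whole of $\mathcal{M}$ below $\varepsilon_Q$. By Proposition \ref{propgaps}, for each $X\in\mathcal{M}\cap Q\cap\mathcal{L}$ the interval $[\varepsilon_{X\cap Q},\varepsilon_X)$ is a gap: models of $\mathcal{M}$ living there lie below $Q$ but are not in $Q$.

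I would first use the structure of $\dom(\mathcal{M}\cap Q)$ (Proposition 3.9 of the source paper, and the proof of Lemma \ref{amal-rest}). It shows that $\mathcal{M}$ below $\varepsilon_Q$ splits into intervals alternating between ``inside $Q$'' and ``gaps'', each gap being bounded by some pair $X\cap Q$, $X$. Then I would form the following sets:
\begin{enumerate}[label=(\roman*)]
\item all $\omega_1$-isomorphic copies of $\mathcal{W}$ onto $Q'[\omega_1]$, for $Q'\in\mathcal{M}(\varepsilon_Q)$, together with their further copies above, as in the large case;
\item the new intersections $Y\cap M$, for $Y$ large in $\mathcal{W}$ (or a copy of $\mathcal{W}$) and $M$ a small model of $\mathcal{M}$ with $Y\in M$, together with their copies.
\end{enumerate}
By Proposition \ref{prop18}(2), the models of $\mathcal{W}$ that need intersecting with models of $\mathcal{M}$ are exactly those falling in the gap intervals. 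This means new intersections $Y\cap M$ are needed only for $M$ in a gap or above $Q$.

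The main obstacle is verifying clause (C) and clause (B) for these new intersection models $Y\cap M$. One must show that $\varepsilon_{Y\cap M}$ lands in a place where the shoulder axiom can be satisfied, and that any two models of the same height are $\omega_1$-isomorphic. I would handle this by showing that $Y\cap M$, and all of $\mathcal{W}\cap M$, can be obtained by copying into $M$ the corresponding structure $\mathcal{W}\cap X$, where $X$ is the large model of $\mathcal{M}\cap Q$ bounding the gap. This uses Propositions \ref{prop9}, \ref{prop10} and \ref{prop21}, which give that $X\cap M$ and its copies are determined independently of the choice of $M$. The verification then reduces to the large case applied to $X$ and $\mathcal{W}\cap X$, performed inside each gap.

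Finally, (E) holds by construction, and (D) holds by closing under copies. Closure under copies terminates in finitely many steps because all the maps are $\omega_1$-isomorphisms between finitely many fixed models.
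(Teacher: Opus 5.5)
Your large case is essentially the paper's construction. Copying further into models of $\mathcal{M}$ above $\varepsilon_Q$ only reproduces models already obtained at height $\varepsilon_Q$, so your $\mathcal{U}$ is $\mathcal{M}(\geq\varepsilon_Q)\cup\{\Psi_{Q,P}(W):W\in\mathcal{W},\,P\in\mathcal{M}(\varepsilon_Q)\}$. Two corrections there:
\begin{itemize}
\item The claim $\mathcal{M}(<\varepsilon_Q)=\mathcal{M}\cap Q$ is false when $\mathcal{M}(\varepsilon_Q)$ has several models. What is true is that each such model is $\Psi_{Q,P}$ of a member of $\mathcal{M}\cap Q$, and that is all you use.
\item The substantive point for (E) is that $\mathcal{M}\cup\mathcal{W}$ is already closed under intersections: $Y\cap M\in\mathcal{W}$ for small $M\in\mathcal{M}$ and large $Y\in\mathcal{W}\cap M$. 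This is not just the fact that copying commutes with intersecting.
\end{itemize}

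The genuine gap is the small case. Your reduction ``to the large case applied to $X$ and $\mathcal{W}\cap X$'' is not available, because the large case needs $\mathcal{M}\cap X\subseteq\mathcal{W}\cap X$, and this fails. For $X\in\mathcal{M}\cap Q\cap\mathcal{L}$, the model $X\cap Q$ belongs to $\mathcal{M}\cap X$. But its $\omega_1$-height is $\delta_Q$, so it is not in $Q$, hence not in $\mathcal{W}$. The same holds for every model of $\mathcal{M}\cap X$ whose height lies in a dark interval. Relatedly, no model of $\mathcal{W}$ lies in a gap (Proposition \ref{propgaps}). What must be intersected are large bright-area models of $\mathcal{W}$ with small dark-area models of $\mathcal{M}$.

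A single round of ``copy $\mathcal{W}$, then add intersections and their copies'' also does not suffice. You additionally need two further kinds of models:
\begin{itemize}
\item the copies $\Psi_{X_i,Y}(P)$ of dark-area models $P\in\mathcal{M}\cap X_i$ into new large models $Y\in\mathcal{W}\setminus\mathcal{M}$ of height $\varepsilon_{X_i}$, which use isomorphisms that are not isomorphisms of $\mathcal{M}$;
\item the copies $\Psi_{(X_i\cap Q)[\omega_1],N[\omega_1]}(W)$ into small models $N\in\mathcal{M}\setminus Q$ of height $\varepsilon_{X_i\cap Q}$, which are forced on you by Remark \ref{pure-amal-remark4}.
\end{itemize}
New intersections must then be copied, and new copies intersected. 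So the collection of models, and hence of isomorphisms, keeps growing. Your termination argument (``finitely many fixed models'') does not apply, and clauses (B) and (C) for the resulting closure are never established.

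The missing idea is the paper's layered induction along a maximal $\in$-chain $X_0\in\dots\in X_{n-1}$ of large models of $\mathcal{M}\cap Q$. Given $\mathcal{U}_i\supseteq(\mathcal{M}\cup\mathcal{W})\cap X_i$, the next stage has three steps:
\begin{enumerate}
\item Apply the large case with the roles reversed to obtain $\mathcal{V}_{i+1}$. Here $\mathcal{W}\cap X_{i+1}$ is the ambient system, $X_i$ is the large model, and $\mathcal{U}_i$ is the inner system.
\item Add all $W\cap M$ with $M$ small in $\mathcal{M}\cap X_{i+1}$ of height $\geq\varepsilon_{X_{i+1}\cap Q}$, and $W$ large in $\mathcal{W}\cap X_{i+1}\cap M$ with $\varepsilon_W\geq\varepsilon_{X_i}$. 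Then prove that the result $\mathcal{V}^*_{i+1}$ is still symmetric and makes everything closed under intersections.
\item Only then copy $\mathcal{V}^*_{i+1}$ via $\Psi_{(X_{i+1}\cap Q)[\omega_1],N[\omega_1]}$ into every $N\in\mathcal{M}\cap X_{i+1}$ of height $\varepsilon_{X_{i+1}\cap Q}$.
\end{enumerate}
Doing the intersections before copying into the dark area is what lets Propositions \ref{prop20}--\ref{prop10} guarantee that copies of intersections are intersections of copies. That is why each stage remains a finite $(\mathcal{S},\mathcal{L})$-symmetric system.
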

	
	The idea behind the proof of the above lemma is that the $(\mathcal{S},\mathcal{L})$-symmetric system $\mathcal{U}$ witnessing the compatibility of $\mathcal{M}$ and $\mathcal{W}$ is the result of closing $\mathcal{M}\cup\mathcal{W}$ under both intersections and $\omega_1$-isomorphisms. However, this needs to be done in a very surgical way if one wants to avoid adding too many models, which makes it somewhat
	 hard to check whether the resulting system is indeed an $(\mathcal{S},\mathcal{L})$-symmetric system. Propositions 3.14-3.18 from the aforementioned paper were isolated precisely to capture this surgical approach to the closure of $\mathcal{M}\cup\mathcal{W}$ under intersections and $\omega_1$-isomorphisms. In a very loose way, these propositions assert that it is enough to close a small fragment of $\mathcal{M}\cup\mathcal{W}$ under intersections so that the subsequent closure under $\omega_1$-isomorphisms results in an $(\mathcal{S},\mathcal{L})$-symmetric system. They rely heavily on the fact that, as Propositions \ref{prop20}-\ref{prop10} show, in the context of $(\mathcal{S},\mathcal{L})$-symmetric systems, the operations of taking intersections and $\omega_1$-isomorphic copies commute. We will not state them explicitly, but their content will be included in
	  the outline of the proof of Lemma \ref{lemma-amal}. 
	
	If $Q$ is a large model, the conclusion of the last amalgamation lemma follows from two important properties of $(\mathcal{S},\mathcal{L})$-symmetric systems, which appear in \cite{Gallart2026forcingsymmetricsystemsmodels} as Propositions 3.14 and 3.18. The first one is that, since $\mathcal{M}\cap Q$ contains every model in $\mathcal{M}$ lying below $Q$, as it was observed in the outline of the proof of Lemma \ref{amal-rest} above, $\mathcal{M}\cup\mathcal{W}$ is already closed under the relevant intersections. That is, and this is Proposition 3.14 in \cite{Gallart2026forcingsymmetricsystemsmodels}, it can be shown that every model of the form $X\cap M$, where $M\in\mathcal{M}\cap\mathcal{S}$ and $X\in\mathcal{W}\cap\mathcal{L}\cap M$, is a member of $\mathcal{W}$. The second property is that, if $\mathcal{M}\cup\mathcal{W}$ is closed under intersections in the above sense, then the closure of $\mathcal{M}\cup\mathcal{W}$ under the relevant $\omega_1$-isomorphisms is an $(\mathcal{S},\mathcal{L})$-symmetric system. More precisely, and this is Proposition 3.18 from the same paper,
	\[
	\mathcal{U}:=\mathcal{M}(\geq\varepsilon_Q)\cup\{\Psi_{Q[\omega_1],P[\omega_1]}(W):W\in\mathcal{W},P\in\mathcal{M}(\varepsilon_Q)\}
	\]
	is an $(\mathcal{S},\mathcal{L})$-symmetric system that extends $\mathcal{M}\cup\mathcal{W}$. 
	
	However, if $Q$ is a small model, the argument requires a much deeper analysis of the structure of $\mathcal{M}\cup\mathcal{W}$. It is no longer the case that $\mathcal{M}\cup\mathcal{W}$ is closed under the relevant intersections, and this is due to the fact that, as already mentioned in the sketch of the proof of Lemma \ref{amal-rest}, $\mathcal{M}\cap Q$ may not contain all the models in $\mathcal{M}$ lying below $Q$. However, this is not the only obstacle in the construction of an $(\mathcal{S},\mathcal{L})$-symmetric system extending both $\mathcal{M}$ and $\mathcal{W}$. Let us first describe the structure of $\mathcal{M}\cap Q$ and how the models in $\mathcal{W}$ sit below $Q$, and then we will give an overview of all the obstacles that must be kept in mind when building $\mathcal{U}$. 
	
	Fix a maximal $\in$-chain $X_0\in\dots\in X_{n-1}$ of large models in $\mathcal{M}\cap Q$, and note that for every $i<n$, $X_i\in X_{i+1}\cap Q$. Hence, we have the following configuration:
	\[
	X_0\cap Q\in X_0\in\dots\in X_i\cap Q\in X_i\in X_{i+1}\cap Q\in\dots\in X_{n-1}\in Q.
	\] 
	Let us divide $\dom(\mathcal{M}\cap Q[\omega_1])\cup\dom(\mathcal{W})$ into two disjoint sets of ordinals that we will denote by $B$ and $D$. Let $B$ be the union of the following intervals of ordinals
	\begin{itemize}
		\item $B_0:=[\varepsilon_{\min},\varepsilon_{X_0\cap Q})$, where $\varepsilon_{\min}:=\min\big(\dom(\mathcal{M}\cap Q[\omega_1])\cup\dom(\mathcal{W})\big)$,
		\item $B_{i+1}:=[\varepsilon_{X_i},\varepsilon_{X_{i+1}\cap Q})$, for every $i<n-1$, and
		\item $B_n:=[\varepsilon_{X_{n-1}},\varepsilon_Q)$,
	\end{itemize}
	and let $D$ be the union of the following intervals of ordinals
	\begin{itemize}
		\item $D_i:=[\varepsilon_{X_i\cap Q},\varepsilon_{X_i})$, for every $i<n$.
	\end{itemize}
	If $Q\in\mathcal{S}\cup\mathcal{L}$, we will say that $Q$ is in the \emph{bright area} if $\varepsilon_Q\in B$, and that $Q$ is in the \emph{dark area} if $\varepsilon_Q\in D$.
	
	\begin{remark}\label{pure-amal-remark1}
		Since $\langle X_i:i<n\rangle$ is a maximal $\in$-chain of large models in $\mathcal{M}\cap Q$, if $P\in\mathcal{M}\cap Q[\omega_1]$ is a model in the bright area, then $P$ is either a small model or satisfies $\varepsilon_P=\varepsilon_{X_i}$ for some $i<n$. 
	\end{remark}
	
	\begin{remark}\label{pure-amal-remark2}
		Proposition \ref{propgaps} implies that all models $P\in\mathcal{M}\cap Q[\omega_1]$ in the dark area are not members of $Q$. 
	\end{remark}
	
	\begin{remark}\label{pure-amal-remark3}
		All the models in $\mathcal{W}$ (and hence in $\mathcal{M}\cap Q$) belong to the bright area.
	\end{remark}
	
	\begin{remark}\label{pure-amal-remark4}
		It is not in general true that every model $P\in\mathcal{M}\cap Q[\omega_1]$ in the bright area is a member of $Q$. Indeed, suppose that $\varepsilon_P\in B_{i+1}$, for some $i<n-1$. By Proposition \ref{prop13}, there must be a model $N_{i+1}\in\mathcal{M}\cap Q[\omega_1]$ such that $\varepsilon_{N_{i+1}}=\varepsilon_{X_{i+1}\cap Q}$ and $P\in N_{i+1}[\omega_1]$. Since all models with $\omega_2$-height in the interval $B_{i+1}\setminus\{\varepsilon_{X_i}\}$ are small, Proposition \ref{prop18} ensures that $P\in N_{i+1}$. However, $N_{i+1}$ need not be of the form $Y_{i+1}\cap Q$ for some $Y_{i+1}\in\mathcal{M}\cap Q\cap\mathcal{L}$ such that $\varepsilon_{Y_{i+1}}=\varepsilon_{X_{i+1}}$. Hence, there is no guarantee that $P$ is a member of $Q$.
	\end{remark}
	
	Now that we have described the structure of $\mathcal{M}\cap Q$ and how the models in $\mathcal{W}$ are placed within, we are ready to start the construction of the $(\mathcal{S},\mathcal{L})$-symmetric system $\mathcal{U}$. As anticipated, $\mathcal{U}$ will be the closure of $\mathcal{M}\cup\mathcal{W}$ under intersections and $\omega_1$-isomorphisms. However, we want to add just enough models to secure this closure, since otherwise it becomes very hard to check whether the resulting system is indeed an $(\mathcal{S},\mathcal{L})$-symmetric system. Hence, let us start by enumerating the three sets of models that any $(\mathcal{S},\mathcal{L})$-symmetric system extending $\mathcal{M}\cup\mathcal{W}$ is missing:
	\begin{enumerate}[label=(\Alph*)]
		\item Let $W\in (\mathcal{W}\setminus\mathcal{M})\cap\mathcal{L}$ be such that $\varepsilon_W=\varepsilon_{X_i}$, for some $i<n$. Let $P\in\mathcal{M}\cap X_i$ be such that $\varepsilon_P\in D_i$. Since $P$ is in the dark area and is therefore not a member of $Q$, it is not a member of $\mathcal{W}$ either. Hence, the image of the model $P$ under the isomorphism  $\Psi_{X_i,W}$ may not be an element of $\mathcal{M}\cup\mathcal{W}$. 
		
		\item Let $W\in (\mathcal{W}\setminus\mathcal{M})\cap\mathcal{L}$ such that $\varepsilon_W\neq\varepsilon_{X_i}$ for any $i<n$. Let $M$ be a small model in $\mathcal{M}\setminus Q$ such that $W\in M$. Then $W\cap M$ may not be a member of $\mathcal{M}\cup\mathcal{W}$.
		
		\item Let $N_i$ be a small model in $\mathcal{M}\setminus Q$ such that $\varepsilon_{N_i}=\varepsilon_{X_i\cap Q}$. Let $W\in\mathcal{W}\setminus\mathcal{M}$ be such that $W\in Y_i\cap Q$, where $Y_i\in\mathcal{M}(\varepsilon_{X_i})\cap Q$. Then $\Psi_{(X_i\cap Q)[\omega_1],N_i[\omega_1]}(W)$ may not be a member of $\mathcal{W}\cup\mathcal{M}$. This is a consequence of Remark \ref{pure-amal-remark4}.
	\end{enumerate}
	
	Adding to $\mathcal{M}\cup\mathcal{W}$ all the missing models from (A)-(C) will not secure the closure of $\mathcal{M}\cup\mathcal{W}$ under intersections and $\omega_1$-isomorphisms. Indeed, if we add a model $W\cap M$, as in (B), to $\mathcal{M}\cup\mathcal{W}$, there is no guarantee that the image of $W\cap M$ under an isomorphism $\Psi_{(X_i\cap Q)[\omega_1],N_i[\omega_1]}$ as in (C) is in $\mathcal{M}\cup\mathcal{W}$. Similarly, if we add a model $\Psi_{(X_i\cap Q)[\omega_1],N_i[\omega_1]}(W)$, as in (C), to $\mathcal{M}\cup\mathcal{W}$, there is no guarantee that if $M$ is a small model in $\mathcal{M}\cup\mathcal{W}$ such that $\Psi_{(X_i\cap Q)[\omega_1],N_i[\omega_1]}(W)\in M$, then the intersection of $\Psi_{(X_i\cap Q)[\omega_1],N_i[\omega_1]}(W)$ and $M$ is in $\mathcal{M}\cup\mathcal{W}$. 
	
	This problem can be addressed by arguing inductively over ``initial segments" of $\mathcal{M}\cup\mathcal{W}$. Thanks to Propositions \ref{prop20}-\ref{prop10}, the operations of taking intersections and $\omega_1$-isomorphic copies commute enough in $(\mathcal{M}\cup\mathcal{W})\cap X_i$, for every $i<n$, to guarantee that after first adding the models of the form $W\cap M$, for small $M\in(\mathcal{M}\setminus Q)\cap X_i$ and large $W\in(\mathcal{W}\setminus\mathcal{M})\cap X_i$ such that $W\in M$, and then closing the resulting system under $\omega_1$-isomorphisms, we produce an $(\mathcal{S},\mathcal{L})$-symmetric system extending $(\mathcal{M}\cup\mathcal{W})\cap X_i$. This is captured in Propositions 3.14-3.18 from \cite{Gallart2026forcingsymmetricsystemsmodels}.
	
	Let us be more precise. For every $i<n$, denote
	\begin{itemize}
		\item $\mathcal{M}_i:=\mathcal{M}\cap X_i$,
		\item $\mathcal{W}_i:=\mathcal{W}\cap X_i=\mathcal{W}\cap(X_i\cap Q)$ (note that $\mathcal{W}_i\subseteq X_i\cap Q$), and
		\item $Q_i:=X_i\cap Q$.
	\end{itemize}   
	Suppose that for some $i<n-1$, $\mathcal{U}_i$ is an $(\mathcal{S},\mathcal{L})$-symmetric system extending $(\mathcal{M}\cup\mathcal{W})\cap X_i$. Define $\mathcal{V}_{i+1}$ as the result of adding all $\omega_1$-isomorphic copies of $\mathcal{U}_i$ to $(\mathcal{M}\cup\mathcal{W})\cap Q_{i+1}[\omega_1]$. Equivalently, let $\mathcal{V}_{i+1}$ be the set 
	\[
	\mathcal{W}_{i+1}(\geq\varepsilon_{X_i})\cup\{\Psi_{X_i,Y_i}(U):U\in\mathcal{U}_i,Y_i\in\mathcal{W}_{i+1}(\varepsilon_{X_i})\}.
	\]
	Note that $\mathcal{V}_{i+1}$ is contained in $Q_{i+1}[\omega_1]$. Proposition 3.18 from \cite{Gallart2026forcingsymmetricsystemsmodels} ensures that $\mathcal{V}_{i+1}$ is an $(\mathcal{S},\mathcal{L})$-symmetric system. This first step deals with the models of the form (A) above. The next step is closing $$\big((\mathcal{M}\cup\mathcal{W})\cap X_{i+1}\big)\cup\mathcal{V}_{i+1}$$ under intersections. Let $\mathcal{V}_{i+1}^*$ be the result of adding to $\mathcal{V}_{i+1}$ every model of the form $W\cap M$, where $M$ is a small model in $\mathcal{M}_{i+1}(\geq\varepsilon_{Q_{i+1}})$ and $W$ is a large model in $\mathcal{W}_{i+1}(\geq\varepsilon_{X_i})\cap M$. Note that $\mathcal{V}_{i+1}^*$ is still contained in $Q_{i+1}[\omega_1]$. The main part of the argument is showing that $\mathcal{V}_{i+1}^*$ is an $(\mathcal{S},\mathcal{L})$-symmetric system with the property that $\big((\mathcal{M}\cup\mathcal{W})\cap X_{i+1}\big)\cup\mathcal{V}_{i+1}^*$ is closed under intersections. This second step deals with the models of the form (B) above. The last step is closing $\big((\mathcal{M}\cup\mathcal{W})\cap X_{i+1}\big)\cup\mathcal{V}_{i+1}^*$ under $\omega_1$-isomorphisms. Define $\mathcal{U}_{i+1}$ as the result of adding all $\omega_1$-isomorphic copies of $\mathcal{V}_{i+1}^*$ to $(\mathcal{M}\cup\mathcal{W})\cap X_{i+1}$. That is, let $\mathcal{U}_{i+1}$ be the set
	\[
	\mathcal{M}_{i+1}(\geq\varepsilon_{Q_{i+1}})\cup\{\Psi_{Q_{i+1}[\omega_1],N[\omega_1]}(V):V\in\mathcal{V}_{i+1}^*,N\in\mathcal{M}_{i+1}(\varepsilon_{Q_{i+1}})\}.
	\]
	Proposition Proposition 3.18 from \cite{Gallart2026forcingsymmetricsystemsmodels} ensures that $\mathcal{U}_{i+1}$ is an $(\mathcal{S},\mathcal{L})$-symmetric system. In this last step, the commutativity of the operations of taking intersections and $\omega_1$-isomorphic copies, given by Propositions \ref{prop20}-\ref{prop10}, plays a central role in guaranteeing that $\mathcal{U}_{i+1}$ is an $(\mathcal{S},\mathcal{L})$-symmetric system. 
	
	This induction yields an $(\mathcal{S},\mathcal{L})$-symmetric system $\mathcal{U}$ that extends $\mathcal{M}\cup\mathcal{W}$.

	\begin{definition}
		Let $\mathbb{M}(\mathcal{S},\mathcal{L})$ be the poset consisting of $(\mathcal{S},\mathcal{L})$-symmetric systems ordered by reverse inclusion.
	\end{definition}
	
	Let us finish this section by giving a brief overview of the properties of the forcing $\mathbb{M}(\mathcal{S},\mathcal{L})$ and variants thereof.
	
	\begin{corollary}[$2^{\aleph_1}=\aleph_2$]\label{pure-preservation}
		$\mathbb{M}(\mathcal{S},\mathcal{L})$ is strongly $\mathcal{S}$-proper, strongly $\mathcal{L}$-proper and has the $\aleph_3$-Knaster condition. Hence, it preserves all cardinals. Moreover, $\mathbb{M}(\mathcal{S},\mathcal{L})$ preserves $2^{\aleph_1}=\aleph_2$.
	\end{corollary}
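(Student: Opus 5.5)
We would derive each clause of the corollary from the corresponding amalgamation lemma of this section, and then get cardinal preservation from Corollary \ref{preservacio-proper2} together with the Knaster condition.

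\emph{Strong properness.} Let $Q\in\mathcal{S}\cup\mathcal{L}$ with $\mathbb{M}(\mathcal{S},\mathcal{L})\in Q$, and let $\mathcal{M}\in\mathbb{M}(\mathcal{S},\mathcal{L})\cap Q$. Since $\mathcal{M}$ is finite, $\mathcal{M}\subseteq Q$. By Lemma \ref{amal-ontop} there is a condition $\mathcal{M}_Q\supseteq\mathcal{M}\cup\{Q\}$, and we claim that any $\mathcal{N}\le\mathcal{M}_Q$ with $Q\in\mathcal{N}$ is strongly $(Q,\mathbb{M}(\mathcal{S},\mathcal{L}))$-generic. Fix $D\subseteq\mathbb{M}(\mathcal{S},\mathcal{L})\cap Q$ dense in $\mathbb{M}(\mathcal{S},\mathcal{L})\cap Q$, and let $\mathcal{N}'\le\mathcal{N}$. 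By Lemma \ref{amal-rest}, $\mathcal{N}'\cap Q$ is a condition, and it lies in $Q$: it is a finite subset of $Q$, and $Q$ is closed under finite subsets. Hence there is $\mathcal{W}\in D$ with $\mathcal{W}\le\mathcal{N}'\cap Q$, so that $\mathcal{N}'\cap Q\subseteq\mathcal{W}\subseteq Q$. Lemma \ref{lemma-amal} then gives a common extension $\mathcal{U}$ of $\mathcal{N}'$ and $\mathcal{W}$. This proves strong $\mathcal{S}$- and $\mathcal{L}$-properness. By Remark \ref{remark-str-gen}, it transfers to models $Q^*\preceq H(\theta)$ with $Q^*\cap H(\kappa)\in\mathcal{S}\cup\mathcal{L}$.

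\emph{$\aleph_3$-Knaster.} Let $\{\mathcal{M}_\xi:\xi<\omega_3\}$ be conditions. Each $\bigcup\mathcal{M}_\xi[\omega_1]$ has size $\aleph_1$.
\begin{enumerate}
\item By $2^{\aleph_1}=\aleph_2$ and $\mathrm{GCH}$-type counting, the number of isomorphism types of the structures $(\bigcup\mathcal{M}_\xi[\omega_1];\in,T,Q_i^\xi)_{i<n}$ is at most $2^{\aleph_1}=\aleph_2$. So we may pass to $\omega_3$ many indices of a fixed type.
\item Apply the $\Delta$-system lemma to the sets $\bigcup\mathcal{M}_\xi[\omega_1]$, which have size $\aleph_1$. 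This uses $\aleph_2^{\aleph_1}=\aleph_2<\aleph_3$, which follows from $2^{\aleph_1}=\aleph_2$. We obtain $\omega_3$ many indices with a root $R$.
\item Thin out further by the position of $R$ inside the structures. There are only $\aleph_2$ possibilities, so we may assume the isomorphisms fix $R$ pointwise.
\end{enumerate}
Then $\mathcal{M}_\xi\cong\mathcal{M}_\eta$ in the sense of the Notation, and Lemma \ref{pureamalgamation2} with $n=1$ makes $\mathcal{M}_\xi\cup\mathcal{M}_\eta$ a condition. So this family is pairwise compatible. I expect this to be the main obstacle: one must ensure the isomorphism is exactly the identity on the intersection, and the counting uses $2^{\aleph_1}=\aleph_2$ in an essential way.

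\emph{Cardinal preservation.} $\omega_1$ is preserved by Corollary \ref{preservacio-proper2} with $\lambda=\omega_1$, using that $\mathcal{S}$ is club. $\omega_2$ is preserved with $\lambda=\omega_2$, using the Fact that $\mathcal{L}$ can be taken stationary, with models containing any given $\alpha<\omega_2$. All cardinals $\ge\omega_3$ are preserved by the $\aleph_3$-c.c.

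\emph{$2^{\aleph_1}=\aleph_2$ in the extension.} Since the forcing is proper for small models, it adds no new reals, so $\mathrm{CH}$ is preserved. Count nice names for subsets of $\omega_1$: each consists of $\aleph_1$ antichains of size at most $\aleph_2$. This gives at most $(|\mathbb{M}|^{\aleph_2})^{\aleph_1}$ names, which is too crude. Instead, use strong $\mathcal{L}$-properness: every subset of $\omega_1$ in the extension lies in $V[G\cap X]$ for some $X\in\mathcal{L}$. Then count over the $\aleph_2$-many relevant $X$ modulo isomorphism, each contributing $2^{\aleph_1}$ many names, for a total of $\aleph_2$. This last count is the delicate point. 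It would also be fine to take $\kappa=\omega_2$-sized $H(\kappa)$ with $|\mathbb{M}|=\aleph_2$, in which case the name count gives $\aleph_2^{\aleph_1}=\aleph_2$ directly.
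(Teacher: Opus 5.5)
Your arguments for strong $\mathcal{S}$- and $\mathcal{L}$-properness, for the $\aleph_3$-Knaster condition and for cardinal preservation are the standard derivations the paper has in mind, and they are fine. For properness you show that any condition containing $Q$ is strongly generic, using Lemmas \ref{amal-ontop}, \ref{amal-rest} and \ref{lemma-amal}. For Knaster you count types, take a $\Delta$-system and apply Lemma \ref{pureamalgamation2}. For cardinals you use Corollary \ref{preservacio-proper2} plus the chain condition.

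The gap is in the last clause, preservation of $2^{\aleph_1}=\aleph_2$. There are three problems.
\begin{enumerate}
\item ``Proper, hence no new reals'' is false. Cohen forcing is proper, and $\mathbb{M}(\mathcal{S},\mathcal{L})$ actually adds Cohen reals, because $G\cap M$ is $V$-generic for the countable atomless poset $\mathbb{M}(\mathcal{S},\mathcal{L})\cap M$. You do not need $\mathrm{CH}$ in the extension, so simply drop this claim.
\item More seriously, counting ``the $\aleph_2$-many relevant $X$ modulo isomorphism'' bounds nothing as stated. If $X\cong X'$, then $\mathbb{M}(\mathcal{S},\mathcal{L})\cap X$ and $\mathbb{M}(\mathcal{S},\mathcal{L})\cap X'$ are isomorphic, but the generics $G\cap X$ and $G\cap X'$ are in general unrelated. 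So $V[G\cap X]$ and $V[G\cap X']$ can contain different subsets of $\omega_1$, and $\mathcal{L}$ has at least $\aleph_3$ members.
\item Your fallback fails too. The corollary concerns the fixed $\kappa>\omega_2$. Even if $|\mathbb{M}(\mathcal{S},\mathcal{L})|=\aleph_2$, you only have the $\aleph_3$-c.c., so antichains may have size $\aleph_2$. The nice-name count is then $2^{\aleph_2}$, not $\aleph_2^{\aleph_1}$.
\end{enumerate}

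The missing idea is to work only with large models lying in the generic system $\bigcup G$, where symmetry synchronizes the generics.
\begin{enumerate}
\item Capture each subset of $\omega_1$. Given a name $\dot A$ for a subset of $\omega_1$, density gives some $X\in\bigcup G$ of the form $X^*\cap H(\kappa)$ with $\dot A,\mathbb{M}(\mathcal{S},\mathcal{L})\in X^*\preceq H(\theta)$. Every condition containing $X$ is strongly $(X^*,\mathbb{M}(\mathcal{S},\mathcal{L}))$-generic, by your own argument plus Remark \ref{remark-str-gen}. Hence $G\cap X$ is $V$-generic for $\mathbb{M}(\mathcal{S},\mathcal{L})\cap X$, and $\dot A^G\in V[G\cap X]$.
\item Identify models of equal height. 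Let $X,X'\in\bigcup G$ have the same $\omega_2$-height, and let $\mathcal{W}\in G\cap X$. Take $\mathcal{U}\in G$ with $X,X'\in\mathcal{U}$ and $\mathcal{U}\le\mathcal{W}$. Proposition \ref{prop6} gives $\Psi_{X,X'}(\mathcal{W})\subseteq\mathcal{U}$. By Proposition \ref{isocopy}, $\Psi_{X,X'}(\mathcal{W})$ is a condition, so $\Psi_{X,X'}(\mathcal{W})\in G$. Thus $\{\Psi_{X,X'}(\mathcal{W}):\mathcal{W}\in G\cap X\}=G\cap X'$, and since $\Psi_{X,X'}\in V$, we get $V[G\cap X]=V[G\cap X']$.
\item Count. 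There are at most $\aleph_2$ such intermediate models, one per height below $\omega_2$. Each is a generic extension by a poset of size $\aleph_1$, so it contains at most $(2^{\aleph_1})^{\aleph_1}=\aleph_2$ subsets of $\omega_1$. This yields $2^{\aleph_1}=\aleph_2$ in $V[G]$.
\end{enumerate}
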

	
	The forcing $\mathbb{M}(\mathcal{S},\mathcal{L})$ adds a two-type symmetric system that covers $H(\kappa)^V$ and an $\omega_1$-club subset of $\omega_2$ that avoids all infinite subsets from the ground model. Moreover, in \cite{Gallart2026forcingsymmetricsystemsmodels}, it was also proven that $\mathbb{M}(\mathcal{S},\mathcal{L})$ adds a Kurepa tree on $\omega_2$. Variants of the same forcing add a club subset of $\omega_2$, a function on $\omega_2$ that bounds every canonical function below $\omega_3$ on a club, and a simplified $(\omega_2,1)$-morass. Furthermore, these variants of $\mathbb{M}(\mathcal{S},\mathcal{L})$ retain the preservation properties given by Corollary \ref{pure-preservation}.

	\section{The forcing construction}\label{section-strong-chains}
	
	Let us assume that $\mathrm{GCH}$ holds throughout this section. We start out by fixing a sequence $\vec{e}=(e_\alpha\,:\, \alpha\in \omega_3)$ such that $e_\alpha:|\alpha|\longrightarrow\alpha$ is a bijection for each $\alpha<\omega_3$. Furthermore, we assume that $\kappa=\omega_3$ and $T=\vec{e}$. Hence, $\mathcal{S}$ is the collection of countable elementary submodels $M$ of $(H(\omega_3); \in, \vec{e})$, and we let $\mathcal{L}$ be the collection of all elementary submodels $X$ of $(H(\omega_3); \in, \vec{e})$ such that $|X|=\aleph_1$ and $^\omega X\subseteq X$.
	
	Let us start by proving some general facts about $(\mathcal{S},\mathcal{L})$-symmetric systems that will be crucially used in the proof of Theorem \ref{mainthm}. 
	
	\begin{lemma}\label{agreementiso}
		For all countable $M_0,M_1\preceq(H(\omega_3);\in,\vec{e})$, if $$(M_0[\omega_1];\in,M_0,\vec{e})\cong(M_1[\omega_1];\in,M_1,\vec{e}),$$ then $M_0\cap M_1\cap\omega_3$ is an initial segment of both $M_0\cap\omega_3$ and $M_1\cap\omega_3$. 
	\end{lemma}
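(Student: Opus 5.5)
The plan is to show directly that if $\alpha\in M_0\cap M_1\cap\omega_3$ and $\beta\in M_0\cap\alpha$, then $\beta\in M_1$; the same argument with the roles of $M_0$ and $M_1$ swapped gives the other half of the statement. Let $\Psi=\Psi_{M_0[\omega_1],M_1[\omega_1]}$. By definition $\Psi$ is the identity on $M_0[\omega_1]\cap M_1[\omega_1]$, respects $\vec{e}$, and maps $M_0$ onto $M_1$ (by Proposition \ref{prop-iso}). So it is enough to show $\Psi(\beta)=\beta$: then $\beta=\Psi(\beta)\in M_1$.

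\emph{Step 1: fix $e_\alpha$.} Since $\alpha\in M_0\cap M_1\subseteq M_0[\omega_1]\cap M_1[\omega_1]$, we have $\Psi(\alpha)=\alpha$. The map $e_\alpha$ is definable from $\alpha$ using the predicate $\vec{e}$, so $e_\alpha\in M_0\cap M_1$. Because $\Psi$ is an isomorphism of structures that include $\vec{e}$, it follows that $\Psi(e_\alpha)=e_{\Psi(\alpha)}=e_\alpha$.

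\emph{Step 2: write $\beta$ through $e_\alpha$.} By elementarity of $M_0$, the ordinal $\xi:=e_\alpha^{-1}(\beta)<|\alpha|\le\omega_2$ belongs to $M_0$. Then
\[
\Psi(\beta)=\Psi(e_\alpha)(\Psi(\xi))=e_\alpha(\Psi(\xi)),
\]
so everything reduces to showing $\Psi(\xi)=\xi$. If $|\alpha|\le\omega_1$, this is immediate: $\xi<\omega_1$, and $\omega_1\subseteq M_0[\omega_1]\cap M_1[\omega_1]$ by Proposition \ref{prop7}, so $\Psi$ fixes $\xi$.

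\emph{Step 3: the case $|\alpha|=\omega_2$, where $\xi<\omega_2$.} I expect this to be the main step. The key claim is
\[
M_i[\omega_1]\cap\omega_2=\varepsilon_{M_i}\quad\text{for } i=0,1.
\]
For $\subseteq$: each element of $M_i[\omega_1]\cap\omega_2$ has the form $f(\gamma)$ with $f\in M_i$ and $f(\gamma)<\omega_2$. The function $f$ restricted to the preimage of $\omega_2$ lies in $M_i$, and its range has size at most $\aleph_1$, so it is bounded by some $\eta\in M_i\cap\omega_2$; hence $f(\gamma)<\eta\le\varepsilon_{M_i}$. For $\supseteq$: for each $\eta\in M_i\cap\omega_2$ we have $e_\eta\in M_i$, and $e_\eta$ maps a cardinal $\le\omega_1$ onto $\eta$. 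Extending it by $0$ to all of $\omega_1$ gives a function in $M_i$ with domain $\omega_1$ whose range contains $\eta$, so $\eta\subseteq M_i[\omega_1]$.

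By the proposition stating that $\omega_1$-isomorphic small models have the same $\omega_2$-height, $\varepsilon_{M_0}=\varepsilon_{M_1}$. Since $\xi\in M_0\cap\omega_2\subseteq\varepsilon_{M_0}$, the claim gives $\xi\in M_0[\omega_1]\cap M_1[\omega_1]$. Therefore $\Psi(\xi)=\xi$, hence $\Psi(\beta)=e_\alpha(\xi)=\beta$, and so $\beta\in M_1$.
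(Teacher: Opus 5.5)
Your proof is correct and is essentially the paper's argument: write the smaller ordinal as $e_\alpha(\xi)$ with $\xi\in M_0\cap\omega_2$, and show that the $\omega_1$-isomorphism $\Psi$ fixes $\xi$. There are two minor differences. First, you justify that $\Psi$ fixes ordinals below $\varepsilon_{M_0}$, via $M_i[\omega_1]\cap\omega_2=\varepsilon_{M_i}$, whereas the paper simply asserts it. Second, you finish by showing $\Psi(\beta)=\beta$; the paper instead uses $\xi\in M_1$ and gets $e_\alpha(\xi)\in M_1$ directly by elementarity of $M_1$, so it never needs $\Psi$ to fix $\alpha$ or $e_\alpha$.
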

	\begin{proof}
		We will show that for every $\beta\in M_0\cap M_1\cap\omega_3$, if $\alpha\in M_0\cap\beta$, then $\alpha\in M_1\cap\beta$. Note that there is some $\xi\in M_0\cap\omega_2$ such that $e_\beta(\xi)=\alpha$ and, in fact, this is seen by $M_0[\omega_1]$. Therefore, 
		\[
		M_1[\omega_1]\models\Psi_{M_0[\omega_1],M_1[\omega_1]}(\xi)\in M_1\cap\omega_2.
		\]
		But note that since $\Psi_{M_0[\omega_1],M_1[\omega_1]}$ is the identity on $M_0[\omega_1]\cap\omega_2$, we have that $\Psi_{M_0[\omega_1],M_1[\omega_1]}(\xi)=\xi\in M_1\cap\omega_2$. Hence, $\alpha=e_\beta(\xi)\in M_1$ as we wanted.
	\end{proof}
	
	\begin{proposition}
		Let $\mathcal{M}$ be an $(\mathcal{S},\mathcal{L})$-symmetric system. If $M,N\in\mathcal{M}\cap\mathcal{S}$ are such that $\delta_M<\delta_N$ and there is no $X\in\mathcal{M}\cap\mathcal{L}$ such that $\varepsilon_M<\varepsilon_X<\varepsilon_N$ or $\varepsilon_N<\varepsilon_X<\varepsilon_M$, then $\varepsilon_M<\varepsilon_N$. 
	\end{proposition}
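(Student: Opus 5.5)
We argue by contradiction: assume $\varepsilon_N\leq\varepsilon_M$ and derive $\delta_N\leq\delta_M$ in each of the two resulting cases, contradicting $\delta_M<\delta_N$. Both cases rest on one observation about $\omega_1$-isomorphisms. If $Q_0[\omega_1]\cong Q_1[\omega_1]$ with $Q_0,Q_1\in\mathcal{S}\cup\mathcal{L}$, then $\omega_1\subseteq Q_0[\omega_1]\cap Q_1[\omega_1]$ by Proposition \ref{prop7}. Hence $\Psi_{Q_0[\omega_1],Q_1[\omega_1]}$ is the identity on $\omega_1$. Since $\Psi$ also sends the predicate $Q_0$ to the predicate $Q_1$, we get $Q_0\cap\omega_1=Q_1\cap\omega_1$, i.e.\ $\delta_{Q_0}=\delta_{Q_1}$.

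\emph{Case $\varepsilon_N=\varepsilon_M$.} By clause (B) of Definition \ref{defSSM2T}, $M[\omega_1]\cong N[\omega_1]$. The observation then gives $\delta_M=\delta_N$, a contradiction.

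\emph{Case $\varepsilon_N<\varepsilon_M$.} By the shoulder axiom (the remark after Definition \ref{defSSM2T}), there is $Q\in\mathcal{M}(\varepsilon_M)$ with $N\in Q[\omega_1]$. By clause (B), $Q[\omega_1]\cong M[\omega_1]$, so the observation gives $Q\cap\omega_1=\delta_M$, which is countable. A large model contains $\omega_1$, so $Q$ must be small.

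Now $Q$ is small, $N\in Q[\omega_1]$, and $\varepsilon_Q=\varepsilon_M$. The hypothesis says there is no $X\in\mathcal{M}\cap\mathcal{L}$ with $\varepsilon_N<\varepsilon_X<\varepsilon_Q$. Proposition \ref{prop18}(2) therefore yields $N\in Q$. By elementarity $\omega_1\in Q$, so $\delta_N=N\cap\omega_1\in Q$. Thus $\delta_N\in Q\cap\omega_1=\delta_Q=\delta_M$, giving $\delta_N<\delta_M$, again a contradiction.

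Hence $\varepsilon_M<\varepsilon_N$. The only delicate point is confirming that the model $Q$ supplied by the shoulder axiom is small, so that Proposition \ref{prop18}(2) applies. This is handled by the fact that $\Psi$ fixes $\omega_1$ pointwise and preserves the predicate. The other direction of the no-large-model hypothesis ($\varepsilon_N<\varepsilon_X<\varepsilon_M$) is exactly what this case uses; the hypothesis with the roles of $M$ and $N$ reversed is not needed.
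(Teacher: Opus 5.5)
Your proof is correct and follows the paper's argument. You split into the cases $\varepsilon_M=\varepsilon_N$, where the $\omega_1$-isomorphism forces $\delta_M=\delta_N$, and $\varepsilon_N<\varepsilon_M$, where the shoulder axiom together with Proposition \ref{prop18}(2) puts $N$ inside a model of height $\varepsilon_M$ and so gives $\delta_N<\delta_M$. Your extra check that this model $Q$ is small, via $\delta_Q=\delta_M<\omega_1$, fills in a step the paper leaves implicit.
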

	\begin{proof}
		If $\varepsilon_M$ and $\varepsilon_N$ were equal, then we would have $\delta_M=\delta_N$. If $\varepsilon_M>\varepsilon_N$, then by the shoulder axiom there would be a model $M'\in\mathcal{M}(\varepsilon_M)$ such that $N\in M'[\omega_1]$, and since there are no large models $X\in\mathcal{M}$ so that $\varepsilon_X$ separates $\varepsilon_M$ and $\varepsilon_N$, we would have $N\in M'$. Therefore, $\delta_N<\delta_{M'}=\delta_M$.
	\end{proof}
	
	\begin{lemma}\label{key-claim}
		Let $\mathcal{M}$ be an $(\mathcal{S},\mathcal{L})$-symmetric system. If $M,N\in\mathcal{M}\cap\mathcal{S}$ are such that $\delta_M<\delta_N$, then $M\cap N\cap\omega_3\in N$.	
	\end{lemma}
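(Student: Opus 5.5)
The plan is to produce, inside $N$, a countable model $\bar M$ and an ordinal bound $\beta\in N\cup\{\omega_3\}$ so that
\[
M\cap N\cap\omega_3=\bar M\cap\beta .
\]
Since $\bar M$ and $\beta$ lie in $N$, this set is definable in $N$ from parameters in $N$, and hence belongs to $N$. We will induct on the number of large models $X\in\mathcal{M}$ whose $\omega_2$-heights lie between $\varepsilon_M$ and $\varepsilon_N$, and split into cases according to the order of $\varepsilon_M$ and $\varepsilon_N$.

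\emph{Basic case: no large model of $\mathcal{M}$ separates $\varepsilon_M$ and $\varepsilon_N$.} By the preceding proposition, $\delta_M<\delta_N$ forces $\varepsilon_M<\varepsilon_N$. The shoulder axiom gives $N'\in\mathcal{M}(\varepsilon_N)$ with $M\in N'[\omega_1]$. Since no large model sits in between, Proposition~\ref{prop18}(2) upgrades this to $M\in N'$. Let $\Psi=\Psi_{N'[\omega_1],N[\omega_1]}$ and put $\bar M:=\Psi(M)\in N$. Since $\bar M$ is countable, $\bar M\subseteq N$. By Lemma~\ref{agreementiso}, $N\cap N'\cap\omega_3$ is an initial segment of both $N\cap\omega_3$ and $N'\cap\omega_3$, and $\Psi$ is the identity on it. Let $\beta$ be the least element of $N\cap\omega_3$ above this common initial segment, or $\beta=\omega_3$ if there is none. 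Then:
\begin{itemize}
\item $M\cap N\cap\omega_3\subseteq N'\cap N\cap\omega_3\subseteq\beta$, because $M\subseteq N'$;
\item on $N'\cap N\cap\omega_3$, the models $M$ and $\bar M$ agree, since $\Psi$ fixes these ordinals;
\item $\bar M\cap\beta\subseteq N\cap\beta=N\cap N'\cap\omega_3$, so $\bar M\cap\beta\subseteq M$.
\end{itemize}
Together these give $M\cap N\cap\omega_3=\bar M\cap\beta$. The bounds come from $N$'s side, so $\beta\in N$ and the claim follows.

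\emph{General case: there are large models in between.} If $\varepsilon_M<\varepsilon_N$, the shoulder axiom gives $N'\in\mathcal{M}(\varepsilon_N)$ with $M\in N'[\omega_1]$, but now only in the hull. Use Proposition~\ref{prop13} to find the large model $X\in\mathcal{M}$ with $X\in N'$ and $M\in X$, taking $X$ of least height above $\varepsilon_M$. By clause (E), $X\cap N'\in\mathcal{M}$, and $X\cap N'$ is small with $\delta_{X\cap N'}=\delta_{N'}=\delta_N>\delta_M$.

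The ordinals of $M\cap N\cap\omega_3$ that matter lie in $N\cap N'$ (as before via Lemma~\ref{agreementiso}), and $M\subseteq X$. So the computation should reduce to $M\cap(X\cap N')\cap\omega_3$, which involves fewer intermediate large models. The induction hypothesis then places this set in $X\cap N'\subseteq N'$. Transporting by $\Psi$ and cutting at $\beta$ as in the basic case puts the required set in $N$.

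If instead $\varepsilon_M>\varepsilon_N$, apply the shoulder axiom to get $M'\in\mathcal{M}(\varepsilon_M)$ with $N\in M'[\omega_1]$. Here $\delta_M<\delta_N$ means $N\notin M'$, so a large $Y\in M'$ with $N\in Y$ exists. Replacing $M$ by $Y\cap M'$-type models via clause (E), and again using Lemma~\ref{agreementiso} to identify $M\cap\omega_3$ with $M'\cap\omega_3$ on the relevant segment, reduces to a configuration with fewer separating large models.

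\emph{Main obstacle.} The hard step is the dark-area situation in the general case. There the isomorphic copy of $M$ lands only in $N[\omega_1]$, not in $N$, and one must check that intersecting with $X\cap N'$ loses none of the ordinals of $M\cap N$. That check needs $\bar M\cap\omega_3\subseteq X$ and a careful use of the coding by $\vec e$: for $\gamma\in X\cap N\cap\omega_3$ the bijection $e_\gamma$ has range coded by ordinals $<\omega_2$, which must be kept track of inside $X$. I would first attempt that bookkeeping through Lemma~\ref{agreementiso} applied to $X\cap N'$ and its copies in $\mathcal{M}(\varepsilon_{X\cap N'})$.
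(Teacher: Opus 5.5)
Your base case is correct. It is a mild variant of the paper's: you apply Lemma \ref{agreementiso} to $N,N'$ and cut at $\beta\in N$, whereas the paper applies it to $M$ and $\Psi(M)$. The general case, however, has two concrete defects even before the step you leave open.

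\emph{The choice of $X$.} Membership $X\in N'$ is guaranteed by Proposition \ref{prop18}(2) only when no large height lies strictly between $\varepsilon_X$ and $\varepsilon_N$, that is, for $X$ at the \emph{greatest} large height below $\varepsilon_N$; Proposition \ref{prop13} still gives $M\in X[\omega_1]=X$ there. For $X$ of least height above $\varepsilon_M$ this can fail. Such an $X$ may lie in the dark area $[\varepsilon_{X^*\cap N'},\varepsilon_{X^*})$ of a higher large $X^*\in N'$, and then $X\notin N'$ by Proposition \ref{propgaps}, so clause (E) gives you nothing.

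\emph{The induction measure.} The number of large heights strictly between the two models need not drop. Suppose $\varepsilon_{X\cap N'}<\varepsilon_M$, which is exactly the dark-area case. The shoulder axiom gives $M''\in\mathcal{M}(\varepsilon_M)$ with $X\cap N'\in M''[\omega_1]$. If no large height separated $\varepsilon_{X\cap N'}$ from $\varepsilon_M$, you would get $X\cap N'\in M''$, hence $\delta_N<\delta_M$. So the new pair is again separated, and if $\varepsilon_X$ was the only large height between $\varepsilon_M$ and $\varepsilon_N$, the count has not decreased. The paper instead inducts on the position of $\max(\varepsilon_M,\varepsilon_N)$ among the large heights. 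Each step replaces the higher of the two models by one with the same $\omega_1$-height lying below the top large height beneath it.

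The genuine gap is the step you call the main obstacle and leave open. It needs neither the copy $\bar M$ nor any $\vec e$-bookkeeping. Take $X$ at the greatest large height below $\varepsilon_N$, let $\Psi=\Psi_{N'[\omega_1],N[\omega_1]}$, and put $Y:=\Psi(X)\in\mathcal{M}\cap N$. If $\alpha\in M\cap N\cap\omega_3$, then $\alpha\in M\subseteq X\subseteq N'[\omega_1]$ and $\alpha\in N[\omega_1]$. So $\Psi(\alpha)=\alpha$, hence $\alpha\in\Psi(X)=Y$. Thus $M\cap N\cap\omega_3=M\cap(Y\cap N)\cap\omega_3$, where $Y\cap N\in\mathcal{M}$ by clause (E), $\delta_{Y\cap N}=\delta_N>\delta_M$, and $\varepsilon_{Y\cap N}<\varepsilon_X$. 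The induction hypothesis then places this set in $Y\cap N\subseteq N$ directly, with no transport-and-cut. The same fixed-point observation also gives $M\cap N\cap\omega_3\subseteq X\cap N'$, which is exactly what your own variant via $X\cap N'$ was missing.

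The case $\varepsilon_N<\varepsilon_M$ is symmetric. Choose $M'\in\mathcal{M}(\varepsilon_M)$ and $X'$ at the greatest large height below $\varepsilon_M$ with $N\in X'\in M'$, and let $X:=\Psi_{M'[\omega_1],M[\omega_1]}(X')\in M$. The same argument, now using $N\subseteq X'\subseteq M'[\omega_1]$, gives $M\cap N\cap\omega_3=(X\cap M)\cap N\cap\omega_3$. The induction hypothesis applies since $\delta_{X\cap M}=\delta_M$ and $\varepsilon_{X\cap M}<\varepsilon_{X'}$. Your sketch of this case via Lemma \ref{agreementiso} can also be completed, because every initial segment of a countable set of ordinals lying in $N$ is itself in $N$. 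But it too needs this fixed-point observation first, to get $M\cap N\cap\omega_3\subseteq M'$.
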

	\begin{proof}
		Let $\langle\varepsilon_i:i<n\rangle$ be the increasing enumeration of $\dom(\mathcal{M}\cap\mathcal{L})$, and suppose that $\varepsilon_n<\omega_2$ is an ordinal greater than $\max\dom(\mathcal{M})$. We will show, by induction on $i\leq n$, that if $\varepsilon_M,\varepsilon_N<\varepsilon_i$, then the statement of the lemma holds.
		
		Suppose first that $\varepsilon_M,\varepsilon_N<\varepsilon_0$. Use the shoulder axiom to find $N'\in\mathcal{M}(\varepsilon_N)$ such that $M\in N'[\omega_1]$, and note that since $\varepsilon_M,\varepsilon_N=\varepsilon_{N'}<\varepsilon_0$, we have $M\in N'$. Let now $M':=\Psi_{N'[\omega_1],N[\omega_1]}(M)$, and note that $M'\in \mathcal{M}\cap N$. By Lemma \ref{agreementiso}, $M\cap M'\cap\omega_3$ is an initial segment of $M'\cap\omega_3$. Hence, as $M'\cap\omega_3\in N$, we have $M\cap M'\cap\omega_3\in N$. We will be done by showing that 
		\[
		M\cap N\cap\omega_3=M\cap M'\cap\omega_3.
		\]
		Let $\alpha\in M\cap N\cap\omega_3$, and note that since $M\in N'$, $\alpha\in N'\cap N$. Therefore, 
		\[
		\alpha=\Psi_{N'[\omega_1],N[\omega_1]}(\alpha)\in\Psi_{N'[\omega_1],N[\omega_1]}(M)=M'.
		\]
		This shows that $M\cap N\cap\omega_3\subseteq M'$, and since $M'\subseteq N$, this is enough to get
		\[
		M\cap N\cap\omega_3=M\cap M'\cap\omega_3\in N.
		\]
		
		Assume now that $\varepsilon_M,\varepsilon_N<\varepsilon_{i+1}$ for some $i<n$, and suppose that the statement holds for every pair of models $M_0,N_0\in\mathcal{M}\cap\mathcal{S}$ such that $\varepsilon_{M_0},\varepsilon_{N_0}<\varepsilon_i$. If $\varepsilon_M,\varepsilon_N<\varepsilon_i$, the conclusion follows from the induction hypothesis, and if $\varepsilon_i<\varepsilon_M,\varepsilon_N$, we can argue exactly as we did in the case $i=0$ to get the conclusion. Therefore, we can divide the rest of the proof in the following two cases:
		
		\textbf{Case 1.} Suppose that $\varepsilon_M<\varepsilon_i<\varepsilon_N$. Use the shoulder axiom twice to find $X\in\mathcal{M}(\varepsilon_i)$ and $N'\in\mathcal{M}(\varepsilon_N)$ such that $M\in X\in N'[\omega_1]$. Since $\varepsilon_i=\varepsilon_X<\varepsilon_{N'}<\varepsilon_{i+1}$, we have $X\in N'$. Let $Y:=\Psi_{N'[\omega_1],N[\omega_1]}(X)$, and note that $Y\in \mathcal{M}\cap N$. Therefore, $Y\cap N$ is a member of $\mathcal{M}$. Note that $\varepsilon_{Y\cap N}<\varepsilon_i$ and $\delta_{Y\cap N}=\delta_N$. Hence, by the induction hypothesis,
		\[
		M\cap (Y\cap N)\cap\omega_3\in Y\cap N\subseteq N,
		\] 
		and thus we will be done by showing that $M\cap N\cap\omega_3=M\cap (Y\cap N)\cap\omega_3$. Let $\alpha\in M\cap N\cap\omega_3$. Since $M\in N'[\omega_1]$, we have $\alpha\in N'[\omega_1]\cap N$. Combining this with the fact that $\alpha\in M\subseteq X$, we get
		\[
		\alpha=\Psi_{N'[\omega_1],N[\omega_1]}(\alpha)\in\Psi_{N'[\omega_1],N[\omega_1]}(X)=Y.
		\]
		This shows that $M\cap N\cap\omega_3\subseteq Y\cap N$, and since $Y\cap N\subseteq N$, this is enough to get
		\[
		M\cap N\cap\omega_3=M\cap (Y\cap N)\cap\omega_3\in N.
		\]
		
		\textbf{Case 2.} Suppose that $\varepsilon_N<\varepsilon_i<\varepsilon_M$. By two applications of the shoulder axiom we can find $X'\in\mathcal{M}(\varepsilon_i)$ and $M'\in\mathcal{M}(\varepsilon_M)$ such that $N\in X'\in M'[\omega_1]$. Since $\varepsilon_i=\varepsilon_{X'}<\varepsilon_{M'}<\varepsilon_{i+1}$, we have $X'\in M'$. Let $X:=\Psi_{M'[\omega_1],M[\omega_1]}(X')$, and note that $X\in\mathcal{M}\cap M$. Therefore, $X\cap M$ is a member of $\mathcal{M}$. Note that $\varepsilon_{X\cap M}<\varepsilon_i$ and $\delta_{X\cap M}=\delta_M$. Hence, by the induction hypothesis,
		\[
		(X\cap M)\cap N\cap\omega_3\in N,
		\]
		and thus we will be done by showing that $M\cap N\cap\omega_3=(X\cap M)\cap N\cap\omega_3$. Let $\alpha\in M\cap N\cap\omega_3$. Since $N\in M'[\omega_1]$, we have $\alpha\in M\cap M'[\omega_1]$. Combining this with the fact that $\alpha\in N\subseteq X'$, we get
		\[
		\alpha=\Psi_{M'[\omega_1],M[\omega_1]}(\alpha)\in\Psi_{M'[\omega_1],M[\omega_1]}(X')=X.
		\]
		This shows that $M\cap N\cap\omega_3\subseteq X\cap M$, and since $X\cap M\subseteq M$, this is enough to get
		\[
		M\cap N\cap\omega_3=(X\cap M)\cap N\cap\omega_3\in N.
		\]
	\end{proof}
	
	\begin{definition}
		Let $\mathcal{A}$ be a finite subset of $\mathcal{S}$, let $\nu<\omega_1$, and let $\alpha,\beta\in\omega_3$. We define the order $<_{\mathcal{A},\nu}$ by letting $\alpha<_{\mathcal{A},\nu}\beta$ if and only if $\alpha<\beta$ and there are $M_0,\dots,M_n\in\mathcal{A}$ and $\gamma_0<\dots<\gamma_{n-1}<\omega_3$ such that
		\begin{enumerate}[label=(\alph*)]
			\item $\sup_{i\leq n}\delta_{M_i}\leq\nu$,
			
			\item $\alpha\in M_0$ and $\beta\in M_n$, and
			
			\item $\gamma_i\in M_{i}\cap M_{i+1}\cap(\alpha,\beta)$, for every $i<n$.
		\end{enumerate}     
	\end{definition}
	
	\begin{remark}
		The order $<_{\mathcal{A},\nu}$ is transitive.
	\end{remark}
	
	We will denote the set $\{M\in\mathcal{A}:\delta_{M}\leq\nu\}$ by $\mathcal{A}^\nu$, and if $\alpha$ and $\beta$ are as in the definition above, we will say that they are \textit{$\mathcal{A}^\nu$-connected through $\{M_0,\dots,M_n\}$.} Moreover, if $\beta\in\field(<_{\mathcal{A},\nu})$, 
	we let $\pred_{\mathcal{A},\nu}(\beta)$ be the set of all immediate $<_{\mathcal{A},\nu}$-predecessors of $\beta$ in $\mathcal{A}^\nu$.\footnote{The field of a relation $<$ is $\field(<)=\dom(<)\cup\range(<)$, i.e., the set of all $a$ such that there is some $b$ with $b<a$ or $a<b$.} That is, $\pred_{\mathcal{A},\nu}(\beta)$ is the set of all $\alpha\in\field(<_{\mathcal{A},\nu})$ such that $\alpha<_{\mathcal{A},\nu}\beta$ and for which there is no $\gamma\in\field(<_{\mathcal{A},\nu})$ such that $\alpha<_{\mathcal{A},\nu}\gamma<_{\mathcal{A},\nu}\beta$.
	
	
	
	The forcing $\mathbb{P}$ witnessing Theorem \ref{mainthm} is defined as follows. Conditions in $\mathbb{P}$ are tuples $p=(\mathcal{M}_p, \mathcal{A}_p, a_p, d_p, u_p, b_p)$ such that:
	\begin{enumerate}[label=(C\arabic*),leftmargin=3.5em]
		\item $\mathcal{M}_p$ is an $(\mathcal{S}, \mathcal{L})$-symmetric system. 
		\item $\mathcal{A}_p\subseteq\mathcal{M}_p\cap\mathcal{S}$ is such that $X\cap M\in\mathcal{A}_p$ for all $M\in\mathcal{A}_p$ and $X\in\mathcal{M}_p\cap\mathcal{L}\cap M$. 
		\item $a_p\in[\omega_3]^{<\omega}$.
		\item $d_p\in[\omega_1]^{<\omega}$.
		\item $u_p=(u_p^\alpha\,:\,\alpha\in a_p)$ and for each $\alpha\in a_p$, $u_p^\alpha:d_p\to 2$ is a function.
		\item $b_p=(b_p(\alpha, \beta)\,:\,\alpha, \beta\in a_p, \alpha<\beta)$ and for all $\alpha<\beta$ in $a_p$,
		\begin{enumerate}
		\item $b_p(\alpha, \beta)\in [d_p]^{{<}\omega}$ and
		\item $b_p(\alpha, \beta)\sub\delta_M$ for every $M\in\mathcal{A}_p$ such that $\alpha$, $\beta\in M$.
		\end{enumerate}
		\item For every $\nu\in d_p$ and all $\alpha,\beta\in a_p$, if $\alpha<_{\mathcal{A}_p,\nu}\beta$, then $u_p^\alpha(\nu)\leq u_p^\beta(\nu)$.
		\item For all $\alpha<\beta\in a_p$ and for every $\nu\in d_p\setminus b_p(\alpha, \beta)$, $u_p^\alpha(\nu)\leq u_p^\beta(\nu)$.
	\end{enumerate}
	
	Given $\mathbb{P}$-conditions $p$ and $q$, $q$ extends $p$ (which we also denote by $q\leq p$) if and only if
	\begin{enumerate}[label=(O\arabic*),leftmargin=3.5em]
		\item $\mathcal{M}_q\supseteq\mathcal{M}_p$;
		\item $\mathcal{A}_q\supseteq\mathcal{A}_p$;
		\item $a_q\supseteq a_p$;
		\item $d_q\supseteq d_p$;
		\item for all $\alpha\in a_p$, $u_q^\alpha\supseteq u_p^\alpha$;
		\item for all $\alpha<\beta$ in $a_p$, $b_q(\alpha, \beta)=b_p(\alpha, \beta)$.
	\end{enumerate}
	
	If $p$ is a condition in $\mathbb{P}$, we will write $<_{p,\nu}$ instead of $<_{\mathcal{A}_p,\nu}$, and $\pred_{p,\nu}(\alpha)$ instead of $\pred_{\mathcal{A}_p,\nu}(\alpha)$, for every $\alpha\in a_p$. 
	
	Let us give some intuition about the definition of the forcing $\mathbb{P}$. Recall that our goal is to force a strong chain $\langle X_\alpha:\alpha<\omega_3\rangle$ of subsets of $\omega_1$. Suppose that $p$ is a condition in $\mathbb{P}$. The small models $M$ in the distinguished set $\mathcal{A}_p$, which we will call the set of \emph{active models} of $p$, are exactly the models for which we will show that $p$ is $(M,\mathbb{P})$-generic. The fact that the set of active models is not closed under isomorphism, unlike the $(\mathcal{S},\mathcal{L})$-symmetric system $\mathcal{M}_p$, is key in making sure that 
	 the arguments in many of the proofs that come next go through. For every $\alpha\in a_p$, the function $u_p^\alpha:d_p\to 2$ is a finite approximation of the characteristic function of the $\alpha$-th set $X_\alpha$ of the strong chain that we want to add generically. Specifically, $d_p$ gives us ordinals $\nu<\omega_1$, and $u_p^\alpha$ decides whether $\nu$ will be an element of $X_\alpha$ (i.e., $u_p^\alpha(\nu)=1$), or it will not (i.e., $u_p^\alpha(\nu)=0$). The component $b_p$ introduces, in clause (C8), the promise that the disagreement between the sets $X_\alpha$ and $X_\beta$ will be at most the finite set $b_p(\alpha,\beta)$.

	Clause (C7) is the main ingredient in the definition. Let $\alpha,\beta\in a_p$ be such that $\alpha<\beta$. Suppose that $\alpha,\beta\in M$ for some $M\in\mathcal{A}_p$. We want to promise that $u_p^\alpha(\nu)\leq u_p^\beta(\nu)$ for all $\nu\in d_p$ such that $\delta_M\leq\nu$. In other words, if $u_p^\alpha$ places $\nu$ in $X_\alpha$, then $u_p^\beta$ must place $\nu$ in $X_\beta$ as well, for all $\nu<\omega_1$, except maybe those $\nu$ that belong to $M$. However, the situation is a little more complicated. Let us momentarily denote by \emph{naive-(C7)} this weak version of clause (C7). We need to anticipate some potential issues that may appear when amalgamating two different conditions $p,q\in\mathbb{P}$. Let $\nu\in d_p$ and suppose that $\alpha,\beta\in a_p$ are such that $\alpha<\beta$. Moreover, suppose that $\alpha\in M_0$ and $\beta\in M_1$ for some $M_0,M_1\in\mathcal{A}_p^\nu$, but there is no $M\in\mathcal{A}_p^\nu$ such that $\alpha,\beta\in M$. In this case, clause naive-(C7) does not impose any requirement on the values of $u_p^\alpha(\nu)$ and $u_p^\beta(\nu)$. So it might very well happen that $u_p^\alpha(\nu)>u_p^\beta(\nu)$. Suppose now that $\gamma\in a_q$ such that $\alpha<\gamma<\beta$ and $\gamma\in M_0\cap M_1$. Note that in this case, it would be impossible to amalgamate $p$ and $q$. Indeed, suppose that $r\in\mathbb{P}$ were a condition extending both $p$ and $q$. Then, on the one hand, $u_r^\alpha(\nu)=u_p^\alpha(\nu)>u_p^\beta(\nu)=u_r^\beta(\nu)$. However, on the other hand, naive-(C7) would impose $u_r^\alpha(\nu)\leq u_r^\gamma(\nu)=u_q^\gamma(\nu)\leq u_r^\beta(\nu)$, which is impossible. The order $<_{\mathcal{A},\nu}$ is designed precisely to account for these potential issues that might appear in some of the proofs of the amalgamation lemmas, and clause (C7) is defined accordingly.

	
	

	
	
	\subsection{Density lemmas and basic properties of the forcing}\label{subsect-sc-density-lemmas}
	
	\begin{lemma}\label{dens0}
		For every $p\in\mathbb{P}$ and every $\alpha\in\omega_3$, there are $p^*\in\mathbb{P}$ extending $p$ and  $\alpha^*<\omega_3$ such that $\alpha^*>\alpha$ and $\alpha^*\in a_{p^*}$. 
	\end{lemma}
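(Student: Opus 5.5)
The plan is to add a single fresh ordinal to $a_p$, chosen so that it lies in none of the countably many models of $\mathcal{M}_p$ and exceeds every ordinal already in $a_p$. With that choice, none of the new constraints interacts with the old ones.

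Since $\mathcal{M}_p$ is finite and every model in it has size at most $\aleph_1$, the set $\bigcup\mathcal{M}_p\cap\omega_3$ has size at most $\aleph_1$ and is therefore bounded in $\omega_3$. I fix $\alpha^*<\omega_3$ with $\alpha^*>\alpha$, $\alpha^*>\max a_p$, and $\alpha^*\notin Q$ for every $Q\in\mathcal{M}_p$. I then define $p^*$ as follows:
\begin{itemize}
\item $\mathcal{M}_{p^*}=\mathcal{M}_p$, $\mathcal{A}_{p^*}=\mathcal{A}_p$, $d_{p^*}=d_p$;
\item $a_{p^*}=a_p\cup\{\alpha^*\}$, with $u_{p^*}^\gamma=u_p^\gamma$ for $\gamma\in a_p$ and $u_{p^*}^{\alpha^*}$ the constant function $1$ on $d_p$;
\item $b_{p^*}(\gamma,\alpha^*)=\emptyset$ for $\gamma\in a_p$, and $b_{p^*}$ agrees with $b_p$ on pairs from $a_p$.
\end{itemize}

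Next I check (C1)--(C8). Clauses (C1)--(C5) are immediate. For (C6)(b), the new pairs $(\gamma,\alpha^*)$ satisfy it vacuously, because no $M\in\mathcal{A}_p$ contains $\alpha^*$; in any case the empty set is trivially contained in every $\delta_M$. For (C8), only the new pairs are at issue: $\nu\in d_p$ and $\gamma<\alpha^*$ give $u^\gamma(\nu)\le 1=u^{\alpha^*}(\nu)$. For (C7), the order $<_{\mathcal{A}_{p^*},\nu}$ is literally the same as $<_{p,\nu}$, since $\mathcal{A}$ is unchanged. Moreover, $\alpha^*$ lies in no model of $\mathcal{A}_p$, so it is not in the field of this order, and no new instance of (C7) arises. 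Even if it were in the field, it could only appear as the larger element, where the value $1$ handles it.

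Finally, the extension clauses (O1)--(O6) hold trivially, since nothing old was changed. I expect no real obstacle here. The only care point is choosing $\alpha^*$ outside all models, which uses that $\omega_3$ is regular and that the models have size at most $\aleph_1$. That choice makes (C6) and (C7) vacuous for the new ordinal, and the constant-$1$ choice of $u^{\alpha^*}$ settles (C8).
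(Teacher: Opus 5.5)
Your proposal is correct and follows essentially the same route as the paper: keep $\mathcal{M}_p$, $\mathcal{A}_p$ and $d_p$ unchanged, and add one ordinal $\alpha^*$ that lies above $\alpha$ and $\max a_p$ and outside the active models, so that it enters no $<_{p,\nu}$-connection. The only difference is that the bookkeeping is mirrored. The paper sets $u_{p^*}^{\alpha^*}\equiv 0$ and $b_{p^*}(\gamma,\alpha^*)=d_p$, which makes (C8) vacuous for the new pairs, whereas your choice $u_{p^*}^{\alpha^*}\equiv 1$ with $b_{p^*}(\gamma,\alpha^*)=\emptyset$ satisfies (C8) directly; since $\alpha^*=\max a_{p^*}$, your version would even work without avoiding the models.
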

	
	\begin{proof}
		Let $\alpha^*<\omega_3$ such that $\alpha^*>\alpha$ and  $\alpha^*>\sup(M\cap \omega_3)$ for each $M\in\mathcal{A}_p$.  Hence, $\alpha^*$ is not $\mathcal{A}_p^\nu$-connected to any $\beta\in\mathcal{A}_p$, for any $\nu\in d_p$. We may also assume, for notational convenience in the definition below of $b^*$, that $\alpha^*>\beta$ for all $\beta\in a_p$. 
		
		For every $\nu\in d_p$, we let $u_{p^*}^{\alpha^*}(\nu)=0$ and $u_{p^*}^\beta(\nu)=u_p^\beta(\nu)$ for each $\beta\in a_p$. Let also $b^*=(b^*(\alpha, \beta)\,:\,\alpha, \beta\in a_p\cup\{\alpha^*\}, \alpha <\beta)$, where $b^*(\alpha, \beta)=b_p(\alpha, \beta)$ if $\beta<\alpha^*$ and $b^*(\alpha, \alpha^*)=d_p$ for all $\alpha\in a_p$. Then,
		\[
		p^*=(\mathcal{M}_p,\mathcal{A}_p,a_p\cup\{\alpha^*\},d_p,(u_{p^*}^\beta:\beta\in a_p\cup\{\alpha^*\}), b^*)
		\]
		is a condition in $\mathbb{P}$ extending $p$ as desired.
	\end{proof}
	
	
	
	
	The proof of the next density lemma is immediate.
	
	\begin{lemma}\label{dens1}
		For every $p\in\mathbb{P}$ and every $\nu\in\omega_1\setminus d_p$, there exists some $p^*\in\mathbb{P}$ such that $p^*\leq p$, $\nu\in d_{p^*}$, and for any $i\in \{0,1\}$, $u_{p^*}^\alpha(\nu)=i$ for all $\alpha\in a_{p^*}$.
	\end{lemma}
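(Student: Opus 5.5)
The plan is to keep every component of $p$ except $d_p$ and $u_p$, which we extend by the single new point $\nu$ with constant value $i$. Concretely, let
\[
p^*=\big(\mathcal{M}_p,\mathcal{A}_p,a_p,d_p\cup\{\nu\},(u_p^\alpha\cup\{(\nu,i)\}:\alpha\in a_p),b_p\big).
\]
Clauses (O1)--(O6) are then immediate: all components are extended or kept fixed, and $b$ is unchanged. So it remains to check that $p^*$ is a condition, i.e.\ clauses (C1)--(C8).

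Clauses (C1)--(C5) are trivial, since $\mathcal{M}$, $\mathcal{A}$ and $a$ are unchanged, $d_{p^*}$ is still finite, and each $u_{p^*}^\alpha$ is a function on $d_{p^*}$ into $2$. For (C6)(a), $b_p(\alpha,\beta)\subseteq d_p\subseteq d_{p^*}$. Clause (C6)(b) involves only $\mathcal{A}_p$ and $b_p$, so it is inherited from $p$.

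For (C7) and (C8), we distinguish whether the coordinate is old or new.

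\textbf{Old coordinates $\mu\in d_p$.} The order $<_{\mathcal{A}_{p^*},\mu}$ equals $<_{\mathcal{A}_p,\mu}$, and $u_{p^*}^\alpha(\mu)=u_p^\alpha(\mu)$. Hence (C7) and (C8) hold because they held for $p$.

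\textbf{The new coordinate $\nu$.} Here $u_{p^*}^\alpha(\nu)=i=u_{p^*}^\beta(\nu)$ for all $\alpha,\beta\in a_p$. Therefore the inequality $u_{p^*}^\alpha(\nu)\le u_{p^*}^\beta(\nu)$ holds unconditionally, whatever $<_{\mathcal{A}_p,\nu}$ looks like and regardless of whether $\nu\in b_p(\alpha,\beta)$. In fact $\nu\notin b_p(\alpha,\beta)$, since $b_p(\alpha,\beta)\subseteq d_p$.

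There is no real obstacle. The only point that requires any care is noticing that the constant assignment makes the new coordinate automatically satisfy both monotonicity clauses. The assignment is constant because the lemma takes a single $i\in\{0,1\}$ and the same value at every $\alpha\in a_{p^*}=a_p$.
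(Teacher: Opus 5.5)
Your proposal is correct and is the argument the paper has in mind: the paper omits the proof as immediate, and your construction mirrors the one it writes out for Lemma~\ref{dens2}. You add $\nu$ to $d_p$ with the constant value $i$ and leave $\mathcal{M}_p,\mathcal{A}_p,a_p,b_p$ unchanged; then (C7) and (C8) hold trivially at $\nu$ because the values are constant, and at the old coordinates they are inherited from $p$.
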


	
	\begin{lemma}\label{dens2}
		Let $p\in\mathbb{P}$, $\alpha\in a_p$, and $\nu<\omega_1$. For each $\beta\in a_p$ such that $\beta>\alpha$ there are $p^*\in\mathbb{P}$ such that $p^*\leq p$ and $\nu^*<\omega_1$ above $\nu$ such that $\nu^*\in d_{p^*}$, $u_{p^*}^\alpha(\nu^*)=0$, and $u_{p^*}^\beta(\nu^*)=1$.
	\end{lemma}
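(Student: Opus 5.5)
The plan is to add a single new coordinate $\nu^*$ to $d_p$ and leave every other component of $p$ unchanged. Concretely, I will fix any $\nu^*<\omega_1$ with $\nu^*>\nu$ and $\nu^*\notin d_p$; this is possible because $d_p$ is finite. Then I will set
\[
p^*=(\mathcal{M}_p,\mathcal{A}_p,a_p,d_p\cup\{\nu^*\},(u_{p^*}^\gamma:\gamma\in a_p),b_p),
\]
where $u_{p^*}^\gamma\restriction d_p=u_p^\gamma$ for each $\gamma\in a_p$. On the new point, $u_{p^*}^\gamma(\nu^*)=1$ if $\gamma\geq\beta$ and $u_{p^*}^\gamma(\nu^*)=0$ if $\gamma<\beta$. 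Since $\alpha<\beta$, this gives $u_{p^*}^\alpha(\nu^*)=0$ and $u_{p^*}^\beta(\nu^*)=1$, as required.

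The main point, and the only place needing care, is why the new column must be chosen monotone in the \emph{ordinal} order rather than merely along $<_{p,\nu^*}$. By (O6), the sets $b_{p^*}(\gamma,\delta)=b_p(\gamma,\delta)$ cannot change, and these are subsets of $d_p$, so $\nu^*$ lies in none of them. Hence (C8) forces $u_{p^*}^\gamma(\nu^*)\leq u_{p^*}^\delta(\nu^*)$ for \emph{all} $\gamma<\delta$ in $a_p$. A threshold function in the ordinal order satisfies exactly this. For (C7) at $\nu^*$, recall that $\gamma<_{\mathcal{A}_p,\nu^*}\delta$ implies $\gamma<\delta$ by definition, so (C7) at $\nu^*$ follows from the monotonicity just established.

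It then remains to verify the other clauses routinely. (C1)–(C4) are unchanged or trivial. (C5) holds because each $u_{p^*}^\gamma$ is a function on $d_{p^*}$. Clause (C6)(a) holds since $b_p(\gamma,\delta)\subseteq d_p\subseteq d_{p^*}$, and clause (C6)(b) is untouched. For $\nu'\in d_p$, clauses (C7) and (C8) are inherited from $p$, since $\mathcal{A}_p$, $b_p$ and the values on $d_p$ are unchanged. Finally, (O1)–(O6) are immediate, so $p^*\leq p$.
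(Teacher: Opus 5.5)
Your proposal is correct and is essentially the paper's proof: the paper also adds a single fresh $\nu^*>\nu$, sets $u_{p^*}^\gamma(\nu^*)=0$ for $\gamma<\beta$ and $u_{p^*}^\gamma(\nu^*)=1$ for $\gamma\geq\beta$, and leaves every other component of $p$ unchanged. The only difference is that the paper also takes $\nu^*>\delta_M$ for every $M\in\mathcal{A}_p$; your observation that $\gamma<_{\mathcal{A}_p,\nu^*}\delta$ already implies $\gamma<\delta$ shows this extra requirement is not needed.
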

	
	\begin{proof}
		Let $\nu^*\in\omega_1\setminus d_p$ be larger than both $\nu$ and $\delta_M$, for every $M\in\mathcal{A}_p$. Let us define $d_{p^*}=d_p\cup\{\nu^*\}$. For every $\gamma\in a_p$, we extend $u_p^\gamma$ to a function $u_{p^*}^\gamma$ with domain $d_{p^*}$ such that $u_{p^*}^\gamma(\nu^*)=0$ if $\gamma<\beta$, and $u_{p^*}^\gamma(\nu^*)=1$ if $\gamma\geq\beta$. Then,
		\[
		p^*=(\mathcal{M}_p,\mathcal{A}_p,a_p,d_{p^*},(u_{p^*}^\gamma:\gamma\in a_p), b_p)
		\]
		is a condition in $\mathbb{P}$ extending $p$ as desired.
	\end{proof}
	
	
	\begin{lemma}\label{ontop}
		Let $Q\in\mathcal{S}\cup\mathcal{L}$ and let $p\in\mathbb{P}\cap Q$. Then there is a condition $q\in\mathbb{P}$ extending $p$ and such that 
		\begin{enumerate}[label=(\arabic*)]
			\item $Q\in\mathcal{M}_q$ and
			\item $Q\in\mathcal{A}_q$ if $Q$ is countable.
		\end{enumerate}
	\end{lemma}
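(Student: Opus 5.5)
The plan is to leave all the working parts $a_p,d_p,u_p,b_p$ of $p$ unchanged and to enlarge only the side condition $\mathcal{M}_p$ and, when $Q$ is countable, the set of active models $\mathcal{A}_p$. Since $p\in Q$, we have $\mathcal{M}_p\in Q$. As $\mathcal{M}_p$ is finite, this gives $\mathcal{M}_p\subseteq Q$, so Lemma \ref{amal-ontop} applies.

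\textbf{Case $Q\in\mathcal{L}$.} Let $q=(\mathcal{M}_p\cup\{Q\},\mathcal{A}_p,a_p,d_p,u_p,b_p)$. By the remark following Lemma \ref{amal-ontop}, $\mathcal{M}_p\cup\{Q\}$ is an $(\mathcal{S},\mathcal{L})$-symmetric system. We check clause (C2) for the new large model $Q$.
\begin{itemize}
\item Suppose $M\in\mathcal{A}_p$ had $Q\in M$. Since $M\in\mathcal{M}_p\subseteq Q$, we would get $Q\in M\in Q$, which is impossible.
\item Hence the closure requirement in (C2) introduces no new intersections.
\end{itemize}
Clauses (C3)--(C8) are untouched, since $\mathcal{A}_p$ and the working parts are unchanged. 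The order clauses (O1)--(O6) are immediate.

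\textbf{Case $Q\in\mathcal{S}$.} Take the system
\[
\mathcal{M}_q:=\mathcal{M}_p\cup\{Q\}\cup\{X\cap Q:X\in\mathcal{M}_p\cap\mathcal{L}\},
\]
which is symmetric by Lemma \ref{amal-ontop} and the remark after it. Put $\mathcal{A}_q:=\mathcal{A}_p\cup\{Q\}\cup\{X\cap Q:X\in\mathcal{M}_p\cap\mathcal{L}\}$, and check (C2) for each kind of active model.
\begin{itemize}
\item The large models of $\mathcal{M}_q$ are exactly those of $\mathcal{M}_p$.
\item For $M\in\mathcal{A}_p$: any large $X\in\mathcal{M}_q\cap M$ lies in $\mathcal{M}_p$, so (C2) for $p$ gives $X\cap M\in\mathcal{A}_p$.
\item For $M=Q$: every large $X\in\mathcal{M}_p$ lies in $Q$, and $X\cap Q$ has been added to $\mathcal{A}_q$.
\item For $M=X\cap Q$: take a large $Y\in X\cap Q$. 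Proposition \ref{prop18}(1) gives $Y\subseteq X$, so $Y\cap(X\cap Q)=Y\cap Q\in\mathcal{A}_q$.
\end{itemize}

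The remaining clauses reduce to one fact about heights. Every new active model $N$ satisfies $\delta_N=\delta_Q$, because $\omega_1\subseteq X$ for $X\in\mathcal{L}$. Also $d_p\in Q$ is a finite subset of $\omega_1$, so $d_p\subseteq\delta_Q$.
\begin{itemize}
\item For (C6)(b): each $b_p(\alpha,\beta)\subseteq d_p\subseteq\delta_Q=\delta_N$ for every new active $N$.
\item For (C7): every $\nu\in d_p$ satisfies $\nu<\delta_Q$. So no new active model belongs to $\mathcal{A}_q^\nu$, hence $\mathcal{A}_q^\nu=\mathcal{A}_p^\nu$ and $<_{q,\nu}={<_{p,\nu}}$. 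Thus (C7) is inherited from $p$.
\item Clause (C8) is unchanged.
\end{itemize}

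The only step needing care is the (C2) closure for the new models $X\cap Q$. It rests on the observation $Y\in X\Rightarrow Y\subseteq X$ for large models, from Proposition \ref{prop18}(1). Everything else follows from $p\in Q$ together with $\delta_{X\cap Q}=\delta_Q$.
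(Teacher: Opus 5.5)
Your proposal is correct and follows the paper's proof: you make the same choice of $\mathcal{M}_q$ and $\mathcal{A}_q$ in both cases, and you reduce (C7) to the same observation, namely that $d_p\subseteq\delta_Q=\delta_{X\cap Q}$, so no new model enters any $\mathcal{A}_q^\nu$ with $\nu\in d_p$. Your explicit verifications of (C2) and (C6)(b), which the paper leaves as obvious, are also correct.
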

	\begin{proof}
		Suppose first that $Q\in\mathcal{S}$ and let $q=(\mathcal{M}_q,\mathcal{A}_q, a_p, d_p, u_p, b_p)$ be such that 
		\[
		\mathcal{M}_q=\mathcal{M}_p\cup\{Q\}\cup\{X\cap Q\,:\,X\in\mathcal{M}_p\cap\mathcal{L}\}
		\]
		and 
		\[
		\mathcal{A}_q=\mathcal{A}_p\cup\{Q\}\cup\{X\cap Q\,:\,X\in\mathcal{M}_p\cap\mathcal{L}\}.
		\]
		The only clause in the definition of $\mathbb{P}$ that we need to check is (C7), as clause (C1) follows from Lemma \ref{amal-ontop}, and the other ones are obvious. On the one hand, note that since $p\in Q$, in particular $d_p\subseteq Q$, and hence $\nu<\delta_Q$ for all $\nu\in d_p$. On the other hand, note that $\delta_Q=\delta_{X\cap Q}$ for all $X\in\mathcal{M}_p\cap\mathcal{L}$. Therefore, neither $Q$ nor the models $X\cap Q$, where $X\in\mathcal{M}_p\cap\mathcal{L}$, add new $\mathcal{A}_q^\nu$-connections between the elements of $a_p$, for any $\nu\in d_p$. Hence, $q$ satisfies clause (C7).
		
		The proof for the uncountable case is exactly the same as for the countable case, with the only change in the definition of $q$ being that $\mathcal{M}_q=\mathcal{M}_p\cup\{Q\}$ and $\mathcal{A}_q=\mathcal{A}_p$. 
	\end{proof}
	
	\begin{definition}
		Given a condition $p\in\mathbb{P}$ and a model $Q\in\mathcal{M}_p$, we define $p\upharpoonright Q=(\mathcal{M}_{p\upharpoonright Q}, \mathcal{A}_{p\upharpoonright Q}, a_{p\upharpoonright Q}, d_{p\upharpoonright Q}, u_{p\upharpoonright Q}, b_{p\upharpoonright Q})$ by letting
		\begin{enumerate}[label=(\arabic*)]
			\item $\mathcal{M}_{p\upharpoonright Q}=\mathcal{M}_p\cap Q$,
			
			\item $\mathcal{A}_{p\upharpoonright Q}=\mathcal{A}_p\cap Q$,
			
			\item $a_{p\upharpoonright Q}=a_p\cap Q$,
			
			\item $d_{p\upharpoonright Q}=d_p\cap Q$, 	
			
			\item $u_{p\upharpoonright Q}^\alpha(\nu)=u_p^\alpha(\nu)$, for all $\alpha\in a_{p\upharpoonright Q}$ and all $\nu\in d_{p\upharpoonright Q}$, and
			\item $b_{p\upharpoonright Q}=(b_p(\alpha, \beta)\,:\,\alpha, \beta\in a_p\cap Q, \alpha<\beta)$.
		\end{enumerate}
		and we call it \emph{the restriction of $p$ to $Q$} .
	\end{definition}
	
	\begin{lemma}\label{restriction}
		If $p\in\mathbb{P}$ and $Q\in\mathcal{M}_p$, then $p\upharpoonright Q$ is a condition in $\mathbb{P}\cap Q$ such that $p\leq p\upharpoonright Q$ if
		\begin{enumerate}[label=(\arabic*)]
			\item $Q\in\mathcal{L}$ or
			\item $Q\in\mathcal{S}$ and $Q\in\mathcal{A}_p$.
		\end{enumerate}
	\end{lemma}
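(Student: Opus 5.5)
We verify clauses (C1)--(C8) for $p\upharpoonright Q$, show $p\upharpoonright Q\in Q$, and then check (O1)--(O6) for $p\leq p\upharpoonright Q$. The ordering clauses are immediate from the definition: every component of $p\upharpoonright Q$ is a subset (or restriction) of the corresponding component of $p$, and $b_{p\upharpoonright Q}$ is literally a restriction of $b_p$.

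\textbf{Membership in $Q$.} Each component of $p\upharpoonright Q$ is a finite subset of $Q$, or a finite function built from finitely many elements of $Q$. For instance, $b_p(\alpha,\beta)$ is a finite subset of $d_p\cap Q$ whenever $\alpha,\beta\in Q$. This uses (C6)(b) when $Q\in\mathcal{A}_p$, since then $b_p(\alpha,\beta)\subseteq\delta_Q\subseteq Q$; when $Q\in\mathcal{L}$ we simply have $\omega_1\subseteq Q$. Since $Q$ is elementary, it is closed under finite subsets, so the whole tuple lies in $Q$. Note that $d_p\cap Q$ equals $d_p\cap\delta_Q$ in the countable case and equals $d_p$ in the large case.

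\textbf{Routine clauses.} For (C1), Lemma~\ref{amal-rest} gives that $\mathcal{M}_p\cap Q$ is an $(\mathcal{S},\mathcal{L})$-symmetric system. For (C2), let $M\in\mathcal{A}_p\cap Q$ and $X\in\mathcal{M}_p\cap Q\cap\mathcal{L}\cap M$. Then $X\cap M\in\mathcal{A}_p$ by (C2) for $p$, and $X\cap M\in Q$ by elementarity, since $X,M\in Q$. Clauses (C3)--(C5) are immediate. For (C6)(b), any $M\in\mathcal{A}_p\cap Q$ is in $\mathcal{A}_p$, so the requirement is inherited from $p$. Clause (C8) is also inherited from $p$, because $u$ and $b$ are just restricted.

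\textbf{Main obstacle: (C7).} The difficulty is that $<_{p\upharpoonright Q,\nu}$ is computed from $\mathcal{A}_p\cap Q$, and is therefore a suborder of $<_{p,\nu}$. Whenever $\alpha,\beta\in a_p\cap Q$, $\nu\in d_p\cap Q$, and $\alpha<_{\mathcal{A}_p\cap Q,\nu}\beta$, the connecting models and the ordinals $\gamma_i$ already witness $\alpha<_{\mathcal{A}_p,\nu}\beta$. Hence (C7) for $p$ gives $u_p^\alpha(\nu)\leq u_p^\beta(\nu)$, which is the required inequality for $p\upharpoonright Q$.

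So the restriction is easy: monotonicity of the relation $<_{\mathcal{A},\nu}$ in $\mathcal{A}$ suffices. The real issue, namely that extending $\mathcal{A}_{p\upharpoonright Q}$ inside $Q$ might create new connections, belongs to the later amalgamation lemma and not to this statement. The hypothesis $Q\in\mathcal{A}_p$ in the countable case is used only to ensure that $b_{p\upharpoonright Q}\in Q$.
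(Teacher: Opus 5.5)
Your proof is correct and follows the paper's route: (C1) comes from Lemma~\ref{amal-rest}, the other clauses are inherited from $p$ (with (C7) by monotonicity of $<_{\mathcal{A},\nu}$ in $\mathcal{A}$), $p\upharpoonright Q\in Q$ holds because it is a finite object built from elements of $Q$, and $p\leq p\upharpoonright Q$ is read off the definition. You are more careful than the paper in noting that $b_p(\alpha,\beta)\subseteq\delta_Q$ follows from (C6)(b) together with $Q\in\mathcal{A}_p$; this single fact is what gives both (C6)(a) for $p\upharpoonright Q$ (values in $[d_p\cap Q]^{<\omega}$) and $b_{p\upharpoonright Q}\in Q$, so the hypothesis is used for slightly more than membership alone.
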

	
	\begin{proof}
		We only prove the case where $Q$ is countable and $Q\in\mathcal{A}_p$, as the uncountable case is straightforward. It is easily seen that $p\upharpoonright Q$ is a condition in $\mathbb{P}$. Indeed, clause (C1) in the definition of $\mathbb{P}$ follows from Lemma \ref{amal-rest} and the other clauses are immediate. Since $p\upharpoonright Q$ is finite, it clearly belongs to $Q$. Lastly, $p\leq p\upharpoonright Q$ follows easily from the definition of $p\upharpoonright Q$.
	\end{proof}
	
	\subsection{Properness}\label{subsect-sc-S-properness}
	
	\begin{definition}\label{defreflectionS}
		Let $p\in\mathbb{P}$ and $M\in\mathcal{A}_p$. A condition $q\in\mathbb{P}$ is called an \emph{$(\mathcal{S},M)$-reflection of $p$} if it satisfies the following properties:
		\begin{enumerate}[label=(\arabic*)]
			\item $q\leq p\upharpoonright M$.
			
			\item If $\mathcal{A}_p=\{M_0,\dots, M_n\}$, then there are $M_0',\dots M_n'\in\mathcal{S}$ such that $\mathcal{A}_q=\{M_0',\dots,M_n'\}$ with the following properties: 
			\begin{enumerate}
				\item[(2.a)] $M_i'=M_i$, for all $i\leq n$ such that $M_i\in M$.
				
				\item[(2.b)] $M_i'\cap M\cap\omega_3=M_i\cap M\cap\omega_3$, for all $i\leq n$ such that $\delta_{M_i}<\delta_M$. 
				
				\item[(2.c)] $a_q\cap M_i\cap M=a_p\cap M_i\cap M$, for all $i\leq n$ such that $\delta_{M_i}<\delta_M$.
				
				\item[(2.d)] $\delta_{M_i'}=\delta_{M_i}$, for all $i\leq n$ such that $\delta_{M_i}<\delta_M$. 
				
				\item[(2.e)] If $\alpha<\beta$ are in $M\cap\omega_3$, $\nu\in M\cap\omega_1$, and there is $A\subseteq \mathcal{A}_p^\nu$ such that $\alpha$ and $\beta$ are $\mathcal{A}_p^\nu$-connected through $A$, then $\alpha$ and $\beta$ are $\mathcal{A}_q^\nu$-connected through $A'=\{M_i':M_i\in A\}$.
				
				\item[(2.f)] $\delta_{M_i'}=\delta_{M_i}$ and $M_i'\cap M\cap\omega_3=M_i\cap M\cap\omega_3$, for every $i\leq n$ and every $M_i'\in\mathcal{A}_q^\nu$ such that $\nu\in d_p\cap M$.
			\end{enumerate}
		\end{enumerate}
	\end{definition}
	
	The following result is straightforward.
	
	\begin{lemma}\label{selfreflectionS}
		Let $p\in\mathbb{P}$ and $M\in\mathcal{A}_p$. Then $p$ is an $(\mathcal{S},M)$-reflection of itself.
	\end{lemma}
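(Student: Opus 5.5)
We take $q:=p$ and, writing $\mathcal{A}_p=\{M_0,\dots,M_n\}$, choose the witnesses $M_i':=M_i$ for every $i\leq n$, so that $\mathcal{A}_q=\mathcal{A}_p=\{M_0',\dots,M_n'\}$. With these choices, each clause of Definition \ref{defreflectionS} either is an identity or reduces to the fact that $p$ extends its own restriction. We check the clauses in order.

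For clause (1) we need $p\leq p\upharpoonright M$. Since $M\in\mathcal{A}_p$ and $M$ is countable, this is exactly case (2) of Lemma \ref{restriction}. That lemma gives that $p\upharpoonright M$ is a condition and that $p$ extends it. For completeness we recall why the ordering clauses hold. Clauses (O1)--(O4) hold because every component of $p\upharpoonright M$ is the corresponding component of $p$ intersected with $M$. Clause (O5) holds because $u^\alpha_{p\upharpoonright M}=u^\alpha_p\upharpoonright(d_p\cap M)\subseteq u^\alpha_p$. Clause (O6) holds because $b_{p\upharpoonright M}$ is literally the restriction of $b_p$ to pairs in $a_p\cap M$.

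For clause (2), all the subclauses (2.a)--(2.f) are now trivial:
\begin{itemize}
\item In (2.a), (2.b), (2.d) and (2.f), each equation has the form $M_i'=M_i$, $\delta_{M_i'}=\delta_{M_i}$, or $M_i'\cap M\cap\omega_3=M_i\cap M\cap\omega_3$, and each holds identically since $M_i'=M_i$.
\item In (2.c) we need $a_q\cap M_i\cap M=a_p\cap M_i\cap M$, which holds because $a_q=a_p$.
\item In (2.e), since $\mathcal{A}_q=\mathcal{A}_p$ we have $\mathcal{A}_q^\nu=\mathcal{A}_p^\nu$ and $A'=A$. So any witnessing sequence $M_0,\dots,M_k$ and $\gamma_0<\dots<\gamma_{k-1}$ for $\alpha<_{\mathcal{A}_p,\nu}\beta$ also witnesses that $\alpha$ and $\beta$ are $\mathcal{A}_q^\nu$-connected through $A'$.
\end{itemize}

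There is no genuine obstacle. The only substantive input is Lemma \ref{restriction}, and through it Lemma \ref{amal-rest}, which guarantees that $p\upharpoonright M$ is a condition. The one point that needs care is that the hypothesis $M\in\mathcal{A}_p$ (rather than merely $M\in\mathcal{M}_p\cap\mathcal{S}$) is exactly what Lemma \ref{restriction} requires in the countable case.
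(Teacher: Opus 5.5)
Your proof is correct and is the routine verification the paper has in mind; the paper only calls the lemma straightforward and gives no proof. Take $q=p$ with $M_i'=M_i$. Then clause (1) is Lemma \ref{restriction} applied to $M\in\mathcal{A}_p$, and clauses (2.a)--(2.f) reduce to identities.
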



	
	\begin{lemma}\label{amalgamation1S}
		Let $p\in\mathbb{P}$ and let $M\in\mathcal{A}_p$. Let $q\in\mathbb{P}\cap M$ be an $(\mathcal{S},M)$-reflection of $p$. Then there is a condition $r\in\mathbb{P}$ extending both $p$ and $q$.
	\end{lemma}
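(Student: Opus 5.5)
We construct $r$ by first amalgamating the side conditions, then taking unions of the working parts, and finally filling in the values that are still undetermined.

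\textbf{Side conditions.} Since $q\leq p\upharpoonright M$, we have $\mathcal{M}_p\cap M=\mathcal{M}_{p\upharpoonright M}\subseteq\mathcal{M}_q\subseteq M$. Lemma \ref{lemma-amal} then yields an $(\mathcal{S},\mathcal{L})$-symmetric system $\mathcal{U}\supseteq\mathcal{M}_p\cup\mathcal{M}_q$, and we set $\mathcal{M}_r=\mathcal{U}$. For $\mathcal{A}_r$ we start with $\mathcal{A}_p\cup\mathcal{A}_q$ and close under clause (C2): we add every $X\cap N$ with $N$ in this set and $X\in\mathcal{M}_r\cap\mathcal{L}\cap N$. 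The construction of $\mathcal{U}$ only adds models of the forms (A)--(C). We must check that each added intersection $X\cap N$ lies in $\mathcal{U}$; this uses how $\mathcal{U}$ was built, namely its closure under the relevant intersections. Each such added model has the same $\omega_1$-height as $N$ and is contained in $N$, so it creates no connections beyond those $N$ already gives. The other components are unions: $a_r=a_p\cup a_q$ and $d_r=d_p\cup d_q$. For $\alpha\in a_p\cap a_q$, which lies in $a_p\cap M$, the functions $u_p^\alpha$ and $u_q^\alpha$ agree on $d_p\cap M\subseteq d_q$. For pairs inside $p$ or inside $q$, $b_r$ is inherited; pairs common to both lie in $M$, where $q$ and $p$ agree by (O6). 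For mixed pairs $\alpha\in a_p\setminus M$, $\beta\in a_q\setminus a_p$, we take $b_r(\alpha,\beta)$ to be the finite set of $\nu\in d_r$ below $\delta_N$ for every $N\in\mathcal{A}_r$ containing both. This choice satisfies (C6)(b) trivially and keeps (C8) as weak as possible.

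\textbf{Filling in $u_r$.} The values still missing are $u_r^\alpha(\nu)$ for $\alpha\in a_p\setminus a_q$ with $\nu\in d_q\setminus d_p$, and for $\alpha\in a_q\setminus a_p$ with $\nu\in d_p\setminus d_q$. Any $\nu$ in the latter set satisfies $\nu\geq\delta_M$. For each such $\nu$ we define the new values by monotone extension along $<_{r,\nu}$. Concretely, we set the new value to $1$ if some old $\gamma$ with $u^\gamma(\nu)=1$ satisfies $\gamma<_{r,\nu}\alpha$, or, when (C8) requires it, $\gamma<\alpha$ with $\nu\notin b_r(\gamma,\alpha)$; otherwise we set it to $0$. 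This is the usual upward-closure trick. It is consistent exactly when no two old points $\gamma<_{r,\nu}\gamma'$ (or the analogous (C8) relation) carry values $1,0$.

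\textbf{Main obstacle: verifying (C7).} Two cases arise.

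\emph{Case $\nu\geq\delta_M$.} Models of $\mathcal{A}_q$ have height below $\delta_M$ and lie in $M$, so any model in $\mathcal{A}_r^\nu$ touching $a_q\setminus a_p$ either comes from $\mathcal{A}_p$ or is an added intersection. A chain linking old points through new ordinals can then be rerouted using models of $\mathcal{A}_p$. The tool here is Lemma \ref{key-claim}: $M\cap N\cap\omega_3\in N$ for $N\in\mathcal{A}_p$ with $\delta_N<\delta_M$. Also, (2.b) guarantees that $N'\cap M\cap\omega_3=N\cap M\cap\omega_3$. Together these let us replace a connecting ordinal $\gamma\in a_q$ by an element of $N\cap M$, so the connection already exists in $\mathcal{A}_p^\nu$ and $p$'s (C7) applies.

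\emph{Case $\nu<\delta_M$, $\nu\in d_q$.} Here we must show that every $<_{r,\nu}$-connection between points of $a_q$, or between points of $M\cap a_p$, is witnessed in $q$. We decompose a connecting chain $M_0,\dots,M_n$ at the points where it leaves $M$. Each segment running through models of $\mathcal{A}_p^\nu$ between points of $M$ is transferred to $\mathcal{A}_q^\nu$ via (2.e) and (2.f), using (2.c) for the intermediate points in $a_p$ and Lemma \ref{key-claim} to pull intersections with $M$ inside. Chains mixing points of $a_p\setminus M$ are handled by the filling rule.

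This rerouting of chains through $M$ is the delicate step, and it is where the clauses of an $(\mathcal{S},M)$-reflection are all used. Once both cases are done, (C8) follows from the choice of $b_r$ together with the filling rule.
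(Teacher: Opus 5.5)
Your construction is the paper's own: amalgamate via Lemma~\ref{lemma-amal}, close $\mathcal{A}_p\cup\mathcal{A}_q$ once under intersections, take unions, set $b_r$ on mixed pairs to $d_r\cap\delta_N$ for the lowest common active model, and fill the missing values by upward closure. However, the verification has a gap that cannot be closed, because the statement itself fails.

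\textbf{The (C8) obligations at new levels.} Take $\alpha<\beta$ in $a_p$ and $\nu\in d_q\setminus d_p$. By (O6), $b_r(\alpha,\beta)=b_p(\alpha,\beta)\subseteq d_p$, so $\nu\notin b_r(\alpha,\beta)$. Hence (C8) forces $u_r^{\cdot}(\nu)$ to be nondecreasing in the ordinal order along \emph{all} of $a_p$. Your consistency criterion only compares pairs of old points. But a point of $a_p\setminus M$ can be pushed to $1$ by one old point through $<_{r,\nu}$, and to $0$ by another old point through this (C8) requirement, while the two old points are unrelated in $q$.

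Here is a concrete instance. Take $Y\in\mathcal{L}$ and $N_0\in\mathcal{S}$ with $Y\in N_0$. Using countable closure of $Y$, take $M',M\in\mathcal{S}$ with $Y\cap N_0\in M'\in M\in Y$.
\begin{itemize}
\item Let $p$ have $\mathcal{M}_p=\{Y\cap N_0,M,Y,N_0\}$, $\mathcal{A}_p=\{M,N_0,Y\cap N_0\}$, $a_p=\{\varepsilon_Y,\omega_2+\delta_{M'}\}$, $d_p=\emptyset$ and $b_p=\emptyset$. Note $\varepsilon_Y\in N_0\setminus M$.
\item Let $q\in M$ have $\mathcal{M}_q=\{Y\cap N_0,M'\}$ and $\mathcal{A}_q=\{M',Y\cap N_0\}$, reflecting $M\mapsto M'$ and $N_0,Y\cap N_0\mapsto Y\cap N_0$. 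This is legitimate since $N_0\cap M=N_0\cap Y$, because $M\subseteq Y$ and $Y\cap N_0\subseteq M$.
\item Let $a_q=\{\delta_{N_0},\omega_2+\delta_{M'}\}$ and $d_q=\{\nu\}$ with $\nu=\delta_{M'}$. Set $u_q^{\delta_{N_0}}(\nu)=1$, $u_q^{\omega_2+\delta_{M'}}(\nu)=0$ and $b_q(\delta_{N_0},\omega_2+\delta_{M'})=\{\nu\}$.
\end{itemize}
Both are conditions. For $p$, the chain $Y\cap N_0\in M\in Y\in N_0$ gives a symmetric system and $d_p=\emptyset$. For $q$, $\omega_2+\delta_{M'}$ lies in no model of $\mathcal{A}_q$. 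Items (1) and (2.a)--(2.f) of Definition~\ref{defreflectionS} are routine to check: (2.c) has both sides empty and (2.f) is vacuous.

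Now let $r$ be any common extension. Then $\delta_{N_0}<_{r,\nu}\varepsilon_Y$ via $M'$, $N_0$ and the ordinal $\omega_1$, so (C7) forces $u_r^{\varepsilon_Y}(\nu)=1$. On the other hand, (C8) for the $p$-pair $\varepsilon_Y<\omega_2+\delta_{M'}$ forces $u_r^{\varepsilon_Y}(\nu)=0$. So no such $r$ exists.

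The paper's proof has the same gap. Its (C8) paragraph dismisses pairs inside $a_p$ ``since $p$ and $q$ are conditions'', which is wrong at levels in $d_q\setminus d_p$. Definition~\ref{defreflectionS} would need strengthening before any argument of this shape can work.

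\textbf{Other steps.} Two further steps are not right as written.

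First, for $\nu\ge\delta_M$ the needed observation is simply that $M\in\mathcal{A}_p^\nu$ and every model of $\mathcal{A}_q$ (and each $X\cap N$ with $N\in\mathcal{A}_q$) is a subset of $M$, so it can be replaced by $M$ in any chain. This also makes $<_{r,\nu}$ the full ordinal order on $a_r\cap M$, which is why the (C8) constraints on $a_q$ are harmless at these levels. Your route through (2.b) and Lemma~\ref{key-claim} does not achieve this:
\begin{itemize}
\item (2.b) says nothing about $q$-models that reflect $p$-models of height $\ge\delta_M$.
\item Lemma~\ref{key-claim} is quoted in the wrong direction: for $\delta_N<\delta_M$ it gives $N\cap M\cap\omega_3\in M$.
\item Models of $\mathcal{A}_q$ do belong to $\mathcal{A}_r^\nu$ and can meet $a_q\setminus a_p$.
\end{itemize}

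Second, your case $\nu<\delta_M$ lumps $\nu\in d_q\setminus d_p$ together with $\nu\in d_p\cap M$. At the latter levels nothing is filled in and the values on $a_p\setminus M$ are fixed by $p$, so chains touching $a_p\setminus M$ must be reduced to $p$, not to $q$. The paper does this with (2.c) and (2.f): every point of $a_r$ lying in a model of $\mathcal{A}_r^\nu$ is already in $a_p$, and each $q$-model in a chain can be exchanged for its $p$-counterpart of the same height.
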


	\begin{proof}
		Let $\mathcal{M}_r$ be the $(\mathcal{S},\mathcal{L})$-symmetric system extending both $\mathcal{M}_p$ and $\mathcal{M}_q$ given by Lemma \ref{lemma-amal}. Let  
		\[
		\mathcal{A}_r:=\mathcal{A}_p\cup\mathcal{A}_q\cup \{X\cap N\,:\, N\in \mathcal{A}_p\cup\mathcal{A}_q,\,X\in\mathcal{M}_r\cap\mathcal{L}\cap N\},
		\]
		and let $a_r:=a_p\cup a_q$ and $d_r:=d_p\cup d_q$. Let also $b_r$ be the extension of $b_p\cup b_q$ to $\{(\alpha, \beta)\in a_r\times a_r, \alpha<\beta\}$ defined by letting $b_r(\alpha, \beta)$ be, for all $\alpha<\beta$ in $a_r$ with $\alpha\in a_p$ iff $\beta\notin a_p$, equal to $d_r\cap\delta_{M'}$ for any $M'\in\mathcal{A}_r$ such that $\alpha$, $\beta\in M'$ of minimal $\omega_1$-height (and letting $b_r(\alpha, \beta)=d_r$ if there is no such $M'$).
		
		Fix some $\nu\in d_r$ and suppose that $a_r=\{\alpha_0,\dots,\alpha_n\}$, where $\alpha_i<\alpha_{i+1}$ for each $i<n$. We will define $u_r^{\alpha_i}(\nu)$ by induction on $i\leq n$, while making sure that clause (C7) of the definition of the poset $\mathbb{P}$ is satisfied.
		\vspace{0.3cm}
		
		\textbf{Case 1.} Suppose first that $\nu\in d_{p\upharpoonright M}$. For every $i\leq n$, define $u_r^{\alpha_i}(\nu)$ as 
		
		\[
		u_r^{\alpha_i}(\nu)=
		\begin{cases}
			u_q^{\alpha_i}(\nu), &\text{if }\alpha_i\in a_q\\
			u_p^{\alpha_i}(\nu), &\text{if }\alpha_i\in a_p\setminus M.
		\end{cases}
		\]
		Let us check that condition (C7) is satisfied. So, let $i<i^*\leq n$ such that $\alpha_i<_{r,\nu}\alpha_{i^*}$. We want to prove that $u_r^{\alpha_i}(\nu)\leq u_r^{\alpha_{i^*}}(\nu)$. We will show that, necessarily, $\alpha_i,\alpha_{i^*}\in a_p$ and $\alpha_i<_{p,\nu}\alpha_{i^*}$.
		
		Let $M_0,\dots, M_m\in\mathcal{A}_r^\nu$ and $\gamma_0<\dots<\gamma_{m-1}<\omega_3$ in $(\alpha_i,\alpha_{i^*})$ such that $\alpha_i\in M_0$, $\alpha_{i^*}\in M_m$, and $\gamma_j\in M_j\cap M_{j+1}$ for every $j<m$. First, note that if $M_j$ is of the form $X\cap N_j$, where $N_j\in\mathcal{A}_p^\nu\cup\mathcal{A}_q^\nu$ and $X\in\mathcal{M}_r\cap\mathcal{L}\cap N_j$, then $\delta_{M_j}=\delta_{N_j}$ and $\gamma_j,\gamma_{j+1}\in M_j\subseteq N_j$. Hence, $\alpha_i$ and $\alpha_{i^*}$ remain $\mathcal{A}_r^\nu$-connected if we substitute all the models $M_j$ as above by $N_j$. Thus, we may assume that $M_j\in\mathcal{A}_p^\nu\cup\mathcal{A}_q^\nu$ for all $j\leq m$. Let us also remark that $\delta_{M_j}\leq\nu<\delta_M$ for every $j\leq m$.
		
		Let us first show that $\alpha_i,\alpha_{i^*}\in a_p$. Let $\alpha\in\{\alpha_i,\alpha_{i^*}\}$ and $N\in\{M_0,M_m\}$ be such that $\alpha\in N$, and assume that $\alpha\in a_q$. If $N\in\mathcal{A}_p^\nu$, since $\delta_N\leq\nu<\delta_M$ and $q$ is an $(\mathcal{S},M)$-reflection of $p$, by (2.c) of Definition \ref{defreflectionS}, we have
		\[
		\alpha\in a_q\cap N=a_p\cap N\cap M.
		\]
		If $N\in\mathcal{A}_q^\nu$, since $\alpha\in a_q\subseteq M$ and $\delta_N\leq\nu\in d_p\cap M$, by (2.f) of Definition \ref{defreflectionS}, there is some $N^*\in\mathcal{A}_p^\nu$ such that $\delta_{N^*}=\delta_N$ and 
		\[
		\alpha\in N\cap\omega_3=N^*\cap M\cap\omega_3.
		\]
		Hence, by (2.c), we have $\alpha\in a_q\cap N^*=a_p\cap N^*\cap M$. Since $a_r=a_p\cup a_q$, this finishes the proof of $\alpha_i,\alpha_{i^*}\in a_p$.
		
		Let us now show that $\alpha_i<_{p,\nu}\alpha_{i^*}$. Note that for every $j<m$ such that $M_{j+1}\in\mathcal{A}_q^\nu$, since $\delta_{M_{j+1}}\leq\nu\in d_p\cap M$, by (2.f) of Definition \ref{defreflectionS} there is some $M_{j+1}^*\in\mathcal{A}_p^\nu$ such that $\delta_{M_{j+1}^*}=\delta_{M_{j+1}}$ and
		\[
		\gamma_j,\gamma_{j+1}\in M_{j+1}\cap\omega_3=M_{j+1}^*\cap M\cap\omega_3.
		\]
		This shows that $\alpha_i<_{p,\nu}\alpha_{i^*}$.
		
		Therefore, since $\alpha_i,\alpha_{i^*}\in a_p$ and $\alpha_i<_{p,\nu}\alpha_{i^*}$, we can conclude that
		\[
		u_r^{\alpha_i}(\nu)=u_p^{\alpha_i}(\nu)\leq	u_p^{\alpha_{i^*}}(\nu)=	u_r^{\alpha_{i^*}}(\nu).
		\]
		
		\textbf{Case 2.} Suppose now that $\nu\in d_q\setminus d_p$. Since $r$ has to extend $q$, we will simply let $u_r^{\alpha_i}(\nu)=u_q^{\alpha_i}(\nu)$, for all $\alpha_i\in a_q$. Hence, we only need to define $u_r^{\alpha_i}(\nu)$ for $\alpha_i\in a_p\setminus M$. We argue by induction on $i\leq n$.
		
		Assume first that $i=0$. If $\alpha_0\in a_q$, we have already defined $u_r^{\alpha_0}(\nu)$ as $u_r^{\alpha_0}(\nu)=u_q^{\alpha_0}(\nu)$. If $\alpha_0\in a_p\setminus M$, let $u_r^{\alpha_0}(\nu)=0$.
		
		Now, let $k\leq n$ and assume that we have defined $u_r^{\alpha_i}(\nu)$ for every $i<k$, and that clause (C7) of the definition of $\mathbb{P}$ holds for every pair $\alpha_{i_0},\alpha_{i_1}\in a_r$, where $i_0<i_1<k$. That is, if $\alpha_{i_0}<_{r,\nu}\alpha_{i_1}$, then $u_r^{\alpha_{i_0}}(\nu)\leq u_r^{\alpha_{i_1}}(\nu)$. Moreover, if $i<k$ and $\alpha_{i}\in a_p\setminus M$, we assume that $u_r^{\alpha_{i}}(\nu)=0$, unless there is some $<_{r,\nu}$-predecessor $\alpha_j$ of $\alpha_i$ in $a_r$ such that $u_r^{\alpha_j}(\nu)=1$. Then, of course, $u_r^{\alpha_{i}}(\nu)=1$.  Note that, in particular, by the induction hypothesis, there must be some $<_{r,\nu}$-predecessor $\alpha_j$ of $\alpha_i$ in $a_q$ such that $u_r^{\alpha_j}(\nu)=u_q^{\alpha_j}(\nu)=1$.
		
		
		\textbf{Case 2.a.} If $\alpha_k$ does not have any $<_{r,\nu}$-predecessor in $a_r$, then we define $u_r^{\alpha_k}(\nu)$ exactly as we did in the case $i=0$. That is, 
		\[
		u_r^{\alpha_k}(\nu)=
		\begin{cases}
			u_q^{\alpha_k}(\nu), &\text{if }\alpha_k\in a_q\\
			0, &\text{if }\alpha_k\in a_p\setminus M.
		\end{cases}
		\]
		In this case condition (C7) of the definition of the poset $\mathbb{P}$ is vacuously satisfied.
		
		\textbf{Case 2.b.} If $\alpha_k$ has $<_{r,\nu}$-predecessors in $a_r$, we define $a_r^{\alpha_k}(\nu)$ as follows:
		\[
		u_r^{\alpha_k}(\nu)=
		\begin{cases}
			u_q^{\alpha_k}(\nu), &\text{if }\alpha_k\in a_q\\
			1, &\text{if }\alpha_k\in a_p\setminus M\text{ and }\exists\alpha_j\in\pred_{r,\nu}(\alpha_k)(u_r^{\alpha_j}(\nu)=1)\\
			0, &\text{if }\alpha_k\in a_p\setminus M\text{ and }\forall\alpha_j\in\pred_{r,\nu}(\alpha_k)(u_r^{\alpha_j}(\nu)=0).
		\end{cases}
		\]
		If $\alpha_k\in a_p\setminus M$, then clause (C7) holds simply by the definition of $u_r^{\alpha_k}(\nu)$. However, if $\alpha_k\in a_q$, we need to prove that for every $i<k$, if $\alpha_i<_{r,\nu}\alpha_k$, then $u_r^{\alpha_i}(\nu)\leq u_r^{\alpha_k}(\nu)$. Assume that $\alpha_k\in a_q$. Note that if $u_r^{\alpha_i}(\nu)=0$ for every $i<k$ such that $\alpha_i<_{r,\nu}\alpha_k$, then we trivially get $u_r^{\alpha_i}(\nu)\leq u_r^{\alpha_k}(\nu)$. Hence, we may also assume that there is some $i<k$ such that $\alpha_i<_{r,\nu}\alpha_k$ and $u_r^{\alpha_i}(\nu)=1$, and thus we need to show that $u_r^{\alpha_k}(\nu)=u_q^{\alpha_k}(\nu)=1$. Note that if $\alpha_i\in a_p\setminus M$, then by the induction hypothesis there must be some $j<i$ such that $\alpha_j\in a_q$, $\alpha_j<_{r,\nu}\alpha_i$ and $u_r^{\alpha_j}(\nu)=u_q^{\alpha_j}(\nu)=1$, and by the transitivity of $<_{r,\nu}$, we have $\alpha_j<_{r,\nu}\alpha_k$. Therefore, the following claim is enough to ensure that $u_r^{\alpha_k}(\nu)=u_q^{\alpha_k}(\nu)=1$.
		
		\begin{claim}
			For every $j^*<k$ such that $\alpha_{j^*}\in a_q$, if $\alpha_{j^*}<_{r,\nu}\alpha_k$, then $\alpha_{j^*}<_{q,\nu}\alpha_k$.
		\end{claim}
		\begin{proof}
			Let $M_0,\dots, M_m\in\mathcal{A}_r^\nu$ and $\gamma_0<\dots<\gamma_{m-1}<\omega_3$ in $(\alpha_{j^*},\alpha_k)$ such that $\alpha_{j^*}\in M_0$, $\alpha_k\in M_m$, and $\gamma_i\in M_i\cap M_{i+1}$ for each $i<m$. By the same argument as in Case 1, we may assume that $M_i\in\mathcal{A}_p^\nu\cup\mathcal{A}_q^\nu$ for all $i\leq m$. It is also worth noting that $\delta_{M_i}\leq\nu<\delta_M$ for every $i\leq m$, since $M_i\in\mathcal{A}_r^\nu$ and $\nu\in d_q\subseteq M$. We will show that there are $M_0^*,\dots,M_m^*\in\mathcal{A}_q^\nu$ such that $\alpha_{j^*}$ and $\alpha_k$ are $\mathcal{A}_q^\nu$-connected through $\{M_0^*,\dots,M_m^*\}$. 
			
			If there is no $i\leq m$ such that $M_i\in\mathcal{A}_q^\nu$, then $\alpha_{j^*}<_{\mathcal{A}_p,\nu}\alpha_k$. Hence, since $q$ is an $(\mathcal{S},M)$-reflection of $p$, and $\alpha_{j^*},\alpha_k\in a_q\subseteq M$ and $\nu\in d_q\subseteq M$, by item (2.e) of Definition \ref{defreflectionS}, we have that $\alpha_{j^*}<_{q,\nu}\alpha_k$ and we are done. Otherwise, let $i_0$ be the least $i\leq m$ for which $M_i\in\mathcal{A}_q^\nu$. If $i_0=0$, we let $M_0^*:=M_0$. If $i_0>0$, we have that $\gamma_{i_0}\in M_{i_0}\subseteq M$, and thus, by item (2.e) of the definition of $(\mathcal{S},M)$-reflection there are $M_0',\dots,M_{i_0-1}'\in\mathcal{A}_q^\nu$ such that $\alpha_{j^*}$ and $\gamma_{i_0}$ are $\mathcal{A}_q^\nu$-connected through $\{M_0',\dots, M_{i_0-1}'\}$. Let $M_i^*:=M_i'$ for every $i<i_0$, and $M_{i_0}^*:=M_{i_0}$. Now, if there is no $i\leq m$ such that $i_0<i$ and $M_i\in\mathcal{A}_q^\nu$, we can apply item (2.e) of Definition \ref{defreflectionS} again to show that $\gamma_{i_0+1}$ and $\alpha_k$ are $\mathcal{A}_q^\nu$-connected. Otherwise, we let $i_1$ be the least $i\leq m$ such that $i>i_0$ and $M_i\in\mathcal{A}_q^\nu$. By the same argument as above, using item (2.e) of the definition of $(\mathcal{S},M)$-reflection, we can prove that $\gamma_{i_0+1}$ and $\gamma_{i_1}$ are $\mathcal{A}_q^\nu$-connected. At this point it should be clear that we can repeat this argument, which will finish eventually because $\alpha_k\in M\cap\omega_3$, to get $M_0^*,\dots,M_m^*\in\mathcal{A}_q^\nu$ such that $\alpha_{j^*}$ and $\alpha_k$ are $\mathcal{A}_q^\nu$-connected through $\{M_0^*,\dots,M_m^*\}$, as we wanted.
		\end{proof}
		
		
		\textbf{Case 3.} Lastly, suppose that $\nu\in d_p\setminus d_q=d_p\setminus M$. Since $r$ has to extend $p$, we will simply let $u_r^{\alpha_i}(\nu)=u_p^{\alpha_i}(\nu)$, for all $\alpha_i\in a_p$. Hence, we only need to define $u_r^{\alpha_i}(\nu)$ for $\alpha_i\in a_q\setminus a_p$. We argue by induction on $i\leq n$.
		
		Assume first that $i=0$. If $\alpha_0\in a_p$, we have already defined $u_r^{\alpha_0}(\nu)$ as $u_r^{\alpha_0}(\nu)=u_p^{\alpha_0}(\nu)$. If $\alpha_0\in a_q\setminus a_p$, we let $u_r^{\alpha_0}(\nu)=0$. 
		
		As in the last case, let $k\leq n$, and assume that we have defined $u_r^{\alpha_i}(\nu)$ for every $i<k$, and that $u_r^{\alpha_{i_0}}(\nu)\leq u_r^{\alpha_{i_1}}(\nu)$ for every pair $i_0<i_1<k$ such that $\alpha_{i_0}<_{r,\nu}\alpha_{i_1}$. Moreover, if $i<k$ and $\alpha_i\in a_q\setminus a_p$, assume that $u_{r}^{\alpha_i}(\nu)=0$, unless there is some $<_{r,\nu}$-predecessor $\alpha_j$ of $\alpha_i$ in $a_r$ such that $u_r^{\alpha_j}(\nu)=1$. Then, of course, $u_r^{\alpha_i}(\nu)=1$. Note that, in particular, by the induction hypothesis there must be some $<_{r,\nu}$-predecessor $\alpha_j$ of $\alpha_i$ in $a_p$ such that $u_r^{\alpha_j}(\nu)=u_p^{\alpha_j}(\nu)=1$.
		
		
		\textbf{Case 3.a.} If $\alpha_k$ does not have any $<_{r,\nu}$-predecessor in $a_r$, then we define $u_r^{\alpha_k}(\nu)$ as follows: 
		\[
		u_r^{\alpha_k}(\nu)=
		\begin{cases}
			u_p^{\alpha_k}(\nu), &\text{if }\alpha_k\in a_p\\
			0, &\text{if }\alpha_k\in a_q\setminus a_p.
		\end{cases}
		\]
		In this case, condition (C7) of the definition of the poset $\mathbb{P}$ is vacuously satisfied.
		
		\textbf{Case 3.b.} If $\alpha_k$ has $<_{r,\nu}$-predecessors in $a_r$, we define $a_r^{\alpha_k}(\nu)$ as follows:
		\[
		u_r^{\alpha_k}(\nu)=
		\begin{cases}
			u_p^{\alpha_k}(\nu), &\text{if }\alpha_k\in a_p\\
			1, &\text{if }\alpha_k\in a_q\setminus q_p\text{ and }\exists\alpha_j\in\pred_{r,\nu}(\alpha_k)(u_r^{\alpha_j}(\nu)=1)\\
			0, &\text{if }\alpha_k\in a_q\setminus a_p\text{ and }\forall\alpha_j\in\pred_{r,\nu}(\alpha_k)(u_r^{\alpha_j}(\nu)=0).
		\end{cases}
		\]
		If $\alpha_k\in a_q\setminus a_p$, then clause (C7) holds simply by the definition of $u_r^{\alpha_k}(\nu)$. However, if $\alpha_k\in a_p$, we need to prove that for every $i<k$, if $\alpha_i<_{r,\nu}\alpha_k$, then $u_r^{\alpha_i}(\nu)\leq u_r^{\alpha_k}(\nu)$. Assume that $\alpha_k\in a_p$. Note that if $u_r^{\alpha_i}(\nu)=0$ for every $i<k$ such that $\alpha_i<_{r,\nu}\alpha_k$, then we trivially get $u_r^{\alpha_i}(\nu)\leq u_r^{\alpha_k}(\nu)$. Hence, we may also assume that there is some $i<k$ such that $\alpha_i<_{r,\nu}\alpha_k$ and $u_r^{\alpha_i}(\nu)=1$, and thus we need to show that $u_r^{\alpha_k}(\nu)=u_p^{\alpha_k}(\nu)=1$. Note that if $\alpha_i\in a_q\setminus a_p$, then by the induction hypothesis there must be some $j<i$ such that $\alpha_j\in a_p$, $\alpha_j<_{r,\nu}\alpha_i$ and $u_r^{\alpha_j}(\nu)=u_p^{\alpha_j}(\nu)=1$, and by the transitivity of $<_{r,\nu}$, we have $\alpha_j<_{r,\nu}\alpha_k$. Therefore, the following claim is enough to ensure that $u_r^{\alpha_k}(\nu)=u_p^{\alpha_k}(\nu)=1$.
		
		\begin{claim}
			For every $j^*<k$ such that $\alpha_{j^*}\in a_p$, if $\alpha_{j^*}<_{r,\nu}\alpha_k$, then $\alpha_{j^*}<_{p,\nu}\alpha_k$.
		\end{claim}
		\begin{proof}
			Suppose that $\alpha_{j^*}$ and $\alpha_k$ are $\mathcal{A}_r^\nu$-connected through $M_0,\dots,M_m$, for some $m<\omega$. Note that for every $i\leq m$, if $M_i\in\mathcal{A}_q^\nu$, or if $M_i$ is of the form $X\cap M_i^*$ for some $M_i^*\in\mathcal{A}_q^\nu$ and some $X\in\mathcal{M}_r\cap\mathcal{L}\cap M_i^*$, since $\mathcal{A}_q\subseteq M$, we then have $M_i\in M$, and thus $M_i\subseteq M$. Hence, since $M\in\mathcal{A}_p^\nu$ (recall that $\nu\in d_p\setminus M$ by assumption, so $\delta_M\leq\nu$), if we substitute each $M_i$ as above by $M$, then $\alpha_{j^*}$ and $\alpha_k$ are $\mathcal{A}_p^\nu$-connected. That is, $\alpha_{j^*}<_{p,\nu}\alpha_k$, as we wanted.
		\end{proof}
		
		This finishes the construction of $u_r$ and the verification that is satisfies clause (C7) from the definition of condition. We still need to show that $u_r^\alpha(\nu)\leq u_r^\beta(\nu)$ holds for all $\alpha<\beta$ in $a_r$ and for every $\nu\in d_r\setminus b_r(\alpha, \beta)$. We may of course assume that $\alpha\in a_p$ iff $\beta\notin a_p$ as otherwise we are done since $p$ and $q$ are conditions. But then, by the definition of $b_r(\alpha, \beta)$, there is some $M'\in\mathcal{A}_r$ such that $\alpha$, $\beta\in M'$ and $\delta_{M'}\leq\nu$. In particular, $\alpha<_{r, \nu}\beta$, so $u_r^\alpha(\nu)\leq u_r^\beta(\nu)$ as desired.

		We have thus shown that $$r:=(\mathcal{M}_r,\mathcal{A}_r,a_r,d_r,(u_r^\alpha:\alpha\in a_r), b_r)$$ is a condition in $\mathbb{P}$ extending both $p$ and $q$.
	\end{proof}
	
	\begin{lemma}\label{propS}
		The forcing $\mathbb{P}$ is proper for countable elementary submodels.
	\end{lemma}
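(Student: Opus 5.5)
We prove that for every $M^*\preceq H(\theta)$ countable with $\mathbb{P}\in M^*$ and $M:=M^*\cap H(\omega_3)\in\mathcal{S}$, and every $p\in\mathbb{P}\cap M^*$, there is $q\le p$ which is $(M^*,\mathbb{P})$-generic. The plan is the standard one for side condition forcings: first put $M$ into the active models, then show that every extension of the resulting condition can be reflected inside $M$, and finally amalgamate using Lemma \ref{amalgamation1S}.

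\textbf{Step 1.} By Lemma \ref{ontop}, fix $q\le p$ with $M\in\mathcal{M}_q$ and $M\in\mathcal{A}_q$. We claim $q$ is $(M^*,\mathbb{P})$-generic. Let $D\in M^*$ be dense and $r\le q$, and extend $r$ into $D$, so we may assume $r\in D$. Note that $M\in\mathcal{A}_r$. By Lemma \ref{restriction}, $r\upharpoonright M\in\mathbb{P}\cap M$.

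\textbf{Step 2.} We want some $s\in D\cap M$ that is an $(\mathcal{S},M)$-reflection of $r$. Then Lemma \ref{amalgamation1S} gives a common extension of $r$ and $s$, which completes the proof. To find $s$, we use elementarity. Let $\mathcal{A}_r=\{M_0,\dots,M_n\}$ and consider the finite parameters
\begin{itemize}
\item $r\upharpoonright M$,
\item $d_r\cap M$, and
\item for each $i$ with $\delta_{M_i}<\delta_M$, the sets $M_i\cap M\cap\omega_3$ and $a_r\cap M_i\cap M$ and the ordinal $\delta_{M_i}$.
\end{itemize}
All of these belong to $M$:
\begin{itemize}
\item The sets $M_i\cap M\cap\omega_3$ lie in $M$ by Lemma \ref{key-claim}.
\item $a_r\cap M_i\cap M$ is finite, so it lies in $M$.
\item $\delta_{M_i}<\delta_M$ gives $\delta_{M_i}\in M$.
\end{itemize}
We also record the finitely many connection patterns among ordinals of $a_r\cap M$ and the relevant finite witnesses $\gamma_j\in M$, for $\nu\in d_r\cap M$. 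For (2.e) we need connections with arbitrary $\alpha,\beta\in M$ and arbitrary $\nu\in M\cap\omega_1$. To handle this, note that "$\alpha,\beta$ are $\mathcal{A}^\nu$-connected through $A$" is witnessed by the traces $N\cap M\cap\omega_3$ for $N\in A$. So we express (2.e) as the requirement that each $M_i'$ contain the trace $M_i\cap M\cap\omega_3$, which follows from (2.b) whenever $\delta_{M_i}\le\nu<\delta_M$.

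Now, in $H(\theta)$ the following statement holds, witnessed by $r$ itself: there is $s\in D$ with $s\le r\upharpoonright M$, with an enumeration $\{M_0',\dots,M_n'\}$ of $\mathcal{A}_s$, such that
\begin{itemize}
\item $M_i'=M_i$ for $M_i\in M$,
\item $\delta_{M_i'}=\delta_{M_i}$ for $\delta_{M_i}<\delta_M$,
\item $M_i'\supseteq M_i\cap M\cap\omega_3$, $M_i'\cap(\text{relevant bounded part})$ equals the given trace, and $a_s\cap M_i'$ matches the given finite set.
\end{itemize}
This statement uses only parameters in $M^*$, so by elementarity it is witnessed by some $s\in D\cap M^*$. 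Equalities such as $M_i'\cap M\cap\omega_3=M_i\cap M\cap\omega_3$ are not first-order in parameters from $M$. We obtain them by pulling back: since $s\in M$, we have $M_i'\subseteq M$, so $M_i'\cap M\cap\omega_3=M_i'\cap\omega_3$. The trace $t_i=M_i\cap M\cap\omega_3$ is an element of $M$, and elementarity lets us demand $M_i'\cap\omega_3=t_i$ exactly, since $M_i$ itself witnesses the corresponding existential statement. This yields (2.b); (2.d) and (2.f) follow the same way, and (2.c) follows by demanding $a_s\cap t_i=a_r\cap t_i$. Finally, (2.e) follows because connections of $\alpha,\beta\in M$ through models of height $\le\nu<\delta_M$ only use points in the traces $t_i$, and these are preserved.

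\textbf{Main obstacle.} The delicate point is (2.b): we need the trace $M_i\cap M\cap\omega_3$ to be an element of $M$, so that it can be used as a parameter and reproduced exactly by $M_i'$. This is precisely Lemma \ref{key-claim}, applied with $\delta_{M_i}<\delta_M$. We also have to make sure the reflected $s$ satisfies $M_i'\cap\omega_3=t_i$ rather than merely containing it. I would handle this by including in the formula that $M_i'$ is countable with $M_i'\cap\omega_3=t_i$, which $M_i$ itself witnesses in $H(\theta)$. Once the reflection is obtained, Lemma \ref{amalgamation1S} produces $r'\le r,s$ with $s\in D\cap M^*$. This shows $D\cap M^*$ is predense below $q$, hence $q$ is $(M^*,\mathbb{P})$-generic and $\mathbb{P}$ is proper.
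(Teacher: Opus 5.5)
Your outline is the paper's: make $M$ active via Lemma~\ref{ontop}, extend into $D$, use Lemma~\ref{key-claim} and the elementarity of $M^*$ to get an $(\mathcal{S},M)$-reflection $s\in D\cap M^*$, and finish with Lemma~\ref{amalgamation1S}. The gap is the step you single out yourself. You correctly note that for $s\in M$, clause (2.b) amounts to $M_i'\cap\omega_3=t_i$. But $M_i$ does \emph{not} witness ``$N\cap\omega_3=t_i$''. For $M_i\in\mathcal{A}_r\setminus M$ with $\delta_{M_i}<\delta_M$ one has in general $M_i\cap\omega_3\not\subseteq M$, i.e.\ $M_i\cap\omega_3\supsetneq t_i$.

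With $t_i\in M$ as a parameter, $r$ only witnesses $t_i\subseteq M_i'$, $\delta_{M_i'}=\delta_{M_i}$ and $a_s\cap t_i=a_r\cap t_i$; the last of these does give (2.c). Reflected into $M$, this yields $M_i'\cap M\cap\omega_3\supseteq M_i\cap M\cap\omega_3$ but not the reverse inclusion. The reverse inclusion is exactly what Case~1 of Lemma~\ref{amalgamation1S} uses via (2.f), to pull ordinals of $a_s$ and connecting points lying in $s$-models back into $r$-models.

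Your argument for (2.e) has a related hole: it assumes the intermediate points $\gamma_j$ of a chain connecting $\alpha,\beta\in M$ lie in $M$. That holds when each $t_i$ is an initial segment of $M_i\cap\omega_3$. It fails if $\varepsilon_{M_i}>\varepsilon_M$, which is possible with $\delta_{M_i}<\delta_M$ once a large model separates the two: then $\omega_2\in t_i$ while $M_i\cap(\varepsilon_M,\omega_2)$ is nonempty and disjoint from $M$.

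A better choice of formula will not fix this, because in the exact form required by (2.b) and (2.f) the reflection may not exist at all. The paper's own proof only says these items follow from Lemma~\ref{key-claim}, which yields $t_i\in M$, i.e.\ only the transferable inclusion. Here is a configuration where no reflection exists:

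Take $p$ trivial, so $\mathcal{M}_q=\mathcal{A}_q=\{M\}$. Let $N'\in\mathcal{S}$ satisfy $N'[\omega_1]\cong M[\omega_1]$ and $N'\cap\omega_3\neq M\cap\omega_3$. Let $M_1\in N'\cap\mathcal{S}$ contain an ordinal of $N'\setminus M$; such models exist for club many $M^*$ by a routine elementarity argument. Put $M_1^*:=\Psi_{N'[\omega_1],M[\omega_1]}(M_1)\in M$. Then $r$ with $\mathcal{M}_r=\{M_1,M_1^*,N',M\}$, $\mathcal{A}_r=\{M_1,M\}$ and all other components empty is a condition below $q$, and $\delta_{M_1}<\delta_M$.

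By the proof of Lemma~\ref{key-claim} and Lemma~\ref{agreementiso}, $t:=M_1\cap M\cap\omega_3=M_1\cap M_1^*\cap\omega_3$ is a common initial segment of $M_1\cap\omega_3$ and $M_1^*\cap\omega_3$. These two sets are distinct and have the same order type, so $t$ is a proper initial segment of $M_1^*\cap\omega_3$. It also contains $M_1\cap\omega_2=M_1^*\cap\omega_2$.

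Now suppose $s\in M$ were an $(\mathcal{S},M)$-reflection of $r$. Then $M_1^*\in\mathcal{M}_{r\upharpoonright M}\subseteq\mathcal{M}_s$, and $\mathcal{A}_s$ contains a model $M_1'$ with $M_1'\cap\omega_3=t$, hence $\varepsilon_{M_1'}=\varepsilon_{M_1^*}$. Clause (B) then makes $M_1'$ and $M_1^*$ $\omega_1$-isomorphic, so $M_1'\cap\omega_3$ and $M_1^*\cap\omega_3$ have the same order type. That is impossible for a proper initial segment.

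So even for $D=\mathbb{P}$ there is no $(\mathcal{S},M)$-reflection of $r$ in $M^*$. What is missing is a weaker notion of reflection that is actually attainable inside $M$, together with a matching version of Lemma~\ref{amalgamation1S}.
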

	
	\begin{proof}
		Let $p\in\mathbb{P}$ and let $M^*$ be a countable elementary submodel of a large enough $H(\theta)$ containing $p$ and $\mathbb{P}$. Let $M:=M^*\cap H(\omega_3)$. By Lemma \ref{ontop}, we may find a condition $p'$ extending $p$ and such that $M\in\mathcal{A}_{p'}$. We will show that $p'$ is $(M^*,\mathbb{P})$-generic. So, let $D\in M^*$ be a dense subset of $\mathbb{P}$, and let $p^*\in D$ be such that $p^*\leq p'$. Note that, in light of Lemma \ref{amalgamation1S}, it suffices to find an $(\mathcal{S},M)$-reflection $q$ of $p^*$ such that $q\in D\cap M^*$. Observe that all the parameters in the definition of $(\mathcal{S},M)$-reflection (Definition \ref{defreflectionS}) are members of $M^*$: Item (1) has parameters in $M$ by Lemma \ref{restriction}, items (2.a), (2.d) and (2.e) are clear, and items (2.b), (2.c) and (2.f) follow from Lemma \ref{key-claim}. Therefore, since $p^*$ is an $(\mathcal{S},M)$-reflection of itself by Lemma \ref{selfreflectionS}, we can find an $(\mathcal{S},M)$-reflection of $p^*$ in $D\cap M^*$ by the elementarity of $M^*$. 
	\end{proof}

	\begin{remark}
		Lemma \ref{propS}, which crucially depends on Lemma \ref{key-claim}, which in turn depends on Lemma \ref{agreementiso}, is the only lemma in the proof of Theorem \ref{mainthm} which does not go through if we try to force the same object consisting of partial functions indexed by ordinals not in $\omega_3$ but in $\omega_4$ (or anything higher, of course). The reason is that the versions of Lemmas \ref{agreementiso} and \ref{key-claim} in this situation (i.e., replacing $\omega_3$ with $\omega_4$) do not hold. 
	\end{remark}

	\subsection{$\aleph_1$-properness}\label{subsect-sc-L-properness}
	
	\begin{definition}\label{defreflectionL}
		Let $p\in\mathbb{P}$ and $X\in\mathcal{M}_p\cap\mathcal{L}$. A condition $q\in\mathbb{P}$ is called an \emph{$(\mathcal{L},X)$-reflection of $p$} if it satisfies the following properties:
		\begin{enumerate}[label=(\arabic*)]
			\item $q\leq p\upharpoonright X$.
			\item $d_p=d_q$.
			\item If $\mathcal{A}_p=\{M_0,\dots, M_n\}$, then there are $M_0',\dots M_n'\in\mathcal{S}$ such that $\mathcal{A}_q=\{M_0',\dots,M_n'\}$, and for every $i\leq n$,
			\begin{enumerate}
				\item[(3.a)] $M_i=M_i'$ if $M_i\in\mathcal{A}_p\cap X$,
				\item[(3.b)] $X\cap M_i\cap\omega_3=X\cap M_i'\cap\omega_3$,
				\item[(3.c)] $a_q\cap M_i\cap X=a_p\cap M_i\cap X$, and
				\item[(3.d)] $\delta_{M_i'}=\delta_{M_i}$.
			\end{enumerate}
			
			
		\end{enumerate}
	\end{definition}
	
	\begin{lemma}\label{selfreflectionL}
		Let $p\in\mathbb{P}$ and $X\in\mathcal{M}_p\cap\mathcal{L}$. Then, $p$ is an $(\mathcal{L},X)$-reflection of itself.
	\end{lemma}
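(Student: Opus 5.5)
The plan is to verify the clauses of Definition \ref{defreflectionL} one at a time, taking $q:=p$ and $M_i':=M_i$ for every $i\leq n$.

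\textbf{Clause (1).} We need $p\leq p\upharpoonright X$. Since $X\in\mathcal{M}_p\cap\mathcal{L}$, Lemma \ref{restriction}(1) says that $p\upharpoonright X$ is a condition in $\mathbb{P}\cap X$ extended by $p$. I would still check the ordering clauses (O1)--(O6) directly, to be safe:
\begin{itemize}
\item $\mathcal{M}_p\cap X\subseteq\mathcal{M}_p$ and $\mathcal{A}_p\cap X\subseteq\mathcal{A}_p$;
\item $a_p\cap X\subseteq a_p$ and $d_p\cap X\subseteq d_p$;
\item for $\alpha\in a_p\cap X$, the function $u_p^\alpha$ extends $u_p^\alpha\upharpoonright(d_p\cap X)$;
\item $b_p$ restricted to pairs from $a_p\cap X$ is literally $b_{p\upharpoonright X}$.
\end{itemize}

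\textbf{Clause (2).} We need $d_p=d_q$, which is trivial because $q=p$.

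\textbf{Clause (3).} With $M_i'=M_i$ we need (3.a)--(3.d).
\begin{itemize}
\item (3.a) and (3.d) are literally $M_i=M_i$ and $\delta_{M_i}=\delta_{M_i}$.
\item (3.b) reads $X\cap M_i\cap\omega_3=X\cap M_i\cap\omega_3$.
\item (3.c) reads $a_p\cap M_i\cap X=a_p\cap M_i\cap X$.
\end{itemize}
Both (3.b) and (3.c) are tautologies once $q=p$.

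\textbf{Main obstacle.} The only step with any content is clause (1), and there the only nontrivial point is that $p\upharpoonright X$ is a condition. That rests on Lemma \ref{amal-rest} for clause (C1), since $\mathcal{M}_p\cap X$ is a symmetric system.

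Clauses (C2), (C6)(b), (C7) and (C8) also need a word, because they pass down to the restriction.
\begin{itemize}
\item (C2): if $M\in\mathcal{A}_p\cap X$ and $Y\in\mathcal{M}_p\cap X\cap\mathcal{L}\cap M$, then $Y\cap M\in\mathcal{A}_p$. Moreover $Y\cap M\in X$, since $Y,M\in X$, $X$ is closed under intersections by elementarity, and $M$ is countable with ${}^\omega X\subseteq X$.
\item (C6)(b): it holds because $\mathcal{A}_{p\upharpoonright X}\subseteq\mathcal{A}_p$.
\item (C7): connections $<_{\mathcal{A}_p\cap X,\nu}$ are also $<_{\mathcal{A}_p,\nu}$-connections, so the required inequality is inherited from $p$.
\item (C8): it is inherited directly from $p$.
\end{itemize}
Lemma \ref{restriction} already records all of this, so the proof reduces to citing it.
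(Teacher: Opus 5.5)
Your proof is correct and is the intended argument. The paper states this lemma without proof, as it does the analogous Lemma~\ref{selfreflectionS}; the point is that with $q=p$ and $M_i'=M_i$, every clause of Definition~\ref{defreflectionL} other than (1) is a tautology, and (1) is Lemma~\ref{restriction}(1). One small aside: in your (C2) check, $Y\cap M\in X$ already follows from $Y,M\in X$ by elementarity, so countable closure of $X$ is not needed there.
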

	

	\begin{lemma}\label{amalgamation1L}
		Let $p\in\mathbb{P}$ and let $X\in\mathcal{M}_p\cap\mathcal{L}$. Let $q\in\mathbb{P}\cap X$ be an $(\mathcal{L},X)$-reflection of $p$. Then there is a condition $r\in\mathbb{P}$ extending both $p$ and $q$.
	\end{lemma}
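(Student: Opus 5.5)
We build the amalgamation $r$ following the pattern of Lemma \ref{amalgamation1S}; the large-model case should be simpler because $d_q=d_p$. First, Lemma \ref{lemma-amal} applied with $Q=X$ and $\mathcal{W}=\mathcal{M}_q$ gives an $(\mathcal{S},\mathcal{L})$-symmetric system $\mathcal{M}_r\supseteq\mathcal{M}_p\cup\mathcal{M}_q$. This is legitimate since $\mathcal{M}_p\cap X=\mathcal{M}_{p\upharpoonright X}\subseteq\mathcal{M}_q\subseteq X$. The other components are defined as follows:
\[
\mathcal{A}_r:=\mathcal{A}_p\cup\mathcal{A}_q\cup\{Y\cap N: N\in\mathcal{A}_p\cup\mathcal{A}_q,\ Y\in\mathcal{M}_r\cap\mathcal{L}\cap N\},
\]
$a_r:=a_p\cup a_q$, $d_r:=d_p=d_q$, and $u_r^\alpha:=u_p^\alpha$ or $u_q^\alpha$ according to whether $\alpha\in a_p$ or $\alpha\in a_q$. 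These agree on $a_p\cap a_q=a_p\cap X$, because $q\le p\upharpoonright X$ and the domains coincide. For $b_r$ we extend $b_p\cup b_q$. For a mixed pair ($\alpha\in a_p\setminus X$, $\beta\in a_q\setminus a_p$ or vice versa) we let $b_r(\alpha,\beta):=d_r\cap\delta_{M'}$ for a model $M'\in\mathcal{A}_r$ of minimal height containing both, and $b_r(\alpha,\beta):=d_r$ if there is none. As in Lemma \ref{amalgamation1S}, (C8) for mixed pairs then reduces to (C7), since $\delta_{M'}\le\nu$ gives $\alpha<_{r,\nu}\beta$. (C6)(b) for the new models $Y\cap N$ is inherited from $N$, and (C2) is built in.

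The whole proof therefore reduces to verifying (C7) for $r$. Note that we have no freedom here, since the values $u_r$ are already fixed. Fix $\nu\in d_r$ and $\alpha<_{r,\nu}\beta$, witnessed by a chain $M_0,\dots,M_m\in\mathcal{A}_r^\nu$ with links $\gamma_j$. As before, each model $Y\cap N$ may be replaced by $N$. So we may assume every $M_j$ lies in $\mathcal{A}_p^\nu\cup\mathcal{A}_q^\nu$. The plan is to transfer the whole chain into $p$ using the reflection map $M_i'\mapsto M_i$.

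By (3.d), each $M_j\in\mathcal{A}_q^\nu$ has the form $M_i'$ with $\delta_{M_i}=\delta_{M_j}\le\nu$. By (3.b), $M_j\cap\omega_3=M_i'\cap X\cap\omega_3=M_i\cap X\cap\omega_3$, using $M_i'\subseteq X$ (as $M_i'\in X$ and is countable). Replacing every $q$-model by its $p$-counterpart keeps all links, since $\gamma_j\in M_j\subseteq X$. If an endpoint, say $\alpha$, lies in $a_q$ and sits in some $M_i'$, then $\alpha\in a_q\cap M_i'\cap X$. By (3.c), $\alpha\in a_p\cap M_i\cap X$; the same holds for $\beta$. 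If an endpoint lies in $a_q$ but is caught by a $p$-model $M_i$, then it belongs to $a_q\cap M_i\cap X$, and (3.c) again places it in $a_p$. Hence every $q$-endpoint that is not in $a_p$ would have to be caught by some model, and (3.c) then shows it is in $a_p$ after all. Consequently $\alpha,\beta\in a_p$ and $\alpha<_{p,\nu}\beta$. Condition (C7) for $p$ then gives $u_r^\alpha(\nu)=u_p^\alpha(\nu)\le u_p^\beta(\nu)=u_r^\beta(\nu)$.

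The main obstacle is a subtle point in this argument. Relation (3.b) only controls $M_i\cap X$, so the transfer is clean only when the link ordinals and the endpoints lie in $X$. An endpoint in $a_p\setminus X$ can occur only inside a $p$-model, which needs no transfer. A link $\gamma_j$ joining a $p$-model to a $q$-model lies in the $q$-model and hence in $X$, so it is handled. For the endpoints, I would first try to show the conclusion $\alpha,\beta\in a_p$ directly as above. If some endpoint cannot be placed in $a_p$, I would fall back to the alternative of concluding $\alpha<_{q,\nu}\beta$ when both endpoints lie in $a_q$. That requires the reverse transfer of $p$-models to $q$-models via (3.b), applied to links lying in $X$, together with care about links outside $X$ between two consecutive $p$-models. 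This last situation is where I expect the real work, possibly using elementarity of $X$ and $\omega_1\subseteq X$.
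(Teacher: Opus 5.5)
Your proposal is correct and follows the paper's proof essentially step for step: the same $\mathcal{M}_r$ from Lemma \ref{lemma-amal}, the same $\mathcal{A}_r$, $a_r$, $d_r$, $u_r$ and $b_r$ (your definition of mixed pairs as those neither both in $a_p$ nor both in $a_q$ is slightly more careful than the paper's wording), and the same proof of (C7), which replaces each $Y\cap N$ by $N$ and each $q$-model $M_i'$ by its counterpart $M_i$ via (3.b)--(3.d), with (C8) then reduced to (C7). The fallback in your last paragraph is never needed: every ordinal in a $q$-model lies in $X$, so the forward transfer into $p$ always succeeds and always places both endpoints in $a_p$, as your own main argument already shows.
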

	\begin{proof}
		Let $\mathcal{M}_r$ be the $(\mathcal{S}, \mathcal{L})$-symmetric system extending both $\mathcal{M}_p$ and $\mathcal{M}_q$ given by Lemma \ref{lemma-amal}. Let  
		\[
		\mathcal{A}_r:=\mathcal{A}_p\cup\mathcal{A}_q\cup \{Y\cap M\,:\, M\in\mathcal{A}_p\cup\mathcal{A}_q,\,Y\in\mathcal{M}_r\cap\mathcal{L}\cap M\},
		\]
		and let $a_r:=a_p\cup a_q$ and $d_r:=d_p=d_q$. Similarly as in the proof of Lemma \ref{amalgamation1S}, let also $b_r$ be the extension of $b_p\cup b_q$ to $\{(\alpha, \beta)\in a_r\times a_r, \alpha<\beta\}$ defined by letting $b_r(\alpha, \beta)$ be, for all $\alpha<\beta$ in $a_r$ with $\alpha\in a_p$ iff $\beta\notin a_p$, equal to $d_r\cap\delta_{M'}$ for any $M'\in\mathcal{A}_r$ such that $\alpha$, $\beta\in M'$ of minimal $\omega_1$-height (and letting $b_r(\alpha, \beta)=d_r$ if there is no such $M'$).
		
		For every $\nu\in d_r$ and every $\alpha\in a_r$, we define $u_r^{\alpha_i}(\nu)$ as follows:
		\[
		u_r^{\alpha_i}(\nu)=
		\begin{cases}
			u_q^{\alpha_i}(\nu), &\text{if }\alpha_i\in a_q\\
			u_p^{\alpha_i}(\nu), &\text{if }\alpha_i\in a_p.
		\end{cases}
		\]
		Let $u_r$ be the sequence $(u_r^\alpha:\alpha\in a_r)$. It is not too hard to see that $r:=(\mathcal{M}_r,\mathcal{A}_r,a_r,d_r,u_r, b_r)$ extends $p$ and $q$, and satisfies clauses (C1)-(C6) from the definition of the forcing $\mathbb{P}$, hence we only need to check that it also satisfies clauses (C7) and (C8).
		
		We start with the proof of (C7). Fix some $\nu\in d_r$ and let $\alpha,\beta\in a_r$ be such that $\alpha<_{r,\nu}\beta$. We will show that $u_r^\alpha(\nu)\leq u_r^\beta(\nu)$. Let $\overline{M}_0,\dots,\overline{M}_m\in\mathcal{A}_r^\nu$ and $\gamma_0<\dots<\gamma_{m-1}<\omega_3$ in $(\alpha,\beta)$ such that $\alpha\in \overline{M}_0$, $\beta\in\overline{M}_m$, and $\gamma_i\in\overline{M}_i\cap\overline{M}_{i+1}$ for each $i<m$. Note that if $\overline{M}_i$ is of the form $Y\cap M$, where $M\in\mathcal{A}_p^\nu\cup\mathcal{A}_q^\nu$ and $Y\in\mathcal{M}_r\cap\mathcal{L}\cap M$, we can substitute $\overline{M}_i$ by $M$, and $\alpha$ and $\beta$ remain $\mathcal{A}_r^\nu$-connected, as witnessed by the same sequence $\gamma_0,\dots,\gamma_{m-1}$ of ordinals of $\omega_3$. Hence, we may assume that $\overline{M}_0,\dots,\overline{M}_m\in\mathcal{A}_p^\nu\cup\mathcal{A}_q^\nu$. 
		
		Let $\{M_0,\dots,M_m\}\subseteq\mathcal{A}_p^\nu$ and $\{M_0',\dots,M_m'\}\subseteq\mathcal{A}_q^\nu$ be two enumerations such that for every $i\leq m$, if $\overline{M}_i\in\mathcal{A}_p^\nu$, then $\overline{M}_i=M_i$, and if $\overline{M}_i\in\mathcal{A}_q^\nu$, then $\overline{M}_i=M_i'$, and moreover assume that the enumeration of the models reflects the agreement given by items (3) and (4) of the definition of $(\mathcal{L},X)$-reflection. That is, assume that for every $i\leq m$,  
		\begin{enumerate}[label=(\roman*)]
			\item $M_i=M_i'$, if $M_i\in\mathcal{A}_p^\nu\cap X$,
			
			\item $X\cap M_i\cap\omega_3=X\cap M_i'\cap\omega_3=M_i'\cap\omega_3$,
			
			\item $a_q\cap M_i=a_p\cap M_i\cap X$, and
			
			\item $\delta_{M_i'}=\delta_{M_i}$.
			
		\end{enumerate}
		
		We first show that $\alpha,\beta\in a_p$. Let $\eta\in\{\alpha,\beta\}$ and $N\in\{\overline{M}_0,\overline{M}_m\}$ be such that $\eta\in M$, and assume that $\eta\in a_q$. If $N\in\mathcal{A}_p^\nu$, then $\eta\in a_q\cap N=a_p\cap N\cap X$, by (iii). If $N\in\mathcal{A}_q^\nu$, since $\eta\in a_q\subseteq X$, by (ii) there is some $N^*\in\mathcal{A}_p^\nu$ such that $\eta\in N\cap\omega_3=X\cap N^*\cap\omega_3$. Hence, by (iii), we have $\eta\in a_q\cap N^*=a_p\cap N^*\cap X$. Since $a_r=a_p\cup a_q$, this finishes the proof of $\alpha,\beta\in a_p$. 
		
		Now, note that for every $i\leq m$ such that $\overline{M}_i=M_i'\in\mathcal{A}_q^\nu$, in light of item (ii) above, 
		\[
		\gamma_i,\gamma_{i+1}\in M_i'\cap\omega_3=X\cap M_i\cap\omega_3\subseteq M_i\cap\omega_3.
		\]
		Therefore, by (iv), $\alpha$ and $\beta$ are $\mathcal{A}_p^\nu$-connected through $\{M_0,\dots,M_m\}$ (in symbols $\alpha<_{p,\nu}\beta$), and hence, since $\alpha,\beta\in a_p$, we have 
		\[
		u_r^\alpha(\nu)=u_p^\alpha(\nu)\leq u_p^\beta(\nu)=u_r^\beta(\nu).
		\]
		
		Finally, clause (C8) from the definition of $\mathbb P$-condition holds for $r$, exactly as in the proof of Lemma \ref{amalgamation1S}, thanks to (C7) holding and by the definition of $b_r(\alpha, \beta)$ in the case that $\alpha\in a_p$ iff $\beta\notin a_p$. 
	\end{proof}

	\begin{lemma}\label{propL}
		The forcing $\mathbb{P}$ is proper for countably closed models of size $\aleph_1$.
	\end{lemma}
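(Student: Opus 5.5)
The plan mirrors the proof of Lemma \ref{propS}, with Lemma \ref{amalgamation1L} in place of Lemma \ref{amalgamation1S}.

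Let $p\in\mathbb{P}$ and let $X^*\preceq H(\theta)$, for $\theta$ large enough, be a model with $|X^*|=\aleph_1$, ${}^\omega X^*\subseteq X^*$ and $p,\mathbb{P}\in X^*$. Set $X:=X^*\cap H(\omega_3)$. Then $X\preceq(H(\omega_3);\in,\vec e)$, since $\vec e$ and $\omega_3$ are definable from parameters in $X^*$. Moreover $|X|=\aleph_1$, and $X$ is countably closed, so $X\in\mathcal{L}$. By Lemma \ref{ontop} there is $p'\le p$ with $X\in\mathcal{M}_{p'}$. I claim that $p'$ is $(X^*,\mathbb{P})$-generic.

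Fix a dense $D\in X^*$ and some $p^*\in D$ with $p^*\le p'$; note that $X\in\mathcal{M}_{p^*}$. By Lemma \ref{amalgamation1L}, it suffices to find an $(\mathcal{L},X)$-reflection $q$ of $p^*$ with $q\in D\cap X^*$, since such a $q$ is compatible with $p^*$. Lemma \ref{selfreflectionL} tells us that $p^*$ is an $(\mathcal{L},X)$-reflection of itself, so some condition in $D$ satisfies the reflection property. The task is to show that this property is expressed by a formula whose parameters lie in $X^*$; elementarity of $X^*$ then yields such a $q\in D\cap X^*$.

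The parameters are checked one by one.
\begin{itemize}
\item $p^*\upharpoonright X$ belongs to $X$. It is a finite object built from members of $X$ (Lemma \ref{restriction}), and $X$ is closed under finite subsets.
\item $d_{p^*}$ belongs to $X$, because $d_{p^*}\subseteq\omega_1\subseteq X$.
\item For (3.a), the set $\mathcal{A}_{p^*}\cap X$ is finite and contained in $X$.
\item For (3.b), each $X\cap M_i\cap\omega_3$ is a countable subset of $X$, so it lies in $X$ by ${}^\omega X\subseteq X$.
\item For (3.c), each $a_{p^*}\cap M_i\cap X$ is a finite subset of $X$, hence lies in $X$.
\item For (3.d), the heights $\delta_{M_i}$ are countable ordinals and therefore in $X$.
\end{itemize}
Countable closure is exactly what replaces Lemma \ref{key-claim} here.

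The formula then states: there is $q\in D$ such that $q\le p^*\upharpoonright X$, $d_q=d_{p^*}$, and $\mathcal{A}_q$ can be enumerated as $M_0',\dots,M_n'$ realizing the finitely many prescribed values. This formula holds in $H(\theta)$ as witnessed by $p^*$, so $X^*$ contains a witness $q$. That $q$ is an $(\mathcal{L},X)$-reflection of $p^*$. Finally, $q\in X^*\cap H(\omega_3)=X$, so $q\in\mathbb{P}\cap X$, which is the hypothesis Lemma \ref{amalgamation1L} needs.

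The main point needing care is that (3.a) and (3.b) refer to $X$ itself, which is not an element of $X^*$. The fix is to replace each condition by its concrete value: (3.a) becomes ``$M_i'=M_i$ for the listed $M_i$'', and (3.b) becomes ``$M_i'\cap\omega_3=$ the given countable set''. This rewriting is valid because $q\in X$ implies $M_i'\subseteq X$. With the conditions in this form, the elementarity step above goes through.
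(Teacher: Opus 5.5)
Your overall route is the paper's: put $X$ into the side condition via Lemma \ref{ontop}, use that $p^*$ is an $(\mathcal{L},X)$-reflection of itself, pull a reflection into $D\cap X^*$ by elementarity, and finish with Lemma \ref{amalgamation1L}. Your handling of clauses (1), (2), (3.a) and (3.d) is fine. The gap is your rewriting of (3.b), at exactly the point where the paper only remarks that $X\cap M_i\cap\omega_3\in X$ by countable closure.

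For elementarity, the formula must be true in $H(\theta)$, and your witness is $p^*$ with $M_i'=M_i$. But the clause ``$M_i'\cap\omega_3=X\cap M_i\cap\omega_3$'' then asserts $M_i\cap\omega_3\subseteq X$. This is false for every active $M_i$ with $\varepsilon_{M_i}>\varepsilon_X$. It is also false whenever $X\in M_i$, since then $\varepsilon_X\in M_i\setminus X$. You cannot exclude such models. By Lemma \ref{ontop}, the set of conditions having an active model of strictly largest $\omega_2$-height is dense, and it lies in $X^*$. Any $p^*\le p'$ in it has such a model, of height $>\varepsilon_X$.

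Your remark that $q\in X$ forces $M_i'\subseteq X$ only shows that the rewritten clause is equivalent to (3.b) for candidates \emph{inside} $X$. The witness $p^*$ lies outside. By contrast, (3.c) can be handled. Phrase it as $a_q\cap S_i=a_{p^*}\cap S_i$ with $S_i:=X\cap M_i\cap\omega_3\in X$. This only constrains $a_q$ on a set in $X$, and $p^*$ satisfies it.

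The problem with (3.b) is not cosmetic. What Lemma \ref{amalgamation1L} actually uses from it is the upper bound $M_i'\cap\omega_3\subseteq M_i$. That bound is what moves endpoints and connecting ordinals $\gamma_j$ from $q$-models into $p$-models. Now suppose a clause $M_i'\cap\omega_3\subseteq T$ with $T\in X^*$ is satisfied by $p^*$. Then $T\supseteq M_i\cap\omega_3\not\subseteq X$. Since a countable $T\in X^*$ is a subset of $X$, $T$ must be uncountable, and then $T\cap X$ is uncountable. So no such clause can confine $M_i'$ to the countable set $S_i$.

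What you can honestly transfer is agreement on a countable set in $X$. For example, $Z\cap M_i'\cap\omega_3=S_i$ with $Z:=\bigcup_i S_i\in X$ is true of $p^*$. But then $M_i'$ may contain ordinals of $X\setminus Z$ lying in no model of $\mathcal{A}_{p^*}$, and the amalgamation has to be redone under this weaker agreement. One possibility is to also code into the formula, as a countable parameter, which pairs from $Z$ are $\mathcal{A}_{p^*}^\nu$-connected for $\nu\in d_{p^*}$, together with the induced bounds on the $u$-values. As written, however, your elementarity step does not go through.
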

	\begin{proof}
		Let $p\in\mathbb{P}$ and let $X^*$ be a countably closed $\aleph_1$-sized elementary submodel of a large enough $H(\theta)$ containing $p$ and $\mathbb{P}$. Let $X:=X^*\cap H(\omega_3)$. By Lemma \ref{ontop}, we may find a condition $p'$ extending $p$ and such that $X\in\mathcal{M}_{p'}$. We will show that $p'$ is $(X^*,\mathbb{P})$-generic. So, let $D\in X^*$ be a dense subset of $\mathbb{P}$, and let $p^*\in D$ be such that $p^*\leq p'$. As in the proof of Lemma \ref{propS}, since $p^*$ is an $(\mathcal{L},X)$-reflection of itself by Lemma \ref{selfreflectionL}, if we show that all the parameters in Definition \ref{defreflectionL} are in $X^*$, and then argue by elementarity, we can find an $(\mathcal{L},X)$-reflection $q\in D\cap X^*$ of $p^*$. Since $p^*$ and $q$ are compatible by Lemma \ref{amalgamation1L}, this is enough to show that $p^*$ is $(X^*,\mathbb{P})$-generic. Seeing that all the parameters in the definition of $(\mathcal{L},X)$-reflection are in $X^*$ is much easier in this case than in the proof of Lemma \ref{propS}. Indeed, note that the parameters in items (3.b) and (3.c) of Definition \ref{defreflectionL} are in $X$ because it is a countably closed model, and note that the other items are obvious or follow from the fact that $\omega_1\subseteq X$. 
	\end{proof}

	\subsection{The chain condition}\label{subsect-sc-chain-condition}
	
	In our final lemma we will show that $\mathbb{P}$ has the $\aleph_3$-chain condition. We will in fact prove that $\mathbb{P}$ has the $\aleph_3$-Knaster condition; in other words, that for every sequence $(p_\xi\,:\,\xi<\omega_3)$ of conditions in $\mathbb{P}$ there is $X\subseteq\omega_3$ of size $\aleph_3$ such that $p_{\xi_0}$ and $p_{\xi_1}$ are compatible for al $\xi_0,\xi_1<\omega_3$ in $\mathbb{P}$.

	\begin{lemma}\label{cc}
		$\mathbb{P}$ has the $\aleph_3$-Knaster condition.
	\end{lemma}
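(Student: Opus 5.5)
Given a sequence $(p_\xi:\xi<\omega_3)$ of conditions, I will run the usual $\Delta$-system and isomorphism-type argument, using $2^{\aleph_1}=\aleph_2$ (from $\mathrm{GCH}$) and Lemma \ref{pureamalgamation2} to amalgamate the side conditions. The plan has three stages: uniformize, amalgamate the side conditions, then define the working parts.

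\textbf{Uniformization.} For each $\xi$ let $R_\xi:=\bigcup\mathcal{M}_{p_\xi}[\omega_1]\cup a_{p_\xi}$, a set of size $\aleph_1$. Each $p_\xi$ is coded by the isomorphism type of the finite structure
\[
(\textstyle\bigcup\mathcal{M}_{p_\xi}[\omega_1];\in,\vec e,Q_i,\mathcal{A}_{p_\xi},a_{p_\xi},d_{p_\xi},u_{p_\xi},b_{p_\xi})_{i<n_\xi}
\]
over $\omega_1$, so the number of types is $2^{\aleph_1}=\aleph_2$. First thin to $\aleph_3$ many $\xi$ sharing one type. Since each $R_\xi\in[\omega_3]^{\aleph_1}$ and $\aleph_2^{\aleph_1}=\aleph_2<\aleph_3$, the $\Delta$-system lemma applies to the family $\{R_\xi\}$. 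Thin further so that the $R_\xi$ form a $\Delta$-system with root $R$, and so that the isomorphisms $\Psi_{\xi,\xi'}$ between the coded structures are the identity on $R$. This last step uses a further pressing-down-style counting, since $R$ has size $\le\aleph_1$ and there are $\aleph_2^{\aleph_1}=\aleph_2$ possible restrictions to $R$. In particular $d_{p_\xi}=d$, and $u$ and $b$ are transported by $\Psi_{\xi,\xi'}$. Then $\mathcal{M}_{p_\xi}\cong\mathcal{M}_{p_{\xi'}}$ in the sense of the Notation preceding Lemma \ref{pureamalgamation2}, so $\mathcal{M}_{p_\xi}\cup\mathcal{M}_{p_{\xi'}}$ is an $(\mathcal{S},\mathcal{L})$-symmetric system by that lemma.

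\textbf{Amalgamation.} Fix $\xi\neq\xi'$ and write $p=p_\xi$, $q=p_{\xi'}$, $\Psi=\Psi_{\xi,\xi'}$. Define
\begin{itemize}
\item $\mathcal{M}_r=\mathcal{M}_p\cup\mathcal{M}_q$ and $\mathcal{A}_r=\mathcal{A}_p\cup\mathcal{A}_q$; the latter satisfies (C2) because intersections $X\cap M$ with $M\in\mathcal{A}_p$ and $X\in\mathcal{M}_q$ have $X\in M$, which forces $X\in\mathcal{M}_p$ via the identity on the overlap;
\item $a_r=a_p\cup a_q$ and $d_r=d$;
\item $u_r^\alpha=u_p^\alpha$ or $u_q^\alpha$ as appropriate, consistent on $a_p\cap a_q\subseteq R$ since $\Psi$ fixes $R$;
\item $b_r$ extending $b_p\cup b_q$ on mixed pairs by $b_r(\alpha,\beta)=d$, which trivially satisfies (C6) and (C8). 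Since every mixed pair with $\alpha\in M\in\mathcal{A}_r$ and $\beta\in M$ would lie in one side, (C6b) is then checked only for such $M$.
\end{itemize}

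\textbf{Main obstacle: (C7).} I must show that if $\alpha<_{r,\nu}\beta$ via a chain alternating between models of $\mathcal{A}_p$ and $\mathcal{A}_q$, then $u_r^\alpha(\nu)\le u_r^\beta(\nu)$. The plan is to project the chain. Each $M\in\mathcal{A}_q$ is $\Psi(M^*)$ with $M^*\in\mathcal{A}_p$ and $\delta_M=\delta_{M^*}$. Each connecting ordinal $\gamma_j$ lying in a model from $p$ and a model from $q$ lies in $R_\xi\cap R_{\xi'}=R$, hence is fixed by $\Psi$. Also $\alpha$ and $\beta$ map to $\Psi^{-1}(\alpha)$ and $\Psi^{-1}(\beta)$ with the same $u$-values, so replacing the $q$-models by their preimages yields a chain in $\mathcal{A}_p^\nu$ connecting $\Psi^{-1}$ of the endpoints. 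For this, $\Psi$ must be order-preserving on the relevant ordinals, which I secure in the uniformization by also requiring that the relative order of $R_\xi$ and $R$ be the same for all $\xi$. The one subtle point is that the projected endpoints must keep $\alpha'<\beta'$ and the $\gamma_j$ must stay in $(\alpha',\beta')$. I expect to handle this by additionally thinning so that elements of $R_\xi\setminus R$ and $R_{\xi'}\setminus R$ occupy the same positions relative to $R$, which is again possible by counting order types. Then (C7) for $p$ gives the inequality.
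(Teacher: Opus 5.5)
Your proposal is correct and follows essentially the paper's proof. The paper uniformizes the structures $(X_\xi;\in,p_\xi)$ for large models $X_\xi\ni p_\xi$ rather than your sets $R_\xi$, which gives the identity on overlaps and order preservation on ordinals for free. It then applies Lemma \ref{pureamalgamation2}, takes unions with $b=d$ on mixed pairs, and verifies (C7) by pulling the chain back through $\Psi^{-1}$, exactly as you plan. Your flagged ``subtle point'' needs no extra thinning. Each consecutive pair of the chain ($\alpha,\gamma_0$; $\gamma_i,\gamma_{i+1}$; $\gamma_{n-1},\beta$) lies in a single model, so it lies entirely on one side, and the $\in$-isomorphism preserves the order step by step; this is how the paper argues.
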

	
	\begin{proof}
		Fix a condition $p_\xi\in\mathbb{P}$ for every $\xi<\omega_3$. For each $\xi<\omega_3$, let $X_\xi$ be a large elementary submodel of $(H(\omega_3); \in, \vec e)$ such that $p_\xi\in X_\xi$. By $2^{\aleph_1}=\aleph_2$ (recall that $\mathrm{GCH}$ was assumed at the beginning of the section), we may find $I\in [\omega_3]^{\aleph_3}$ such that for all $\xi_0<\xi_1$ in $I$, the structures $(X_{\xi_0}; \in, p_{\xi_0})$ and $(X_{\xi_1}; \in, p_{\xi_1})$ are isomorphic, and the isomorphism $\Psi_{X_{\xi_0}, X_{\xi_1}}$ is the identity on $X_{\xi_0}\cap X_{\xi_1}$. We will now prove that for all $\xi_0<\xi_1$ in $I$, the conditions $p_{\xi_0}$ and $p_{\xi_1}$ are compatible. 
		
		By Lemma \ref{pureamalgamation2}, $\mathcal{M}_{p_{\xi_0}}\cup\mathcal{M}_{p_{\xi_1}}$ is an $(\mathcal{S},\mathcal{L})$-symmetric system. Moreover, note that if $M\in\mathcal{A}_{p_{\xi_1}}\cap\mathcal{S}$ and $Y\in\mathcal{M}_{p_{\xi_0}}\cap\mathcal{L}\cap M$, then $Y\in X_{\xi_1}$, and hence, $\Psi_{X_{\xi_0},X_{\xi_1}}(Y)=Y$. Therefore, $Y\in\mathcal{M}_{p_{\xi_1}}$, and by clause (C2) applied to $p_{\xi_1}$, we have $Y\cap M\in\mathcal{A}_{p_{\xi_1}}$. Hence, we can conclude that $\mathcal{A}_{p_{\xi_0}}\cup\mathcal{A}_{p_{\xi_1}}$ satisfies clause (C2). Denote $\mathcal{A}_{p_{\xi_0}}\cup\mathcal{A}_{p_{\xi_1}}$ by $\mathcal{A}_q$ from now on. 
		
		Since $X_{\xi_1}$ is a large model, $\omega_1\subseteq X_{\xi_1}$. Hence, in particular, $d_{p_{\xi_0}}\subseteq X_{\xi_1}$, and thus, $\Psi_{X_{\xi_0},X_{\xi_1}}"d_{p_{\xi_0}}=d_{p_{\xi_0}}$, which implies that $d_{p_{\xi_0}}=d_{p_{\xi_1}}$. Define $a_q:=a_{p_{\xi_0}}\cup a_{p_{\xi_1}}$ and $d_q:=d_{p_{\xi_0}}=d_{p_{\xi_1}}$, and note that for every $\nu\in d_q$, if $\alpha\in a_{p_{\xi_0}}\cap a_{p_{\xi_1}}$, then $u_{p_{\xi_0}}^\alpha(\nu)=u_{p_{\xi_1}}^\alpha(\nu)$. For all $\nu\in d_q$ and all $\alpha\in a_{p_{\xi_i}}$, where $i\in\{0,1\}$, define $u_q^\alpha(\nu)=u_{p_{\xi_i}}^\alpha(\nu)$, and let $u_q$ be the sequence $(u_q^\alpha:\alpha\in a_q)$. It is clear that $u_q^\alpha$ is a well-defined function extending $u_{p_{\xi_i}}^\alpha$, for $i\in\{0,1\}$ and all $\alpha\in a_{p_{\xi_i}}$. Finally, let $b_q=(b_q(\alpha, \beta)\,:\,\alpha, \beta\in a_q, \alpha<\beta)$ be the function extending $b_{p_{\xi_0}}$ and $b_{p_{\xi_1}}$ by letting $b_q(\alpha, \beta)=d_q$ whenever $\alpha\in u_{p_{\xi_0}}$ iff $\beta\notin u_{p_{\xi_1}}$. If we show that
		\[
		q:=(\mathcal{M}_{p_{\xi_0}}\cup\mathcal{M}_{p_{\xi_1}},\mathcal{A}_q, a_q,d_q,u_q, b_q)
		\]
		satisfies clause (C7), then $q$ will be a common extension of $p_{\xi_0}$ and $p_{\xi_1}$ in $\mathbb{P}$, as we wanted.
		
		Let $\nu\in d_q$, and $\alpha<\beta$ in $a_q$ such that $\alpha <_{\mathcal{A}_q,\nu}\beta$. Let $M_0,\dots, M_n\in\mathcal{A}_q^\nu$ and $\gamma_0<\dots<\gamma_{n-1}<\omega_3$ in $(\alpha,\beta)$ such that $\alpha\in M_0$, $\beta\in M_n$, and $\gamma_i\in M_i\cap M_{i+1}$ for each $i<n$. We will show that $u_q^\alpha(\nu)\leq u_q^\beta(\nu)$. Now, for every $i\leq n$, if $M_i\in\mathcal{A}_{p_{\xi_0}}$, we let $M_i'$ denote the model $M_i$, and if $M_i\in\mathcal{A}_{p_{\xi_1}}$, we let $M_i'$ denote the model $\Psi_{X_{\xi_1},X_{\xi_0}}(M_i)$. Moreover, for every $i<n$, if either $M_i$ or $M_{i+1}$ belong to $\mathcal{A}_{p_{\xi_1}}$, we let $\gamma_i':=\Psi_{X_{\xi_1},X_{\xi_0}}(\gamma_i)$. Otherwise, we let $\gamma_i':=\gamma_i$. Finally, if $\alpha\in a_{p_{\xi_0}}$, let $\alpha':=\alpha$, and if $\alpha\in a_{p_{\xi_1}}$, let $\alpha':=\Psi_{X_{\xi_1},X_{\xi_0}}(\alpha)$, and similarly for $\beta$. 
		
		We claim that $\sup_{i\leq n}\delta_{M_i'}\leq\nu$, $\alpha'<\gamma_0'<\dots<\gamma_{n-1}'<\beta'$, $\alpha'\in M_0'$, $\beta'\in M_n'$, and $\gamma_i'\in M_i'\cap M_{i+1}'$ for each $i<n$. In particular, we claim that $\alpha'<_{\mathcal{A}_{p_{\xi_0}},\nu}\beta'$, which in turn will imply $u_q^\alpha(\nu)=u_{p_{\xi_0}}^{\alpha'}(\nu)\leq u_{p_{\xi_0}}^{\beta'}(\nu)=u_q^\beta(\nu)$, because $\Psi_{X_{\xi_1},X_{\xi_0}}$ is an isomorphism.
		
		It is clear that $\sup_{i\leq n}\delta_{M_i'}\leq\nu$ and that $\gamma_i'\in M_i'\cap M_{i+1}'$, for each $i<n$. Moreover, note that if $\alpha\in a_{p_{\xi_0}}$ and $M_0\in\mathcal{A}_{p_{\xi_0}}$, then $\alpha'=\alpha\in M_0=M_0'$, and if $\alpha\in a_{p_{\xi_1}}$ and $M_1\in\mathcal{A}_{p_{\xi_1}}$, then $\alpha'=\Psi_{X_{\xi_1},X_{\xi_0}}(\alpha)\in\Psi_{X_{\xi_1},X_{\xi_0}}(M_i)=M_i'$. If $\alpha\in a_{p_{\xi_0}}$ and $M_0\in\mathcal{A}_{p_{\xi_1}}$, then $\alpha\in X_{\xi_0}\cap X_{\xi_1}$, and hence, $\alpha'=\alpha=\Psi_{X_{\xi_1},X_{\xi_0}}(\alpha)\in\Psi_{X_{\xi_1},X_{\xi_0}}(M_0)=M_0'$. If $\alpha\in a_{p_{\xi_1}}$ and $M_0\in\mathcal{A}_{p_{\xi_0}}$, then $\alpha\in X_{\xi_0}\cap X_{\xi_1}$, and thus, $\alpha'=\Psi_{X_{\xi_1},X_{\xi_0}}(\alpha)=\alpha\in M_0=M_0'$. The same argument shows that $\beta'\in M_n'$. 
		
		It only remains to check that $\alpha'<\gamma_0'<\dots<\gamma_{n-1}'<\beta'$. First, we show that $\alpha'<\gamma_0'$. If $M_0\in\mathcal{A}_{p_{\xi_0}}$, then $\alpha'=\alpha$. Moreover, if $M_1\in\mathcal{A}_{p_{\xi_0}}$, then $\gamma_0'=\gamma_0$, and if $M_1\in\mathcal{A}_{p_{\xi_1}}$, then $\gamma_0\in X_{\xi_0}\cap X_{\xi_1}$, and thus, $\gamma_0'=\Psi_{X_{\xi_1},X_{\xi_0}}(\gamma_0)=\gamma_0$. In both cases $\alpha'=\alpha$ and $\gamma_0'=\gamma_0$, so we have that $\alpha'<\gamma_0'$. Now, if $M_0\in\mathcal{A}_{p_{\xi_1}}$, then $\alpha'=\Psi_{N_{\xi_1},N_{\xi_0}}(\alpha)$ and $\gamma_0'=\Psi_{N_{\xi_1},N_{\xi_0}}(\gamma_0)$, and hence $\alpha'<\gamma_0'$. Next, we check that $\gamma_i'<\gamma_{i+1}'$ for every $i<n$. By similar reasons as above, it is not too hard to see that if $M_{i+1}\in\mathcal{A}_{p_{\xi_0}}$, then  $\gamma_i'=\gamma_i$ and $\gamma_{i+1}'=\gamma_{i+1}$, and if $M_{i+1}\in\mathcal{A}_{p_{\xi_1}}$, then $\gamma_i'=\Psi_{X_{\xi_1},X_{\xi_0}}(\gamma_i)$ and $\gamma_{i+1}'=\Psi_{X_{\xi_1},X_{\xi_0}}(\gamma_{i+1})$. In both cases $\gamma_i'<\gamma_{i+1}'$. Lastly, we check that $\gamma_{n-1}'<\beta'$. But again, by the same reasons as before, if $M_n\in\mathcal{A}_{p_{\xi_0}}$, then $\gamma_{n-1}'=\gamma_{n-1}$ and $\beta'=\beta$, and if $M_n\in\mathcal{A}_{p_{\xi_1}}$, then $\gamma_{n-1}'=\Psi_{X_{\xi_1},X_{\xi_0}}(\gamma_{n-1})$ and $\beta'=\Psi_{X_{\xi_1},X_{\xi_0}}(\beta)$. So, in both cases $\gamma_{n-1}'<\beta'$. 
		
		This finishes the proof of $\alpha'<_{\mathcal{A}_{p_{\xi_0}},\nu}\beta'$ and, as we observed above, implies that $u_q^\alpha(\nu)=u_{p_{\xi_0}}^{\alpha'}(\nu)\leq u_{p_{\xi_0}}^{\beta'}(\nu)=u_q^\beta(\nu)$. Therefore, this shows that $q$ satisfies clause (C7), and hence that $q$ is a condition in $\mathbb{P}$ extending both $p_{\xi_0}$ and $p_{\xi_1}$.
	\end{proof}

	\subsection{Conclusion}
	
	Recall that we have assumed $\mathrm{GCH}$ throughout the last section. Therefore, under this assumption, $\mathcal{L}$ is stationary in $[H(\omega_3)]^{\aleph_1}$, and hence Lemmas \ref{preservacio-proper}, \ref{propS}, \ref{propL} and \ref{cc} ensure that the forcing $\mathbb{P}$ preserves all cardinals. 
	
	We will finish by showing that $\mathbb{P}$ in fact forces a strong $\omega_3$-chain of subsets of $\omega_1$, which finishes the proof of Theorem \ref{mainthm}.
	
	Let $G$ be a $\mathbb{P}$-generic filter over $V$ and let us work in $V[G]$. For all $\alpha<\omega_3^V=\omega_3^{V[G]}$, if there is any $p\in G$ such that $\alpha\in a_p$, we let
	\[
	C_\alpha:=\{p\in\mathbb{P}:p\in G,\alpha\in a_p\},
	\]
	and
	\[
	u_G^\alpha:=\bigcup\{u_p^\alpha\,:\,  p\in C_\alpha\}.
	\]
	By Lemma \ref{dens0}, the set $B$ of ordinals $\alpha<\omega_3$ for which $u_G^\alpha$ is defined has size $\aleph_3$. Note that for all $\alpha<\beta$ in $B$, $\dom(u_G^\alpha)=\dom(u_G^\beta)$ by genericity. Let now $A:=\dom(u_G^\alpha)$ for any $\alpha\in B$,  and observe that $A\in[\omega_1]^{\omega_1}$ by Lemma \ref{dens1}. Let $(\alpha_i\,:\, i<\omega_3)$ be the strictly increasing enumeration of $B$ and let $(\nu_\tau\,:\,\tau<\omega_1)$ be the strictly increasing enumeration of $A$. For each $i<\omega_3$ let 
	\[
	X_i=\{\tau\in \omega_1\,:\, u_G^{\alpha_i}(\nu_\tau)=1\}
	\]
	Using our density lemmas \ref{dens1} and \ref{dens2} it is now a routine matter to check that $(X_i\,:\, i<\omega_3)$ is a strong $\omega_3$-chain of subsets of $\omega_1$ -- the fact that $X_{i_0}\setminus X_{i_1}$ is finite for all $i_0<i_1$ follows of course from the presence of the component $b_p$ in the definition of $\mathbb P$-condition. This finishes the proof of Theorem \ref{mainthm}.

	\bibliographystyle{plainurl} 
	\bibliography{References}

\end{document}